\pdfoutput=1
\documentclass[10pt,reqno]{amsart}
\usepackage[margin=1in]{geometry}
\usepackage{bbm}
\usepackage{amsfonts}
\usepackage{latexsym, amssymb, amsmath, amscd, amsthm, amsxtra}
\usepackage{mathtools}
\usepackage{enumerate}
\usepackage[all]{xy}
\usepackage{mathrsfs}
\usepackage{fancyhdr}
\usepackage{listings}
\usepackage{hyperref}
\usepackage{cleveref}
\usepackage{soul}
\usepackage{xcolor}
\usepackage{dsfont}
\usepackage{stmaryrd}
\usepackage{bm}
\usepackage{comment}

\allowdisplaybreaks
\setcounter{tocdepth}{1}

\hypersetup{colorlinks=true}
\pagestyle{plain}

\oddsidemargin=0in
\evensidemargin=0in
\textwidth=6.5in
\setlength{\unitlength}{1cm}
\setlength{\parindent}{0.6cm}
\usepackage{graphicx, color}
\usepackage{cite} 


\flushbottom
\numberwithin{equation}{section}
\numberwithin{figure}{section}

\DeclareMathOperator{\bbe}{\mathbb{E}}


\theoremstyle{plain} 
\newtheorem{theorem}{Theorem}[section]
\newtheorem*{theorem*}{Theorem}
\newtheorem{lemma}[theorem]{Lemma}
\newtheorem*{lemma*}{Lemma}

\newtheorem*{corollary*}{Corollary}

\newtheorem*{proposition*}{Proposition}
\newtheorem{definition}[theorem]{Definition}
\newtheorem*{definition*}{Definition}
\newtheorem{assumption}[theorem]{Assumption}
\newtheorem{claim}[theorem]{Claim}

\newtheorem*{conjecture*}{Conjecture}

\theoremstyle{definition} 

\newtheorem*{example*}{Example}
\newtheorem{remark}[theorem]{Remark}
\newtheorem*{remark*}{Remark}


\newcommand{\cal}{\mathcal}

\newcommand{\wh}{\widehat}
\newcommand{\wt}{\widetilde}
\renewcommand{\dag}{*}
\renewcommand{\dagger}{*}
\newcommand{\qqD}{\llbracket D\rrbracket}
\newcommand{\size}{\mathsf{size}}

\definecolor{darkred}{rgb}{0.9,0,0.3}
\definecolor{darkblue}{rgb}{0,0.3,0.9}


\renewcommand{\P}{\mathbb{P}}
\newcommand{\E}{\mathbb{E}}
\newcommand{\R}{\mathbb{R}}
\newcommand{\C}{\mathbb{C}}
\newcommand{\N}{\mathbb{N}}
\newcommand{\Z}{\mathbb{Z}}

\newcommand{\cG}{{\mathcal G}}
\newcommand{\cI}{\mathcal{I}}
\newcommand{\fa}{\mathfrak a}
\newcommand{\fb}{\mathfrak b}
\newcommand{\fc}{\mathfrak c}
\newcommand{\ft}{\mathfrak t}

\newcommand{\eqq}{\varepsilon_L}

\newcommand{\ii}{\mathrm{i}}
\newcommand{\dd}{\mathrm{d}}
\newcommand*{\deq}{\mathrel{\vcenter{\baselineskip0.65ex \lineskiplimit0pt \hbox{.}\hbox{.}}}=}

\renewcommand{\leq}{\leqslant}
\renewcommand{\geq}{\geqslant}
\renewcommand{\epsilon}{\varepsilon}


\newcommand{\qq}[1]{\llbracket{#1}\rrbracket}

\newcommand{\avg}[1]{\langle #1 \rangle}

\newcommand{\avgB}[1]{\Bigl\langle #1 \Bigr\rangle}


\DeclareMathOperator{\diag}{diag}
\DeclareMathOperator{\tr}{Tr}

\DeclareMathOperator{\re}{Re}
\DeclareMathOperator{\im}{Im}

\DeclareMathOperator{\dist}{dist}

\DeclareMathOperator{\spec}{spec}


\newcommand{\bg}{{\bf{g}}}
\newcommand{\bx}{{\bf{x}}}

\newcommand{\bu}{{\bf{u}}}
\newcommand{\bv}{{\bf{v}}}
\newcommand{\bw}{{\bf{w}}}

\newcommand{\al}{\alpha}

\newcommand{\be}{\begin{equation}}
\newcommand{\ee}{\end{equation}}

\newcommand{\e}{{\varepsilon}}

\newcommand{\VV}{H_{\Lambda}}
\newcommand{\HS}{{\mathrm{HS}}}
\newcommand{\oo}{\mathrm{o}}
\newcommand{\OO}{\mathrm{O}}

\newcommand{\LK}{\Delta}
\newcommand{\LKE}{\widehat\Delta}
\newcommand{\opr}[1]{\mathrm{O}_\prec({#1})}

\setcounter{tocdepth}{1}
\allowdisplaybreaks

\numberwithin{equation}{section}
\usepackage{environ}

\begin{document}
\title{A random matrix model towards the quantum chaos transition conjecture}


\author{Bertrand Stone}
\address{Department of Mathematics, University of California, Los Angeles, Los Angeles, CA, USA.}
\email{bertrand.stone@math.ucla.edu}

\author{Fan Yang}
\address{Yau Mathematical Sciences Center, Tsinghua University, and Beijing Institute of Mathematical Sciences and Applications, Beijing, China.}
\email{fyangmath@mail.tsinghua.edu.cn}

\author{Jun Yin}
\address{Department of Mathematics, University of California, Los Angeles, Los Angeles, CA, USA.}
\email{jyin@math.ucla.edu}

\maketitle

\begin{abstract}
Consider $D$ random systems that are modeled by independent $N\times N$ complex Hermitian Wigner matrices. Suppose they are lying on a circle and the neighboring systems interact with each other through a deterministic matrix $A$. We prove that in the asymptotic limit $N\to \infty$, the whole system exhibits a quantum chaos transition when the interaction strength $\|A\|_{\HS}$ varies. Specifically, when $\|A\|_{\HS}\ge N^{\e}$, we prove that the bulk eigenvalue statistics match those of a $DN\times DN$ GUE asymptotically and each bulk eigenvector is approximately equally distributed among the $D$ subsystems with probability $1-\oo(1)$. These phenomena indicate quantum chaos of the whole system. In contrast, when $\|A\|_{\HS}\le N^{-\e}$, we show that the system is integrable: the bulk eigenvalue statistics behave like $D$ independent copies of GUE statistics asymptotically and each bulk eigenvector is localized on only one subsystem. In particular, if we take $D\to \infty$ after the $N\to \infty$ limit, the bulk statistics converge to a Poisson point process under the $DN$ scaling.
\end{abstract}

{
\hypersetup{linkcolor=black}
\tableofcontents
}

\section{Introduction}

Consider a quantum Hamiltonian $H_{\lambda}=H +\lambda H_{I}$, where $H$ represents the Hamiltonian of some separable (or independent) subsystems in a specific coordinate system, and $H_{I}$ is a non-separable perturbation describing the interactions between these subsystems. In the study of quantum chaos, researchers aim to answer the following questions. (i) Does the behavior of the system become ``chaotic" when the interaction (measured by the parameter $\lambda>0$) becomes sufficiently strong? If the chaotic phase indeed exists, then (ii) what kind of features of energy levels, eigenstates, or some other physical observables can be used to distinguish between the integrable (or non-chaotic) and chaotic phases, and (iii) when the quantum chaos transition occurs as $\lambda$ varies. 

In the context of a generic integrable model, the well-known Berry-Tabor conjecture \cite{Berry-Tabor,Berry_1984,Jens-BT} states that the local energy level statistics should follow a Poisson process, i.e., neighboring energy levels should be uncorrelated. Conversely, for a generic chaotic model, the renowned Bohigas-Giannoni-Schmit (BGS) conjecture \cite{BGS-conjecture,BGS-conjecture2} suggests that its local energy level statistics should resemble those of Gaussian orthogonal ensemble (GOE). Specifically, it predicts the presence of level repulsion between neighboring energy levels, and the nearest neighbor spacing is expected to align with the ``Wigner surmise" predicted by \cite{Wigner} for Wigner matrices. The eigenfunctions of chaotic models are predicted to satisfy a fundamental property called \emph{quantum ergodicity} \cite{QE,QE2,QE3}, i.e., almost all eigenfunctions become equidistributed as the energy tends to $\infty$. Later, a stronger notion called \emph{quantum unique ergodicity} (QUE) states that the above equipartition principle holds for all eigenfunctions in the high-energy limit \cite{RudSar1994}. 
We refer the readers to \cite{BG_book1,BG_book2,BourgadeKeating2013} for a more comprehensive review of the above conjectures and the relation between quantum chaos and random matrix theory. There have been numerous studies on the Berry-Tabor conjecture \cite{Cheng1994,CL_PRA,Marklof_Annals,Marklof_Duke,VanderKam} and quantum unique ergodicity \cite{Lindenstrauss2006,Holowinsky2010}, to name just a few. However, to the best of our knowledge, the BGS conjecture remains unproven in the existing literature.

In the past few decades, there has been great progress in understanding the chaotic behaviors of some classical ensembles of random matrices, including the delocalization of eigenvectors, QUE, the GOE/GUE statistics of eigenvalues (or the Wigner surmise), and so on.
They are now considered a good paradigm for exploring many ideas related to quantum chaos and the BGS conjecture.  
The extensive body of literature explores these behaviors across various random matrix ensembles, such as Wigner matrices (see, for example, \cite{vector_flow,cipolloni-erdos2021,CES_QUE3,CIPOLLONI2022109394,ESY_local,AjaErdKru2015,CEJK2023,bourgade2016fixed,ErdPecRamSchYau2010,ErdSchYau2011,erdos2011universality,erdHos2012bulk,erdHos2012rigidity,TaoVu2011,ErdYau2012,LanYau2015,LANDON20191137,ESY1}),
random graphs (see, for example, \cite{erdHos2012spectral,erdHos2013spectral,BHY2019,BouHuaYau2017,BKH2017,AnaLeM2013,ADK2021,BHKY_2017_AOP,HLY_2015}), heavy-tailed random matrices (see, for example, \cite{ALY_Levy,ALM_Levy,Mobility_Levy}), and non-mean-field random matrices (see, for example, \cite{bourgade2017universality,bourgade2020random,yang2021delocalization,yang2022delocalization,xu2022bulk,YangYin2021,erdos2013delocalization}), to name just a few. For a historical review of seminal works that have led to the proof of the Wigner surmise and the delocalization of eigenvectors for random matrices, readers can refer to the book \cite{Erds2017ADA}. However, most of these models (except for non-mean-field random matrices) are chaotic in nature, so they generally do not exhibit integrable features and hence are not suitable for the investigation of the transition between integrable and chaotic phases. On the other hand, random band matrices are proposed for this purpose. The random band matrix ensemble with band width $W$ and system size $N$ is conjectured to exhibit a quantum chaos transition as $W$ varies (see, for example, \cite{PhysRevLett.67.2405,PhysRevLett.64.1851,PhysRevLett.64.5,PhysRevLett.66.986,PB_review,Spencer2}): there exists a critical band width $W_c=\sqrt{L}$ such that 
\begin{itemize}
    \item if $W \ll W_c$, $H$ has localized bulk eigenvectors, and the bulk eigenvalues satisfy Poisson statistics; 
    \item if $W \gg W_c$, $H$ has delocalized bulk eigenvectors satisfying QUE, and the bulk eigenvalues satisfy GOE/GUE statistics depending on the symmetry of the system.  
\end{itemize}
This conjecture remains unresolved; however, significant progress has been made in the past decade towards establishing its validity (see, for example, \cite{BaoErd2015,Goldstein_1/2,CS1_4,CPSS1_4,Hislop:2022td,Shcherbina:2021wz,erdos2013delocalization,bourgade2017universality,bourgade2020random,Sch2009,PelSchShaSod,Sch2014,Sch1,Sch3,Sch2,SchMT,EK_band1,ErdKno2011,YangYin2021}).

\subsection{Overview of the main results} 
In this paper, we consider a simpler random matrix model (than random band matrices) that also naturally exhibits the quantum chaos transition. More precisely, fix an integer $D\ge 2$, we take $D$ independent random subsystems, assuming that they can be modeled by $N\times N$ Wigner matrices, whose entries have mean zero, variance $N^{-1}$, and fulfill certain moment conditions. Without introducing interactions, the whole system is modeled by a block diagonal matrix $H$ with diagonal blocks being independent Wigner matrices $H_a$, $a=1,\ldots, D$. Next, we assume that the subsystems are aligned along a cycle and the neighboring subsystems interact with each other through an arbitrary deterministic $N\times N$ matrix $A$ (non-nearest-neighbor interactions can also be allowed; see \Cref{remark_more_general} below). 
We denote the interaction Hamiltonian by $\Lambda$, which is a tridiagonal block matrix with non-diagonal blocks being $A$ or $A^*$, and denote the whole system by $\VV$. In other words, $\VV$ is defined as 
\be\label{eq:def_model}
\VV=H+\Lambda,
\ee
where $H$ and $\Lambda$ are $D\times D$ block matrices defined as
\be\label{matrix_H}
H = \begin{pmatrix}
H_1 & 0 & 0 & \cdots & 0 & 0 \\
0 & H_2 & 0 & \cdots & 0 & 0 \\
0 & 0 & H_3 & \cdots & 0 & 0 \\
\vdots & \vdots & \vdots & \ddots & \vdots & \vdots \\
0 & 0 & 0 & \cdots & H_{D-1} & 0 \\
0 & 0 & 0 & \cdots & 0 & H_D
\end{pmatrix},\quad \Lambda = \begin{pmatrix}
0 &A & 0 & \cdots & 0 &A^\dagger \\
A^\dagger & 0 &A & \cdots & 0 & 0 \\
0 & A^\dagger & 0 & \cdots & 0 & 0 \\
\vdots & \vdots & \vdots & \ddots & \vdots & \vdots \\
0 & 0 & 0 & \cdots & 0 &A \\
A & 0 & 0 & \cdots & A^\dagger & 0
\end{pmatrix}.
\ee
Furthermore, we assume that $\VV$ is a perturbation of $H$, i.e., $\|A\|\ll \E\|H\|\sim 1$. 

In this paper, we prove that the bulk eigenvalues and eigenvectors of $\VV$ exhibit a transition from the integrable phase to the chaotic phase, and this transition occurs at $\|A\|_{\HS}\sim 1$. 
For simplicity, we introduce the index sets ${\cal I}_a:=\llbracket (a-1)N+1,aN\rrbracket$, $a \in \{ 1,\ldots,D\}$, for the subsystems and let $\cal I:=\llbracket DN\rrbracket$ be the index set for the whole system. Hereafter, for any $n,m\in \R$, we denote $\llbracket n, m\rrbracket: = [n,m]\cap \Z$ and $\llbracket n\rrbracket:=\llbracket1, n\rrbracket$. 
Roughly speaking, we prove the following results:
\begin{itemize}
    \item If $\|A\|_{\HS}\ll 1$, then every bulk eigenvector $\mathbf v$ is concentrated on only one subsystem, i.e., there exists an $\cal I_a$ such that $\sum_{k\in\mathcal{I}_a}|\bv(k)|^2 =1-\oo(1)$ with probability $1-\oo(1)$ (\Cref{nonmix}). The bulk eigenvalues of $\VV$ are negligible perturbations of those of $H$ (\Cref{NonMixEV}); in particular, if $D$ is large, the eigenvalue statistics around a bulk energy is almost a Poisson point process under the $DN$ scaling.

    \item If $\|A\|_{\HS}\gg 1$, then every bulk eigenvector $\mathbf v$ extends to the whole system (\Cref{mix}): on each $\cal I_a$,
    \begin{equation}\label{eq:QUE}
    \sum_{k\in\mathcal{I}_a}|\bv(k)|^2=D^{-1} +  \oo(1)  \quad \text{with probability } 1-\oo(1).
    \end{equation}
    The bulk eigenvalue statistics of $\VV$ match those of a $DN\times DN$ GOE/GUE asymptotically (\Cref{MixEV}). 
    For ease of presentation, we will, with a slight departure from strict notation, refer to \eqref{eq:QUE} as the ``QUE estimate" within the context of this paper.  It is important to note, however, that QUE typically denotes a stronger property; see the discussion below \Cref{mix}.  
\end{itemize}
We would like to emphasize that these results apply to any deterministic interaction $A$---no specific structure about $A$ will be used in the proof except for the operator norm bound $\|A\|\ll 1$ and the conditions on $\|A\|_{\HS}$.

As $N\to \infty$, when $D$ is fixed, the typical spectral distances between $D$ independent Wigner matrices are of order $N^{-1}$. Intuitively, quantum chaos arises when the interaction $\Lambda$ mixes the spectra of subsystems. In the proof, we will show that $\Lambda$ induces a shift in the eigenvalues of magnitude $N^{-1}\|A\|_{\HS}$, thereby rigorously confirming the aforementioned intuition: when $\|A\|_{\HS}\ll 1$, the spectra of subsystems remain separate, indicating integrability of the entire system; when $\|A\|_{\HS}\gg 1$, the subsystems become mixed, resulting in chaotic behavior.

\begin{figure}[htb]
\begin{center}
\includegraphics[width=12cm]{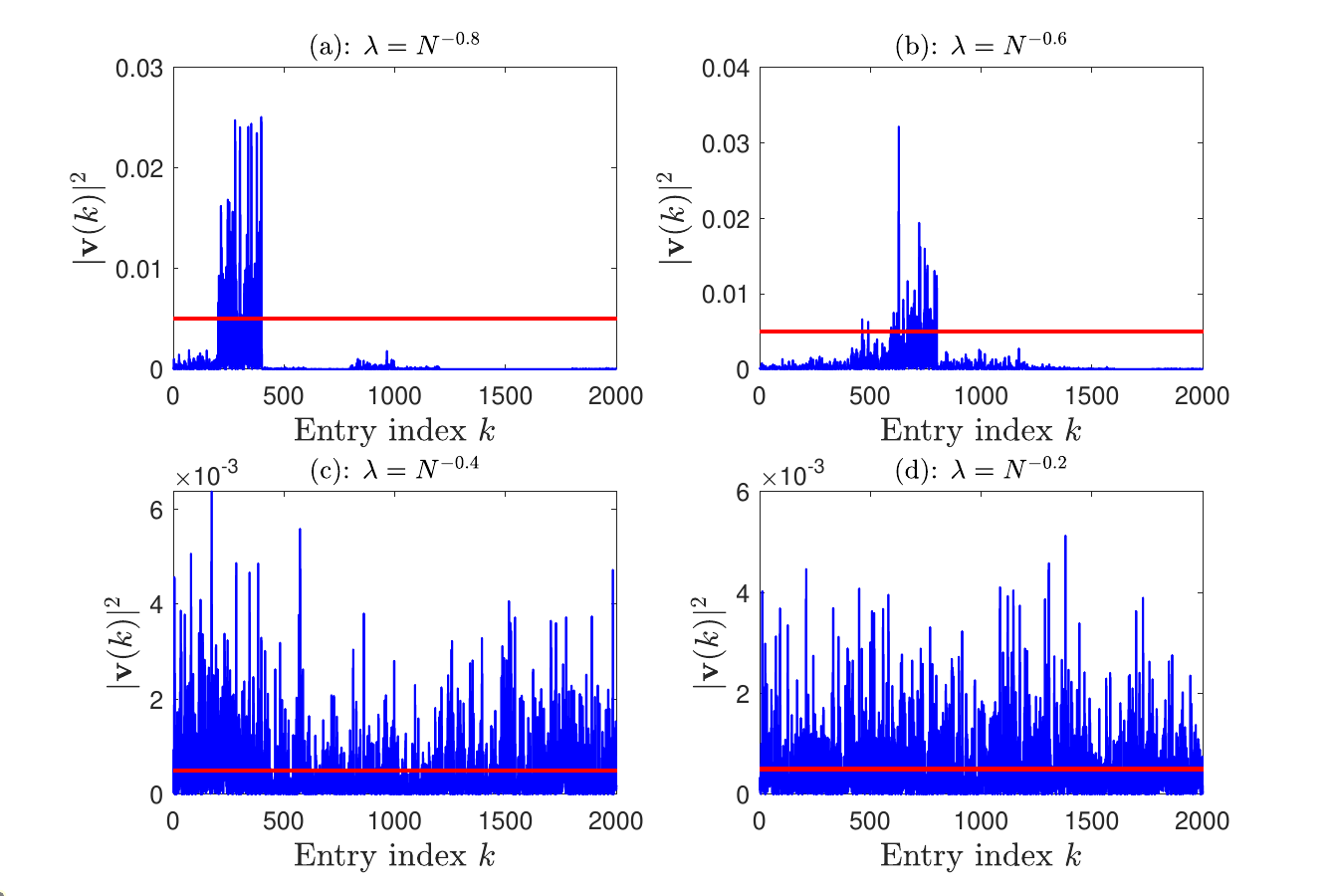}
\caption{The entries $|\bv(k)|^2$ for $k\in \cI$, where $\bv$ is the $(DN/2)$-th eigenvector of $\VV$. 
We choose $D=10$, $N=200$, and $A = \lambda I$. 
In (a) and (b), the red lines mark the value $N^{-1}$; in (c) and (d), the red lines mark the value $(DN)^{-1}$.}\label{simfigvec}
\end{center}
\end{figure}

We now perform some simulations to illustrate the theoretical results. We choose the $D$ independent $N\times N$ Wigner matrices to be GOE, and take $A = \lambda I$ (so $\|A\|_{\HS}=\lambda \sqrt{N}$ and the transition occurs at $\lambda=N^{-1/2}$). In \Cref{simfigvec}, we show the squared magnitudes of the entries of a bulk eigenvector. One can observe that in the subcritical cases, the eigenvector entries are localized in only one block, while in the supercritical cases, the eigenvector is delocalized. In \Cref{simfigvalue}, we depict the simulated distributions of the energy gaps between neighboring bulk eigenvalues under the scaling $DN$. In the subcritical cases, the simulated gap distribution exhibits a good resemblance to the exponential distribution, providing evidence that the behavior of bulk eigenvalues locally resembles that of a Poisson process. This finding substantiates the Berry-Tabor conjecture within our framework. Conversely, in the supercritical cases, the simulated gap distribution closely aligns with the Wigner surmise, indicating that the bulk eigenvalues follow the GOE statistics. 
This result confirms the BGS conjecture within our context.

\begin{figure}[htb]
\begin{center}
\includegraphics[width=12cm]{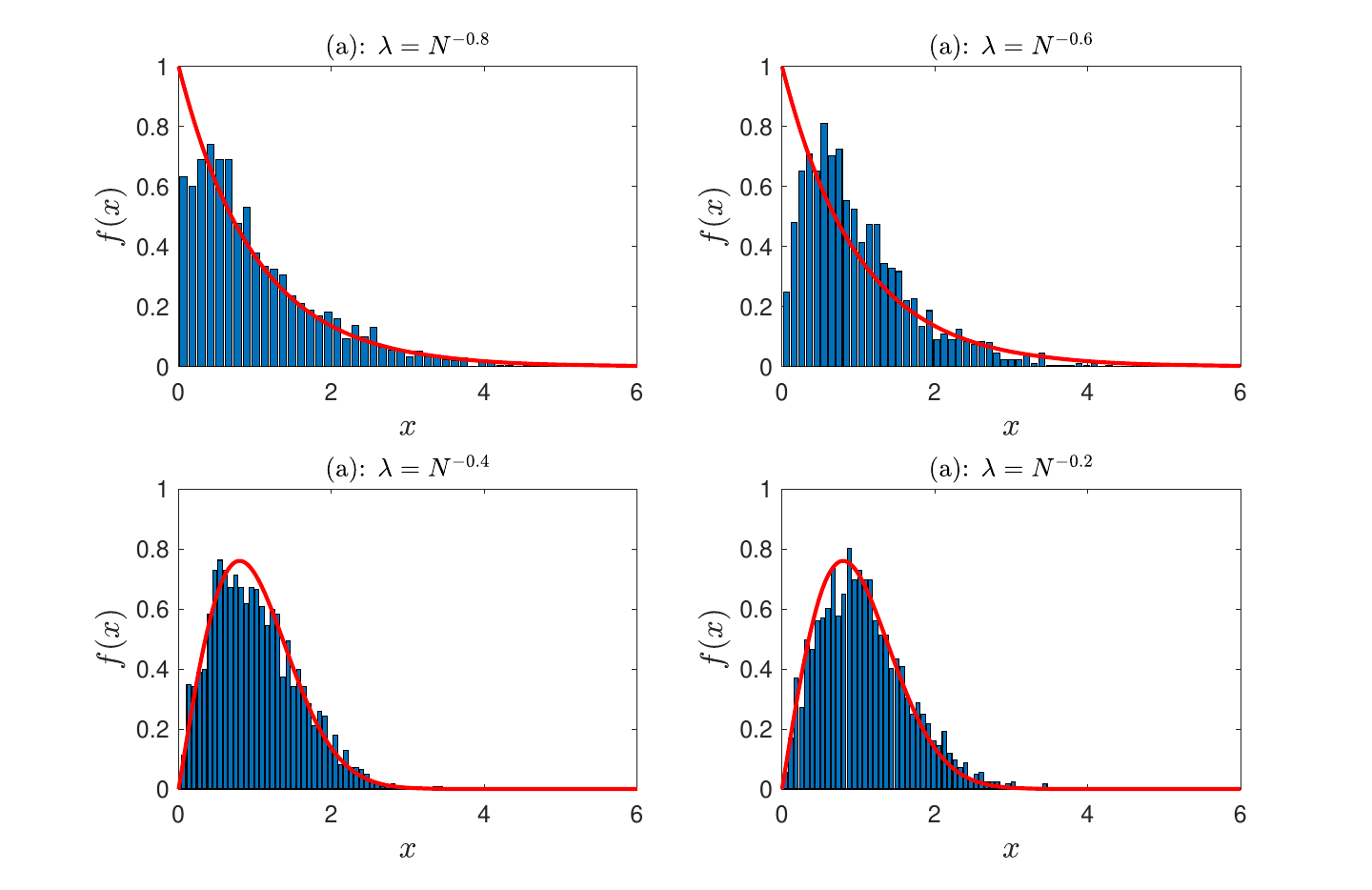}
\caption{
The bulk eigenvalue gap distributions under the same setting as \Cref{simfigvec}. The normalized histograms plot the rescaled eigenvalue gaps $DN\rho_{sc}(\lambda_k)(\lambda_{k+1}-\lambda_k)$, where $\lambda_k$ are the eigenvalues of $\VV$ with $k\in \qq{DN/20,19DN/20}$ and $\rho_{sc}$ is the semicircle density given by \eqref{eq:semidensity} below. In (a) and (b), the red curves plot the probability density function for the exponential distribution: $f(x)=e^{-x}$. In (c) and (d), the red curves plot the Wigner surmise: $f(x)=\frac{\pi x}{2}e^{-\pi x^2/4}$. Note that repulsions between eigenvalues already appear to be present in the $\lambda = N^{-0.6}$ case. We attribute this phenomenon to the finite-$N$ effect.}
\label{simfigvalue}
\end{center}
\end{figure}

\begin{remark}\label{remark_more_general}
There are some direct extensions of our results to various other models. Our method relies primarily on the block diagonal structure of matrix $H$ and the block translation invariance of the interactions. Hence, the proofs in our paper can be readily extended to more general cases encompassing non-nearest-neighbor interactions (including interactions within each block), as long as the interaction between the $i$-th and $j$-th blocks depends only on $j-i$ (by utilizing the periodic distance on the cycle). 
We can further relax the condition and only require that the interactions have the same distribution. For instance, the interactions can consist of independent lower triangular matrices with i.i.d.~entries, in which case we get random band matrices. Alternatively, the interactions can be independent Ginibre matrices with i.i.d.~entries, leading to the Wegner orbital model \cite{PelSchShaSod}. In addition, the random matrix characterizing each subsystem can be more intricate models, such as sparse random matrices, generalized Wigner matrices, or deformed Wigner matrices. 
Finally, our results can be easily extended to higher-dimensional models, where the subsystems reside on a $d$-dimensional lattice with $d\ge 2$. In this setting, all our arguments remain applicable as long as the interactions between different blocks maintain translation symmetry on the lattice.

\end{remark}

\begin{remark}
At the critical point where $\|A\|_{\HS}\sim 1$, simulations show that each block contains a substantial portion of a bulk eigenvector, although certain blocks contain significantly larger amounts (measured in terms of the $\ell^2$-norm) than others. We conjecture that the $\ell^2$-mass distribution across the $D$ blocks would follow a polynomial decay. Additionally, we anticipate that the bulk eigenvalue statistics would exhibit an interpolation between the GOE/GUE statistics and $D$ independent copies of GOE/GUE statistics. The investigation of this critical scenario is deferred for future research.
\end{remark}

\subsection{Physical background} 

One of the main physical motivations behind this model is to gain insights into the Anderson metal-insulator transition of the well-known Anderson model \cite{Anderson}. In the Anderson model, the random potential (denoted by $V$) consists of i.i.d.~diagonal entries. However, in our model (and its higher-dimensional counterparts, as mentioned in \Cref{remark_more_general}), we have replaced $V$ with a block diagonal random matrix $H$ and generalized the Laplacian matrix to a more general matrix $\Lambda$ that introduces interactions between adjacent blocks. From a physical perspective, our ``block potential" is related to the Anderson potential through a scaling transformation. Regarding $\Lambda$, choosing $A=I$ yields the block Anderson model \cite{PelSchShaSod}, while incorporating random independent interactions leads to random band matrices or the Wegner orbital model, as mentioned in \Cref{remark_more_general} above. These models represent important extensions of the Anderson model and all exhibit the desired metal-insulator transition as the strength of interactions varies. 
In particular, the Wegner orbital models draw inspiration from the work of Wegner \cite{Wegner1}, which was further developed in \cite{Wegner2,Wegner3}. They are proposed to model the motion of a quantum particle with many internal degrees of freedom in a disordered medium.  
We expect that investigating the block model $\VV$ could provide an alternative approach to the renowned Anderson metal-insulator transition conjecture. We will delve into this aspect in an ongoing paper \cite{Block_Anderson}, where we study the localization-delocalization transition of the block Anderson and Wegner orbital models as the interaction strength changes. It is worth highlighting that the framework presented in that paper poses greater challenges, as it involves examining ``more local" interactions where $D\to \infty$ as $N\to \infty$. 

Next, we explore another physical connection, aiming to gain some \emph{heuristic insights} into the mechanism of quantum chaos transition through the lens of random matrix theory. To accomplish this, we begin by considering a simplified toy model. 
Consider the two-dimensional (2D) quantum billiard inside the unit disk $\mathcal D$:
$$ -\Delta \psi(r,\theta) = E\psi(r,\theta),\quad \psi|_{\partial \mathcal D}=0.$$
This is an integrable model with eigenfunctions $ \psi_{n,m}(r,\theta)=J_{|m|}(k_{m,n}r)e^{\ii m\theta}$ and eigenvalues $E=k_{m,n}^2$, where $m\in \Z,$ $ n\in \N, $ $J_{|m|}$ is the $|m|$-th Bessel function, and $k_{m,n}$ is the $n$-th zero of $J_{|m|}$. We then add a potential $V(r,\theta)$ to the Hamiltonian: $H=-\Delta + V(r,\theta)$. Note when $V$ tends to $\infty\cdot \mathbf 1_{\cal B}$ for a region $\cal B \subset \cal D$, we get a quantum billiard inside the region between $\partial \cal D$ and $\partial \cal B$. 
Our goal is to understand when quantum chaos arises as the potential $V$ varies. In particular, we are interested in the asymptotic behavior of the eigenvalues and corresponding eigenstates around an energy $E$ in the high-energy limit $E\to \infty$.

In the original quantum billiard in $\cal D$, there is a conserved physical quantity, namely the \emph{angular momentum $-\ii \partial_{\theta}$}. As a result, the operator $-\Delta$ can be decomposed according to its invariant subspaces characterized by the quantum numbers for the angular momentum:
$$ V_m :=\left\{ \psi(r,\theta):\psi(r,\theta)=f(r)e^{\ii m \theta} \text{ for some radial function } f(r)\right\},$$
where $-\Delta$ restricted to $V_m$ is given by  
\be\label{eq:Deltam}
-\Delta \psi = - \Delta^{(m)}\psi,\quad \text{with}\quad \psi(r,\theta)=f(r)e^{\ii m \theta}, \quad \Delta^{(m)}:= \frac{\partial^2}{\partial r^2} +\frac{1}{r}\frac{\partial }{\partial r} - \frac{m^2}{r^2}.
\ee
This decomposition naturally leads to a block structure of $H$ with blocks labeled by the quantum numbers for the angular momentum. More precisely, let \smash{$\{f_n(r)\}$} be an orthonormal basis of the space of radial functions with $f_n(1)=0$. Then, the $(m,m')$-block of $H$ can be represented by an $\infty$-dimensional matrix with entries
\be\label{eq:blocksH} 
H^{(m',m)}_{n',n} 
= \frac{1}{2\pi} \iint \bar f_{n'} (r)\left[-\Delta^{(m)}+V(r,\theta)\right]f_n (r) e^{\ii (m-m')\theta}\dd r \dd \theta.
\ee
By performing a finite-dimensional reduction, we can express $H$ as a block matrix. 

\begin{figure}[htb]
\begin{center}
\includegraphics[width=7.5cm]{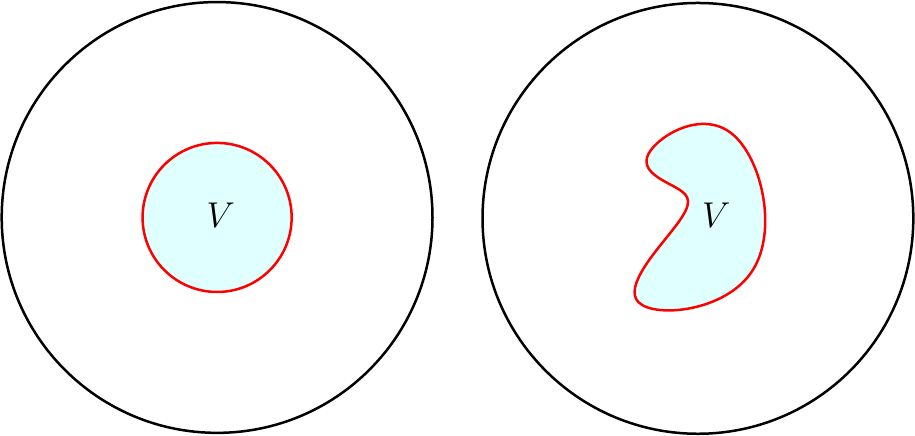}
\end{center}
\caption{A 2D quantum billiard inside a circle with a potential $V$ supported in a compact region. The left picture depicts an integrable system featuring a radially symmetric potential, which does not mix eigenstates with different angular momentum. In the right picture, the rotational symmetry of $V$ is broken, resulting in mixing of eigenstates with different angular momentum, which can induce chaotic behavior. Note when the potential $V\to \infty$, essentially forming an inner boundary, the system becomes a quantum billiard between two boundaries.}\label{figure:QC}
\end{figure}

The block matrix obtained above is a deterministic matrix, whose behavior seems to be inherently complex to analyze. Intuitively, one would expect that, for a general $V$, $H$ exhibits characteristics akin to a ``quasi-random" matrix, with entries possessing intricate magnitudes and phases. 
To facilitate a more accessible study of the quantum chaos transition, we propose employing a block random matrix \smash{$\VV$} as a computationally feasible ``toy model". This model allows for easier investigation while still capturing essential qualitative features that establish a connection between the aforementioned quantum billiard model and the model \eqref{eq:def_model}.

\begin{itemize}

\item[(i)] Due to the fast-oscillating nature of the functions $f_n$ when $n$ is large, the entries within the diagonal blocks \emph{$H^{(m,m)}$} exhibit complicated phases. Furthermore, neighboring diagonal blocks demonstrate slightly different spectra (due to the term $-m^2/r^2$ in \eqref{eq:Deltam}). Following Wigner's insight \cite{Wigner}, we propose employing independent Wigner matrices to model these blocks---random matrices serve as effective phenomenological models for complex ``quasi-random" matrices, and independent Wigner matrices have distinct yet closely aligned spectra.

\item[(ii)] When the potential $V(r,\theta)$ is zero or exhibits rotational symmetry, as depicted in the left graph of \Cref{figure:QC}, the eigenstates with different angular momentum remain unmixed, resulting in an integrable model. This scenario corresponds to the $A=0$ case in our model \eqref{eq:def_model}. 

\item[(iii)] When we modify the potential $V$ as shown in the right graph of \Cref{figure:QC}, off-diagonal block entries \smash{$H^{(m',m)}_{n',n} = (2\pi)^{-1} \iint \bar f_{n'} (r) V(r,\theta)f_n (r) e^{\ii (m-m')\theta}\dd r \dd \theta$} with $m'\neq m$ will emerge and they reflect the strength of $V$ and the departure of $V$ from rotational symmetry. From \eqref{eq:blocksH}, we first notice that $H$ exhibits a block translation symmetry, meaning that \smash{$H^{(m',m)}$} depends on $m'$ and $m$ only through their difference $m'-m$. This aligns with the block translation invariance of $\Lambda$ in \eqref{matrix_H}. 
Second, the magnitudes of the off-diagonal block entries \smash{$H^{(m',m)}_{n',n}$} generally decrease as the difference $|m'-m|$ increases, given fixed $n$ and $n'$. Consequently, we have chosen $\Lambda$ in \eqref{matrix_H} to be nearest-neighbor interactions to model such ``short-range phenomenon". (However, this simplification is not crucial since, as mentioned in \Cref{remark_more_general}, we do allow for the inclusion of non-nearest-neighbor interactions as well.) 
\end{itemize}
Physically, item (ii) above means that the introduction of a circular hole at the center still gives a block diagonal matrix, so our results in the integrable phase imply \emph{heuristically} the integrability of the quantum billiard model. On the other hand, in this paper, we provide evidence of a quantum chaos transition for $\VV$ as the interaction strength increases. In particular, as $V\to \infty$, our results suggest, in a heuristic sense, that adding a ``generic" non-rotationally symmetric hole will lead to chaos. 

We expect that our paper may serve as an initial step towards heuristically understanding the quantum chaos transition from the perspective of random matrix theory. Significant efforts are still needed to attain a more comprehensive understanding and navigate the vast realm awaiting exploration. One important extension is explained in the following remark.

\begin{remark}\label{rem_largeD}
In the model \eqref{eq:def_model}, the quantities $D$ and $N$ correspond to the number of quantum numbers $m$ for the angular momentum and the number of basis functions $f_n$ retained in the finite-dimensional reduction, respectively. On the other hand, given a target energy $E$, the matrix dimension $DN$ represents the number of energy levels falling within an energy window around $E$, say $[0,2E]$. 
When $|m|$ or $n$ is large, utilizing the asymptotics of the zeros of Bessel functions, we see that the eigenvalues of $-\Delta$ scale as \smash{$k_{m,n}^2 \sim m^2+n^2$}. Consequently, it becomes apparent that the most relevant regime for investigating the quantum chaos transition should be \smash{$D\sim N \sim \sqrt{E}$}. Note that this regime corresponds to a random band matrix model with a \emph{critical band width}, which is a highly challenging problem as mentioned earlier. As a simplification, this paper focuses on a scenario where $D$ remains fixed as $N\to \infty$, i.e., we only examine the mixing of a finite number of blocks with similar angular momentum. It is possible to relax the assumption to a certain extent with $1\ll D\ll N$, but we defer such exploration to \cite{Block_Anderson} and more future studies.
\end{remark}

The rest of this paper is organized as follows. In Section \ref{sec:res}, we define our model and state the main results, which consist of two parts: in the chaotic regime with $\|A\|\ge N^\e$, we state the QUE of bulk eigenvectors (\Cref{mix}) and the GUE statistics for the bulk eigenvalues (\Cref{MixEV}); in the integrable regime, we state the localization of bulk eigenvectors (\Cref{nonmix}) and the Poisson statistics for the bulk eigenvalues (\Cref{NonMixEV}). These theorems (Theorems \ref{mix}--\ref{NonMixEV}) are proved in \Cref{sec:proof_mix}, \Cref{sec:mix_evalue}, \Cref{sec:localization}, and \Cref{sec_nonmix_evalue}, respectively. 
For the proof of \Cref{mix}, we reduce its proof to that of a Gaussian divisible case with a small Gaussian component in \Cref{sec:flow_prelim}, and then prove the Gaussian divisible case using a characteristic flow argument in \Cref{sec:flow}. Finally, the proofs of several estimates used in the main proofs are included in Appendix \ref{appd_determ}--\ref{sec:Gprop}, including some basic deterministic estimates, a Green's function comparison estimate, and the local laws for the Green's function of $\VV$.

\medskip

\noindent{\bf Notations.} To facilitate the presentation, we introduce some necessary notations that will be used throughout this paper. In this paper, we are interested in the asymptotic regime with $N\to \infty$. When we refer to a constant, it will not depend on $N$. Unless otherwise noted, we will use $C$ to denote generic large positive constants, whose values may change from line to line. Similarly, we will use $\epsilon$, $\delta$, $\tau$, $c$ etc.~to denote generic small positive constants. 
For any two (possibly complex) sequences $a_N$ and $b_N$ depending on $N$, $a_N = \OO(b_N)$ or $a_N \lesssim b_N$ means that $|a_N| \le C|b_N|$ for a constant $C>0$, whereas $a_N=\oo(b_N)$ or $|a_N|\ll |b_N|$ means that $\lim_{N\to \infty}|a_N| /|b_N| \to 0$. We say that $a_N\sim b_N$ if $a_N = \OO(b_N)$ and $b_N = \OO(a_N)$. For any $a,b\in\R$, we denote 
$a\vee b:=\max\{a, b\}$ and $a\wedge b:=\min\{a, b\}$. 
For an event $\Xi$, we let $\mathbf 1_\Xi$ or $\mathbf 1(\Xi)$ denote its indicator function. 
Given a vector $\mathbf v$, $\|\mathbf v\|\equiv \|\mathbf v\|_2$ denotes the Euclidean norm and $\|\mathbf v\|_p$ denotes the $\ell_p$-norm. 
Throughout this paper, we use ``$*$" to denote the Hermitian conjugate of a matrix.
Given a matrix $B = (B_{ij})$, we use $\|B\|$, $\|B\|_{\HS}$, and $\|B\|_{\max}:=\max_{i,j}|B_{ij}|$ to denote the operator, Hilbert-Schmidt, and maximum norms, respectively. We also adopt the notion of generalized entries: $B_\mathbf{uv}\equiv \mathbf u^* B \mathbf v$ for vectors $\bu,\bv$.

\subsection*{Acknowledgement}

Fan Yang is supported in part by the National Key R\&D Program of China (No. 2023YFA1010400). 
Jun Yin is supported in part by the Simons Fellows in Mathematics Award 85515. The authors would like to thank L{\'a}szl{\'o} Erd{\H{o}}s for helpful comments on an earlier draft of this paper. We are very grateful to two anonymous referees for the helpful comments, which have resulted in a significant improvement of the paper.

\section{Main results}\label{sec:res}

\subsection{The model and main results}

In this paper, we consider a block random matrix model. Fix any integer $D\geq 2$, let $H_1,H_2,\ldots, H_D$ be $D$ independent copies of $N\times N$ Wigner matrices, i.e., the entries of $H_a$ are independent (up to symmetry $H=H^*$) random variables satisfying that 
\be\label{eq:meanvar}
\E(H_{a})_{ij}=0,\quad \E |(H_{a})_{ij}|^2 = {N}^{-1},\quad a \in \qqD,\ \ i,j \in \qq{N}.
\ee
For the definiteness of notations, in this paper, we consider the complex Hermitian case, while the real case can be proved in the same way with some minor changes in notations. In the complex case, we assume additionally that 
\be\label{eq:meanvar2}
\E [(H_{a})^2_{ij}] = 0,\quad a \in \qqD,\ \ i\ne j \in \qq{N}.  
\ee
We assume that the diagonal entries are i.i.d.~real random variables and the entries above the diagonal are i.i.d.~complex random variables. Let $A$ be an arbitrary $N\times N$ (real or complex) deterministic matrix. Then, we consider the block random matrix model $\VV$ defined in \eqref{eq:def_model} with $H$ and $\Lambda$ given in \eqref{matrix_H}. 
Note that the distribution of the model $\VV$ exhibits block translation symmetry. As we consider $\VV$ as a perturbation of $H$, we always assume in this paper that $\|A\|\ll \E\|H\| \sim 1$. Under this assumption, the empirical measure of $\VV$ follows approximately the semicircle law with support $[-2,2]$. 
(We believe that the results in the chaotic regime, Theorems \ref{mix} and \ref{MixEV}, remain valid even for $\|A\|\gtrsim 1$ provided that $A$ has a reasonably large rank. However, they may fail if, for example, $A$ contains only one nonzero entry.) We now summarize the assumptions for the model \eqref{eq:def_model}.

\begin{assumption}\label{main_assm}
Fix any integer $D\ge 2$, we consider the model \eqref{eq:def_model}, where $A$ is an arbitrary $N\times N$ deterministic matrix with $\|A\|\le N^{-\delta_A}$ for a constant $\delta_A>0$, and $H_1,H_2,\ldots, H_D$ are $D$ i.i.d.~$N\times N$ complex Hermitian Wigner matrices satisfying \eqref{eq:meanvar}, \eqref{eq:meanvar2}, and the following high moment condition: for any $p\in \N$, there exists a constant $C_p>0$ such that 
\be\label{eq:highmoment}
\bbe |H_{11}|^p  + \bbe |H_{12}|^p \leq C_pN^{-p/2}. 
\ee
\end{assumption}

Let the eigenvalues of $\VV$ be $\lambda_1\le \lambda_2\le \cdots \le \lambda_{DN}$ and denote the corresponding (unit) eigenvectors by ${\bf v}_1,\bv_2,\ldots, {\bf v}_{DN}$. Now, we are ready to state the first two main results concerning the bulk eigenvectors and eigenvalues in the quantum chaotic phase when $\|A\|_{\HS}\gg 1$. Our first theorem states that with probability $1-{\rm o}(1)$, each bulk eigenvector is asymptotically uniformly distributed among all ${\cal I}_a$, $a\in \llbracket D\rrbracket$.

\begin{theorem}[Chaotic regime: eigenvectors]\label{mix} 
Under \Cref{main_assm}, let $\kappa \in (0,1/2)$ be an arbitrary constant and suppose there exists a constant $\varepsilon_A>0$ such that
\begin{equation}
\label{eq:condA1}
\|A\|_{\HS}\ge N^{\varepsilon_A} .
\end{equation}
Then, for any $k\in  \llbracket  \kappa DN,(1-\kappa)DN  \rrbracket$, there exists a constant $c>0$ such that 
\begin{equation}
\label{eq:main_evector1}
\mathbb P\left( \max_{a\in\llbracket D\rrbracket}\left|{\bf v}_k^* E_{a}  {\bf v}_k-D^{-1}\right|\ge N^{-c}\right)\le N^{-c},
\end{equation}
where $E_a\in \C^{DN\times DN}$ denotes the block identity matrix restricted to $\cal I_a$, i.e., $(E_{a})_{ij}={\bf 1}(i=j\in {\cal I}_a)$.
\end{theorem}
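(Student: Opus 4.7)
The plan is to follow the two-stage strategy outlined in the introduction: first reduce to a Gaussian divisible ensemble, then analyze the latter via a characteristic flow argument.

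For the reduction, I would couple each Wigner block to a copy satisfying four-moment matching, writing $H_a = \sqrt{1-t}\, H_a^{(0)} + \sqrt{t}\, G_a$ for a small $t$ and an independent GUE matrix $G_a$. The observable $\bv_k^* E_a \bv_k$ is not a smooth functional of $\VV$, but a standard mesoscopic-averaging step turns it into an expression involving $\mathrm{Im}\, \tr[E_a G(\lambda_k + \ii\eta)]$ at a spectral scale $\eta$ slightly above $N^{-1}$, together with the local density $\mathrm{Im}\, \tr G(\lambda_k + \ii\eta)$ which is controlled by the local law for $G(z) = (\VV - z)^{-1}$ stated in the appendix. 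A Lindeberg-type Green's function comparison then transfers the QUE estimate from the Gaussian divisible ensemble back to $\VV$.

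For the Gaussian divisible model $\VV_t = H_t + \Lambda$, with only the block-diagonal Gaussian part $H_t$ flowing, I would employ the method of characteristics. The entries of $H_t$ evolve by independent Ornstein--Uhlenbeck dynamics, Ito's formula yields an SDE for $G_t(z) = (\VV_t - z)^{-1}$, and one cancels the deterministic drift by letting $z = z_t$ flow along the characteristics of the self-consistent Stieltjes equation for $\VV_t$. Propagating the block-restricted traces $\tr[E_a G_t(z_t)]$ along these characteristics, the exchange symmetry between the blocks $H_a$ (which is respected by $\Lambda$) forces their values across $a\in\qqD$ to equilibrate. Together with the first step of the reduction, this should yield the bound $N^{-c}$ in \eqref{eq:main_evector1}.

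The main obstacle will be extracting the correct dependence on $\|A\|_{HS}$. The block indices are mixed by off-diagonal blocks of $G$, which at spectral scale $\eta \sim N^{-1}$ are driven by $\Lambda$ and whose typical size is governed by $\|A\|_{HS}$. The assumption $\|A\|_{HS} \ge N^{\varepsilon_A}$ must enter through the local law for $G_t(z_t)$ to ensure sufficiently fast relaxation within a short window $t \le N^{-1+\delta}$ for which the four-moment comparison still closes. Maintaining sharp local laws that are uniform in $t$ along the characteristic curves---and in particular quantifying the size of the off-diagonal blocks of $G_t$ in terms of $\|A\|_{HS}$---is where I expect most of the technical difficulty to lie.
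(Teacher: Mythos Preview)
Your proposal has a genuine gap: the observable you plan to control, the single-resolvent trace $\operatorname{Im}\tr[E_a G(z)]$, cannot distinguish the chaotic from the integrable regime. By spectral decomposition,
\[
\operatorname{Im}\tr[E_a G(E+\ii\eta)]=\sum_j \frac{\eta\,\bv_j^* E_a \bv_j}{(\lambda_j-E)^2+\eta^2},
\]
which is an average of $\bv_j^* E_a\bv_j$ over the $\sim N\eta$ eigenvectors with $\lambda_j$ near $E$. The averaged local law \eqref{eq:aver_local} already gives $\langle G E_a\rangle\approx m(z)/D$ for every $a$, uniformly in $\|A\|_{HS}$ and in particular also when $A=0$. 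In the latter case each $\bv_k^* E_a\bv_k$ is $0$ or $1$, yet their average over a mesoscopic window is still $\approx 1/D$. Thus ``equilibration of $\tr[E_aG_t(z_t)]$ across $a$'' is automatic and carries no information about individual eigenvectors; no relaxation argument on single-resolvent traces can recover \eqref{eq:main_evector1}.

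The paper instead works with the \emph{two-resolvent} quantity $L_{ab}=D\langle G(z_1)E_aG(z_2)E_b\rangle$. The spectral identity
\[
\tr\bigl[\operatorname{Im}G\,(E_a-D^{-1})\operatorname{Im}G\,(E_a-D^{-1})\bigr]
=\eta^2\sum_{i,j}\frac{|\bv_i^*(E_a-D^{-1})\bv_j|^2}{|\lambda_i-z|^2|\lambda_j-z|^2}
\]
extracts $|\bv_k^* E_a\bv_k-D^{-1}|^2$ directly. The main analytic step (Lemma~\ref{main_lemma}) is a two-resolvent local law $\mathbb E L_{ab}=K_{ab}+O(N^{-1-c_L}\eta^{-2})$, proved for a Gaussian-divisible ensemble via the characteristic flow acting on $L_t$ (not on $\langle G_tE_a\rangle$), and transferred by Green's function comparison. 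The hypothesis $\|A\|_{HS}\ge N^{\varepsilon_A}$ enters not as a relaxation rate but through the deterministic part: $K$ is flat in the sense $\max_{a,b,a',b'}|K_{ab}-K_{a'b'}|=O(N/\|A\|_{HS}^2)$ (Lemma~\ref{lem_bas_M}), so that the combination $K_{aa}-\tfrac{2}{D}\sum_b K_{ab}+\tfrac{1}{D^2}\sum_{b,b'}K_{bb'}$ is small. Your overall architecture (comparison plus flow) is right, but it must be applied to $L_{ab}$; the single-resolvent route you describe would fail to detect the transition.
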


In the proof, we will prove a slightly stronger result than \eqref{eq:main_evector1}: 
\begin{equation}\label{eq:extend:main_evector1}
\mathbb P\left(\max _{i,j\in \qq{k-N^c,k+N^c}}\max_{a\in \qqD}
\left| {\bf v}_i^* (E_a-D^{-1}){\bf v}_j \right|\ge N^{-c}\right) \le  N^{-c}.
\end{equation}
As mentioned below \eqref{eq:QUE}, we will refer to \eqref{eq:main_evector1} and \eqref{eq:extend:main_evector1} as the ``QUE estimates" of bulk eigenvectors. However, QUE usually represents a stronger property: with probability $1-\oo(1)$, the entries of $\bv_k$ are approximately equally distributed on \emph{any subset} $I\subset \cI$ with $|I|\gg 1$. Such probabilistic QUE was first proved for Wigner matrices \cite{vector_flow}; later, a stronger notion called the \emph{eigenstate thermalization hypothesis} (ETH) was also established for Wigner matrices \cite{cipolloni-erdos2021,CES_QUE3,CIPOLLONI2022109394,CEH2023}. The QUE (or ETH) of eigenvectors has been extended to many other types of mean-field random matrices and random graphs (such as \cite{ALM_Levy,BHY2019,LP2021,BouHuaYau2017,Marcinek_thesis,CEJK2023,adhikari2023eigenstate}, to name just a few) and non-mean-field random matrices \cite{bourgade2017universality,bourgade2020random,xu2022bulk}. In this paper, we refrain from proving the stronger QUE for brevity, and the current form is already sufficient for our proof of the following result, \Cref{MixEV}. We leave the exploration of optimal QUE or ETH to future works.

Our second theorem shows that the local bulk eigenvalue statistics of $\VV$ match those of GUE asymptotically under the condition \eqref{eq:condA1}, that is, $\VV$ satisfies the bulk universality. Let $p_{\VV}(\lambda_1,\ldots, \lambda_{DN})$ denote the joint symmetrized probability density of the eigenvalues of $\VV$. For any $1\le n \le DN$, define the $n$-point correlation function by
$$
p_{\VV}^{(n)}\left(\lambda_1, \cdots, \lambda_n\right)
:=\int_{\R^{DN-n}} p_{\VV}\left(\lambda_1, \cdots, \lambda_{DN}\right) \mathrm{d} \lambda_{n+1} \cdots \mathrm{d} \lambda_{DN}.
$$
Denote the corresponding $n$-point correlation function for $DN\times DN$ GUE by $p_{G U E}^{(n)}$.

\begin{theorem}[Chaotic regime: eigenvalues]\label{MixEV}
In the setting of \Cref{mix}, let $O \in C_c^{\infty}\left(\mathbb{R}^n\right)$ be an arbitrary smooth, compactly supported function. Then, for any $|E|\le 2-\kappa$ and fixed $n\in \N$, there exists a constant $c>0$ so that 
$$
 \left|	\int_{\mathbb{R}^n} \mathrm{~d} \boldsymbol{\alpha}\; O(\boldsymbol{\alpha}) \left[p_{\VV}^{(n)}-p_{GUE}^{(n)}\right]\left(E+\frac{\alpha_1}{DN  }, \ldots, E+\frac{\alpha_n}{DN  }\right)   \right|\leq N^{-c}, 
\quad \boldsymbol{\alpha}=\left(\alpha_1, \ldots, \alpha_n\right) .
$$
\end{theorem}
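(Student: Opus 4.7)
The plan is to follow the standard three-step strategy for bulk universality in random matrix theory, with the QUE estimate of \Cref{mix} serving as the key new structural input. The three ingredients are: (i) the local law for the resolvent of $\VV$ (established in the appendix), which gives rigidity of eigenvalues in the bulk; (ii) bulk universality for a Gaussian-divisible auxiliary matrix, established via a Dyson Brownian motion (DBM) coupling that uses \Cref{mix} crucially; (iii) a Green's function comparison argument to transfer universality from the Gaussian-divisible matrix back to $\VV$.

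\textbf{Step 1 (DBM for a Gaussian-divisible ensemble).} Introduce $\VV_t := \VV + \sqrt{t}\, W$, where $W$ is an independent $DN \times DN$ GUE matrix with entries of variance $(DN)^{-1}$, and $t = N^{-1+\delta}$ for some small constant $\delta > 0$. The eigenvalues of $\VV_t$ satisfy DBM starting from the eigenvalues of $\VV$, while a reference GUE ensemble evolves under the same DBM starting from a GUE initial condition (both flows share the semicircle law as equilibrium). Following the coupling method of Bourgade--Erd\H{o}s--Yau (or its Landon--Sosoe--Yau variant), I would couple these two DBMs driven by the same Brownian motion and control the difference of eigenvalue gaps. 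The coupling requires two inputs: rigidity of eigenvalues (from the local law) and a ``flat'' QUE-type estimate for bulk eigenvectors that controls off-diagonal overlaps in a mesoscopic window. The latter is precisely the content of the extended form \eqref{eq:extend:main_evector1} of \Cref{mix}. After time $t \sim N^{-1+\delta}$, the local gap statistics of $\VV_t$ agree with those of GUE up to an error $N^{-c}$, and a standard averaging (or reverse-heat-flow) argument converts gap universality into convergence of $n$-point correlation functions tested against smooth $O \in C_c^\infty(\R^n)$.

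\textbf{Step 2 (Green's function comparison).} To transfer universality from $\VV_t$ back to $\VV$, I would use a moment-matching Lindeberg swapping argument in the spirit of Erd\H{o}s--Yau--Yin. The entries of $H$ match the Gaussian in their first two moments by \eqref{eq:meanvar} and \eqref{eq:meanvar2}, and the high-moment bound \eqref{eq:highmoment} provides the estimates needed to swap entries one-by-one without accumulating too much error. A key observation is that the deterministic interaction $\Lambda$ plays no role in the swapping itself, since only the random part $H$ is modified; $\Lambda$ only enters through the local law of $\VV$. Combined with the local law, this gives that the $n$-point correlation functions of $\VV$ and $\VV_t$ differ by at most $N^{-c}$ for some $c>0$, provided $t$ is chosen small enough relative to the swapping error.

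\textbf{Main obstacle.} The principal difficulty is Step 1: running the DBM coupling for an ensemble whose unperturbed part $H$ decomposes into $D$ independent Wigner spectra, which must mix into a single GUE-like spectrum through $\Lambda$. Standard DBM universality arguments assume the reference ensemble already has fully delocalized eigenvectors on the whole space. The QUE of \Cref{mix} is what bridges this gap: once bulk eigenvectors of $\VV$ are shown to be uniformly distributed across all $D$ blocks with sharp off-diagonal overlap bounds, the eigenvalue dynamics of $\VV_t$ relax at the standard GUE rate. Controlling the mesoscopic off-diagonal overlaps in \eqref{eq:extend:main_evector1}, which go beyond the diagonal statement \eqref{eq:main_evector1}, and adapting the Bourgade--Yau coupling to exploit this block-averaged flatness, is where I expect the bulk of the technical work to lie.
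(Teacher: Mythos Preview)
Your three-step architecture is correct, but you have inverted where the main obstacle lies and where the QUE estimate of \Cref{mix} is actually consumed.

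The DBM step (your Step~1) is in fact routine and does \emph{not} require QUE. Once the averaged local law for $\VV$ is in place, Theorem~2.2 of \cite{LANDON20191137} applies directly and shows that $\VV_t$ has GUE bulk statistics for $t=N^{-1+\delta}$; the only input is rigidity on mesoscopic scales. The paper disposes of this step in a few lines (Lemma~\ref{CVtG}). No eigenvector information enters the Landon--Sosoe--Yau coupling, so your plan to ``adapt the Bourgade--Yau coupling to exploit block-averaged flatness'' is aimed at a nonexistent difficulty.

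The genuine obstacle is your Step~2, and the standard moment-matching Lindeberg swap you propose fails here. The full $DN\times DN$ GUE component $W$ changes the variance profile: the off-diagonal-block entries of $H$ have variance $0$, whereas in $\VV_t$ those same positions acquire variance $\sim t/(DN)$. Thus the second moments of $\VV$ and $\VV_t$ do not match on a macroscopic fraction of entries, and the Lindeberg replacement, which requires at least the first two moments to agree, is not applicable. This is not a perturbative issue: the very mechanism that lets $\VV_t$ relax to full GUE (coupling the $D$ blocks through off-diagonal noise) is exactly what destroys the moment matching. Conversely, if you had added a block-diagonal GUE so as to preserve the variance profile, DBM would relax each block separately and never produce $DN\times DN$ GUE statistics.

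The paper's resolution, following \cite{xu2022bulk}, is to use QUE \emph{in the comparison step}. Differentiating $\E\prod_i \im\avg{G_t(z_i)}$ via It\^o's formula, the second-order term carries the weight $(DN)^{-1}-s_{xy}$, which after summing over $x,y$ produces expressions of the form $\avg{G_i^2(D^{-1}-E_a)}$ and $\avg{G_i^2(D^{-1}-E_a)G_j^2E_a}$. Spectrally decomposing, these are controlled by the overlaps $\bv_k^*(E_a-D^{-1})\bv_l$, which are small precisely by \eqref{eq:extend:main_evector1}. So QUE is the substitute for moment matching in Step~2, not an input to the DBM coupling in Step~1.
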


The next two main results concern the bulk eigenvectors and eigenvalues of $\VV$ in the integrable (or non-chaotic) phase when $\|A\|_{\HS}\ll 1$. First, \Cref{nonmix} shows that in contrast to \Cref{mix}, each bulk eigenvector is mostly localized in only one $\cI_a$ in the integrable phase.

\begin{theorem}[Integrable regime: eigenvectors] \label{nonmix}
Under \Cref{main_assm}, let $\kappa\in (0,1/2)$ be an arbitrary constant and suppose there exists a constant $\varepsilon_A$ such that
\begin{equation}
\label{eq:condA2}
\|A\|_{\HS}\le N^{-\varepsilon_A} . 
\end{equation}
Then, for any $k\in \llbracket \kappa DN,(1-\kappa)D N\rrbracket$, there exists a constant $c>0$ such that
\begin{equation}
\label{eq:main_evector2}
\mathbb P\left(\max_{a=1}^D \|E_a{\bf v}_k\|^2 \le 1 - N^{-c}\right)\le N^{-c}. 
\end{equation}
\end{theorem}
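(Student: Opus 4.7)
The approach will exploit two structural features of the model: the block-diagonal form of $H$, which makes its eigenvectors automatically block-localized, and the fact that $\Lambda$ mixes blocks only weakly because $\|A\|_{HS}\le N^{-\varepsilon_A}$. The plan is a first-principles eigenvector perturbation analysis performed in the eigenbasis of $H$.

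Let $\{(\bu_j,\mu_j)\}_{j=1}^{DN}$ denote the eigenpairs of $H$, ordered so that each $\bu_j$ is supported on a single block $\cI_{a(j)}$. Expanding $\bv_k=\sum_j c_{kj}\bu_j$ and inserting into $\VV\bv_k=\lambda_k\bv_k$ gives the identity
$(\mu_j-\lambda_k)c_{kj}=-\sum_l\Lambda_{jl}c_{kl}$, where $\Lambda_{jl}:=\bu_j^*\Lambda\bu_l$. The key structural observation is that the diagonal blocks of $\Lambda$ vanish and $\bu_j$ lives on a single block, so $\Lambda_{jl}=0$ unless $a(l)=a(j)\pm1\pmod D$. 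In particular $\Lambda_{jj}=0$, which kills the first-order Rayleigh--Schr\"odinger shift and makes the eigenvalue correction quadratic in $\|A\|_{HS}$.

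The proof will then rely on two probabilistic inputs. First, an isotropic/QUE estimate: whenever $a(j)\neq a(l)$, the vectors $\bu_j$ and $\bu_l$ are eigenvectors of independent $N\times N$ Wigner matrices; conditioning on one vector and applying standard isotropic delocalization to the other gives $|\Lambda_{jl}|\le N^{o(1)}\|A\|_{HS}/N\le N^{-1-\varepsilon_A+o(1)}$ with overwhelming probability. Second, a ``no near-collision'' estimate between blocks: combined with the usual level repulsion \emph{within} each $H_a$, a first-moment computation---the expected number of eigenvalues of $H_{a(k)\pm1}$ in an interval of length $\eta$ around $\mu_k$ is $O(N\eta)$---shows that with probability at least $1-N^{-\varepsilon_A/2+o(1)}$, the distance from $\mu_k$ to any eigenvalue in a neighboring block exceeds $N^{-1-\varepsilon_A/2}$.

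On the intersection of these high-probability events, the identity for $c_{kj}$ yields $|c_{kj}|\lesssim|\Lambda_{jk_0}|/|\mu_j-\mu_{k_0}|$ (up to self-consistent errors), where $k_0$ is chosen so that $\mu_{k_0}$ is closest to $\lambda_k$. A dyadic summation restricted to adjacent-block indices then produces $\sum_{j\neq k_0}|c_{kj}|^2\lesssim N^{-c}$, so that $\bv_k$ is within $N^{-c/2}$ of $\bu_{k_0}$ and concentrates on the single block $\cI_{a(k_0)}$, which gives \eqref{eq:main_evector2}. The main technical obstacle will be closing this perturbation argument self-consistently, since the off-diagonal coupling scale $\|A\|_{HS}/N$ is only a factor $N^{-\varepsilon_A}$ smaller than the typical in-block eigenvalue spacing. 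I expect this to require first establishing a one-to-one matching $\lambda_k\leftrightarrow\mu_{k_0}$ via a Schur-complement analysis of the secular equation $\det\bigl(I+(H-z)^{-1}\Lambda\bigr)=0$ at a scale $z=\lambda_k+\ii\eta$ with $\eta$ chosen just above the perturbation scale, and then bootstrapping the resulting crude bound into the sharper eigenvector estimate above.
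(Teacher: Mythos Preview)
Your isotropic bound $|\Lambda_{jl}|\prec\|A\|_{HS}/N$ is correct for eigenvectors $\bu_j,\bu_l$ lying in \emph{independent} Wigner blocks, and the no-collision event is the right one to work on. The gap is in passing from this entrywise bound to the claimed first-order estimate $|c_{kj}|\lesssim|\Lambda_{jk_0}|/|\mu_j-\mu_{k_0}|$. The identity actually gives
\[
c_{kj}=-\frac{1}{\mu_j-\lambda_k}\sum_l\Lambda_{jl}c_{kl}=-\frac{1}{\mu_j-\lambda_k}\,\bu_j^*A^*\bv_k^{(1)}+\cdots,
\]
where $\bv_k^{(1)}:=E_1\bv_k$ is correlated with every $\bu_j$ (in particular with the eigenvectors of $H_2$), so you cannot condition on $\bv_k^{(1)}$ and invoke isotropic delocalization of $\bu_j$. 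Your first-order bound amounts to assuming $\bv_k^{(1)}\approx\bu_{k_0}$, which is strictly stronger than the block-localization you are proving. The only a priori control is Cauchy--Schwarz, $|\bu_j^*A^*\bv_k^{(1)}|\le\|A\bu_j\|\prec\|A\|_{HS}/\sqrt N$, and this loses exactly the factor $\sqrt N$ that makes the dyadic sum diverge: one obtains $\sum_{a(j)\ne a(k_0)}|c_{kj}|^2\prec N^{1+2\delta}\|A\|_{HS}^2$, which is useless for small $\varepsilon_A$. The Schur complement you propose rewrites the secular equation, but the same correlated inner product reappears in the self-energy acting on $\bv_k^{(1)}$, so it does not supply the missing cancellation.

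The paper bypasses this correlation issue by moving to expectations of multi-resolvent traces rather than pointwise eigenvector overlaps. On the event $\mathscr E_{(a)}=\{\dist(\lambda_k,\spec(H_b))\ge N^{-1-\delta}\ \forall\,b\ne a\}$ one bounds
\[
\sum_{b\ne a}\E\bigl[\|E_b\bv_k\|^2;\mathscr E_{(a)}\bigr]\lesssim N^{2(c+\delta)}\,\E\tr\bigl[\im G_0(z)\,\Lambda\,\im G(z)\,\Lambda\bigr],
\]
with $z=\gamma_k+\ii N^{-1+c}$, $G_0=(H-z)^{-1}$ and $G=(\VV-z)^{-1}$. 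The key technical input is the estimate $\E\tr[\im G_0\Lambda\im G\Lambda]\prec\|A\|_{HS}^2$, proved by an iterated cumulant expansion that tracks ``$(G,\Lambda)$-loop'' structures and shows that each expansion step either gains a factor $(N\eta)^{-1}$ or reduces the number of resolvents adjacent to a $\Lambda$. Because the full perturbed resolvent $G$ appears inside the trace, the dependence of $\bv_k$ on every block of $H$ is absorbed automatically, without ever isolating a conditionally-deterministic vector.
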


Next, we show that under the condition \eqref{eq:condA2}, the bulk eigenvalues of $\VV$ are negligible perturbations of those of $H$ under the scaling $N$. As a result, the bulk eigenvalue statistics of $\VV$ and $H$ match asymptotically. Denote the eigenvalues of $H$ as $\lambda_1(H)\le \ldots \le \lambda_{DN}(H)$, and for any $1\le n\le N$, let $p_H^{(n)} $ represent the $n$-point correlation function of them.

\begin{theorem}[Integrable regime: eigenvalues]\label{NonMixEV}
Under the setting of \Cref{nonmix}, for any $k\in \llbracket \kappa DN,(1-\kappa)D N\rrbracket$, there exists a constant $c>0$ such that 
\be\label{eq:main_perurb}
\P\left( \left|\lambda_k-\lambda_k(H)\right| \ge N^{-1-c}\right)\le N^{-c}.
\ee
As a consequence, it implies that for any $|E|\le 2-\kappa$, fixed $n\in \N$, and smooth, compactly supported test function $O \in C_c^{\infty}\left(\mathbb{R}^n\right)$,  there exists a constant $c>0$ so that
$$ \left|	\int_{\mathbb{R}^n} \mathrm{~d} \boldsymbol{\alpha}\; O(\boldsymbol{\alpha}) \left[p_{\VV}^{(n)}-p_{H}^{(n)}\right]\left(E+\frac{\alpha_1}{DN  }, \ldots, E+\frac{\alpha_n}{DN  }\right)   \right|\leq N^{-c}, 
\quad \boldsymbol{\alpha}=\left(\alpha_1, \ldots, \alpha_n\right) . $$
\end{theorem}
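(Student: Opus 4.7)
The main task is to establish the eigenvalue approximation \eqref{eq:main_perurb}; the correlation function bound then follows by Taylor expanding $O$ in its $n$ arguments, since the eigenvalue shift $N^{-1-c}$ is much smaller than the probing scale $(DN)^{-1}$. For \eqref{eq:main_perurb}, the central observation is that $H$ is \emph{exactly} block diagonal, so each of its eigenvectors takes the form $u_k = e_a \otimes w_k$, supported on a single block $\cI_a$, where $w_k$ is an eigenvector of $H_a$ with eigenvalue $\mu_k = \lambda_k(H)$. Because $\Lambda$ has zero diagonal blocks, the first-order Rayleigh correction $\langle u_k, \Lambda u_k\rangle$ vanishes \emph{exactly}, so any shift of $\lambda_k$ away from $\mu_k$ must arise at second order or higher in $\Lambda$.

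To exploit this I would write $\VV v = \lambda v$ block-by-block as $(H_a - \lambda) v^{(a)} + A v^{(a+1)} + A^* v^{(a-1)} = 0$ (indices mod $D$). For $\lambda$ in a tiny neighborhood of a fixed $\mu_k \in \spec(H_a)$, the operators $H_b - \lambda$ for $b \ne a$ are invertible with overwhelming probability: by Wigner rigidity and the independence of the $H_b$'s, $\dist(\mu_k, \spec(H_b)) \gtrsim N^{-1-\oo(1)}$. Inverting via Schur complements and iterating around the cycle reduces the problem to a matrix equation $(H_a + \Sigma_a(\lambda) - \lambda) v^{(a)} = 0$ on block $a$, whose leading self-energy is
\[
\Sigma_a^{(2)}(\lambda) \;=\; A (H_{a+1} - \lambda)^{-1} A^* + A^* (H_{a-1} - \lambda)^{-1} A,
\]
while higher-order chain terms going further around the cycle pick up additional factors of $\|A\|^2 \le N^{-2\delta_A}$ and are subleading.

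The crucial estimate is the size of $\langle w_k, \Sigma_a(\mu_k) w_k\rangle$. Setting $z = \mu_k + \ii\eta$ with $\eta \downarrow 0$, the isotropic local law for $H_{a\pm 1}$ (which are independent of $w_k$) applied to the deterministic vectors $A^* w_k$ and $A w_k$ yields
\[
\langle w_k,\, A (H_{a+1} - z)^{-1} A^* w_k\rangle \;\approx\; m_{\r{sc}}(z)\,\|A^* w_k\|^2,
\]
and similarly for the other term, with real parts bounded in the bulk. An ETH-type estimate (isotropic Wigner delocalization) for the bulk eigenvector $w_k$ of $H_a$ then gives $\|A^* w_k\|^2 = \langle w_k, AA^* w_k\rangle = \tr(AA^*)/N \cdot (1 + \oo(1)) = N^{-1}\|A\|_{HS}^2 \cdot (1+\oo(1))$. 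Combining these yields $|\lambda_k - \mu_k| \lesssim \|A\|_{HS}^2/N \le N^{-1-2\varepsilon_A}$, which proves \eqref{eq:main_perurb}.

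The main obstacle is that $\|\Lambda\|\sim\|A\|$ is \emph{not} small compared to the typical gap $N^{-1}$, so a naive operator-norm perturbation series in $\Sigma_a$ would diverge. The fix is to work in the eigenbasis $\{w_j\}$ of $H_a$: by ETH-type estimates, both diagonal and off-diagonal matrix elements of $\Sigma_a$ in this basis are $\ll N^{-1}$ (diagonal $\lesssim \|A\|_{HS}^2/N$, off-diagonal $\lesssim \|A\|\cdot\|A\|_{HS}/N$ with high probability), so basis-adapted perturbation theory in the $\{w_j\}$ frame is well-controlled. To match indices $\lambda_k \leftrightarrow \lambda_k(H)$ and preserve the ordering, I would wrap each $\mu_k$ in a circle $\gamma_k$ of radius $N^{-1-c'}$ for some $0 < c' < 2\varepsilon_A$ and verify, via $\tfrac{1}{2\pi\ii}\oint_{\gamma_k} \tr[G(z) - G_H(z)]\,dz$, that each such contour encloses exactly one eigenvalue of $\VV$.
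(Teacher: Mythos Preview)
Your route is genuinely different from the paper's. The paper interpolates $\VV(\theta)=H+\theta\Lambda$, writes $\lambda_k(1)-\lambda_k(0)=\int_0^1 \bv_k(\theta)^*\Lambda\,\bv_k(\theta)\,d\theta$, and bounds the integrand in $L^2$ via the two-resolvent trace estimate $\E\langle G_1\Lambda G_2\Lambda\rangle\prec N^{-1}\|A\|_{HS}^2$ (\Cref{lem:nonmix_key2}), established by a systematic cumulant expansion. That argument never isolates a block, never needs a spectral gap between different $H_b$'s, and never invokes ETH---only the single-resolvent local law plus the expansion machinery. Your Schur-complement picture is the correct intuition (and would in principle give the sharper shift $\|A\|_{HS}^2/N$), but as written it has real gaps.

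The main gap is the one you flag but do not actually close. The self-energy $\Sigma_a(\lambda)$ has \emph{operator norm} of order $\|A\|^2$ (its leading part is $m_{sc}(\lambda)(AA^{*}+A^{*}A)$), and under the sole hypothesis $\|A\|\le N^{-\delta_A}$ this need not be small compared with the gap $\sim N^{-1}$ of $H_a$. Hence the Feshbach inverse $P_k^\perp[(H_a-\lambda)+P_k^\perp\Sigma_a P_k^\perp]^{-1}P_k^\perp$ is not controlled by a Neumann series. To salvage the argument you would need to show the Rayleigh--Schr\"odinger series for the eigenvalue converges, which requires the ETH bound $|\langle w_j,\Sigma_a w_l\rangle|\prec\|A\|\|A\|_{HS}/N$ uniformly over all $j,l$ \emph{and} a no-level-crossing bootstrap along $H_a+t\Sigma_a(\lambda)$, all while tracking the nonlinear $\lambda$-dependence. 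This is not impossible, but it is substantial work absent from the proposal. A second gap: you invoke the isotropic local law as $\eta\downarrow 0$, but it only holds for $\eta\ge N^{-1+\tau}$; passing to real energy on the event $\dist(\mu_k,\spec(H_{a\pm1}))\gtrsim N^{-1-\delta}$ is exactly where the fluctuations blow up, and bridging it requires additional input. The paper's averaged trace $\E\tr[\im G_\theta\,\Lambda\,\im G_\theta\,\Lambda]$ sidesteps both issues at once.
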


The spectrum of $H$ consists of the eigenvalues of $D$ independent copies of Wigner matrices $H_1,\ldots, H_D$. Recall that it has been shown in a sequence of works (see e.g., \cite{bourgade2016fixed,ErdPecRamSchYau2010,ErdSchYau2011,erdos2011universality,erdHos2012bulk,erdHos2012rigidity,TaoVu2011}) that the eigenvalue statistics of each $H_a$, $a\in \qqD$, around a bulk energy $E\in [-2+\kappa,2-\kappa]$ and under a scaling $N$ converge to a determinantal point process with sine-kernel. Hence, the eigenvalue statistics of $\VV$ around $E$ and under a scaling $DN$ will converge to $D$ independent copies of sine processes as $N\to \infty$. In particular, if we consider the limit $N\to \infty$ followed by $D\to \infty$, the bulk eigenvalue statistics ${DN(\lambda_k-E)}$ is expected to converge to a Poisson point process whose density is determined by the limiting density $\rho_N$ defined in \eqref{eq:density_rhoN} below. 

\begin{remark} 
We expect that for $k\in \qq{1,\kappa DN}$, the transition threshold from the integrable phase to the chaotic phase should be 
\be\label{eq:thres}\|A\|_{\HS}\sim N^{1/3}/k^{1/3}.\ee 
In fact, the proofs for the bulk case in this paper suggest that the interaction $\Lambda$ induces an eigenvalue perturbation of order $N^{-1}\|A\|_{\HS}$. We expect chaos to enter when this perturbation can mix neighboring energy levels. On the other hand, the typical energy gap between the $k$-th and $(k+1)$-th eigenvalues of $H$ near the spectral edge is known to be of order $N^{-2/3}k^{-1/3}$. Thus, the transition should occur when 
$$ N^{-1}\|A\|_{\HS} \sim N^{-2/3}k^{-1/3},$$
which leads to the threshold \eqref{eq:thres}. 
We leave the detailed study of the edge case for future research. 
\end{remark}

To summarize, Theorems \ref{mix} and \ref{MixEV} together indicate the chaotic behavior of the whole system under \eqref{eq:condA1}, while Theorems \ref{nonmix} and \ref{NonMixEV} indicate the non-chaotic/integrable behavior of the whole system under \eqref{eq:condA2}.
Hence, we have established the quantum chaos transition of the random matrix model \eqref{eq:def_model} as $\|A\|_{\HS}$ crosses the threshold 1.  

\subsection{Local law}

One basic tool for our proof is the local law for the Green's function (or resolvent) of $\VV$,
\be\label{def_resolv}
G(z)\equiv G(z,H,\Lambda):=(\VV-z)^{-1},\quad z\in \mathbb C_+:=\{z\in \C:\im z>0\},
\ee
as we will state in \Cref{lem_loc} below. Note the model \eqref{eq:def_model} can be regarded as a deformed generalized Wigner matrix. Many methods have been developed in the literature (see e.g., \cite{LeeSchSteYau2015,knowles2017anisotropic,He2018,AEK_PTRF,EKS_Forum}) to show that in the $N\to \infty$ limit, $G(z)$ converges to a deterministic matrix $M(z)$ in the sense of local laws (see \Cref{lem_loc}). Moreover, $M(z)\equiv M(z,\Lambda)$ satisfies the \emph{matrix Dyson equation}: 
\be\label{def_M}
\left( \cal S(M) +z-\Lambda\right)M+I=0, 
\ee
where $\cal S(\cdot)$ is a linear operator acting on $M$ such that $\cal S(M)$ is a diagonal matrix with entries
$$\cal S(M)_{ij}=\mathbf 1(i=j)\sum_x s_{ix}M_{xx} =\mathbf 1(i=j)D\avg{ME_a},\quad i,j\in \cal I_a. $$
Hereafter, we denote the variances of the entries of $H$ by 
\be\label{eq:sij}
s_{ij}=\E|H_{ij}|^2= N^{-1}\mathbf 1(i,j\in \cal I_a \  \text{for some } a \in \llbracket D\rrbracket),
\ee
and let $S=(s_{ij}:i,j\in \cal I)$ be the variance matrix. In addition, we use $\langle B\rangle := (DN)^{-1}\tr B$ to denote the normalized trace of a $DN\times DN$ matrix $B$. 
Due to the block translation symmetry of $S$ and $\Lambda$, we see that $M$ is also block translation invariant, which implies that $\cal S(M)$ should be a scalar matrix 
$\cal S(M)=mI$, where $m(z)$ is defined as $m(z):=\langle M(z)\rangle$. 

\begin{remark}
When $D=2$, the block translation symmetry may not hold. In this case, we denote
$$M=\begin{pmatrix}
M_{(11)} & M_{(12)} \\
M_{(21)} & M_{(22)}
\end{pmatrix}.
$$
Then, we can derive directly from equation \eqref{def_M} that 
\be\label{eq:MD=2}
\begin{aligned}
    M_{(11)}=\frac{m+z}{AA^*-(m+z)^2},\quad &   M_{(22)}=\frac{m+z}{A^*A-(m+z)^2},\\  M_{(12)}=\frac{1}{AA^*-(m+z)^2}A,\quad & M_{(21)}=\frac{1}{A^*A-(m+z)^2}A^*,
\end{aligned} 
\ee
where $m(z)$ satisfies the self-consistent equation 
$m(z)=N^{-1}\tr M_{(11)}(z)=N^{-1}\tr M_{(22)}(z).$
\end{remark}

 \begin{definition}[Matrix limit of $G$]\label{defn_Mm}
 We define $m(z)\equiv m_N(z)$ as the unique solution to 
 \be\label{self_m}
m(z)= \left\langle \left(\Lambda -z- m(z)\right)^{-1} \right\rangle 
\ee
 such that $\im m(z)>0$ whenever $z\in \C_+$.  
 Then, we define the matrix $M(z)\equiv M_N(z,\Lambda)$ as 
\be\label{def_G0}
M(z):= \left(\Lambda -z- m(z)\right)^{-1}.
 \ee
Since $\Lambda$ is Hermitian, we have that $m(\bar z)= \overline {m(z)}$ and $M(\bar z)=M(z)^*$.
 \end{definition}

Under this definition, $m(z)$ is actually the Stieltjes transform of a probability measure $\mu_{N}$, called the \emph{free convolution} of the empirical measure of $\Lambda $ and the semicircle law with density 
\be\label{eq:semidensity}\rho_{sc}(x)=\frac{1}{2\pi} \sqrt{4-x^2}\mathbf 1_{x\in [-2,2]}.\ee
Moreover, the probability density $\rho_{N}$ of $\mu_{N}$ is determined from $m(z)$ by \be\label{eq:density_rhoN}
\rho_{N}(x)=\pi^{-1}\lim_{\eta\downarrow 0}\im m(x+\ii \eta).
\ee
Under the assumption $\|A\|=\oo(1)$, the support of $\rho_{N}$ is a single interval $[-a_N,b_N]$ with $|a_N-2|+|b_N-2|=\oo(1)$, and $m(z)$ is close to the Stieltjes transform of $\rho_{sc}$ given by
$m_{sc}(z)=(-z+\sqrt{z^2-4})/{2}$. 
In this paper, we focus on the eigenvalues and eigenvectors of $\VV$ in the bulk $[-2+\kappa,2-\kappa]$ for some constant $\kappa>0$, away from the edges $-a_N$ and $b_N$. We define the quantiles $\gamma_k$, $k\in \cal I$, of $\rho_N$ as 
\be\label{eq:gammak}
\gamma_k:=\sup_{x\in \R}\left\{\int_{-\infty}^{x}\rho_N(x)\dd x < \frac{k}{DN}\right\}.
\ee

To state the local law and streamline the presentation, in this paper, we adopt the following convenient notion of stochastic domination introduced in \cite{Average_fluc}.

\begin{definition}[Stochastic domination and high probability event]\label{stoch_domination}
	{\rm{(i)}} Let
	\[\xi=\left(\xi^{(N)}(u):N\in\mathbb N, u\in U^{(N)}\right),\hskip 10pt \zeta=\left(\zeta^{(N)}(u):N\in\mathbb N, u\in U^{(N)}\right),\]
	be two families of non-negative random variables, where $U^{(N)}$ is a possibly $N$-dependent parameter set. We say $\xi$ is stochastically dominated by $\zeta$, uniformly in $u$, if for any fixed (small) $\tau>0$ and (large) $D>0$, 
	\[\mathbb P\bigg(\bigcup_{u\in U^{(N)}}\left\{\xi^{(N)}(u)>N^\tau\zeta^{(N)}(u)\right\}\bigg)\le N^{-D}\]
	for large enough $N\ge N_0(\tau, D)$, and we will use the notation $\xi\prec\zeta$. 
	If for some complex family $\xi$ we have $|\xi|\prec\zeta$, then we will also write $\xi \prec \zeta$ or $\xi=\OO_\prec(\zeta)$. 
	
	\vspace{5pt}
	\noindent {\rm{(ii)}} As a convention, for two deterministic non-negative quantities $\xi$ and $\zeta$, we will write $\xi\prec\zeta$ if and only if $\xi\le N^\tau \zeta$ for any constant $\tau>0$. 

 \vspace{5pt}
	\noindent {\rm{(iii)}} Let $A$ be a family of random matrices and $\zeta$ be a family of non-negative random variables. Then, we use $A=\OO_\prec(\zeta)$ to mean that $\|A\|\prec \xi$, where $\|\cdot\|$ denotes the operator norm. 
	
	\vspace{5pt}
	\noindent {\rm{(iv)}} We say an event $\Xi$ holds with high probability (w.h.p.) if for any constant $D>0$, $\mathbb P(\Xi)\ge 1- N^{-D}$ for large enough $N$. More generally, we say an event $\Omega$ holds $w.h.p.$ in $\Xi$ if for any constant $D>0$,
	$\P( \Xi\setminus \Omega)\le N^{-D}$ for large enough $N$.
\end{definition}

\begin{lemma}[Local laws and rigidity of eigenvalues]\label{lem_loc}
Under \Cref{main_assm}, for any small constant $\tau>0$, the following local laws hold uniformly in $z=E+\ii\eta $ with $|z|\le \tau^{-1}$ and $ \eta \ge N^{-1+\tau}$. 
\begin{itemize}
    \item  \textbf{Anisotropic local law:} For any deterministic unit vectors $\mathbf{u}, \mathbf{v}\in \C^{DN}$, we have
\be\label{eq:aniso_local}
 \left(G(z)-M(z)\right)_{\bu\bv}\prec \sqrt{\frac{\im m(z)}{N\eta}}+\frac{1}{N\eta}.
\ee

\item  \textbf{Averaged local law:} For any deterministic matrix $B \in \C^{DN\times DN}$ with $\|B\|\le 1$, we have 
\be\label{eq:aver_local}
 \left\langle \left(G-M\right) B\right\rangle \prec \frac{1}{ N\eta } .
\ee

\end{itemize}
As a consequence of \eqref{eq:aver_local} when $B=I$, we have the rigidity of eigenvalues:
\be\label{eq:rigidity}
\lambda_k-\gamma_k \prec N^{-2/3}\min (k, DN+1-k)^{-1/3},\quad k\in \cal I.
\ee
In addition, all the above estimates remain valid even if we do not assume identical distributions for the diagonal and off-diagonal entries of $H$.
\end{lemma}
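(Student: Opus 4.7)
The plan is to view $\VV = H + \Lambda$ as a deformed Wigner-type matrix with flat block-diagonal variance profile $S$ from \eqref{eq:sij} and a small deterministic deformation $\Lambda$ satisfying $\|\Lambda\| \le N^{-\delta_A}$, and to derive both local laws via the by-now-standard framework developed for such models in \cite{LeeSchSteYau2015,knowles2017anisotropic,He2018,AEK_PTRF,EKS_Forum}. A useful simplification in our setting is that the matrix equation \eqref{def_M}, combined with the block-translation symmetry forcing $\mathcal{S}(M) = mI$, reduces to the explicit formula $M = (\Lambda - z - m)^{-1}$ where $m$ solves the scalar self-consistent equation \eqref{self_m}. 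Since $\|\Lambda\| = \oo(1)$, a direct perturbation argument around the Wigner case ($\Lambda = 0$, $m = m_{sc}$) shows that $M$ is well-defined and uniformly bounded, and that the associated stability operator on matrices is invertible with $\OO(1)$ norm throughout the bulk $|E| \le 2 - \kappa$, where $\im m(z) \sim 1$.

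With the MDE analysis in place, I would derive the entrywise local law by the self-consistent equation approach. Writing $G = (\VV - z)^{-1}$, a Schur complement / minor expansion yields, for each index $i$, an approximation of the form $-G_{ii}^{-1} \approx z + \Lambda_{ii} - H_{ii} + \mathcal{S}(G)_{ii} + \mathcal{E}_i$, in which $\mathcal{E}_i$ comprises quadratic forms in the $i$-th row of $H$ paired against $G^{(i)}$ (the resolvent with row/column $i$ removed), together with cross-terms involving $\Lambda$; these errors are controlled by standard large-deviation bounds using the moment condition \eqref{eq:highmoment}. Off-diagonal entries $G_{ij}$ satisfy analogous equations. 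A bootstrap argument decreasing $\eta$ from $\OO(1)$ down to $N^{-1+\tau}$, combined with the stability of the MDE, produces the entrywise bound; polarization together with high-moment estimates then promote this to the anisotropic statement \eqref{eq:aniso_local}.

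The improved averaged local law \eqref{eq:aver_local} follows by fluctuation averaging: the trace $\langle (G - M) B \rangle$ sums many approximately mean-zero quantities whose correlations can be controlled by iterated cumulant expansion, gaining the extra factor $(\im m/(N\eta))^{1/2}$ that removes the square-root improvement present in the anisotropic bound. Finally, the rigidity estimate \eqref{eq:rigidity} follows from \eqref{eq:aver_local} with $B = I$ via the standard Helffer--Sjöstrand argument: the averaged local law produces the counting-function control $|\mathcal{N}(E) - DN \int_{-\infty}^E \rho_N| \prec 1$ uniformly for $E$ in the bulk, which converts into \eqref{eq:rigidity} upon inverting the quantile map \eqref{eq:gammak}.

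The main obstacle I anticipate is the coupling between blocks introduced by $\Lambda$: although $\mathcal{S}$ only acts within each block, $M$ itself is not block-diagonal (as visible in the explicit $D = 2$ formulas \eqref{eq:MD=2}), so the self-consistent equation for $G - M$ and its stability analysis must respect the full block structure rather than decouple. Fortunately, since $\|\Lambda\| \le N^{-\delta_A}$, all cross-block contributions are suppressed by $N^{-2\delta_A}$, keeping the analysis firmly perturbative and obviating any structural assumption on $A$ beyond the operator-norm bound. The extension to non-identically distributed entries requires no new ideas, since none of the above arguments use the identical distribution hypothesis---only the entrywise moment bounds \eqref{eq:highmoment}, which apply to each entry separately.
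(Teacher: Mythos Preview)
Your approach is essentially the same as the paper's: both reduce to verifying the stability of the self-consistent equation and then invoke the general machinery of \cite{He2018,AEK_PTRF,EKS_Forum}. The paper makes this reduction more concrete by observing that, since $\mathcal{S}(G)=\sum_k g_k E_k$ with $g_k:=D\langle G E_k\rangle$, the stability problem collapses to a $D$-dimensional vector equation for $\mathbf g=(g_1,\dots,g_D)$; a second-order Taylor expansion of $R_{\mathbf g\cdot\mathbf E}$ around $\mathbf m=(m,\dots,m)$ then yields the quadratic stability estimate $\|\mathbf g-\mathbf m\|_\infty\lesssim \xi/(\im m(z)+\sqrt\xi)$, using only that the off-diagonal coefficients $c_{kl},c_{kls}$ are $\oo(1)$ because $\|A\|=\oo(1)$.

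The one place your sketch falls short of the full statement is the edge. You verify only that the linearized stability operator is bounded in the bulk $|E|\le 2-\kappa$, and your rigidity argument is correspondingly restricted to bulk energies. But the lemma asserts the local laws uniformly for all $|z|\le\tau^{-1}$ and rigidity for all $k\in\mathcal I$, including edge indices; this requires the stability estimate to survive where $\im m(z)$ is small, which is exactly what the $\sqrt\xi$ term in the denominator of the paper's quadratic bound buys. The paper also records a refined averaged law outside the spectrum, $\langle G-M\rangle\prec (N(\kappa_E+\eta))^{-1}+(N\eta)^{-2}(\kappa_E+\eta)^{-1/2}$, which together with the bulk averaged law feeds into the rigidity argument of \cite{erdHos2012rigidity}. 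Your linear-in-the-bulk stability is not enough to reach these conclusions; you need the finite-dimensional quadratic analysis (or an equivalent edge-stable estimate) to close the argument. Note also that the variance profile $S$ is not flat, so the black-box hypotheses of \cite{AEK_PTRF,EKS_Forum} are not literally met; the paper's point is precisely that once stability is established by hand via the $D$-dimensional reduction, the remaining steps of those frameworks go through unchanged.
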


With the anisotropic local law \eqref{eq:aniso_local}, we can derive the following estimate on the products of resolvents. The proofs of \Cref{lem_loc} and \Cref{appen1} are both presented in \Cref{sec:Gprop}.
\begin{lemma}
\label{appen1}
Fix any integer $p\ge 1$. Suppose $(\Lambda_i)_{1\leq i \leq p}$ is an arbitrary sequence of $D\times D$ block matrices of the same form as $\Lambda$ and consisting of $N\times N $ deterministic blocks $A_i$ and $A_i^*$ with $\|A_i\|=\oo(1)$. Let $(B_i)_{1\leq i \leq p}$ be an arbitrary sequence of  deterministic matrices satisfying $\|B_i\|\le 1$. Suppose the anisotropic local law \eqref{eq:aniso_local} holds for all $G_i$, where $G_i:= G(z_i,H,\Lambda_i)$ for a sequence $(z_i)_{1\leq i \leq p}$ with $z_i\in \{z, \bar z\}$.  Then, for any deterministic unit vectors $\bu, \bv\in\mathbb{C}^{DN}$, we have   
\be\label{entprodG}
\bu^* \left( \prod_{i=1}^p G_i B_i\right)\bv \prec \frac{1}{\eta^{p-1}}.
\ee
\end{lemma}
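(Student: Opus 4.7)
The plan is to reduce the bound to the Ward identity $G_iG_i^* = \pm\,\im G_i/\eta$ and the anisotropic local law \eqref{eq:aniso_local}, applied only at the two endpoint vectors $\bu$ and $\bv$, while the interior factors are handled by the cheap deterministic bounds $\|G_i\|,\|\im G_i\|\le \eta^{-1}$ and $\|B_i\|\le 1$. The improvement over the naive operator-norm bound $\eta^{-p}$ comes entirely from this combination of Ward and the anisotropic law at the two ends: each endpoint saves a factor of $\eta^{1/2}$, and together they give the full factor of $\eta$ required.

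Concretely, I will first record the endpoint estimate: for any deterministic unit vector $\bu$,
\[
\|G_j^*\bu\|^2 = \frac{|\bu^*\,\im G_j\,\bu|}{\eta} \prec \frac{1}{\eta},
\]
which follows from Ward (valid since $z_j\in\{z,\bar z\}$ makes $G_j^*$ the resolvent at the conjugate spectral parameter) together with \eqref{eq:aniso_local} and the bulk boundedness of $M_j$. I will then run a reverse induction on $j$ to control the partial tail
$\bw_j := G_jB_jG_{j+1}B_{j+1}\cdots G_pB_p\bv$, showing $\|\bw_j\|^2 \prec \eta^{-(2(p-j)+1)}$ for all $2\le j\le p$. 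The base case $j=p$ reads
$\|\bw_p\|^2 = \eta^{-1}\,|(B_p\bv)^*\,\im G_p\,(B_p\bv)| \prec \eta^{-1}$, again via \eqref{eq:aniso_local} applied to the still-deterministic vector $B_p\bv$; the inductive step uses only the crude estimate
\[
\|G_jB_j\bw_{j+1}\|^2 \le \eta^{-1}\|\im G_j\|\,\|B_j\bw_{j+1}\|^2 \le \eta^{-2}\|\bw_{j+1}\|^2,
\]
because once $\bw_{j+1}$ has become random we cannot usefully apply the local law to it.

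Finally, one Cauchy--Schwarz at the leftmost resolvent gives
\[
\Bigl|\bu^*\prod_{i=1}^p G_iB_i\,\bv\Bigr| = |\bu^*G_1B_1\bw_2| \le \|G_1^*\bu\|\,\|\bw_2\| \prec \eta^{-1/2}\cdot \eta^{-(p-3/2)} = \eta^{-(p-1)},
\]
and the degenerate case $p=1$ is handled by applying \eqref{eq:aniso_local} directly to the deterministic vector $B_1\bv$. The only real subtlety---and the point one must be careful about---is to spend the anisotropic law at the two endpoints, where the test vectors remain deterministic; trying to apply it to the random vectors $\bw_j$ for intermediate $j$ would fail and force one back to the unimproved $\eta^{-p}$ bound. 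I do not anticipate any other technical difficulty beyond this bookkeeping.
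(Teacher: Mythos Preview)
Your proposal is correct and in fact takes a cleaner route than the paper's own argument. The paper proves \eqref{entprodG} by induction on $p$: for even $p=2q$ it splits the product at the midpoint, applies Cauchy--Schwarz, uses Ward's identity on each half to produce a product of $2q-1$ resolvents at $(\bu,\bu)$ and $(\bv,\bv)$, and then invokes the induction hypothesis; odd $p$ is handled by a separate argument that combines the induction hypothesis on one half with the crude bound $\|G_i\|\le \eta^{-1}$ on the other. Your approach dispenses with the induction on $p$ and the even/odd case split entirely: you apply Ward plus the anisotropic local law exactly once at each of the two extreme endpoints---at $G_p$ acting on the deterministic vector $B_p\bv$, and at $G_1^*$ acting on $\bu$---and absorb every intermediate resolvent by the trivial operator-norm bound. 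This is legitimate because the total saving needed over the naive $\eta^{-p}$ is exactly one power of $\eta$, and each endpoint contributes $\eta^{1/2}$; nothing is gained by trying to be clever in the interior. The paper's recursive midpoint splitting would become advantageous only if one needed sharper bounds on longer products (where repeatedly exploiting the local law at interior deterministic vectors could matter), but for the stated estimate your direct argument is both sufficient and more transparent.
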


\subsection{Proof ideas} \label{sec:idea}
In this subsection, we discuss some key ideas for the proof of the main results. 

\medskip

\noindent{\bf Integrable regime.} 
We define a sequence of interpolating matrices as 
\be\label{eq:VVtheta}
\VV(\theta):=H+\theta \Lambda, \quad \theta\in [0,1], \quad \text{with}\quad \VV(0)=H, \quad \VV(1)=\VV.
\ee
By the standard perturbation theory for eigenvalues, we have 
$\lambda_k'(\theta)=\bv_k(\theta)^* \Lambda \bv_k(\theta)$, where $k$ is a bulk index, $\lambda_k(\theta)$ is the $k$-th eigenvalue of $\VV(\theta)$, and $\bv_k(\theta)$ denotes the corresponding eigenvector. Thus, the shift of the eigenvalue $\lambda_k$ is of order $\int_{0}^1 \E|\bv_k(\theta)^* \Lambda \bv_k(\theta)|\dd \theta \le \int_{0}^1 (\E|\bv_k(\theta)^* \Lambda \bv_k(\theta)|^2)^{1/2}\dd \theta$. With the spectral decomposition of $G_\theta(z):=(\VV(\theta)-z)^{-1}$, we can show that 
\be\label{eq:GLambda0}
\E|\bv_k(\theta)^* \Lambda \bv_k(\theta)|^2 \prec \eta^2 \mathbb E { \tr\left[ \im G_\theta(z_k(\theta)) \Lambda \im G_\theta(z_k(\theta)) \Lambda \right]},\quad z_k(\theta):=\gamma_k(\theta)+\ii \eta,
\ee
where $\im G_\theta := (G_\theta-G_\theta^*)/(2\ii)$, $\eta=N^{-1+c}$ for an arbitrary small constant $c>0$, and $\gamma_k(\theta)$ is the classical location of $\lambda_k(\theta)$ defined as in \eqref{eq:gammak}. 
Then, we will prove a key technical estimate that 
\be\label{eq:GLambda}
\mathbb E \tr \left[ \im G_\theta(z_k(\theta)) \Lambda \im G_\theta(z_k(\theta)) \Lambda \right] \prec \|A\|_{\HS}^2. 
\ee
Together with \eqref{eq:GLambda0}, it implies that as long as $\|A\|_{\HS}\le N^{-\e_A}$ and $c<\e_A/2$, $\Lambda$ leads to a perturbation of $\lambda_k$ of order $\OO (N^{-1-\e_A/2})$ with probability $1-\oo(1)$, which is negligible under the scaling $DN$. 

Now, we have shown that $\lambda_k(1)$ is a perturbation of the eigenvalue $\lambda_k(0)$ of $H$. Suppose $\lambda_k(0)$ is the eigenvalue of a diagonal block $H_a$, $a\in \qqD$. By the bulk universality of Wigner matrices \cite{bourgade2016fixed}, we know that conditioning on $\lambda_k(0)$, the eigenvalue spectra of other blocks $H_b$, $b\ne a$, are away from $\lambda_k$ by a distance of order $N^{-1}$ with probability $1-\oo(1)$. Without loss of generality, suppose $a\ne 1$. We write $\bv_k$ as $\bv_k=(\bu_k^\top, \bw_k^\top)^\top$ for $\bu_k\in \C^N$ and $\bw_k\in \C^{(D-1)N}$, and let $\wt A:=\begin{pmatrix}
    A ,
    \mathbf 0_{N\times (D-3)N} ,
    A^*
\end{pmatrix}$ denote the upper right $N\times (D-1)N$ block of $\VV$. 
Then, from the eigenvalue equation $\VV \bv_k =\lambda_k \bv_k$, we get that 
$$H_1 \bu_k + \wt A \bw_k = \lambda_k \bu_k \ \Rightarrow \ \bu_k = - \left(H_1-\lambda_k \right)^{-1}\wt A \bw_k.$$
Our intuition is that $H_1$ and $\bw_k$ are almost independent in the integrable regime, so when $\dist(\lambda_k,\spec(H_1)) \gtrsim N^{-1}$, $\|\bu_k\|=\|\left(H_1-\lambda_k \right)^{-1}\wt A \bw_k\|$ should be small. In fact, we will prove that
$$\E\|\bu_k\|^2=\E\|\left(H_1-\lambda_k \right)^{-1}\wt A \bw_k\|^2 \prec N^{\tau+2c} \E \tr \left[ \im G_0(z_k) \Lambda \im G(z_k) \Lambda \right]$$
for any small constant $\tau>0$, where $G_0$ denotes the resolvent of $H$ and $z_k\equiv z_k(\theta=1)$ is defined in \eqref{eq:GLambda0}. For the right-hand side (RHS), we will prove a similar bound as in \eqref{eq:GLambda}:
\be\label{eq:GLambda2}
\E \tr \left[ \im G_0(z_k) \Lambda \im G(z_k) \Lambda \right] \prec \|A\|_{\HS}^2. 
\ee
Thus, as long as $\|A\|_{\HS}\le N^{-\e_A}$ and $\tau+2c<\e_A$, we obtain that $\|\bu_k\|\le N^{-\e_A/2}$ with probability $1-\oo(1)$. 
Applying this argument to all blocks $H_b$ with $b\ne a$, we can show that the $\ell_2$-norm of $\bv_k$ within each $\cI_b$ is negligible, implying that $\bv_k$ must be concentrated on the subset $\cI_a$. 

The proof of the \emph{multi-resolvent estimates} \eqref{eq:GLambda} and \eqref{eq:GLambda2} constitutes the main technical part of the above argument. We will apply the cumulant expansions formula in \cite{Cumulant2,He:2017wm} iteratively to bound the left-hand sides (LHS) of \eqref{eq:GLambda} and \eqref{eq:GLambda2}. A key point of the proof is that we need to find a systematic way to expand the resolvent entries so that after each expansion, the resulting expressions become strictly smaller until they satisfy the desired bound $\|A\|_{\HS}^2$. For this purpose, we need to keep track of the fine structures of the expressions and find the ``correct" resolvent entry to expand; see \Cref{sec:Gauss_exp} for more details.

\medskip

\noindent{\bf Chaotic regime.} 
By Markov's inequality, the QUE estimate \eqref{eq:main_evector1} follows immediately from the second moment bound $\E[\|E_a \bv_k\|^2-D^{-1}]^2\le N^{-\delta}$ for a constant $\delta>0$. To prove this bound, we again need to establish a multi-resolvent estimate for $G(z)$. More precisely, when $\|A\|_{\HS}\ge N^{\e_A}$, we aim to show that 
\be\label{eq:normtrace}
\E \avg{\im G(z) (E_a -D^{-1}) \im G(z) (E_a -D^{-1})} \le N^{-1-\delta}\eta^{-2}, \quad a\in \qqD, 
\ee
for a constant $\delta>0$ depending on $\e_A$. With the spectral decomposition of $\im G$, we obtain from \eqref{eq:normtrace} that 
\begin{align*}
  \mathbb E {\eta^4} \sum_{i,j\in \cal I} \frac{|\bv_i^*(E_a -D^{-1})\bv_j|^2}{|\lambda_i-z|^2 |\lambda_j-z|^2 } \le  N^{-\delta} ,\quad z=\gamma_k+\ii \eta,\ \ a\in \qqD. 
\end{align*}
From the above equation, we see that a bound of the form \eqref{eq:normtrace} at $\eta=N^{-1+c}$ implies that a local sum of $|\bv_i^*(E_a -D^{-1})\bv_j|^2$ in $|i-k|=\OO(N\eta)$ and $|j-k|=\OO(N\eta)$ is bounded by $N^{-\delta}$. In particular, by taking $i=j=k$, it gives the desired estimate.

Let $z_1,z_2\in \{z,\bar z\}$ and abbreviate $G_i:=G(z_i)$. Writing $\im G(z)=(G(z)-G(\bar z))/(2\ii)$ and $I=\sum_a E_a$, we can expand the LHS of \eqref{eq:normtrace} into a linear combination of two-resolvent traces of the form $\E L_{ab}$, $a,b\in \qqD$, with $L_{ab}:=D\avg{G_1 E_a G_2 E_b}$. Then, a key point of our proof is to establish the following two-resolvent local law for $\E L_{ab}$: there exists a deterministic matrix $K$ and a constant $\delta'>0$ so that  
\be\label{eq:2resol}
|\E L_{ab} - K_{ab}| \le N^{-1-\delta'} \eta^{-2}. 
\ee
This estimate shows that $\E L_{ab}$ is approximated by $K_{ab}$ up to a negligible error.  In addition, we will show that the deterministic part $K$ is ``flat" when $\|A\|_{\HS}\gg 1$, that is, $\max_{a,b} |K_{ab}-K_{11}|=\OO(N/\|A\|_{\HS}^2)$. It implies that the leading part of the LHS of \eqref{eq:normtrace} is of order $\OO(N/\|A\|_{\HS}^2)\ll N^{-1-\e_A}\eta^{-2}$ as long as we take $2c<\epsilon_ A$. 
This observation, combined with \eqref{eq:GLambda}, elucidates the essence of the quantum chaos transition occurring at $\|A\|_{\HS}\sim 1$.

We will see that, without the expectation, $L_{ab}$ satisfies the following (essentially sharp) local law: 
\be\label{eq:2resolweak}
| L_{ab} - K_{ab}| \prec N^{-1} \eta^{-2}. 
\ee
Multi-resolvent local laws as in \eqref{eq:2resolweak} have been an important development in the random matrix theory literature over the past few years. Specifically, (almost) sharp multi-resolvent local laws have been established for Wigner matrices in \cite{CEH2023,cipolloni-erdos2021,CEDS_EJP,CIPOLLONI2022109394,CES_Forum,CES_AAP} and they have played an important role in proving the eigenstate thermalization hypothesis for Wigner matrices \cite{cipolloni-erdos2021,CES_QUE3,CIPOLLONI2022109394,CEH2023}. Later, they are extended to deformed Wigner matrices \cite{CEJK2023} and generalized Wigner matrices with mean-field variance profiles \cite{adhikari2023eigenstate}. In these works, the canonical proof is to derive a self-consistent equation for $L$ and then bound the fluctuation error carefully using the cumulant expansion method. However, their proofs depend crucially on the flat (or some special mean-field) variance profiles of the random matrices, which do not apply to our block variance profile setting. Instead, we will adopt a dynamical approach called the method of \emph{characteristic flow}. 
This idea of estimating resolvents dynamically first appeared in \cite{Lee_edge}, and was later applied to various models \cite{HL2019_PTRF,Landon2024_PTRF,Adhikari2020_PTRF,Adhikari2023_PTRF,LS_2022,Bourgade_JEMS} to show single-resolvent local laws (or closely related quantities). More recently, its usefulness in establishing multi-resolvent local laws has also been recognized in \cite{BF_LQG,CES_PTRF,CEH2023}, as well as the concurrently posted paper \cite{CEX2023}. However, in comparison to these works, our specific setting presents two distinct challenges: the non-mean-field nature of the problem and the arbitrary interaction $A$. Moreover, we need to improve the estimate \eqref{eq:2resolweak} to \eqref{eq:2resol} by a small factor $N^{-\delta'}$.

In the proof of \eqref{eq:2resolweak}, we will consider a matrix-valued process $\VV(t)$, where each diagonal block of $\VV$ satisfies an independent matrix-valued Ornstein-Uhlenbeck (OU) process $H_a(t)$ with initial condition $H_a$, and each off-diagonal block satisfies a matrix-valued linear differential equation. Then, we let the spectral parameter $z_t$ of the resolvent $G_t:=(\VV(t)-z_t)^{-1}$ satisfy a first-order differential equation, which is indeed the characteristic flow of the underlying complex Burgers equation. Along this flow, the leading terms in the time evolution of $G_t$ cancel each other, and $\eta_t=\im z_t$ decreases, which enables us to transfer the estimates \eqref{eq:2resol} and \eqref{eq:2resolweak} from larger scales of $\eta$ to smaller scales at the expense of an added Gaussian component. But, we can easily remove it with a standard Green's function comparison argument.    
The aforementioned flow approach comprises two components: an initial estimate at $t=0$ and large $\eta_0$, followed by a flow estimate for the subsequent time evolution. One interesting observation is that the initial estimate, often believed to be simple at large $\eta$, actually becomes a major obstacle when attempting to improve \eqref{eq:2resolweak} to \eqref{eq:2resol}. 
More precisely, when we expand $L_{ab} - K_{ab}$ using cumulant expansions, besides some centered random fluctuations, we also find leading deterministic terms in $L_{ab}$ of exact order $N^{-1}\eta^{-2}$. 
We will need to obtain the exact expressions of these terms and show an intricate cancellation between them through an explicit calculation. However, we remark that this cancellation is not accidental and indeed corresponds to a deep \emph{self-energy renormalization mechanism} originally identified in the context of random band matrices \cite{yang2021delocalization}; see \Cref{rem:self} below. 
We believe that this kind of self-energy renormalization is a conceptually universal mechanism behind the chaotic behavior of non-mean-field random models.

Finally, we remark that $L_{ab}=N^{-1}\sum_{x\in \cal I_b,y\in \cal I_a}G_{xy}(z_1)G_{yx}(z_2)$ corresponds to the $T$-variable in the study of random band matrices, so the $T$-expansion method developed in \cite{xu2022bulk,yang2021delocalization,yang2022delocalization,YangYin2021,bourgade2019random,erdos2013delocalization} is potentially useful for the proof of the two-resolvent local laws for $L$. Although it is not as concrete as the flow method in the current setting, the $T$-expansion method can be crucial when we consider extensions to the large $D$ case later.

Given the QUE estimates in \Cref{mix}, we can prove the GOE/GUE statistics for the bulk eigenvalues of $\VV$ adopting an idea developed recently in \cite{xu2022bulk}. More precisely, similar to many previous proofs of bulk universality of random matrices, our starting point is the three-step strategy initiated in \cite{ErdPecRamSchYau2010,ErdSchYau2011}. Readers can refer to \cite{Erds2017ADA} for an overview of this strategy. The first step is to establish a local law for the resolvent $G(z)$ up to the optimal scale $\eta\gg N^{-1}$, which has been accomplished in \Cref{lem_loc}.
For the second step, one considers the matrix Brownian motion $\VV(t)=H_t +\Lambda$, where $H_t=(h_{ij}(t))_{i,j\in \cI}$ is a matrix-valued OU process. Note that in contrast to the flow $\VV(t)$ defined above in the characteristic flow approach, $H_t$ here satisfies a $DN\times DN$ matrix-valued OU equation (see \eqref{defVt} for the definition) and $\Lambda$ remains unchanged. 
With the local law as main input, it was proved in \cite{LANDON20191137,LanYau2015} that the local bulk statistics of $\VV(t)$ converge to those of GOE/GUE at time $t\gg N^{-1}$. 
In the third step, one needs to show that the local bulk statistics of the original matrix $\VV$ are well-approximated by those of $\VV(t)$, which is achieved by comparing the moments of $\langle\im G(z) \rangle$ with those of $\langle\im G_t(z) \rangle$, where $G_t(z):=(\VV(t)-z)^{-1}$ denotes the resolvent of $\VV(t)$. 
In the literature (see e.g., \cite{ErdPecRamSchYau2010,ErdSchYau2011,Erds2017ADA,erdHos2012rigidity}), this step is typically accomplished through a standard Green's function comparison approach via moments matching between the entries of $\VV$ and those of $\VV(t)$. However, this kind of moment-matching argument fails in our specific setting. 
To address this challenge, we adopt the idea in \cite{xu2022bulk}, that is, we utilize the QUE of bulk eigenvectors to perform the Green's function comparison. We will see that the leading terms appearing in the comparison of $\langle\im G(z) \rangle$ with $\langle\im G_t(z) \rangle$ will contain factors of the form
${\bf v}_i^* (E_a-D^{-1}){\bf v}_j $.
These factors are small as shown in the QUE estimate \eqref{eq:extend:main_evector1}, which will help with bounding the leading terms in the comparison. This allows us to prove that the local bulk statistics of $\VV$ match those of $\VV(t)$ asymptotically.

\section{Chaotic regime: eigenvectors}\label{sec:proof_mix}

In this section, we prove \Cref{mix}. As discussed in \Cref{sec:idea}, the main ingredient for the proof is a precise two-resolvent local law on quantities of the form $\langle G(z_1)E_{a}G(z_2)E_{b}\rangle$, $a,b\in \llbracket D\rrbracket$. We first introduce several notations. 

\begin{definition}
\label{def_wtM} 
Define the spectral domain $\mathbf D(\kappa,\tau):=\{z=E+\ii \eta\in \C :|z|\le \tau^{-1}, |E|\le 2-\kappa, |\eta| \ge N^{-1+\tau}\}$ for an arbitrarily small constant $\tau>0$.
For $z_1,z_2\in \mathbf D(\kappa,\tau)$, we define the $D\times D$ matrices $\wh M$ and $L$ as
\begin{align}\label{def_ML}
    \wh M_{ab}(z_1,z_2,\Lambda):=D\langle M(z_1) E_{a} M(z_2) E_{b}\rangle ,\quad L_{ab}(z_1,z_2, H, \Lambda):= D\langle G(z_1)E_{a}G(z_2)E_{b}\rangle , 
\end{align}
for  $a,b\in \llbracket D\rrbracket$, and define the $D\times D$ matrix $K$ by
\begin{align}\label{def_K}   K(z_1,z_2, \Lambda ):= \left[1-\wh M(z_1,z_2,\Lambda)\right]^{-1} \wh M(z_1,z_2,\Lambda).
\end{align}
 \end{definition}

For ease of presentation, we introduce the following simplified notations: given a matrix-valued function (e.g., $G$, $M$, $\wh M$, $L$, and $K$) of $z$, we use subscripts to indicate its dependence on the spectral parameters. For example, we will denote $G_i:= G(z_i,H,\Lambda)$, $M_{i}:=M(z_{i},\Lambda),$ \smash{$\wh M_{(1,2 )}:=\wh M(z_{1 },z_{2 } ,\Lambda )$}, $L_{(1,2)}:=L(z_{1 },z_{2 }, H ,\Lambda ),$ and $K_{(1,2 )}:=K(z_{1 },z_{2 } ,\Lambda )$.

\begin{lemma}\label{main_lemma} 
Take any $z=E+\ii\eta \in \mathbf D(\kappa,\tau)$ with $\eta = N^{-1+\eqq}$ for a small constant $\eqq>0$. Then, under the assumptions of \Cref{mix}, there exists a constant $c_L>0$ (depending on $\eqq$, $\e_A$, and $\delta_A$) such that 
\be\label{main_EGG}
\left(\E L_{(1,2)}- K_{(1,2)}\right)_{ab}=\OO(N^{-1-c_L}\eta^{-2})
\ee
for $z_1,z_2\in \{z, \bar z\}$ and $a,b \in \qqD$.
\end{lemma}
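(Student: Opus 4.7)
I would prove \Cref{main_lemma} via the characteristic flow method outlined in \Cref{sec:idea}. Introduce a matrix process $\VV(t) = H(t) + \Lambda$, $t \in [0, t_*]$, with $\VV(0) = \VV$, where each diagonal Wigner block $H_a(t)$ evolves under an independent complex Hermitian matrix Ornstein--Uhlenbeck flow (so the variance profile of $H(t)$ is preserved and a Gaussian component of variance $\OO(t/N)$ per entry is added). Couple this with spectral parameter trajectories $z_{i,t}$, $i=1,2$, running along the characteristic curve $\partial_t z_{i,t} = -m(z_{i,t},\Lambda) + z_{i,t}/2$ (adapted to the OU normalization), so that $\im z_{i,t}$ decreases monotonically from an initial large value $\eta_0 = N^{-\e_0}$ with $\e_0 \in (0,\e_A/10)$ down to the target value $\eta = N^{-1+\eqq}$ at a terminal time $t_* = \OO(N^{-\e_0})$. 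Once a bound of the form \eqref{main_EGG} is established for $\VV(t_*)$ at $z_{t_*}$, the smallness of the added Gaussian component (variance $\OO(N^{-1-\e_0})$ per entry) allows a standard Green's function comparison argument, combining the moment-matching techniques of Appendix B with the anisotropic local law of \Cref{lem_loc} applied to products of resolvent entries, to transfer the estimate back to the original matrix $\VV$ at $z$.

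\textbf{Flow equation for $\E L_t$.} Setting $L_{ab,t} := D\langle G_t(z_{1,t}) E_a G_t(z_{2,t}) E_b\rangle$, I would apply It\^o's formula in $t$ together with Gaussian integration by parts on the OU increments. The crucial cancellation motivating the characteristic flow is that the leading deterministic drift produced by $\partial_t H(t)$ exactly cancels the $\partial_t z_{i,t}$ contribution coming from $\partial_{z_i} G_t(z_{i,t}) = G_t(z_{i,t})^2$. After taking expectations, this leaves a closed matrix-valued ODE for $\E L_t$ that is structurally identical to the one satisfied by $K_t$, modulo a small remainder $\mathcal R(t)$ collecting higher-order cumulant terms. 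Subtracting the two ODEs and integrating gives the Duhamel representation
\begin{equation*}
\E L_{t_*} - K_{t_*} = \mathcal U_{t_*, 0}(\E L_0 - K_0) + \int_0^{t_*} \mathcal U_{t_*, s}\, \mathcal R(s)\, ds,
\end{equation*}
where $\mathcal U_{t, s}$ is the propagator generated by $\wh M$. In the chaotic regime $\|A\|_{HS} \ge N^{\e_A}$, explicit computation of $\wh M$ shows that $\mathcal U$ grows only by a small polynomial factor in $N$, while a cumulant expansion of $\mathcal R(s)$ using \Cref{lem_loc} and \Cref{appen1} yields $\|\mathcal R(s)\| \prec (N\eta_s)^{-3}$, so that the integral contributes at most $N^{-1-c}\eta^{-2}$ for some $c > 0$.

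\textbf{Main obstacle: the initial estimate.} The principal technical difficulty is to establish the initial bound $\E L_0 - K_0 = \OO(N^{-1-\delta'}\eta_0^{-2})$ at the large initial scale $\eta_0 = N^{-\e_0}$ with $\delta'$ sufficiently large to absorb the propagator loss. At this scale, the averaged local law \eqref{eq:aver_local} alone only yields $L_0 - K_0 \prec (N\eta_0)^{-1}$, which falls short of the target. I would upgrade this bound via a systematic cumulant expansion of $\E L_{ab, 0}$ in the randomness of $H$, carried out to sufficiently high order and using the smallness $\|A\| \le N^{-\delta_A}$ so that off-diagonal blocks of $M$ carry an $A$-factor. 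The genuine random fluctuation terms are easily controlled, but the expansion also generates a family of leading \emph{deterministic} contributions of exact order $N^{-1}\eta_0^{-2}$ built from traces of products of $M_i$, $\wh M$, and $\Lambda$. None of these individually beats the $N^{-1}\eta_0^{-2}$ barrier, and the heart of the proof is to identify an exact algebraic cancellation among them. I expect this cancellation to manifest the self-energy renormalization phenomenon referenced in the discussion following \eqref{eq:2resolweak}: specifically, iteratively invoking the defining equation \eqref{def_M} for $M$ should allow one to rewrite the sum of leading deterministic terms as the action of $(1 - \wh M)^{-1}$ on $\wh M$, which is precisely $K$ itself, leaving only a remainder of the desired smaller size. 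This algebraic step is where the bulk of the technical work will be concentrated.

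\textbf{Conclusion.} Combining the initial bound with the Duhamel identity above yields $\E L_{t_*} - K_{t_*} = \OO(N^{-1-c_L}\eta^{-2})$ at the final spectral parameter $z_{t_*} = z$, and the Green's function comparison argument then produces the stated bound for $\VV$ at $z$. The same argument runs identically for each of the four choices $(z_1, z_2) \in \{z, \bar z\}^2$, since complex conjugation can be absorbed into the characteristic flow by mirroring $z_{i,t}$ across the real axis, and uniformly for all $a, b \in \qqD$ because the preceding steps are entry-wise in the $D \times D$ block indices.
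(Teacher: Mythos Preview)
Your strategy is essentially that of the paper: reduce to the Gaussian-divisible case via the characteristic flow (the paper's Lemma~3.3, proved in Sections~4--5), then remove the Gaussian component by Green's function comparison (Lemma~3.4). The identification of the initial estimate at large $\eta_0$, and of the self-energy cancellation therein, as the crux is also correct.

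One misconception should be flagged, since it may distort how you allocate effort. You attribute the control of the propagator $\mathcal U$ to the chaotic condition $\|A\|_{HS}\ge N^{\e_A}$; in fact that hypothesis plays no role anywhere in the proof of Lemma~3.2. The propagator bound (equation~(5.22), and its analogue in Lemma~4.7) uses only the spectral structure of $\wh M$---that its non-Perron eigenvalues satisfy $\re d_k\le d_1$---and it is \emph{large}, of order $(\eta_0/\eta)^2$, not small. All of the gain therefore has to come from the initial estimate: one needs $\|\E L_0-K_0\|\prec N^{-1}\eta_0^{-2}(\eta_0+N^{-\delta_A})$ (equation~(4.4)), i.e.\ better than the natural $(N\eta_0)^{-1}$ bound by exactly the factor $\eta_0+N^{-\delta_A}$, and it is this $\delta_A$- and $\eta_0$-gain that survives propagation to give $c_L$. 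The chaotic condition enters only afterwards, in the proof of Theorem~2.3, to show that $K$ itself is flat (equation~(3.8)). Two further technical points: the paper's flow also rescales $\Lambda$ via $\dot\Lambda_t=-\tfrac12\Lambda_t$ (Definition~4.3), which is precisely what forces the clean identity $\dot M_t=\tfrac12 M_t$ and hence exact matching of the $L$- and $K$-drifts---keeping $\Lambda$ frozen as you propose would generate extra terms; and the self-energy cancellation (Lemmas~4.14--4.15, equation~(4.23)) is not that the leading deterministic remainders reassemble into $K$, but that they take the form $c_N K_{ab}$ for an explicit scalar $c_N$ built from $m_{sc}$ which vanishes to order $\OO(\eta_0)$ by an algebraic identity. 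Remark~5.3 gives a robust explanation of why this must occur.
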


Before proving this key lemma, we first use it to prove \Cref{mix}. 

\begin{proof}[Proof of \Cref{mix}]
For $z=E+\ii \eta$, using the spectrum decomposition of $\im G(z)$, we get that for any $DN\times DN$ matrix $B$,
$$
\tr \left[ \im G (z) B\im G(z)  B^* \right]=\eta^2\sum_{i,j\in \cI}\frac{|\bv^*_i B \bv_j|^2}{|\lambda_i-z|^2 |\lambda_j-z|^2 }.
$$
In particular, choosing $B=E_{a}-D^{-1}I$ and $z_k=\gamma_k+\ii N^{-1+\eqq}$ and using the rigidity of eigenvalues in \eqref{eq:rigidity}, we get from this estimate that for any constant $c\in (0,\e_L)$, 
\be\label{eq:bdd_spec}
 \max_{i,j\in \qq{k-N^c,k+N^c}}|\bv^*_i (E_{a}-D^{-1}) \bv_j|^2 \prec N^{-2+2\eqq} \tr \left[ \im G (z_k) (E_{a}-D^{-1}I)\im G(z_k) (E_{a}-D^{-1}I) \right].
\ee
It remains to bound the RHS. By denoting $z_1=z_k$, $z_2=\bar z_k$ and using \eqref{main_EGG}, its expectation is estimated as
\begin{align}
   &-\frac{N^{-2+2\eqq}}{4} \E\tr \Big[ (G_1-G_2)\Big(E_{a}-D^{-1}\sum_b E_b\Big)(G_1-G_2) \Big(E_{a}-D^{-1}\sum_{b'} E_{b'}\Big) \Big] \nonumber\\
   &=N^{-1+2\eqq} \bigg(\E\cal L_{aa} - \frac{2}{D} \sum_{b=1}^D \E\cal L_{ab} + \frac{1}{D^2} \sum_{b,b'=1}^D \E\cal L_{bb'}\bigg)\nonumber\\
   &=N^{-1+2\eqq} \bigg(\cal K_{aa} - \frac{2}{D} \sum_{b=1}^D \cal K_{ab} + \frac{1}{D^2} \sum_{b,b'=1}^D \cal K_{bb'}\bigg) + \OO\left(N^{-c_L}\right),\label{eq:bdd_spec2}
\end{align}
where the $D\times D$ matrices $\cal L$ and $\cal K$ are defined as $\cal L:=(L_{(12)}+L_{(21)}-L_{(11)}-L_{(22)})/4$ and $\cal K:=(K_{(12)}+K_{(21)}-K_{(11)}-K_{(22)})/4$. 
On the other hand, we claim that for $i,j\in \{1,2\}$, 
\be\label{eq:est_M-M}
\max_{a,b,a',b'\in \qqD}\left|\left(K_{(ij)}\right)_{ab}-\left(K_{(ij)}\right)_{a'b'}\right|=\OO \left({N}/{\|A\|_{\HS}^2}\right).
\ee
We postpone the proof of this estimate to \Cref{lem_bas_M} in the appendix. With \eqref{eq:est_M-M}, we obtain that
\be\label{eq:bdd_spec3} N^{-1+2\eqq} \bigg(\cal K_{aa} - \frac{2}{D} \sum_{b=1}^D \cal K_{ab} + \frac{1}{D^2} \sum_{b,b'=1}^D \cal K_{bb'}\bigg) \lesssim N^{-2\e_A + 2\e_L} .
\ee
Combining \eqref{eq:bdd_spec}, \eqref{eq:bdd_spec2}, and \eqref{eq:bdd_spec3}, we obtain that for any small constant $\e>0$, 
\be\label{eq:bdd_spec4} \E  \max_{i,j\in \qq{k-n^c,k+n^c}}|\bv^*_i (E_{a}-D^{-1}) \bv_j|^2 \le N^{-c_L +\e} +N^{-2\e_A + 2\eqq+\e}.
\ee
If we take $\eqq<\e_A/2$ and $\e< (c_L\wedge \e_A)/2$, this gives that 
$$ \E  \max_{i,j\in \qq{k-n^c,k+n^c}}|\bv^*_i (E_{a}-D^{-1}) \bv_j|^2 \le N^{-c_L/2} +N^{-\e_A/2}.$$
Then, applying Markov's inequality and a simple union bound over $a\in \qqD$ concludes \eqref{eq:extend:main_evector1}. Taking $i=j=k$, we obtain \eqref{eq:main_evector1}.
\end{proof}

For the proof of \Cref{main_lemma}, we first establish it for a special case with Gaussian divisible $H$. Its proof will be presented in \Cref{sec:flow_prelim}.

\begin{lemma}\label{main_lemma_G}
Under the setting of \Cref{mix}, suppose $H_a$, $a\in \qqD$, are of the form
\be\label{Gcom_a}
H_a = \sqrt{1-N^{-\e_g}}\cdot H_a^{(0)}+ {N^{-\e_g/2}} H_a^{(g)},
\ee
where $H_a^{(0)}$ are independent Wigner matrices satisfying the assumptions for $H_a$ in \Cref{main_assm} and $H_a^{(g)}$ are i.i.d.~GUE satisfying \eqref{eq:meanvar} and \eqref{eq:meanvar2}. Then, for small enough constant $\e_g>0$ (depending on $\e_L$ and $\e_A$) and $z=E+\ii\eta $ with $|E|\le 2-\kappa$, there exists an absolute constant $C>1$ such that 
\be\label{main_G_es}
\|\E L_{(1,2)}- K_{(1,2)}\|_{\HS}\prec N^{-1-(\e_g\wedge \delta_A)}\eta^{-2},\quad \text{for} \quad  N^{-1+C\e_g } \le \eta \le N^{-C\e_g }\ , \ z_1,z_2\in \{z, \bar z\}.
\ee
\end{lemma}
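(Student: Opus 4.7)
The plan is to execute the characteristic flow strategy announced in \Cref{sec:idea}. I introduce an auxiliary matrix-valued process $\VV(t) = H(t) + \Lambda(t)$ for $t \in [0, t_0]$ with $t_0 := N^{-\e_g}$, where each block $H_a(t)$ satisfies an independent matrix OU equation $\dd H_a(t) = -\tfrac{1}{2} H_a(t)\dd t + \dd B_a(t)$ with initial datum $H_a^{(0)}$ and complex Hermitian Brownian motion $B_a(t)$ of block variance profile, and the deterministic part evolves by the linear ODE $\Lambda(t) = e^{-t/2}\Lambda$. The terminal time is chosen so that the law of $H_a(t_0)$ agrees (up to harmless rescaling) with the Gaussian-divisible decomposition \eqref{Gcom_a}. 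Coupled to this matrix flow I run characteristic trajectories $z_i(t)$, $i=1,2$, of the underlying complex Burgers equation,
\[
\dot z_i(t) = -m(z_i(t), \Lambda(t)), \qquad z_i(t_0) = z_i,
\]
along which $M(z_i(t), \Lambda(t))$ from \Cref{defn_Mm} is preserved. The scale $\eta_i(t) := \im z_i(t)$ then decreases monotonically from some $\eta_0 \lesssim \eta + t_0 \le N^{-C\e_g}$ at $t=0$ down to the target $\eta$ at $t = t_0$, transferring the problem from a moderate initial scale to the required smaller one.

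For the time-evolution step, I apply Itô's formula to the observable $L(t) := D\langle G_t(z_1(t)) E_a G_t(z_2(t)) E_b \rangle$ and to its deterministic counterpart $K(t) := K_{(1,2)}(t)$ computed along the flow. The whole point of the characteristic is that the second-order drift generated by the $\dd B_a(t)$ contractions, the first-order drift $-\tfrac{1}{2}H_a(t)\dd t$ from the OU, and the motions $\dot z_i(t)$ and $\dot \Lambda(t)$ cancel against each other at the leading $M$-level via the Dyson equation \eqref{def_M}. Taking expectations and subtracting $K(t)$, the residual inequality has the schematic Gronwall form $\|\partial_t X(t)\|_{HS} \lesssim \|X(t)\|_{HS}/\eta_t + \mathcal{E}(t)$ for $X(t) := \E L(t) - K(t)$, where $\mathcal{E}(t)$ is controlled by the anisotropic local law \eqref{eq:aniso_local}, the product bound \eqref{entprodG}, and a \emph{weak} two-resolvent local law of the form \eqref{eq:2resolweak} which I would establish first through a simpler cumulant expansion, mirroring \cite{CES_PTRF, CEH2023}. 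Integrating along the decreasing $\eta_t$ trajectory, the target bound $N^{-1-(\e_g\wedge\delta_A)}\eta^{-2}$ at $t = t_0$ follows \emph{provided} the initial datum $\|X(0)\|_{HS}$ is already of this improved size at $\eta_0$.

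The main obstacle, as emphasized in the discussion preceding the lemma, is precisely this initial estimate at $\eta_0 \sim N^{-C\e_g}$, which is \emph{not} the trivially large regime where one-shot expansion suffices. At $t = 0$ the matrix is a pure non-Gaussian block Wigner, and expanding $\E L_{(1,2)}(0)$ via the cumulant formula of \cite{Cumulant2, He:2017wm} produces three kinds of terms: the desired main term $K(0)$, centered random fluctuations of admissible size $N^{-1-c}\eta_0^{-2}$, and a stubborn \emph{deterministic} residue of exact order $N^{-1}\eta_0^{-2}$ originating from the fourth-cumulant and crossing contractions that couple the two $G$-strands through the off-diagonal $\Lambda$-blocks. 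I must extract this residue explicitly, write it in closed form using the self-consistent equation \eqref{self_m} and the resolvent formula \eqref{def_G0}, and exhibit the cancellation announced in \Cref{sec:idea}: after resumming the placements of the contraction and folding them back against \eqref{def_M} using the flatness bound \eqref{eq:est_M-M} for $K$, the residue collapses to a $N^{-1-(\e_g\wedge\delta_A)}\eta_0^{-2}$ term. This is the self-energy renormalization mechanism first isolated in \cite{yang2021delocalization}, and, given the non-mean-field block variance profile \eqref{eq:sij} together with the arbitrariness of $A$, no symmetry shortcut is available --- the cancellation must be executed by hand through a careful case analysis depending on whether $z_1, z_2$ lie in the same or opposite half-planes. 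I expect this explicit verification, rather than the flow step or the Gronwall bookkeeping, to absorb the bulk of the technical work.
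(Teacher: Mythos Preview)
Your overall strategy matches the paper's: run a characteristic flow from an initial scale $\eta_0\sim N^{-\e_g}$ down to the target $\eta$, with the initial estimate requiring an explicit fourth-cumulant cancellation. Two points, however, need correction.

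First, the flow step is more delicate than a single Gronwall for $X(t)=\E L(t)-K(t)$ with error controlled by \eqref{eq:aniso_local}, \eqref{entprodG}, and the weak law \eqref{eq:2resolweak}. The drift of $\E L$ contains terms of the form $\E\langle(G-M)E_a\rangle\cdot(L_{(1,2,1)})_{xya}$ and the variance of $L-K$; bounding these with your listed inputs gives only the unimproved $N^{-1}\eta^{-2}$. The paper proves a hierarchy of auxiliary flow lemmas---a weak two-resolvent law (\Cref{lem_flow_1}), a three-resolvent law (\Cref{lem_flow_2}), and an improved one-resolvent expectation $\E\langle(G-M)E_a\rangle\prec N^{-1-\e_g}(t_c-t)^{-1}$ (\Cref{lem_flow_1.5})---each with its own initial estimate at $\eta_0$ (\Cref{lemma_G_large}, \Cref{lemma_G_large_2}). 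Only with these does the Gronwall for $\E L-K$ close with the extra $N^{-(\e_g\wedge\delta_A)}$ factor; this hierarchy is the substantive content you are missing. Along the same lines, $M$ is not preserved along the flow but satisfies $\dot M_t=\tfrac12 M_t$ (see \eqref{devM}), and $K_t$ obeys the nonlinear ODE \eqref{eq:whMK}; these equations are what make the drift cancellation and the operator-norm bound $\|\cal T_t\|_{op}\le 1+\oo(1)$ in \eqref{Tnorm} work.

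Second, the initial-scale cancellation does not use the flatness bound \eqref{eq:est_M-M}. One first replaces $M_i$ by the scalar $m_{sc}(z_i)I$ via \eqref{eq:msc0.5}, incurring an $\OO(\|A\|)=\OO(N^{-\delta_A})$ error---this is the sole origin of the $\delta_A$ in \eqref{main_G_es}. The residual fourth-cumulant contribution is then a scalar multiple of $K_{ab}$, and that scalar vanishes to $\OO(\eta_0)$ by an explicit algebraic identity in $m_{sc}$ (the paper's \eqref{eq:sumzero}); your case split between $z_1=z_2$ and $z_1=\bar z_2$ is correct, the former being easier since $\|(1-\wh M)^{-1}\|=\OO(1)$ there by \eqref{1-M-2}.
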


Now, \Cref{main_lemma} follows from \Cref{main_lemma_G} combined with a Green's function comparison result stated in \Cref{main_lemma_com}. We postpone the proof of \Cref{main_lemma_com} to \Cref{sec:comparison}.
\begin{lemma}\label{main_lemma_com}
Let $H$ and $\wt H$ be two matrices satisfying \Cref{main_assm}. Suppose they satisfy the following moment-matching conditions: for $i,j\in \cal I$ and integers $k,l\ge 0$, 
\be\label{comH0}
\E (H_{ij})^k (H^*_{ij})^l - \E (\wt H_{ij})^k (\wt H^*_{ij})^l=0 \ \ \text{for} \ \ k+l\le 3, 
\ee
and there exists a constant $\delta \in (0,1/2)$ such that 
\be\label{comH}
\left|\E (H_{ij})^k (H^*_{ij})^l - \E (\wt H_{ij})^k (\wt H^*_{ij})^l\right| \lesssim N^{-2-\delta}  \ \ \text{for} \ \ k+l= 4. 
\ee
Then, for any $z\in \mathbf D(\kappa,\tau)$, $z_1,z_2\in \{z, \bar z\}$, and $a,b \in \llbracket D\rrbracket$, 
\be\label{eq:cpmparison}
\mathbb E \langle G_1E_{a}G_2E_{b}\rangle-\mathbb E \langle \wt G_1E_{a}\wt G_2E_{b}\rangle\prec N^{-1-\delta}\eta^{-2},
\ee
where $\wt G_i \equiv G(z_i,\wt H,\Lambda)$, $i\in \{1,2\}$, denote the Green's functions of $\wt H$.
\end{lemma}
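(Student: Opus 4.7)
The plan is to prove the comparison estimate by a Lindeberg-style swapping argument tailored to the two-resolvent observable $F(H):=\langle G(z_1,H,\Lambda)E_a G(z_2,H,\Lambda)E_b\rangle$. First enumerate the independent entries of the block-diagonal $H$ (the real diagonal entries and the complex upper-triangular entries of each $H_a$, $a\in\qqD$) as $h_1,\dots,h_M$ with $M=\OO(DN^2)$, and let $H^{(\gamma)}$ be the matrix in which the first $\gamma$ entries of $H$ have been replaced by the corresponding entries of $\wt H$, so that $H^{(0)}=H$ and $H^{(M)}=\wt H$. Telescoping reduces the statement to bounding $\E F(H^{(\gamma-1)})-\E F(H^{(\gamma)})$ and summing over $\gamma$.

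Fix $\gamma$ and let $(i,j)$ be the position at which $H^{(\gamma-1)}$ and $H^{(\gamma)}$ differ, with corresponding random entries $h$ and $\wt h$. Let $H^{0}$ be the matrix with that entry (and its Hermitian conjugate) set to zero, set $\VV^{0}:=H^{0}+\Lambda$ and $G^{0}_\ell:=(\VV^{0}-z_\ell)^{-1}$, and define $\phi(s)$ to be $F$ evaluated on the Hermitian matrix whose $(i,j)$ entry is $s$, so that $\E F(H^{(\gamma-1)})=\E\phi(h)$ and $\E F(H^{(\gamma)})=\E\phi(\wt h)$. Taylor-expand $\phi$ around $s=0$ to fourth order, with an integral remainder $R_K$ of order $K+1$ for some large fixed $K$, and take expectations. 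The moment-matching hypothesis \eqref{comH0} cancels the zeroth through third order contributions identically in $\E\phi(h)-\E\phi(\wt h)$; the fourth-order term equals $(4!)^{-1}(\E h^4-\E\wt h^4)\,\E\phi^{(4)}(0)$, which by \eqref{comH} is bounded by $\OO(N^{-2-\delta})\,|\E\phi^{(4)}(0)|$. The remainder $R_K$ is controlled using the high-moment bound \eqref{eq:highmoment}, $\E|h|^{K+1}\lesssim N^{-(K+1)/2}$, which for $K$ sufficiently large dominates any fixed polynomial in $\eta^{-1}$ coming from bounds on the higher derivatives of $\phi$.

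To estimate $\phi^{(4)}(0)$, apply the resolvent identity $\partial_s G=-G(\partial_s H)G$ iteratively. The first derivative absorbs its rank-one insertion into the free matrix indices via cyclicity of the trace, giving
\[ \phi'(0)=-(DN)^{-1}\bigl[(G^{0}_1 E_a G^{0}_2 E_b G^{0}_1)_{ji}+(G^{0}_2 E_b G^{0}_1 E_a G^{0}_2)_{ji}\bigr], \]
an entry at $(j,i)$ of a three-resolvent product. Each subsequent derivative splits a single Green's function $G^{0}_\ell$ into $-G^{0}_\ell V G^{0}_\ell$, where the new rank-one factor $V\in\{e_ie_j^*,e_je_i^*\}$ comes from decomposing the rank-two Hermitian insertion $\partial_s H$ and is wedged strictly between the two freshly produced copies of $G^{0}_\ell$. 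By induction, every such internal insertion is separated from each other and from the boundary of the matrix product by at least one Green's function. The key observation is the collapsing identity $(A\,e_ie_j^*\,B)_{ji}=A_{ji}B_{ji}$ and its analogs for the other rank-one factors, which factor any internal insertion out of the matrix product. After four derivatives and iterated collapsing, each term in $\phi^{(4)}(0)$ becomes a product of exactly four sub-entries of $G^{0}_\ell$-products (possibly with $E_a$ or $E_b$ sandwiched inside), whose total number of Green's functions is six and each of which contains at least one. Since $\VV^{0}$ differs from $\VV$ by a rank-two perturbation, $G^{0}_\ell$ satisfies the same anisotropic local law as $G$ (\Cref{lem_loc}), so by \Cref{appen1} each sub-entry with $g$ Green's functions is $\OO_\prec(\eta^{-(g-1)})$.

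Summing the exponents $\sum_p (g_p-1)=6-4=2$ across the four pieces yields $|\phi^{(4)}(0)|\prec\eta^{-2}/(DN)$, uniformly in $(i,j)$. Plugging this into the Lindeberg sum, the dominant contribution is $M\cdot N^{-2-\delta}\cdot\eta^{-2}/(DN)\lesssim N^{-1-\delta}\eta^{-2}$, which is the claimed bound. The higher-order remainder $R_K$ is handled by the same derivative formula and collapsing analysis: the exact same counting shows that for every $k\ge 2$, $|\phi^{(k)}(0)|\prec\eta^{-2}/(DN)$ (each additional derivative adds one sub-entry piece \emph{and} one Green's function, keeping $\sum_p(g_p-1)=2$), so an extra factor $\E|h|^{K+1}\lesssim N^{-(K+1)/2}$ makes $R_K$ negligible once $K$ is large enough. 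The principal technical obstacle is the combinatorial bookkeeping of the iterated splitting, and in particular the verification that the rank-one insertions remain separated throughout the iteration, on which the uniform $\eta^{-2}$ bound crucially depends.
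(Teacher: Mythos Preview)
Your proposal is correct and takes essentially the same route as the paper's proof. The paper uses the continuous interpolation of \cite{knowles2017anisotropic} in place of your discrete Lindeberg swap, and writes the fourth-order contribution as a trace rather than collapsing it into a product of entries, but these are cosmetic differences: the resolvent expansion, the moment-matching cancellation through third order, and the key bound $|\phi^{(4)}(0)|\prec (DN)^{-1}\eta^{-2}$ via \Cref{appen1} are identical in substance. One small point worth tightening in your write-up: for off-diagonal complex entries you should expand separately in the real and imaginary parts (or equivalently in $s$ and $\bar s$), so the fourth-order term is a sum over all $(k,l)$ with $k+l=4$ rather than the single $(\E h^4-\E\wt h^4)\phi^{(4)}(0)$ you wrote; the bound \eqref{comH} covers all of these and your entrywise estimate applies to each mixed derivative, so nothing changes in the end.
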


\begin{proof}[Proof of Lemma \ref{main_lemma}]  
Given the matrix $H$ considered in \Cref{main_lemma}, we can construct another random matrix $\wt H$ satisfying the setting in \Cref{main_lemma_G} and such that the moment-matching conditions \eqref{comH0} and \eqref{comH} hold with $\delta=\e_g$ (see e.g., Lemma 6.5 in \cite{erdHos2012bulk}).  
By \Cref{main_lemma_G}, as long as we choose $\e_g$ small enough such that $C\e_g \le \eqq \le 1-C\e_g$, there is 
$$D\mathbb E \langle \wt G_1E_{a}\wt G_2E_{b}\rangle- (K_{(1,2)})_{ab}
\prec N^{-1-(\e_g\wedge \delta_A)}\eta^{-2},$$
for $\eta = N^{-1+ \eqq} $. On the other hand, by \Cref{main_lemma_com}, we have that 
$$\mathbb E \langle  G_1E_{a} G_2E_{b}\rangle- \mathbb E \langle \wt G_1E_{a}\wt G_2E_{b}\rangle \prec N^{-1-\e_g}\eta^{-2}.$$
Combining the above two estimates, we conclude Lemma \ref{main_lemma} by choosing $c_L<\e_g\wedge \delta_A$.
\end{proof}

\section{Gaussian divisible case} 
\label{sec:flow_prelim}

In this section, we prove the key lemma---\Cref{main_lemma_G}---in the proof of \Cref{mix}. We first introduce some new notations that are similar to those in \Cref{def_wtM} but with three $z$ arguments. 

\begin{definition}\label{defLK3}
Define the $D\times D\times D$ tensors $L$ and $K$ as
$$
\left[L(z_1, z_2, z_3, H, \Lambda)\right]_{a_1 a_2 a_3}:= D\langle G_1E_{a_1}G_2E_{a_2}G_3E_{a_3}\rangle,
$$
$$\left[K(z_1, z_2, z_3, \Lambda)\right]_{a_1 a_2 a_3} = \sum_{b_1, b_2, b_3}(I - \wh M_{(1,2)})^{-1}_{a_1b_1}(I - \wh M_{(2,3)})^{-1}_{a_2b_2}(I - \wh M_{(3,1)})^{-1}_{a_3b_3}D\langle M_1E_{b_1}M_2E_{b_2}M_3E_{b_3}\rangle ,
$$
for $a_i\in \qqD$, $i \in \{1,2,3\}$. Here, we have abused the notations a little bit and still use $L$ and $K$ to denote these tensors. Moreover, we will also abbreviate them by $L_{(1,2,3)}$ and $K_{(1,2,3)}$.
\end{definition} 

As discussed in \Cref{sec:idea}, the proof of \Cref{main_lemma_G} is based on the characteristic flow method in \cite{Lee_edge}. We start with several estimates at the initial scale $\eta\sim N^{-\e_g}$ as stated in Lemmas \ref{lemma_G_large} and \ref{lemma_G_large_2}. Their proofs will be postponed to Sections \ref{sec_simpleLK} and \ref{sec_hardLK}. 

\begin{lemma}\label{lemma_G_large}
Under the assumptions of \Cref{mix}, take any $z=E+\ii \eta$ with $|E|\le 2-\kappa$ and $\eta\sim N^{-\e_g}$. Then, for any constant $\e_g\in (0,1/4)$ and $z_i\in \{z, \bar z\}$, $i\in\{1,2,3\}$, we have that 
\begin{align}
 \|L_{(1,2 )}-K_{(1,2)}\|_{2}&\prec N^{-1 }\eta^{-3},\label{largG_Ent}\\
 \|L_{(1,2,3 )}-K_{(1,2,3)}\|_2 & \prec N^{-1 }\eta^{-4} ,\label{largG_3G}
\end{align}
where $\|\cdot\|_2$ denotes the $\ell_2$-norm by regarding matrices and tensors as vectors (for matrices, it is the Hilbert-Schmidt norm). 
\end{lemma}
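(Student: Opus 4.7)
The plan is to derive approximate self-consistent equations for $\mathbb{E}L_{(1,2)}$ and $\mathbb{E}L_{(1,2,3)}$ via iterated cumulant expansion in $H$, mirroring the defining equations of the deterministic approximations $K_{(1,2)}$ and $K_{(1,2,3)}$, and then to invert the associated linear operators. At the initial scale $\eta\sim N^{-\e_g}$ the single-resolvent local laws of \Cref{lem_loc} are very sharp, and the multi-resolvent entry bound of \Cref{appen1} provides $\OO_\prec(\eta^{1-p})$ control on strings of $p$ resolvents, making this regime amenable to a direct cumulant-expansion treatment.

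For \eqref{largG_Ent}, I would begin with the resolvent identity $G_1=M_1-M_1HG_1-m_1M_1G_1$, obtained from $G_1^{-1}-M_1^{-1}=H+m_1I$, substitute it into $L_{ab}=D\avg{G_1E_a G_2E_b}$, and take expectation. Applying the cumulant expansion of \cite{Cumulant2,He:2017wm} to the term $\mathbb{E}\avg{M_1HG_1E_aG_2E_b}$, the second-order (Gaussian) contribution combines with $-m_1\mathbb{E}\avg{M_1G_1E_aG_2E_b}$: the block structure of $\cal S$ forces the variance piece to take the shape $\sum_c(\wh M_{(1,2)})_{ac}\,\mathbb{E}L_{(1,2),cb}$, yielding the approximate fixed-point relation
\begin{equation*}
(I-\wh M_{(1,2)})\,\mathbb{E}L_{(1,2)}=\wh M_{(1,2)}+\mathcal{E},
\end{equation*}
with an error tensor $\cal E$. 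Higher cumulants of order $p+1$ contribute prefactors $N^{-(p-1)/2}$ times products of resolvent entries bounded via \Cref{appen1}, while centered $(G-M)$ fluctuations are handled by the Ward identity $GG^{*}=\eta^{-1}\im G$ and the anisotropic local law. Since $\|A\|\le N^{-\delta_A}$ makes $\wh M_{(1,2)}$ a small perturbation of its $\Lambda=0$ semicircle counterpart and $E$ lies in the bulk, $(I-\wh M_{(1,2)})^{-1}$ has bounded norm on the $D\times D$ block space; inverting and comparing with $K_{(1,2)}=(I-\wh M_{(1,2)})^{-1}\wh M_{(1,2)}$ gives the expectation bound, and the stochastic domination statement is obtained by running the same machinery on the moments $\mathbb{E}\|L_{(1,2)}-K_{(1,2)}\|_{2}^{2p}$ and applying Markov's inequality.

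For \eqref{largG_3G}, I would iterate the scheme on each of the three resolvents in turn. Expanding $G_1$ produces a factor $(I-\wh M_{(1,2)})^{-1}$ on the index $a_1$ after resumming the variance contribution; repeating on $G_2$ and $G_3$ generates the factors $(I-\wh M_{(2,3)})^{-1}$ and $(I-\wh M_{(3,1)})^{-1}$ on $a_2$ and $a_3$, which is exactly the tensor structure in the definition of $K_{(1,2,3)}$ in \Cref{defLK3}. Each additional resolvent contributes one extra $\eta^{-1}$ to the entry bound via \Cref{appen1}, so after the same Ward and local-law manipulations the analog of $\cal E$ is bounded by $N^{-1}\eta^{-4}$.

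The main obstacle is the bookkeeping of the cumulant expansion: every non-leading term must be shown to fit under the target $N^{-1}\eta^{-(p+1)}$ after judicious use of Ward's identity, the anisotropic local law to replace isolated $(G-M)$ factors, and \Cref{appen1} to control the remaining resolvent strings. A secondary difficulty is confirming uniform invertibility and boundedness of $I-\wh M_{(i,j)}$ on the $D\times D$ block space for all $z_i,z_j\in\{z,\bar z\}$ in the bulk; this should follow from the fact that $m(z)$ stays uniformly close to $m_{sc}(z)$ throughout $\mathbf{D}(\kappa,\tau)$ under the assumption $\|A\|\ll 1$. These are precisely the points I would expect \Cref{sec_simpleLK} and \Cref{sec_hardLK} to carry out in detail.
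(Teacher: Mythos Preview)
Your overall strategy is essentially the paper's: derive a self-consistent equation for $L_{(1,2)}$ (and then $L_{(1,2,3)}$) via \eqref{eq:G-M} and cumulant expansion, solve it against $K_{(1,2)}$, and control the renormalized fluctuation term $\langle\underline{G_1E_aM_2HG_2E_b}\rangle$ by high-moment bounds. That part is correct.

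There is, however, a genuine gap in your accounting. You assert that $(I-\wh M_{(1,2)})^{-1}$ has bounded norm for all $z_1,z_2\in\{z,\bar z\}$, arguing this from $m\approx m_{sc}$. This is false when $z_1=\bar z_2$: already in the $\Lambda=0$ case, $\wh M(z,\bar z)=|m_{sc}(z)|^2 I$, and $|m_{sc}(E+\ii\eta)|=1-\OO(\eta)$ in the bulk, so $(I-\wh M)^{-1}\sim\eta^{-1}$. With general $\Lambda$ the Perron--Frobenius eigenvalue of $\wh M(z,\bar z)$ is exactly $\im m/(\im m+\eta)$ by the Ward identity (see \eqref{sumwtM} and \eqref{1-M}), so again $\|(I-\wh M)^{-1}\|\sim\eta^{-1}$. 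The bound \eqref{1-M-2} you have in mind holds only for $z_1=z_2$.

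This $\eta^{-1}$ loss is not a nuisance to be absorbed---it is precisely why the target in \eqref{largG_Ent} is $N^{-1}\eta^{-3}$ rather than $N^{-1}\eta^{-2}$. In the paper's argument the pre-inversion error is $\OO_\prec((N\eta)^{-1}\eta^{-1})$ and the underlined term is $\OO_\prec((N\eta^2)^{-1})$; multiplying by $\|(I-\wh M)^{-1}\|\lesssim\eta^{-1}$ yields \eqref{largG_Ent}. For \eqref{largG_3G}, the paper does not expand all three resolvents symmetrically as you suggest. Instead it exploits that two of $z_1,z_2,z_3$ must coincide (say $z_1=z_3$), so that $K_{(3,1)}$ and $(I-\wh M_{(3,1)})^{-1}$ are genuinely $\OO(1)$; only one inversion costs $\eta^{-1}$, and combined with the already-established \eqref{largG_Ent} (in the form \eqref{largG_Ent_BB} and \eqref{largG_Ent_same}) this gives $N^{-1}\eta^{-4}$. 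Your plan to iterate on all three $G_i$'s with three bounded inverses would overcount if you believed all inverses bounded, and would overshoot the target if you used $\eta^{-1}$ for each. You need to track which pair of spectral parameters coincides.
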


\begin{lemma}\label{lemma_G_large_2}
Under the assumptions of \Cref{mix}, take any $z=E+\ii \eta$ with $|E|\le 2-\kappa$ and $\eta\sim N^{-\e_g}$. Then, for any constant $\e_g\in (0,1/4)$ and $z_i\in \{z, \bar z\}$, $i\in\{1,2\}$, we have that 
\begin{align}
& \max_{a\in \qqD} \left|\E \langle \left(G(z)-M(z)\right) E_a\rangle\right|   \prec N^{-1},\label{largG_1G} \\
& \| \E L_{(1,2 )}-K_{(1,2)}\|_2 \prec N^{-1}\eta^{-2}\left(\eta + N^{-\delta_A}\right).\label{largG_Ex}
 \end{align}
\end{lemma}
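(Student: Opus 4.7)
The plan is to use the cumulant expansion formula of \cite{Cumulant2,He:2017wm} to expand the expectations of $\langle (G-M) E_a\rangle$ and $L_{(1,2)}$ in the independent entries of $H$, and to derive refined self-consistent equations. The improvement over the naive bounds given by the averaged local law \eqref{eq:aver_local} and by \eqref{largG_Ent} relies on two ingredients: (1) at the expectation level the leading ``ladder'' contributions combine into an invertible self-consistent operator whose inverse is bounded uniformly in the bulk; (2) every factor of $\Lambda$ appearing in the resolvent identity and in the Dyson equation carries $\|A\| \lesssim N^{-\delta_A}$. Throughout, one has $\|M\|, \|\wh M\| = O(1)$ in the bulk, $\|G\|\le \eta^{-1}$, and the anisotropic local law \eqref{eq:aniso_local} together with \eqref{largG_Ent} provides control on every term that contains subleading resolvent entries.

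For \eqref{largG_1G}, I would start from the resolvent identity
$$ G - M = -M(H+mI)G,$$
which follows from $G^{-1} - M^{-1} = H + m$ (using $M^{-1}=\Lambda-z-m$ from Definition~\ref{defn_Mm}), take normalized trace against $E_a$ and expectation, and apply cumulant expansion to $\E\langle M H G E_a\rangle$ in the entries of $H$. The second-order (Gaussian) contractions generate $\E\langle M\, \cal S(G)\, G E_a\rangle$, which combines with $-m\,\E\langle M G E_a\rangle$ and the identity $\cal S(M)=mI$ to yield $\E\langle M\, \cal S(G-M)\, G E_a\rangle$. This renormalized term is quadratic in $G-M$ and hence $\prec N^{-1}$ by the averaged local law \eqref{eq:aver_local} together with $\|M\|=O(1)$ and $\|G\|\le\eta^{-1}$. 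The remaining higher-order cumulant terms ($k\ge 3$) come with explicit factors $N^{-k/2}$ and chains of Green's-function entries which, by \eqref{eq:aniso_local} and the moment condition \eqref{eq:highmoment}, each contribute $\prec N^{-1}$ at $\eta\sim N^{-\e_g}$ with $\e_g$ small.

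For \eqref{largG_Ex}, the same identity is used on $G_1$ inside $(L_{(1,2)})_{ab} = D\langle G_1 E_a G_2 E_b\rangle$. Expanding $\E\langle M_1 H G_1 E_a G_2 E_b\rangle$ by cumulants, the leading Gaussian contractions produce a ladder contribution equal to $[\wh M_{(1,2)}\,\E L_{(1,2)}]_{ab}$ plus the deterministic source term $(\wh M_{(1,2)})_{ab}$. This yields the matrix self-consistent equation
$$(I-\wh M_{(1,2)})\,\E L_{(1,2)} \;=\; \wh M_{(1,2)} \;+\; \cal E_{(1,2)},$$
and since $(I-\wh M_{(1,2)})^{-1}$ is bounded in the bulk, $\E L_{(1,2)} - K_{(1,2)} = (I-\wh M_{(1,2)})^{-1}\cal E_{(1,2)}$. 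The decisive step is to classify every contribution to $\cal E_{(1,2)}$ as either (i) a term containing an explicit factor of $\Lambda$---originating from the $\Lambda$ pieces of $M^{-1}$ or from the non-scalar blocks of $M$---each bounded by $\|A\|\cdot N^{-1}\eta^{-2} \lesssim N^{-1-\delta_A}\eta^{-2}$, or (ii) a term containing at least one additional centered resolvent entry $(G-M)$ or an explicit factor of $\eta$ (e.g., through $\im m$), each bounded by $N^{-1}\eta^{-1}$. Summing the two types gives the desired $N^{-1}\eta^{-2}(\eta+N^{-\delta_A})$ bound.

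I expect the main obstacle to lie in the combinatorial bookkeeping required to realize the dichotomy (i)--(ii): one must verify, term by term in the cumulant expansion, that any ``purely $H$-derived'' error carries a gain of order $\eta$ from an extra $G-M$ factor, while any error from the $\Lambda$-pieces carries the gain $N^{-\delta_A}$. This is the self-energy renormalization phenomenon alluded to in \Cref{sec:idea}, and it has to be tracked with enough precision to obtain the $\ell_2$-norm bound uniformly over the $D^2$ index pairs $(a,b)$; the latter is accessible because $D$ is fixed and $\|M\|,\|\wh M\|$ are uniformly bounded in the bulk.
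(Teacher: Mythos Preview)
Your outline for \eqref{largG_1G} is essentially the paper's argument, modulo one slip: the term $\E\langle M\,\cal S(G-M)\,G\,E_a\rangle$ is \emph{not} quadratic in $G-M$. Writing $G=M+(G-M)$ splits it into a linear piece $\sum_x(\wh M_{(1,1)})_{ax}\,\E\langle(G-M)E_x\rangle$, which must be moved to the left to form the self-consistent operator $(I-\wh M_{(1,1)})$, and a genuinely quadratic remainder $\prec (N\eta)^{-2}$. Since $\|(I-\wh M_{(1,1)})^{-1}\|=\OO(1)$ in the bulk, inverting then gives \eqref{largG_1G}. This is harmless but should be stated correctly.

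For \eqref{largG_Ex} there are two genuine gaps. First, your assertion that $(I-\wh M_{(1,2)})^{-1}$ is ``bounded in the bulk'' is false in the relevant case $z_1=\bar z_2$: by \eqref{1-M} one has $\|(I-\wh M_{(1,2)})^{-1}\|\sim\eta^{-1}$, not $\OO(1)$. The budget for the error $\cal E_{(1,2)}$ is therefore $N^{-1}\eta^{-1}(\eta+N^{-\delta_A})$, one power of $\eta$ tighter than what your dichotomy (i)--(ii) would deliver. Second, and more seriously, the leading error terms do \emph{not} fit into your dichotomy. They are the fourth-cumulant contributions to $\E\langle\underline{G_1E_aM_2HG_2E_b}\rangle$ and to $\E\langle(G_i-M_i)E_x\rangle$ (Lemmas~\ref{lem:fourthcumu1} and~\ref{lem:fourthcumu2} in the paper). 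After the natural replacement $G\to M$ these terms are \emph{deterministic}, each of size $N^{-1}\cdot K_{ab}\sim N^{-1}\eta^{-1}$, and contain no explicit $\Lambda$, no $(G-M)$, and no visible $\eta$. The extra factor of $\eta$ does not come from an additional centered resolvent; it comes from an explicit algebraic cancellation: after further replacing $M$ by $m_{sc}I$ (this step costs $\|A\|\cdot N^{-1}\eta^{-1}$ and produces the $N^{-\delta_A}$ part of the bound), the fourth-cumulant coefficients combine into the expression
\[
m_{sc}(\bar z)^4+|m_{sc}|^4+|m_{sc}|^2m_{sc}(z)^2+|m_{sc}|^2m_{sc}(\bar z)^2+\frac{m_{sc}(z)^4|m_{sc}|^2}{1-m_{sc}(z)^2}+\frac{m_{sc}(\bar z)^6}{1-m_{sc}(\bar z)^2},
\]
which one verifies by direct computation to be $\OO(\eta)$. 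This is precisely the ``self-energy renormalization'' you allude to, but it is a cancellation among fully deterministic order-$N^{-1}\eta^{-1}$ terms, not a power-counting consequence of an extra $(G-M)$ factor. Without computing Lemmas~\ref{lem:fourthcumu1}--\ref{lem:fourthcumu2} precisely and checking this identity (or the equivalent Ward-identity argument in Remark~\ref{rem:self}), your scheme would stall at $\cal E_{(1,2)}=\OO_\prec(N^{-1}\eta^{-1})$, which after the $\eta^{-1}$ from the inverse yields only $N^{-1}\eta^{-2}$---the bound of Lemma~\ref{lemma_G_large}, not the improved \eqref{largG_Ex}.
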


Next, we apply the flow method to transfer the bounds at larger scales of $\eta$ to smaller scales of $\eta$.  

\begin{definition}[Characteristic flow]\label{flow} 
Given a starting time $t_0\in \R$ and initial values $(z_{t_0},\Lambda_{t_0})$, we define flows of $z$ and $\Lambda$ as 
\be\label{eq:deterflow} \frac{\dd}{\dd t}z_t=-\frac12 z_t- \langle M_t\rangle,\quad   \frac{\dd}{\dd t}\Lambda_t=-\frac12 \Lambda_t,\quad t\ge t_0, 
\ee
where $M_t:= M(z_t, \Lambda_t)$ is the solution to \eqref{def_M} with $z$ and $\Lambda$ replaced by $z_t$ and $\Lambda_t$. Let $t_c:=\inf\{t\ge t_0: \im z_{t_c}=0\}$ be the first time $\im z_t$ vanishes. We also introduce the function $Z:\C\times \C^{DN\times DN}\to\C^{DN\times DN} $ as $Z(z ,\Lambda ):=z I - \Lambda$ and abbreviate that $Z_t:=Z(z_t , \Lambda_t )$. Note that $Z_t$ satisfies
\be\label{eq:Zt}
 \frac{\dd}{\dd t}Z_t=-\frac12 Z_t- \langle M_t\rangle .
\ee
Given the initial random matrix $H_{t_0}$ satisfying \Cref{main_assm} with diagonal blocks $(H_a)_{t_0}$, $a\in \qqD$, we define the flow $H_t$ as a $DN\times DN$ random matrix with diagonal blocks $(H_a)_{t}$ being matrix-valued OU processes 
\be\label{eq:Ht} \dd (H_a)_t = -\frac12(H_a)_t\dd t + \frac{1}{\sqrt{N}}\dd (B_a)_t,\ee
where $(B_a)_t$, $a\in \qqD$, are independent complex Hermitian matrix Brownian motions (i.e., $\sqrt{2} \re (B_a)_{ij}$ and $ \sqrt{2} \im (B_a)_{i j}$, $i<j$, and $(B_a)_{i i}$ are independent standard Brownian motions and $(B_a)_{j i}=(\overline{B}_a)_{i j}$). In particular, for each $t\ge t_0$, $(H_a)_t$ has the same law as  
\be\label{eq:Gauss_div}
e^{-(t-t_0)/2}\cdot  H_a^{(0)}+ \sqrt{1-e^{-(t-t_0)}}\cdot H_a^{(g)},
\ee
where $H_a^{(g)}$, $a\in \qqD$, are i.i.d.~GUE. Then, we define the Green's function flow 
$G_t=\left(H_t+\Lambda_t-z_t\right)^{-1}.$
Finally, with $(z_{i})_t$, $i\in \{1,2,3\}$, $\Lambda_t$, $H_t$, and $M_t$, we can define 
$$ \wh M_{(1,2),t}=\wh M((z_{1})_t,(z_{2})_t,\Lambda_t),\quad L_{(1,2),t}=L((z_{1})_t,(z_{2})_t,H_t, \Lambda_t),\quad K_{(1,2),t}=K((z_{1})_t,(z_{2})_t,\Lambda_t)$$ 
as in \Cref{def_wtM}, and define
$$L_{(1,2,3),t}=L((z_{1})_t, (z_{2})_t, (z_{3})_t, H_t, \Lambda_t),\quad K_{(1,2,3),t}=K((z_{1})_t, (z_{2})_t, (z_{3})_t, \Lambda_t)$$ 
as in \Cref{defLK3}. 
\end{definition}

We now collect some basic properties of the characteristic flows in \eqref{eq:deterflow}. 
\begin{lemma} 
\label{lem_prop_flow} 
Under \Cref{flow}, the following properties hold.

\begin{itemize}
    \item Denote $m_t:=\langle M_t\rangle$. Suppose $\im z_t=\oo(1)$. Then, for any $t\ge t_0$, we have that
    \be\label{t_eta}
t_c-t=\frac{\im z_t}{\im m_t}(1+\oo(1)).
 \ee

 \item $M_t$ satisfies the following equation for ${t\ge t_0}$:
 \be\label{devM}
  \frac{\dd }{\dd t} M( z_t, \Lambda_t)=\frac{1}{2}M( z_t, \Lambda_t).
\ee

 \item {\bf Conjugate flow}: We have $\overline Z_t=Z(\bar z_t, \Lambda_t)$ and $\overline M_t=M(\bar z_t, \Lambda_t)$. Moreover, they satisfy the following equations under the conjugate flows $(\bar z_t, \Lambda_t)$ for ${t\ge t_0}$:
\be\label{Conflow}
 \frac{\dd}{\dd t}Z(\bar z_t, \Lambda_t )=-\frac12 Z(\bar z_t, \Lambda_t )- \langle M(\bar z_t, \Lambda_t )\rangle ,\quad \frac{\dd}{\dd t} M( \bar z_t, \Lambda_t)=\frac{1}{2}M( \bar z_t, \Lambda_t).
\ee

   \item  For any $   (z_{i})_{t}\in \{z_t, \bar z_t\}$, $i\in\{1,2,3\}$, $\wh M_{(1,2),t}$ and $K_{(1,2),t}$ satisfy the equations
   \be\label{eq:whMK}
    \frac{\dd}{\dd t}\wh M_{(1,2),t} =\wh M_{(1,2),t},\quad \frac{\dd}{\dd t} K_{(1,2),t}= \left(K_{(1,2),t}\right)^2+K_{(1,2),t},
    \ee
     and $K_{(1,2,3),t}$ satisfies that for any $a_1,a_2,a_3\in \qqD$, 
    \begin{align}
        \frac{\dd}{\dd t} (K_{(1,2,3),t})_{a_1 a_2 a_3}= \frac{3}{2}K_{(1,2,3),t} + \sum_{a=1}^D &\left[(K_{(1,2),t})_{a_1 a}(K_{(1,2,3),t})_{a a_2 a_3}  +(K_{(2,3),t})_{a_2 a}(K_{(1,2,3),t})_{a_1 a a_3} \right. \nonumber\\
    &\left.+ (K_{(3,1),t})_{a_3 a}(K_{(1,2,3),t})_{a_1 a_2 a}\right] .   \label{eq:whMK3} 
    \end{align}

\end{itemize} 
  
\end{lemma}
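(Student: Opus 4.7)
The strategy is to first establish item (2)---the identity $\dot M_t = \tfrac{1}{2}M_t$---and then derive the remaining items from it. The cleanest route is to guess and verify an explicit solution to the coupled system (the flow \eqref{eq:deterflow} together with the Dyson equation \eqref{def_M}). With $\Lambda_t = e^{-(t-t_0)/2}\Lambda_{t_0}$, propose the ansatz
\[
z_t := e^{-(t-t_0)/2}(z_{t_0} + m_{t_0}) - e^{(t-t_0)/2}m_{t_0},\qquad M_t := e^{(t-t_0)/2} M_{t_0}.
\]
A short direct calculation shows that this ansatz satisfies both the flow equation $\dot z_t = -\tfrac{1}{2}z_t - \langle M_t\rangle$ and the matrix Dyson equation $M_t^{-1} = \Lambda_t - z_t - \langle M_t\rangle$ for all $t < t_c$, and that $\im z_t > 0$ throughout this interval. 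By uniqueness of the solution to the Dyson equation with positive imaginary part (\Cref{defn_Mm}), combined with uniqueness of the ODE \eqref{eq:deterflow}, this ansatz is the true solution; thus \eqref{devM} holds and, taking normalized trace, $\dot m_t = \tfrac{1}{2}m_t$.

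Item (1) is then immediate: taking imaginary parts of the formula for $z_t$ gives $\im z_t = e^{-(t-t_0)/2}\im z_{t_0} - 2\sinh\bigl((t-t_0)/2\bigr)\im m_{t_0}$, which combined with $\im m_t = e^{(t-t_0)/2}\im m_{t_0}$ yields $t_c - t = \log(1 + \im z_t / \im m_t)$ upon imposing $\im z_{t_c} = 0$. Since $\im m_t$ is bounded away from $0$ uniformly in the bulk (by the assumption on $\re z_t$) and $\im z_t = o(1)$, expanding the logarithm to leading order yields \eqref{t_eta}. Item (3) is obtained by repeating the argument along the conjugate flow started from $(\bar z_{t_0},\Lambda_{t_0})$; the explicit formula for $z_t$ immediately gives $\bar z_t$ at time $t$ (using $m(\bar z_{t_0},\Lambda_{t_0}) = \overline{m_{t_0}}$), and the same derivation as for $M_t$ and $Z_t$ produces the stated evolution equations for $M(\bar z_t, \Lambda_t)$ and $Z(\bar z_t, \Lambda_t)$.

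Item (4) is bookkeeping via the product rule combined with $\dot M_i = \tfrac{1}{2}M_i$. For $\wh M_{(1,2),t} = D\langle M_1 E_a M_2 E_b\rangle$, two factors of $\tfrac{1}{2}$ combine to give $\dot{\wh M}_{(1,2),t} = \wh M_{(1,2),t}$. Writing $P_{(1,2)} := (I-\wh M_{(1,2)})^{-1}$, one has $K_{(1,2)} = P_{(1,2)} - I$ and $\dot K_{(1,2)} = P_{(1,2)}\dot{\wh M}_{(1,2)} P_{(1,2)} = P_{(1,2)}\wh M_{(1,2)} P_{(1,2)}$; substituting $P_{(1,2)}\wh M_{(1,2)} = K_{(1,2)}$ and $P_{(1,2)} = I + K_{(1,2)}$ reduces this to $K_{(1,2)} + K_{(1,2)}^2$. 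For the three-index tensor, express $K_{(1,2,3)}$ as the contraction of the three factors $P_{(i,j)}$ against $T_{b_1 b_2 b_3} := D\langle M_1 E_{b_1} M_2 E_{b_2} M_3 E_{b_3}\rangle$ and apply the product rule: the $\dot T = \tfrac{3}{2}T$ term yields $\tfrac{3}{2}K_{(1,2,3),t}$, while each $\dot P_{(i,j)} = P_{(i,j)}\wh M_{(i,j)} P_{(i,j)}$ inserts an extra factor $K_{(i,j), a_i a}$ and leaves the remaining contraction equal to $K_{(1,2,3)}$ with index $a_i$ replaced by $a$, reproducing exactly \eqref{eq:whMK3}. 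The only real obstacle is carefully tracking the three distinct index slots in this final contraction; there is no conceptual difficulty beyond the triple-tensor bookkeeping.
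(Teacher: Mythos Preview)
Your proof is correct. The only substantive difference from the paper is in the derivation of \eqref{devM}: the paper differentiates $M_t = -(Z_t+m_t)^{-1}$ directly to obtain $\dot M_t = M_t(\dot Z_t+\dot m_t)M_t$, takes the normalized trace to get a linear equation in $\dot m_t$, solves it as $\dot m_t = \tfrac12 m_t$, and feeds this back; you instead write down an explicit closed-form solution $z_t = e^{-(t-t_0)/2}(z_{t_0}+m_{t_0}) - e^{(t-t_0)/2}m_{t_0}$, $M_t = e^{(t-t_0)/2}M_{t_0}$ and verify it satisfies both the flow and the Dyson equation, then appeal to uniqueness. Your approach buys the exact identity $t_c-t=\log(1+\im z_t/\im m_t)$, from which \eqref{t_eta} follows by a one-term Taylor expansion---this is cleaner and more precise than the paper's informal argument for item (1), which only says that \eqref{t_eta} ``follows by tracking $\im m_t$'' together with continuity of $z_s$ and $m_s$ on $[t,t_c]$. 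For items (3) and (4) your argument is essentially identical to the paper's (conjugation plus the product rule combined with $\dot M_i=\tfrac12 M_i$); your detailed bookkeeping for $K_{(1,2)}$ and $K_{(1,2,3)}$ is exactly what the paper leaves as ``direct calculations''.
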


\begin{proof}
The estimate \eqref{t_eta} follows by tracking $\im m_t$ solved from the first equation in \eqref{eq:deterflow} and using the simple facts that $z_s-z_t=\oo(1)$ and $m_s(z_s)-m_t(z_t)=\oo(1)$ for all $s\in [t,t_c]$ as long as $|t_c-t|=\oo(1)$. Now, we prove the identity  \eqref{devM}. Using $M_t=-(Z_t+m_t)^{-1}$ and equation \eqref{eq:Zt}, we obtain that 
\be\label{derM_t}
\dot M_t = M_t(\dot Z_t + \dot m_t)M_t = M_t(- Z_t/2 - m_t + \dot m_t)M_t,
\ee
where we have used Newton's notation for derivatives with respect to time. Taking trace and using $Z_tM_t = -I - m_tM_t$, we obtain 
\[\dot m_t = \frac{1}{2}m_t + \frac{1}{2}m_t\langle M_t^2\rangle - m_t\langle M_t^2\rangle + \dot m_t\langle M_t^2\rangle \ \Rightarrow \  \dot m_t = \frac{1}{2}m_t .\] 
Plugging it back into \eqref{derM_t}, we conclude \eqref{devM}. 
Taking conjugate transposes of \eqref{eq:Zt} and \eqref{devM}, we get \eqref{Conflow}. 
The equations \eqref{eq:whMK} and \eqref{eq:whMK3} follow from direct calculations with \eqref{eq:Zt} and \eqref{devM}. 
\end{proof}

For the proof of \Cref{main_lemma_G}, we need the following several key lemmas, Lemmas \ref{lem_flow_1}--\ref{lem_flow_3}. These lemmas rely on the previously defined flow and their detailed proofs will be provided in \Cref{sec:flow}.

\begin{lemma}\label{lem_flow_1}   
Suppose the assumptions of \Cref{mix} hold for $H_{t_0}$ and $\Lambda_{t_0}$. 
Under \Cref{flow}, take any $z_{t_0}=E+\ii \eta\in \mathbf D(\kappa, \tau)$ such that $t_c-t_0\sim N^{-\e_g}$. Let $(z_1)_t, (z_2)_t\in \{z_t, \bar z_t\}$ for $t\in [t_0,t_c]$. Then, for any fixed constant $C>4$ and $t\in [t_0,t_c- N^{-1+C\e_g}]$, 
\be\label{flow_1_result}
 \|L_{(1,2),t}- K_{(1,2),t}\|_{2}\prec \frac{(t_c-t_0)^2}{(t_c-t)^2}\left\|L_{(1,2),t_0}- K_{(1,2),t_0}\right\|_2+\frac{1}{N(t_c-t)^2}.
 \ee 
Together with \eqref{largG_Ent}, it implies that for any $t\in [t_0,t_c- N^{-1+C\e_g}]$, 
\be\label{flow_1_result_2}
 \left\| L_{(1,2),t}- K_{(1,2),t}\right\|_2\prec  \frac{N^{\e_g}}{N(t_c-t)^2}.
 \ee
\end{lemma}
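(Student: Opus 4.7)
The plan is to follow the characteristic flow methodology: apply Itô's formula to $L_{(1,2),t}$, subtract the deterministic equation \eqref{eq:whMK} for $K_{(1,2),t}$, exploit the cancellations engineered by \eqref{eq:deterflow} to obtain a linear SDE for the difference, and then solve it by variation of parameters. The structure of \eqref{flow_1_result} mirrors this: the first term is the initial error propagated by the Green's function of the linearised flow, which blows up like $(t_c-t)^{-2}$ because $(I-\wh M_{(1,2),t})^{-1}$ diverges like $(t_c-t)^{-1}$ near criticality, while the second term is the martingale noise floor of size $1/[N(t_c-t)^2]$.

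First, using $(H_t+\Lambda_t-(z_i)_t)G_{i,t}=I$ together with the OU dynamics \eqref{eq:Ht} and the characteristic equations for $(z_i)_t$ and $\Lambda_t$, a direct Itô computation gives
\begin{equation*}
\dd G_{i,t}=\tfrac{1}{2}G_{i,t}\,\dd t+G_{i,t}\,\cal S(G_{i,t}-M_{i,t})\,G_{i,t}\,\dd t+\dd\cal M_{i,t},
\end{equation*}
where $\dd\cal M_{i,t}=-N^{-1/2}G_{i,t}\,\dd B_t\,G_{i,t}$ and the drift $-m_tG_{i,t}^2$ coming from $\tfrac{\dd}{\dd t}(z_i)_t=-\tfrac{1}{2}(z_i)_t-m_t$ exactly cancels the Itô self-energy $G_{i,t}\cal S(M_{i,t})G_{i,t}=m_tG_{i,t}^2$; this is the raison d'\^etre of \eqref{eq:deterflow}. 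Applying Itô to $L_{(1,2),t}=D\langle G_{1,t}E_aG_{2,t}E_b\rangle$, the cross variation between $G_{1,t}$ and $G_{2,t}$ generates the nonlinear drift $(\wh M_{(1,2),t}L_{(1,2),t}+L_{(1,2),t}\wh M_{(1,2),t})\,\dd t$ at leading order, obtained by replacing one copy of $G_{i,t}$ in each resulting three-resolvent quantity by $M_{i,t}$ via the anisotropic local law. Subtracting the $K$-equation \eqref{eq:whMK}, together with $(I+K_{(1,2),t})=(I-\wh M_{(1,2),t})^{-1}$, yields
\begin{equation*}
\frac{\dd}{\dd t}(L_{(1,2),t}-K_{(1,2),t})=\cal A_t\bigl(L_{(1,2),t}-K_{(1,2),t}\bigr)+\cal E_t\,\dd t+\dd\widetilde{\cal M}_t,
\end{equation*}
with $\cal A_t(X)=X+\wh M_{(1,2),t}X+X\wh M_{(1,2),t}$, a deterministic error $\cal E_t$ captured by $\|L_{(1,2,3),t}-K_{(1,2,3),t}\|_2\prec[N(t_c-t)^4]^{-1}$ via \eqref{largG_3G} combined with $\eta_t\sim t_c-t$ (see \eqref{t_eta}), and a martingale $\widetilde{\cal M}_t$.

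Second, I would solve this equation by variation of parameters. The Green's function $\Phi_{t,s}$ of $\dot X=\cal A_tX$ factorises through $(I-\wh M_{(1,2),\cdot})^{-1}$, and \eqref{t_eta} gives $\|\Phi_{t,s}\|\lesssim (t_c-s)^2/(t_c-t)^2$. Propagating the initial data produces the first term in \eqref{flow_1_result}. Integrating $\Phi_{t,s}\cal E_s$ against $s\in[t_0,t]$ yields a contribution $\prec\int_{t_0}^t[(t_c-s)^2/(t_c-t)^2]\cdot[N(t_c-s)^4]^{-1}\,\dd s\prec [N(t_c-t)^2]^{-1}$, and the Burkholder--Davis--Gundy inequality applied to $\int_{t_0}^t\Phi_{t,s}\,\dd\widetilde{\cal M}_s$ yields the same order once the bracket of $\widetilde{\cal M}$ is reduced via the anisotropic local law to products of two-resolvent traces of order $[N^2(t_c-s)^4]^{-1}\,\dd s$. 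This proves \eqref{flow_1_result}; then \eqref{flow_1_result_2} follows by inserting the initial bound \eqref{largG_Ent} (which gives $\|L_{(1,2),t_0}-K_{(1,2),t_0}\|_2\prec N^{-1+3\e_g}$) together with $(t_c-t_0)\sim N^{-\e_g}$.

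The main obstacle is the careful isolation of $\cal E_t$: besides the transparent contribution from $L_{(1,2,3),t}-K_{(1,2,3),t}$, the self-energy term $G_t\cal S(G_t-M_t)G_t$ in $\dd G_t$ generates sub-leading quantities that must be reorganised into pieces either linear in $L_{(1,2),t}-K_{(1,2),t}$ (to be absorbed into $\cal A_t$) or genuinely three-resolvent, and mixed spectral combinations $(z_1)_t=z_t,(z_2)_t=\bar z_t$ require separate bookkeeping because $\im G$-like cancellations alter the structure of $\cal A_t$. Finally, the constant $C>4$ in the statement is used so that the stopping time $t_c-N^{-1+C\e_g}$ lies safely inside the regime where the three-resolvent bound \eqref{largG_3G} (valid at the initial scale $\eta_{t_0}\sim N^{-\e_g}$) can be propagated along the flow, and so that a standard discretisation-in-$t$ argument yields uniform stochastic domination on the whole interval $[t_0,t_c-N^{-1+C\e_g}]$.
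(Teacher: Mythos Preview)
Your overall strategy is right, but three key steps are either misidentified or missing.

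\textbf{The linear operator is wrong.} The cross-variation term in the Itô expansion of $D\langle G_{1,t}E_aG_{2,t}E_b\rangle$ yields $\sum_c(L_{(1,2),t})_{ac}(L_{(1,2),t})_{cb}$, i.e.\ the full nonlinear drift of $L_t$ is $L_t+L_t^2$ (matching \eqref{eq:whMK} for $K_t$). Linearising $L_t^2-K_t^2$ around $K_t$ gives $K_tX+XK_t$, not $\wh M_tX+X\wh M_t$. This is not cosmetic: since $\|\wh M_{(1,2),t}\|\le1$, your $\cal A_t$ has bounded norm and its propagator would be $O(1)$, flatly contradicting your own claim $\|\Phi_{t,s}\|\lesssim(t_c-s)^2/(t_c-t)^2$. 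The paper avoids this by passing to $\wt L_t:=(t_c-t)L_t$ and $\wt K_t:=(t_c-t)K_t$, for which the relevant linear operator is $\cal T_t(V)=\wt K_tV+V\wt K_t-[1-(t_c-t)]V$; the crucial and non-obvious estimate is $\|\cal T_t\|_{op}\le1+\oo(1)$, obtained by diagonalising $\wt K_t$ and checking $|(\Xi_t)_{kk}+(\Xi_t)_{ll}-1|\le1+\oo(1)$ via the elementary fact $|1/(1+w)-\tfrac12|\le\tfrac12$ for $\re w\ge0$. This spectral computation is the technical heart of the proof and is absent from your sketch.

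\textbf{The quadratic remainder is missing.} After subtracting, the equation for $L_t-K_t$ retains a $(L_t-K_t)^2$ term that cannot be placed in either $\cal A_t$ or $\cal E_t$ a priori. The paper closes this by a continuity/stopping-time argument (a self-improving estimate of the form $N(t_c-t)\|\wt\LK_t\|_2\le N^{2\e_g}\Rightarrow\ \prec N^{\e_g}+N^{(4-C)\e_g}$), and it is exactly here that the hypothesis $C>4$ is used---not, as you suggest, to propagate \eqref{largG_3G}.

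\textbf{The drift error is handled differently.} The residual drift is $D\langle(G_{i,t}-M_{i,t})E_a\rangle\,[L_{(1,2,i),t}]_{xya}$, which the paper bounds directly by $\OO_\prec((N\eta_t)^{-1}\cdot\eta_t^{-2})$ using only the averaged local law \eqref{eq:aver_local} and \Cref{appen1}. Your route through $L_{(1,2,3),t}-K_{(1,2,3),t}$ is circular, since the propagation of the three-resolvent local law along the flow is \Cref{lem_flow_2}, proved \emph{after} and \emph{using} \Cref{lem_flow_1}.
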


\begin{lemma}\label{lem_flow_2}   
Under the assumptions of \Cref{lem_flow_1}, let $(z_1)_t, (z_2)_t, (z_3)_t\in \{z_t, \bar z_t\}$ for $t\in [t_0,t_c]$. Then,  we have that for any $t\in [t_0,t_c- N^{-1+C\e_g}]$,
\be\label{flow_3G_res}
\left\| L_{(1,2,3),t}- K_{(1,2,3),t}\right\|_2\prec \frac{(t_c-t_0)^3}{(t_c-t)^3}\left\|L_{(1,2,3),t_0}- K_{(1,2,3),t_0}\right\|_2+\frac{N^{\e_g}}{N(t_c-t)^3}. 
\ee
Together with \eqref{largG_3G}, it implies that for any $t\in [t_0,t_c- N^{-1+C\e_g}]$,
\be\label{flow_3G_res_2}
\left\|L_{(1,2,3),t}- K_{(1,2,3),t}\right\|_2 \prec \frac{N^{\e_g}}{N(t_c-t)^3}. 
\ee
\end{lemma}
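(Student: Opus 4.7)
\medskip

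\noindent\textbf{Proof proposal for Lemma \ref{lem_flow_2}.}

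The plan is to mimic the characteristic-flow strategy that yields Lemma~\ref{lem_flow_1}, but one order higher: derive a stochastic evolution equation for $L_{(1,2,3),t}$, subtract the deterministic evolution \eqref{eq:whMK3} of $K_{(1,2,3),t}$, and then solve the resulting linear inhomogeneous equation for $D_{(1,2,3),t}:=L_{(1,2,3),t}-K_{(1,2,3),t}$ via a Duhamel/Gronwall argument in a weighted norm that carries the singular factor $(t_c-t)^{-3}$. First, I would apply It\^o's formula to $D\langle G_t E_{a_1}G_t E_{a_2}G_t E_{a_3}\rangle$, using: (i) the OU dynamics \eqref{eq:Ht} of each diagonal block $(H_a)_t$, which produces a $-\tfrac{1}{2}H_t$ drift plus a quadratic-variation contribution obtained by Wick-contracting one $dB$ factor with another (this is the ``self-energy'' contribution, each contraction producing a trace factor of the form $D\langle G_t E_a \cdots\rangle$ paired with a remaining three-resolvent chain); (ii) the characteristic flow \eqref{eq:Zt} of $Z_t$, whose $-\langle M_t\rangle$ piece is designed precisely to cancel the leading self-energy against $D\langle M_tE_a\,\cdot\,\rangle$; and (iii) the $-\tfrac{1}{2}\Lambda_t$ drift, whose net effect, together with the three $-\tfrac{1}{2}z_t$ contributions from the three resolvents and the Jacobian of rescaling, is to extract the $\tfrac{3}{2}L_{(1,2,3),t}$ term matching the leading term of \eqref{eq:whMK3}.

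After this cancellation, the evolution of $L_{(1,2,3),t}$ takes the form
\begin{equation}\label{eq:LhatM3}
\frac{\dd}{\dd t}L_{(1,2,3),t} = \tfrac{3}{2}L_{(1,2,3),t} + \sum_{(i,j)\in\{(1,2),(2,3),(3,1)\}}\wh M_{(i,j),t}\cdot_{i,j} L_{(1,2,3),t} + \mathcal{E}_t + \dd\mathcal{M}_t,
\end{equation}
where $\wh M_{(i,j),t}\cdot_{i,j}$ denotes contraction in the $(i,j)$ pair of tensor indices (matching the structure in \eqref{eq:whMK3}), $\dd\mathcal{M}_t$ is the martingale piece coming from $dB$-terms that are not contracted, and $\mathcal{E}_t$ is a deterministic error consisting of two-resolvent forcings of the type $(L_{(i,j),t}-K_{(i,j),t})\cdot (\text{one-resolvent factor})$, together with subleading fluctuation remainders from cumulant expansions beyond second order. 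Subtracting \eqref{eq:whMK3} from \eqref{eq:LhatM3} yields a closed linear ODE for $D_{(1,2,3),t}$ driven by $\mathcal{E}_t+\dd\mathcal{M}_t$; the homogeneous semigroup is exactly the one governing $K_{(1,2,3),t}$, so integrating against it contributes the factor $(t_c-t_0)^3/(t_c-t)^3$ that propagates the initial data. The forcing $\mathcal{E}_t$ is bounded using \eqref{flow_1_result_2} together with $\|\wh M_{(i,j),t}\|=1-\Theta((t_c-t)/(t_c-t_0))$, while the martingale piece is controlled by the BDG inequality together with the single-resolvent local law of Lemma~\ref{lem_loc} at spectral scale $\eta_t\sim t_c-t$, giving a fluctuation of size $\prec 1/(N^{1/2}(t_c-t)^{7/2})$ after integration; squaring and integrating produces the target $N^{\e_g}/(N(t_c-t)^3)$.

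The main technical obstacle, as in Lemma~\ref{lem_flow_1}, is book-keeping the Wick contractions so that the coefficient of $L_{(1,2,3),t}$ in the right-hand side of \eqref{eq:LhatM3} matches exactly the coefficient of $K_{(1,2,3),t}$ in \eqref{eq:whMK3}; this is the three-resolvent incarnation of the self-energy renormalization mentioned in Section~\ref{sec:idea}, and it is what forces the $K$-tensor to have the particular $(I-\wh M_{(i,j)})^{-1}$ structure of Definition~\ref{defLK3}. A secondary difficulty is that the Wick contraction can pair indices belonging to different resolvents so as to produce a genuine two-resolvent object $D\langle G_t E_a G_t E_b\rangle$ times a three-resolvent ``stub''; to handle these cross terms one must reassemble them into $L_{(i,j),t}E_{\ast}L_{(k)}$-type pieces and pay the two-resolvent bound \eqref{flow_1_result_2} only, keeping the three-resolvent norm linear. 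Once the ODE structure is in place, the stopping-time argument (running the Gronwall estimate only until $t=t_c-N^{-1+C\e_g}$ so that $\eta_t$ stays above the local-law threshold) yields \eqref{flow_3G_res}; combining with the initial bound \eqref{largG_3G} of Lemma~\ref{lemma_G_large} at $t=t_0$ gives \eqref{flow_3G_res_2}.
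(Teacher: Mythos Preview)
Your overall strategy—It\^o flow for $L_{(1,2,3),t}$, subtraction of the deterministic $K$-evolution \eqref{eq:whMK3}, and Gronwall/Peano--Baker along the characteristic—is exactly what the paper does. However, several concrete steps are wrong, and two of them are genuine gaps.

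\textbf{Wrong coupling coefficient.} Your displayed drift equation has $\wh M_{(i,j),t}$ contracting against $L_{(1,2,3),t}$, and you say this ``matches the structure in \eqref{eq:whMK3}''. It does not: \eqref{eq:whMK3} has $K_{(i,j),t}=(1-\wh M_{(i,j),t})^{-1}\wh M_{(i,j),t}$ as coefficient, not $\wh M_{(i,j),t}$. More importantly, the It\^o quadratic-variation contraction across two distinct resolvents produces the \emph{full random} two-resolvent trace $L_{(i,j),t}$ as coefficient, not any deterministic matrix. The actual drift (after the single-resolvent self-energy cancellation you describe correctly) is
\[
\tfrac{3}{2}L_{(1,2,3),t}+\sum_{(i,j)} L_{(i,j),t}\cdot_{i,j} L_{(1,2,3),t}+\text{(four-resolvent error)}.
\]
One then invokes Lemma~\ref{lem_flow_1} (specifically \eqref{flow_1_result_2}) to replace $L_{(i,j),t}$ by $K_{(i,j),t}$; the resulting error is $(L_{(i,j),t}-K_{(i,j),t})\cdot L_{(1,2,3),t}$, i.e.\ a two-resolvent fluctuation times a \emph{three}-resolvent tensor, not a ``one-resolvent factor'' as you write. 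Bounding this product requires the rough entrywise bound $L_{(1,2,3),t}\prec\eta_t^{-2}$ from Lemma~\ref{appen1}, and it is this term that produces the $N^{\e_g}$ in \eqref{flow_3G_res}. Also, there are no ``cumulant expansion remainders beyond second order'': the Brownian increments are Gaussian, so It\^o is exact.

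\textbf{Missing semigroup bound.} You assert that the homogeneous semigroup ``contributes the factor $(t_c-t_0)^3/(t_c-t)^3$'' without argument. After rescaling $\wt L_t:=(t_c-t)^{3/2}L_{(1,2,3),t}$, the linear operator $\mathscr T_t$ acting on the difference has coefficients $\wt K_{(i,j),t}=(t_c-t)K_{(i,j),t}$, each of which can have norm up to $1+\oo(1)$ by \eqref{1-M}, so a priori $\|\mathscr T_t\|_{op}$ could be as large as $3+\oo(1)$, which would blow up the Gronwall bound. The paper's key observation is that since $(z_1)_t,(z_2)_t,(z_3)_t\in\{z_t,\bar z_t\}$, at least two of them coincide; say $z_1=z_3$, so $\|\wt K_{(3,1),t}\|=\oo(1)$ by \eqref{1-M-2}, and the remaining two contractions combine into precisely the operator $\cal T_t$ of the two-resolvent proof, with $\|\cal T_t\|_{op}\le 1+\oo(1)$ by \eqref{Tnorm}. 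Only then does $\|\mathscr T_t\|_{op}\le \tfrac{3}{2}+\oo(1)$, yielding the correct propagator growth $(t_c-s)^{3/2}/(t_c-t)^{3/2}$ for the rescaled quantity.

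Finally, your martingale estimate is off: the quadratic variation of the rescaled martingale integrates to $\prec N^{-2}(t_c-t)^{-3}$, so BDG gives $\prec N^{-1}(t_c-t)^{-3/2}$, not the $N^{-1/2}(t_c-t)^{-7/2}$ you claim (which would in fact be too large to close the argument).
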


\begin{lemma}\label{lem_flow_1.5}
Under the assumptions of \Cref{lem_flow_1}, we have that for any $t\in [t_0,t_c- N^{-1+C\e_g}]$,
\be\label{flow_1G_res}
\max_{ a\in \qqD}\E \langle \left(G_t-M_t\right) E_a\rangle   \prec \frac{t_c-t_0}{t_c-t}
\max_{ a\in \qqD}
\left|\E \langle \left(G_{t_0}-M_{t_0}\right) E_a\rangle \right|
+\frac{N^{\e_g}}{N^2(t_c-t)^2}.
\ee
Together with \eqref{largG_1G}, it implies that for any $t\in [t_0,t_c- N^{-1+C\e_g}]$,
\be\label{flow_1G_res_2}
 \E \langle \left(G_t-M_t\right) E_a\rangle   \prec \frac{N^{-\e_g}}{N(t_c-t)}.  \ee
 \end{lemma}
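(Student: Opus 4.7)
Set $X_{a,t} := \E\langle(G_t - M_t)E_a\rangle$ and view $\b{X}_t := (X_{a,t})_{a\in\qqD}$ as a vector in $\mathbb C^D$. The plan is to derive a closed linear ODE of the form $\dot{\b{X}}_t = (\tfrac12 I + K_t^T)\b{X}_t + \b{E}_t$ along the characteristic flow, bound the forcing $\b{E}_t$ using \eqref{flow_1_result_2} and the averaged local law \eqref{eq:aver_local}, and then apply Duhamel's formula with a sharp propagator estimate.

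For the ODE, I would apply It\^o's formula to $G_t=(H_t+\Lambda_t-z_t)^{-1}$: the drifts from the OU dynamics $\dd H_t=-\tfrac{1}{2}H_t\,\dd t+N^{-1/2}\,\dd B_t$, the deterministic flows $\dd\Lambda_t=-\tfrac{1}{2}\Lambda_t\,\dd t$ and $\dd z_t=(-\tfrac{1}{2}z_t-m_t)\,\dd t$, combined with the It\^o correction from the block variance structure, collapse to
\[
\dd G_t\big|_{\mathrm{drift}}=\bigl(\tfrac12 G_t-m_t G_t^2+G_t\cal S(G_t)G_t\bigr)\dd t.
\]
Averaging against $E_a$, using $\cal S(G_t)=\sum_b D\langle G_tE_b\rangle E_b$ and $\langle G_t^2E_a\rangle=\tfrac{1}{D}\sum_b L^t_{ba}$ with $L^t_{ba}:=D\langle G_tE_bG_tE_a\rangle$, and subtracting $\tfrac{\dd}{\dd t}\langle M_tE_a\rangle=\tfrac12\langle M_tE_a\rangle$ from \eqref{devM}, the ``mean parts'' cancel thanks to $\langle M_tE_b\rangle=m_t/D$ (by the block translation symmetry of $M_t$ when $D\ge 3$; the $D=2$ case is handled analogously via the explicit \eqref{eq:MD=2}). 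Replacing $\E L^t_{ba}$ by $K^t_{ba}$ then yields the advertised ODE with forcing
\[
(\b{E}_t)_a=\sum_b X_{b,t}(\E L^t_{ba}-K^t_{ba})+\sum_b\mathrm{Cov}(\langle G_tE_b\rangle,L^t_{ba}).
\]

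To bound $\b{E}_t$, I would use \eqref{flow_1_result_2} to obtain $|\E L^t_{ba}-K^t_{ba}|\prec N^{\e_g}/(N(t_c-t)^2)$ and $\var(L^t_{ba})\prec N^{2\e_g}/(N^2(t_c-t)^4)$ (the bound in expectation follows by combining the high-probability bound with crude deterministic estimates on the exceptional event), and \eqref{eq:aver_local} to obtain $|X_{b,t}|\prec 1/(N(t_c-t))$ and $\var(\langle G_tE_b\rangle)\prec 1/(N(t_c-t))^2$. Cauchy--Schwarz then gives $|(\b{E}_t)_a|\prec N^{\e_g}/(N^2(t_c-t)^3)$. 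For the propagator $U_{t,s}$ of the homogeneous equation, the key observation is that \eqref{eq:whMK} gives $\wh M_t=e^{t-t_0}\wh M_{t_0}$, so all $K_t=(I-\wh M_t)^{-1}\wh M_t$ share the eigenbasis of $\wh M_{t_0}$ and hence commute; diagonalizing, the eigenvalue of $U_{t,s}$ attached to $\lambda_k\in\spec(\wh M_{t_0})$ equals $e^{(t-s)/2}(1-e^{s-t_0}\lambda_k)/(1-e^{t-t_0}\lambda_k)$, and since $t_c$ is determined by $e^{t_c-t_0}\lambda_{\max}(\wh M_{t_0})=1$, the leading eigenvalue dominates and $\|U_{t,s}\|\lesssim (t_c-s)/(t_c-t)$ uniformly on $t_0\le s\le t\le t_c-N^{-1+C\e_g}$.

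Duhamel's formula then gives
\[
\max_a|X_{a,t}|\le \|U_{t,t_0}\|\max_a|X_{a,t_0}|+\int_{t_0}^t\|U_{t,s}\|\,|\b{E}_s|\,\dd s\prec \frac{t_c-t_0}{t_c-t}\max_a|X_{a,t_0}|+\frac{N^{\e_g}}{N^2(t_c-t)^2},
\]
which is \eqref{flow_1G_res}; substituting $|X_{a,t_0}|\prec 1/N$ from \eqref{largG_1G} and using $t_c-t_0\sim N^{-\e_g}$ together with $t_c-t\ge N^{-1+C\e_g}$ (with $C>4$) yields \eqref{flow_1G_res_2}. The main technical obstacle is the derivation of the clean ODE: the $-m_t\E\langle G_t^2E_a\rangle$ term from It\^o must cancel exactly against the ``mean part'' of $G_t\cal S(G_t)G_t$, a consequence of the matrix Dyson equation \eqref{def_M} for $M_t$; without this cancellation, the effective forcing would scale like $1/(N(t_c-t)^2)$, too large for the Duhamel integral to close at the claimed order.
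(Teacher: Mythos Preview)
Your approach is sound and is essentially the paper's argument without the $(t_c-t)^{1/2}$ rescaling. The paper sets $(\wt L_t)_x:=(t_c-t)^{1/2}\langle G_tE_x\rangle$ and $(\wt K_t)_x:=(t_c-t)^{1/2}\langle M_tE_x\rangle$, derives the same It\^o drift and the same cancellation coming from $D\langle M_tE_b\rangle=m_t$, and obtains for $\E\wt L_t-\wt K_t$ a linear integral equation with operator $T_t(V)=(t_c-t)K_{(1,1),t}V-\tfrac12[1-(t_c-t)]V$ acting with weight $(t_c-t)^{-1}$. The forcing is bounded exactly as you propose, by combining \eqref{flow_1_result_2} with \eqref{eq:aver_local}. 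Since $\|K_{(1,1),t}\|\lesssim 1$ by \eqref{1-M-2}, one has $\|T_t\|_{op}\le\tfrac12+o(1)$, and Gr\"onwall gives a propagator bound $\prec[(t_c-s)/(t_c-t)]^{1/2}$; after undoing the rescaling this is precisely your $(t_c-s)/(t_c-t)$. Your explicit diagonalization via the commutativity of the $K_t$'s (all being rational functions of $\wh M_{t_0}$, since $\wh M_t=e^{t-t_0}\wh M_{t_0}$) is a clean alternative, and your eigenvalue formula for $U_{t,s}$ is correct.

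One point in your reasoning needs fixing: the claim ``$t_c$ is determined by $e^{t_c-t_0}\lambda_{\max}(\wh M_{t_0})=1$'' holds for $\wh M_{(1,2)}$ with $z_1=\bar z_2$ (where the Perron eigenvalue is $\im m/(\im m+\eta)$ by \eqref{sumwtM}), but is \emph{false} for the $\wh M_{(1,1)}$ relevant here, where both spectral parameters equal $z_t$. In that case the eigenvalues of $\wh M_{(1,1),t}$ are close to $m_{sc}(z_t)^2$, and $|1-m_{sc}(z_t)^2|\gtrsim 1$ in the bulk by \eqref{est_1-m2}; hence $1-e^{t-t_0}\lambda_k$ stays bounded away from zero on all of $[t_0,t_c]$ and in fact $\|U_{t,s}\|=O(1)$. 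Your claimed bound $(t_c-s)/(t_c-t)\ge 1$ is therefore still a valid upper bound and the Duhamel step goes through unchanged, but the stated justification should be replaced by the uniform boundedness of $K_{(1,1),t}$ from \eqref{1-M-2}.
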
 

Armed with the above three lemmas, we can prove the following key result for the proof of \Cref{main_lemma_G}.

 \begin{lemma}\label{lem_flow_3}  
 Under the assumptions of  \Cref{lem_flow_1}, we have that for any $t\in [t_0,t_c- N^{-1+C\e_g}]$,
\be\label{flow_3_result}
\left\|\E L_{(1,2),t}- K_{(1,2),t}\right\|_2\prec \frac{(t_c-t_0)^2}{(t_c-t)^2}\left\|\E L_{(1,2),t_0}- K_{(1,2),t_0}\right\|_2+ \frac{N^{-\e_g}}{N(t_c-t)^2}+\frac{N^{2\e_g}}{N^2(t_c-t)^3}.
\ee
Together with \eqref{largG_Ex}, it implies that for any $t\in [t_0,t_c- N^{-1+C\e_g}]$,
\be\label{flow_3_result_2}
\left\| \E L_{(1,2),t}- K_{(1,2),t}\right\|_2\prec  \frac{N^{-(\e_g\wedge \delta_A)}}{N(t_c-t)^2}.
 \ee
 \end{lemma}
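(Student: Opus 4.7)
The plan is to derive a linear ODE for $D_t := \E L_{(1,2),t} - K_{(1,2),t}$ along the characteristic flow and integrate it via Duhamel's formula, with initial data controlled by \Cref{lemma_G_large_2}. Applying It\^o's formula to $L_{(1,2),t} = D\langle G_{1,t} E_{a} G_{2,t} E_{b}\rangle$, the drift decomposes into contributions from: the deterministic flow of $z_t, \Lambda_t$ (which produces $-G(\Lambda_t - z_t)G/2 + m_tG^2$-type terms via $\dd G = -G(\dd\Lambda - \dd z)G$); the OU drift $-H_t/2$ on $H_t$; and the It\^o correction $\tfrac{1}{N}\sum_{x,y\in\cI_c}\partial^2_{H_{xy}H_{yx}} L_{(1,2),t}$ coming from the block-diagonal variance profile \eqref{eq:sij}. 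The characteristic flow \eqref{eq:deterflow} is designed precisely so that the $-GHG/2$ part of the flow cancels the OU drift and the remaining deterministic pieces combine, via the matrix Dyson equation $(Z_t + m_t)M_t = -I$, to produce the ODE $\dot K = K + K^2$ from \eqref{eq:whMK} at leading order. After taking expectation (killing the martingale piece), the residual error $\mathcal{E}_t$ consists of two kinds of terms: a three-resolvent It\^o contribution of the form $D^{-1}N^{-1}\sum_c \E L_{(1,2,3),t}$ with an $E_c$ insertion and repeated spectral parameters, and a single-resolvent remainder proportional to $K_{(1,2),t}\cdot \E\langle (G_t-M_t)E_c\rangle$ generated when one uses $(Z_t - m_t)M_t = -I$ to replace products of $G$'s with $M$'s in the Dyson-equation closure.

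Subtracting $\dot K_{(1,2),t} = K_{(1,2),t} + K_{(1,2),t}^2$ from the equation for $\E L_{(1,2),t}$ yields
\begin{equation*}
\frac{\dd}{\dd t} D_t = D_t + \wh M_{(1,2),t} D_t + D_t \wh M_{(1,2),t} + \wt{\mathcal E}_t,
\end{equation*}
where $\wt{\mathcal E}_t$ additionally contains an $(L-K)^2$ piece that is absorbed via the weaker bound \eqref{flow_1_result_2}. The linear propagator $U(t,s)$ acts by left-and-right multiplication by $\wh M$, whose top eigenvalue approaches $1$ as $t \to t_c$, giving $\|U(t,s)\| \lesssim \bigl((t_c-s)/(t_c-t)\bigr)^2$ (the power $2$ arising because both left and right multiplication contribute one log, compatible with \eqref{t_eta}). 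Duhamel's formula then gives a homogeneous contribution $((t_c-t_0)/(t_c-t))^2\|D_{t_0}\|_2$, matching the first term of \eqref{flow_3_result}. Integrating the single-resolvent error from \eqref{flow_1G_res_2} yields $\int_{t_0}^t ((t_c-s)/(t_c-t))^2 \cdot N^{-\e_g}/(N(t_c-s)^2)\,\dd s \lesssim N^{-\e_g}/(N(t_c-t)^2)$, and integrating the three-resolvent error from \eqref{flow_3G_res_2} yields $\int_{t_0}^t ((t_c-s)/(t_c-t))^2 \cdot N^{\e_g}/(N^2(t_c-s)^3)\,\dd s \lesssim N^{2\e_g}/(N^2(t_c-t)^3)$ after absorbing a logarithm into $N^{\e_g}$. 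This gives exactly the three terms in \eqref{flow_3_result}, and \eqref{flow_3_result_2} follows by plugging \eqref{largG_Ex} into the initial data, using $t_c - t_0 \sim N^{-\e_g}$ and $\eta_{t_0} \sim N^{-\e_g}$.

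The main obstacle is the careful execution of the It\^o expansion to verify that the residual error splits cleanly into the three-resolvent and single-resolvent pieces with precisely the claimed sizes. Specifically, one must show that the second-derivative term $N^{-1}\sum_{x,y\in\cI_c}\partial^2_{H_{xy}H_{yx}}\langle G_1 E_a G_2 E_b\rangle$ collapses under the block-variance structure \eqref{eq:sij} into a $D^{-1}\sum_c$-averaged three-resolvent object with an $E_c$ insertion whose deterministic part is exactly $K_{(1,2,3),t}$ (so that the difference is controlled by \eqref{flow_3G_res_2}), while the remaining contraction against $\wh M$ combines with the $m_t G^2$-type flow term to reproduce the $K^2$ piece. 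A secondary subtlety is that the three-resolvent objects produced by It\^o have their spectral parameters forced into a specific configuration (two equal to some $(z_i)_t$), so \Cref{lem_flow_2} must be invoked in each such configuration, and one has to verify the operator norm expansion $\|U(t,s)\|\lesssim((t_c-s)/(t_c-t))^2$ rigorously---in particular, that the $I$ term in $\dot{\wh M}=\wh M$ does not spoil the leading $2\log((t_c-s)/(t_c-t))$ behavior of the integrated generator.
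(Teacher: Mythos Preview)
Your overall strategy matches the paper's: take expectation of the It\^o equation for $L_{(1,2),t}$, subtract $\dot K = K + K^2$, and integrate via Duhamel using Lemmas \ref{lem_flow_1}--\ref{lem_flow_1.5}. But two pieces are wrong as written and would not give \eqref{flow_3_result}.

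First, the linearization. After subtracting, the drift for $D_t$ is $D_t + K_t D_t + D_t K_t$, \emph{not} $D_t + \wh M_t D_t + D_t \wh M_t$. Since $\|\wh M_t\|\le 1$, the generator you wrote has bounded norm and its propagator is $O(1)$; the exponent $2$ cannot come from ``$\wh M$'s top eigenvalue approaches $1$''. The correct mechanism is that $\|K_t\|\sim(t_c-t)^{-1}$, so left-and-right multiplication by $K_t$ integrates to $2\log((t_c-s)/(t_c-t))$. In the paper this is packaged by rescaling to $\widetilde L_t=(t_c-t)L_t$ and proving the operator bound $\|\mathcal T_t\|_{op}\le 1+o(1)$ in \eqref{Tnorm} via the explicit spectrum of the circulant $\wh M_t$; this step is not automatic and uses that the non-Perron eigenvalues of $\wh M_t$ lie in a half-plane where $|1/(1+w)-1/2|\le 1/2$.

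Second, the error structure. The It\^o drift (see \eqref{ItowtL}) does not produce a standalone $N^{-1}\sum_c\E L_{(1,2,3)}$ plus a $K_{(1,2)}\cdot\E\langle(G-M)E_c\rangle$ term; it produces a \emph{product} $\langle(G_i-M_i)E_a\rangle\cdot(L_{(1,2,1),t})_{xya}$. The decomposition the paper uses is $L_{(1,2,1)} = K_{(1,2,1)} + (L_{(1,2,1)}-K_{(1,2,1)})$: the deterministic factor is $K_{(1,2,1)}$ with $|K_{(1,2,1)}|\lesssim(t_c-t)^{-2}$ (not $K_{(1,2)}$), and it pairs with the improved expectation bound \eqref{flow_1G_res_2} to give $N^{-\e_g}/(N(t_c-t)^3)$ in the differential; the fluctuation piece pairs \eqref{flow_3G_res_2} with the rough $\langle(G-M)E_a\rangle\prec(N(t_c-t))^{-1}$ to give $N^{\e_g}/(N^2(t_c-t)^4)$. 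Separately, one must replace $\E L_t^2$ by $(\E L_t)^2$, and the resulting covariance is bounded via \eqref{flow_1_result_2} by $N^{2\e_g}/(N^2(t_c-t)^4)$. Your differential error sizes are each off by one power of $(t_c-t)$, and it is this covariance (together with the fluctuation piece), not ``absorbing a logarithm into $N^{\e_g}$'', that produces the $N^{2\e_g}/(N^2(t_c-t)^3)$ term after Duhamel.
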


\begin{proof}[Proof of \Cref{main_lemma_G}] 
Suppose we want to prove \eqref{main_G_es} for $z=E+\ii \eta$ and $ \Lambda$ satisfying the assumptions of \Cref{main_lemma_G}. In particular, we have $|E|\le 2-\kappa$ and $N^{-1+C\e_g } \le \eta \le N^{-C\e_g }$ for some constant $C>4$ such that \Cref{lem_flow_3} holds. 
Now, take $t_f=0$ and let $t_0=t_f- N^{-\e_g}/2$. We can find initial values $z_{t_0}$ and $\Lambda_{t_0}$ such that $z_{t_f}=z$ and $\Lambda_{t_f}=\Lambda$ at $t=t_f$. (In fact, we can first solve the second equation in \eqref{eq:deterflow} as $\Lambda_t=e^{(t_f-t)/2}\Lambda$ and then plug it into the first equation in \eqref{eq:deterflow}. In the resulting equation, the RHS is a locally Lipschitz function in $t$ and $z$, so there exists a solution $z_{t_0}$ at $t=t_0$.) 
Since $m_t(z)=m_{sc}(z)+\oo(1)$ under $\|\Lambda_t\|=\oo(1)$ (see \eqref{eq:msc} in the appendix), we have $\im m_t(z_t)\sim 1$. Thus, by \eqref{t_eta}, we know that $t_c-t_f\sim \eta $, which also gives $t_c-t_0=(t_f-t_0)(1+\oo(1))=N^{-\e_g}(1/2+\oo(1))$. 

Under the assumption \eqref{Gcom_a} for $H$, we can find another Gaussian divisible matrix $H_{t_0}$ such that $H_{t_f}$ has the same law as $H $ under the evolution \eqref{eq:Ht}. With the above choices of $H_{t_0}$, $z_{t_0}$ and $\Lambda_{t_0}$, we have 
$$ \E L_{(1,2)}= \E L_{(1,2),t_f} ,\quad  K_{(1,2)}=  K_{(1,2),t_f}. $$
Finally, using \eqref{flow_3_result_2} at $t=t_f$, 
we conclude the desired result \eqref{main_G_es}. 
\end{proof}

\subsection{Proof of \Cref{lemma_G_large}}\label{sec_simpleLK}

Our proof relies on the following formula derived from the definitions of $G$ and $M$ in \eqref{def_M}, 
\be\label{eq:G-M} 
G - M= - G(H+m)M= - M(H+m)G,
\ee
and the following complex cumulant expansion formula. We adopt the form stated in \cite[Lemma 7.1]{He:2017wm}.
	\begin{lemma}(Complex cumulant expansion) \label{lem:complex_cumu}
		Let $h$ be a complex random variable with all its moments exist. The $(p,q)$-cumulant of $h$ is defined as
		$$
		\mathcal{C}^{(p,q)}(h)\deq (-\ii)^{p+q} \cdot \left(\frac{\partial^{p+q}}{\partial {s^p} \partial {t^q}} \log \E e^{\mathrm{i}sh+\mathrm{i}t\bar{h}}\right) \bigg{|}_{s=t=0}\,.
		$$
		Let $f:\mathbb C^2 \to \C$ be a smooth function, and we denote its holomorphic  derivatives by
		$$
		f^{(p,q)}(z_1,z_2)\deq \frac{\partial^{p+q}}{\partial z_1^p \partial z_2^q} f(z_1,z_2)\,.
		$$ Then, for any fixed $l \in \N$, we have
		\begin{equation} \label{5.16}
		\E f(h,\bar{h})\bar{h}=\sum\limits_{p+q=0}^l \frac{1}{p!\,q!}\mathcal{C}^{(p,q+1)}(h)\E f^{(p,q)}(h,\bar{h}) + R_{l+1}\,,
		\end{equation}
		given all integrals in (\ref{5.16}) exist. Here, $R_{l+1}$ is the remainder term depending on $f$ and $h$, and for any $\tau>0$, we have the estimate
		$$
			\begin{aligned}
			R_{l+1}=&\, \OO(1)\cdot \E \big|h^{l+2}\mathbf{1}_{\{|h|>N^{\tau-1/2}\}}\big|\cdot \max\limits_{p+q=l+1}\big\| f^{(p,q)}(z,\bar{z})\big\|_{\infty} \\
			+&\,\OO(1) \cdot \E |h|^{l+2} \cdot \max\limits_{p+q=l+1}\big\| f^{(p,q)}(z,\bar{z})\cdot \mathbf{1}_{\{|z|\le N^{\tau-1/2}\}}\big\|_{\infty}\,.
			\end{aligned}
		$$
	\end{lemma}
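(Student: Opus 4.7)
The plan is to derive the expansion from the Fourier representation of $f$ together with the cumulant generating function of $h$. Introduce the characteristic function $\Phi(s,t):=\E e^{\ii s h+\ii t\bar h}$ and write $\log\Phi(s,t)=K(s,t)$, so that by definition of the joint cumulants
\[
K(s,t)=\sum_{p+q\ge 1}\frac{(\ii s)^p(\ii t)^q}{p!\,q!}\,\mathcal{C}^{(p,q)}(h).
\]
Differentiating in $t$ gives the identity $\E\bigl[\bar h\,e^{\ii s h+\ii t\bar h}\bigr]=-\ii\,\partial_t\Phi=(-\ii\,\partial_t K)\,\Phi$, and the Taylor series of $-\ii\,\partial_t K$ at the origin has coefficients exactly $\tfrac{(\ii s)^p(\ii t)^q}{p!\,q!}\,\mathcal{C}^{(p,q+1)}(h)$.

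Next, first assume temporarily that $f$ and its derivatives up to order $l+1$ are Schwartz, so that the Fourier inversion formula $f(z,\bar z)=\tfrac{1}{(2\pi)^2}\int \hat f(s,t)\,e^{\ii s z+\ii t\bar z}\,ds\,dt$ is legitimate. Then
\[
\E f(h,\bar h)\bar h=\frac{1}{(2\pi)^2}\int \hat f(s,t)\,(-\ii\,\partial_t K)(s,t)\,\Phi(s,t)\,ds\,dt.
\]
Substitute the truncated Taylor polynomial of $-\ii\,\partial_t K$ of order $l$ for $(-\ii\,\partial_t K)$. The monomial $\tfrac{(\ii s)^p(\ii t)^q}{p!\,q!}$ pulled against $\hat f(s,t)\,\Phi(s,t)$ returns, after inverse Fourier transform and taking the expectation over $h$, precisely $\tfrac{1}{p!\,q!}\,\E f^{(p,q)}(h,\bar h)$. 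Multiplying by $\mathcal{C}^{(p,q+1)}(h)$ and summing over $p+q\le l$ yields the main terms in \eqref{5.16}.

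The final and most delicate step is controlling the Taylor remainder, which also lets us remove the Schwartz assumption. I would split $\E f(h,\bar h)\bar h$ according to the event $\{|h|\le N^{\tau-1/2}\}$ and its complement. On the bulk event, approximate $f$ (on the ball $\{|z|\le N^{\tau-1/2}\}$) by a Schwartz function and apply the integral form of Taylor's theorem for $-\ii\,\partial_t K$; the order-$(l+1)$ remainder contributes a factor $\E|h|^{l+2}$ together with the sup of the $(p,q)$-derivatives of $f$, $p+q=l+1$, over the ball, matching the second line of the stated bound on $R_{l+1}$. On the tail event, bound $|\bar h\,f(h,\bar h)|$ crudely by $|h|^{l+2}$ times the unrestricted sup norm of $f^{(p,q)}$ with $p+q=l+1$ (after one integration by parts in $h$ or by writing $f$ as a Taylor polynomial plus an $|h|^{l+1}$ remainder), giving the first line of the stated bound. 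The two regimes must be handled separately because the paper's applications typically have $f$ growing outside the typical scale of $h$, so a global sup norm cannot appear in the main contribution; the cutoff at $N^{\tau-1/2}$ is exactly what makes this split work, and arranging this dichotomy cleanly is the main technical obstacle.
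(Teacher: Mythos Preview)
The paper does not actually prove this lemma: it is quoted verbatim from \cite[Lemma 7.1]{He:2017wm}, so there is no in-paper argument to compare against.

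On the substance of your proposal: the Fourier-analytic route is a legitimate heuristic for deriving the \emph{identity} \eqref{5.16}, but as you yourself flag, the remainder control is where it breaks down. The specific form of the bound---with the split on $\{|h|>N^{\tau-1/2}\}$ and moments $\E|h|^{l+2}$---does not fall out of a Taylor remainder for $-\ii\partial_t K$: that remainder is governed by high-order \emph{cumulants} of $h$, not moments, and turning it into a pointwise bound on $f^{(p,q)}$ restricted to the ball $\{|z|\le N^{\tau-1/2}\}$ would require unwinding the Fourier transform in a way that the event split does not respect (Fourier inversion is global in $h$, while the cutoff is in physical space). The approximation of $f$ by a Schwartz function on the ball is also not innocuous, since the derivatives $f^{(p,q)}$ must be matched uniformly up to order $l+1$ to preserve the remainder estimate.

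The standard argument in \cite{He:2017wm} (and implicitly in \cite{Cumulant2}) avoids Fourier analysis entirely: one first checks \eqref{5.16} exactly for $f$ a polynomial in $(z,\bar z)$---where both sides are finite sums and the identity reduces to the combinatorial moment--cumulant relations---and then for general smooth $f$ one Taylor-expands $f(h,\bar h)$ about the origin to order $l$, applies the exact polynomial identity to the Taylor polynomial, and controls the integral-form Taylor remainder directly using the split $|h|\lessgtr N^{\tau-1/2}$. This makes the two lines of the $R_{l+1}$ bound appear transparently: the first from the tail event (where one uses only $|f^{(p,q)}|\le\|f^{(p,q)}\|_\infty$), the second from the bulk event (where the remainder is evaluated only on the ball). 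If you want to pursue your route, you would need to replace the Fourier step by this direct Taylor argument to get the stated remainder.
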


With assumptions \eqref{eq:meanvar}, \eqref{eq:meanvar2}, and \eqref{eq:highmoment}, we can show that for $i, j\in \cI$,
$$ \mathcal{C}^{(0,1)}(H_{ij})=\mathcal{C}^{(1,0)}(H_{ij})=0,\quad \mathcal{C}^{(1,1)}(H_{ij}) =s_{ij},\quad \mathcal{C}^{(0,2)}(H_{ij})=\mathcal{C}^{(2,0)}(H_{ij})=s_{ij}\delta_{ij},$$
and that for any fixed $p,q\in \N$ with $p+q\ge 3$, there exists a constant $C>0$ such that
\be\label{eq:bdd_cumu}
\max_{i,j\in \cI}|\mathcal{C}^{(p,q)}(H_{ij})| \le \left(CN\right)^{-(p+q)/2}.
\ee

We also adopt the following notation from \cite[equation (42)]{cipolloni-erdos2021}.

\begin{definition}\label{def:underline}
Suppose that $f$ and $g$ are matrix-valued functions. Define
\be
\underline{g(H)Hf(H)} := g(H)Hf(H)-\widetilde{\bbe}g(H)\widetilde{H}(\partial_{\widetilde{H}}f)(H) - \widetilde{\bbe}(\partial_{\widetilde{H}}g)(H)\widetilde{H}f(H),
\ee
where $\widetilde{H}$ is an indepdent copy of $H$, $\widetilde{\bbe}$ denotes the partial expectation with respect to $\wt H$, and $(\partial_{\widetilde{H}}f)(H)$ denotes the directional derivative of the function $f$ in the direction $\wt H$ at the point $H$, i.e.,
\be
[(\partial_{\widetilde{H}}f)(H)]_{xy} = (\widetilde{H}\cdot \nabla f(H))_{xy} := \sum_{\alpha,\beta\in \cI}\widetilde{H}_{\alpha\beta}\frac{\partial f(H)_{xy}}{\partial H_{\alpha\beta}}.
\ee
\end{definition}

The terms subtracted from $g(H)Hf(H)$ are precisely the second-order term in the cumulant expansion. In particular, if all entries of $H$ are Gaussian, we have $\bbe\underline{g(H)Hf(H)} = 0$. Moreover, if we take $g(H) = I$ and $f(H) = G$, we have that
\be\label{eq:HGline}
\underline{HG} = HG + \widetilde{\bbe}[\widetilde{H}G\widetilde{H}]G,\quad \text{with}\quad \widetilde{\bbe}[\widetilde{H}G\widetilde{H}] = \sum_{a=1}^D D\langle GE_a\rangle E_a.
\ee

First, we prove the estimate \eqref{largG_Ent}. Here, we restrict our consideration to the case where $z_1=z$ and $z_2=\bar z$; the case where $z_1=z_2\in \{z,\bar z\}$ is simpler and can be treated in a similar manner. 
For simplicity, we drop the subscripts and abbreviate $\wh M\equiv \wh M_{(1,2)}$, $L\equiv L_{(1,2)}$, and $K=K_{(1,2)}$. 
Using \eqref{eq:G-M} and \eqref{eq:HGline}, we write that 
\be\label{basGM}G - M = -M(m + H)G = -M\underline{HG} + M(\widetilde{\bbe}[\widetilde{H}G\widetilde{H}] - m)G.
\ee
Applying it to $G_2$ and multiplying by $G_1E_a$ from the left gives 
\[G_1E_aG_2  = G_1E_aM_2  - G_1E_aM_2\underline{HG_2}  + G_1E_aM_2(\widetilde{\bbe}[\widetilde{H}G_2\widetilde{H}] - m_2)G_2 .\]
On the other hand, by definition,
\begin{align}\label{basGM2}
\underline{G_1E_aM_2HG_2 }  
&= G_1E_aM_2\underline{HG_2}  + G_1\widetilde{\bbe}[\widetilde{H}G_1E_aM_2\widetilde{H}]G_2 ,
\end{align} 
so we have that
\[G_1E_aG_2  = G_1E_aM_2  - \underline{G_1E_aM_2HG_2  } + G_1\widetilde{\bbe}[\widetilde{H}G_1E_aM_2\widetilde{H}]G_2  + G_1E_aM_2(\widetilde{\bbe}[\widetilde{H}G_2\widetilde{H}] - m_2)G_2 .\] 
Now, multiplying $E_b$ from the right and taking the trace, we get that 
\begin{align}
\begin{split}
L_{ab} = &\,D\langle G_1E_aM_2E_b\rangle - D\langle \underline{G_1E_aM_2HG_2E_b}\rangle \\
&+ D \sum_{x=1}^D \langle G_1E_aM_2E_x\rangle L_{xb} + D^2\sum_{x=1}^D \langle (G_2 - M_2) E_x\rangle \langle G_1E_aM_2E_xG_2E_b\rangle.
\end{split}\label{eq:aftertrace}
\end{align} 
Then, using the averaged local law \eqref{eq:aver_local} and Lemma \ref{appen1}, we can estimate the RHS as
\begin{align*}
L_{ab} = &\,\wh M_{ab} - D\langle \underline{G_1E_aM_2HG_2E_b}\rangle + (\wh M L)_{ab}  +\opr{\eta^{-1}(N\eta)^{-1}},\quad a,b\in \qqD.
\end{align*} 
With the bound on $\|(1-\wh M)^{-1}\|$ in \eqref{1-M} below, we get from the above equation that
\begin{align}\label{eq_Lab} 
L_{ab} = &K _{ab} - D\sum_{x=1}^D\big(1-\wh M\big)^{-1}_{ax}\langle \underline{G_1E_xM_2HG_2E_b}\rangle   +\opr{\eta^{-2}(N\eta)^{-1}}.
\end{align} 
To conclude \eqref{largG_Ent}, we only need to prove that given any $a,b\in \qqD$, there is
\begin{equation}\nonumber
Z:=\langle \underline{G_1E_aM_2HG_2E_b}\rangle \prec  (N\eta^2)^{-1}.
\end{equation}
By Markov's inequality, it suffices to prove the following high-moment bound: for any fixed $p\in 2\N $ with $p\ge 4$,
\be
\E |Z|^p \prec (N\eta^2)^{-p } .\label{eq:i_ii_cumulant}
\ee

To prove \eqref{eq:i_ii_cumulant}, we use the cumulant expansion formula in \Cref{lem:complex_cumu}:
\be\label{eq:Zcumu}
\begin{aligned}
\E |Z|^p &= \E \langle \underline{G_1E_aM_2HG_2E_b}\rangle \overline{Z}|Z|^{p-2} = \frac{1}{DN^2}\sum_{x=1}^D\sum_{ \alpha,\beta \in \cI_x} \E (G_2E_bG_1E_aM_2)_{\beta\alpha}\partial_{\beta\alpha}(\overline{Z}|Z|^{p-2}) 
\\
&\hspace{0.5cm}+ \sum_{ m+n=2}^l
   \frac{1}{DN} \sum_{x=1}^D\sum_{\alpha, \beta\in \cI_x}
    \frac{1}{m!n!} \mathcal{C}^{(n,m+1)}_{\beta\alpha}  \E\partial_{\alpha\beta}^m\partial_{\beta\alpha}^{n}
    \big((G_2E_bG_1E_aM_2)_{\beta\alpha}\overline{Z}|Z|^{p-2}\big) 
    + \mathcal{R}_{l+1}
    \\
&=: {X_1} + \sum_{m+n=2}^l X_{m,n} + \mathcal{R}_{l+1},
\end{aligned}
\ee
where $\partial_{\beta\al}\equiv \partial_{h_{\beta\al}}$, $\mathcal{C}^{(n,m)}_{\beta\al}=\OO(N^{-(n+m)/2})$ denotes the $(n,m)$-cumulant of $h_{\beta\al}$, and $\cal R_{l+1}$ denotes the error term involving the $(l+2)$-th order moment of $h_{\beta\al}$. By definitions, we can write \(Z\) as 
\begin{align}\label{eq:Z}
Z &= \langle G_1E_aM_2HG_2E_b\rangle + D\sum_{x=1}^D \langle G_2E_x\rangle \langle G_1E_aM_2E_xG_2E_b\rangle + D\sum_{x=1}^D \langle G_1E_xG_2E_b\rangle\langle G_1E_aM_2E_x\rangle .
\end{align}
With this formula and $HG_2=I-(\Lambda-z)G_2$, we can check with direct calculations that
\be\label{eq:X1}
 X_1=N^{-2} \E \left(\cal X_{11}|Z|^{2}+\cal X_{12}(\overline Z)^2\right)|Z|^{p-4},
\ee
where $\cal X_{11}$ and $\cal X_{12}$ are linear combinations of $\OO(1)$ many terms taking the following forms with coefficients of order $\OO(1)$: 
\begin{itemize}
    \item $\langle \prod_{i=1}^k G_{j_i}B_{i}\rangle $ for some $k\le 5$, $j_i \in \{1,2\}$, and deterministic matrices $B_i$ with $\|B_i\|=\OO(1)$;
    \item $\langle \prod_{i=1}^k G_{j_i}B_{i}\rangle $$\langle \prod_{i=1}^{k'} G_{j_{i'}}B_{i'}\rangle $ for some $k,k'\ge 1$ with $k+k'\le 6$, $j_i,j_{i'}\in \{1,2\}$, and deterministic matrices $B_i, B_{i'}$ with $\|B_i\|=\OO(1)$, $\|B_{i'}\|=\OO(1)$.
\end{itemize}
Then, using \Cref{appen1}, we can bound \eqref{eq:X1} by 
\be\label{eq:X1_2}
X_1 \prec   (N\eta^2)^{-2} \E |Z|^{p-2} .
\ee

Next, we consider the terms $X_{m,n}$ with $2\le m+n\le l$. Using \eqref{eq:Z}, the anisotropic local law \eqref{eq:aniso_local}, and \Cref{appen1},  we can check that for any fixed $m,n\in \N$ with $m+n\ge 1$, 
\be\label{pq}\partial_{\alpha\beta}^m\partial_{\beta\alpha}^n
\big( G_2E_bG_1E_aM_2\big)_{\beta\alpha}\prec \frac{1}{\eta}
,\quad \quad 
\partial_{\alpha\beta}^m\partial_{\beta\alpha}^n
Z\prec \frac{1}{N \eta^2} .
\ee
With these estimates, we obtain that 
\be\nonumber
\partial_{\alpha\beta}^m\partial_{\beta\alpha}^n
\left(\left( G_2E_bG_1E_aM_2\right)_{\beta\alpha}\overline Z |Z|^{p-2}
\right)\prec \sum_{q=0}^{p-1}\frac{1}{\eta}\left(\frac{1}{N\eta^2}\right)^q|Z|^{p-1-q},
 \ee
which gives that for any $3\le m+n\le l$, 
\be\label{eq:Xmn>2}
X_{m,n}\prec
\sum_{q=1}^p N^{2-({n+m+1})/{2}}\left(\frac{1}{N\eta^2}\right)^q \E|Z|^{p-q} \le \sum_{q=1}^p \left(\frac{1}{N\eta^2}\right)^q \E|Z|^{p-q}.
\ee
It remains to consider the $m+n=2$ case: 
 \begin{itemize}
     \item[(i)] 
     If no derivative or both derivatives act on $(G_2E_bG_1E_aM_2)_{\beta\alpha}$, using the anisotropic local law \eqref{eq:aniso_local} and \Cref{appen1}, we can check directly that
    $$ \sum_{\al,\beta\in \cI_x} \left|
\partial_{\alpha\beta}^m\partial_{\beta\alpha}^n
\big( G_2E_bG_1E_aM_2\big)_{\beta\alpha}
\right|^2\prec {\eta^{-3}{N}}.$$
Together with the Cauchy-Schwarz inequality, it implies that
   \be\label{deronG}
    \sum_{\al,\beta\in \cI_x} 
\left|
\partial_{\alpha\beta}^m\partial_{\beta\alpha}^n
\big( G_2E_bG_1E_aM_2\big)_{\beta\alpha}
\right| \prec {\eta^{-3/2}}{N^{3/2}}.
\ee

\item[(ii)] If one derivative acts on $(G_2E_bG_1E_aM_2)_{\beta\alpha}$ and the other on $Z$ or $\overline{Z}$, then we can check that
\be\label{eq:dervZZZ}
\left|\partial_{\alpha\beta}\big( G_2E_bG_1E_aM_2\big)_{\beta\alpha}
\right|+\left|
\partial_{\beta\alpha}\big( G_2E_bG_1E_aM_2\big)_{\beta\alpha}
\right|\prec \eta^{-1},\quad 
\sum_{\al,\beta} \left|\partial_{\alpha\beta} Z\right|^2 \prec {N}^{-1}\eta^{-5}.
\ee
To see the second estimate, we express $\partial_{\alpha\beta} Z$ as 
\begin{align*}
\partial_{\alpha\beta} Z=\frac{1}{DN   }\left(\cal Z^{(1)}_{\beta\al}+\cal Z^{(2)}_{\beta\al}\right),
\end{align*}
where $\cal Z^{(1)}_{\beta\al}$ comes from the derivative of the first term in \eqref{eq:Z}:
$$\cal Z^{(1)}_{\beta\al}:= - (G_1E_aM_2E_bG_1)_{\beta\al}+ ( G_1E_aM_2(\Lambda-z)G_2E_bG_1)_{\beta\al}+ ( G_2E_bG_1E_aM_2(\Lambda-z)G_2)_{\beta\al},$$
and $\cal Z^{(2)}_{\beta\al}$ comes from the derivative of the remaining two terms. Moreover, $\cal Z^{(2)}_{\beta\al}$ is a sum of $6$ terms with coefficients of order $\OO(1)$, taking the forms: 
$$D\sum_{x=1}^D( G_{j_1}B_{1}G_{j_2}B_2G_{j_1})_{\beta\al}\avg{G_{j_3} B_3},\quad \text{or}\quad D\sum_{x=1}^D\langle G_{j_1}B_{1}G_{j_2}B_{2}\rangle (G_{j_3}B_3 G_{j_3})_{\beta\al}, $$
where $j_i \in \{1,2\}$ and $B_i$ are deterministic matrices such that $\|B_i\|=\OO(1)$ for $i\in \{1,2,3\}$. By using \Cref{appen1}, we can establish the bounds:
\begin{align*}
& \sum_{\al,\beta}|\cal Z^{(1)}_{\beta\al}|^2\prec N\eta^{-5},\quad \sum_{\al,\beta}|( G_{j_1}B_{1}G_{j_2}B_2G_{j_1})_{\beta\al}\avg{G_{j_3} B_3}|^2\prec N\eta^{-5},\\
& \sum_{\al,\beta}|\langle G_{j_1}B_{1}G_{j_2}B_{2}\rangle (G_{j_3}B_3 G_{j_3})_{\beta\al}|^2\prec \eta^{-2} \sum_{\al,\beta}|(G_{j_3}B_3 G_{j_3})_{\beta\al}|^2 \prec N\eta^{-5} .   
\end{align*}
These conclude the second estimate in \eqref{eq:dervZZZ}.
Now, combining \eqref{eq:dervZZZ} with the Cauchy-Schwarz inequality, we obtain that
\be\label{deronG2}
\sum_{\al,\beta\in \cI_x}
\left(\left| \partial_{\alpha\beta}\big( G_2E_bG_1E_aM_2\big)_{\beta\alpha} \right|+ \left| \partial_{\beta\alpha}\big( G_2E_bG_1E_aM_2\big)_{\beta\alpha} \right|\right)
\left(\left|\partial_{\alpha\beta} Z\right|+\left|\partial_{\beta\alpha} Z\right|\right) \prec \frac{N^{1/2}}{\eta^{7/2}}.
\ee 
 \end{itemize}
Therefore, combining \eqref{pq}, \eqref{deronG}, and \eqref{deronG2}, we obtain that for $m+n=2$, 
\be\label{eq:Xmn=2}
X_{m,n}\prec 
\sum_{q=1}^3 \left(\frac{1}{N\eta^2}\right)^q \E|Z|^{p-q}.
\ee

Finally, for the remainder term $\cal R_{l+1}$, with the argument in e.g., \cite[Lemma 4.6]{He:2017wm}, it is easy to prove the following estimate: as long as $l$ is sufficiently large depending on $\tau$ and $p$, there is 
\be\label{eq:X_remainder}
\cal R_{l+1} \prec N^{-p}.
\ee
Since the argument is standard, we omit the details here. 

Now, plugging \eqref{eq:X1_2}, \eqref{eq:Xmn>2}, \eqref{eq:Xmn=2}, and \eqref{eq:X_remainder} into \eqref{eq:Zcumu} yields that for any small constant $\e>0$,
\[\bbe|Z|^p \prec  \sum_{q=1}^{p}\left(N^{-1}\eta^{-2}\right)^q \bbe |Z|^{p-q} \le \sum_{q=1}^{p}\left(N^{-1}\eta^{-2}\right)^q \left(\bbe |Z|^p\right)^{\frac{p-q}{p}} \prec \left(N^{-1+\e}\eta^{-2}\right)^p+ N^{-\e} \bbe |Z|^p,\] 
where we have applied H\"older's and Young's inequalities in the second and third steps. 
This implies \eqref{eq:i_ii_cumulant} since $\e$ is arbitrary, which further completes the proof of \eqref{largG_Ent}.

\begin{remark}
In the above proof, the matrix $E_b$ can be replaced by any deterministic $B$ with $\|B\|=\OO(1)$. Thus, the above arguments lead to the following estimate that is similar to \eqref{largG_Ent}: 
\be\label{largG_Ent_BB}
\langle G_1E_{a}G_2B\rangle -\sum_{b\in \qqD} \big(1-\wh M_{(1,2)}\big)^{-1}_{ab} \langle M_1 E_{b} M_2 B\rangle \prec N^{-1}\eta^{-3}.
\ee
Furthermore, in the case where $z_1=z_2\in \{z,\bar z\}$, since $\|(1-\wh M)^{-1}\|\lesssim 1$ due to \eqref{1-M-2}, the error term in \eqref{eq_Lab} becomes $\opr{N^{-1}\eta^{-2}}$. Thus, we get a better bound 
\be\label{largG_Ent_same}
\|L_{(1,2 )}-K_{(1,2)}\|_{2}\prec N^{-1 }\eta^{-2}\quad \text{when}\quad z_1=z_2.
\ee
\end{remark}

Next, we prove the estimate \eqref{largG_3G}. Note that two of $z_1, z_2, z_3$ must take the same value. We assume without loss of generality that $z_1 = z_3\in \{z,\bar z\}$. Then, using \eqref{eq:G-M} and \Cref{def:underline}, we write that 
\begin{align}
G_1E_{a_1}G_2E_{a_2}G_3E_{a_3} &= G_1E_{a_1}G_2E_{a_2}M_3E_{a_3} - \underline{G_1E_{a_1}G_2E_{a_2}M_3HG_3E_{a_3}} \nonumber\\
&+ G_1E_{a_1}G_2E_{a_2}M_3\widetilde{\bbe}[\widetilde{H}(G_3 - M_3)\widetilde{H}]G_3E_{a_3}\nonumber\\
&+ G_1\widetilde{\bbe}[\widetilde{H}G_1E_{a_1}G_2E_{a_2}M_3\widetilde{H}]G_3E_{a_3} + G_1E_{a_1}G_2\widetilde{\bbe}[\widetilde{H}G_2E_{a_2}M_3\widetilde{H}]G_3E_{a_3}. \nonumber
\end{align} 
Taking the trace and applying \eqref{eq:aver_local} and \Cref{appen1}, we derive from this equation that 
\begin{align}
(L_{(1,2,3)})_{a_1a_2a_3}&= D\langle G_1E_{a_1}G_2E_{a_2}M_3E_{a_3}\rangle - D\langle\underline{G_1E_{a_1}G_2E_{a_2}M_3HG_3E_{a_3}}\rangle  \nonumber\\
&+ \sum_{x=1}^D (\wh{M}_{(2,3)})_{a_2x}(L_{(1,2,3)})_{a_1 x a_3} 
+ \sum_{x=1}^D D\langle G_1E_{a_1}G_2E_{a_2}M_3E_x\rangle (K_{(3,1)})_{a_3 x}\nonumber\\
&+ \sum_{x=1}^D D\langle G_1E_{a_1}G_2E_{a_2}M_3E_x\rangle\left(L_{(3,1)}  - K_{(3,1)}\right)_{a_3 x} + \opr{\eta^{-2}(N\eta)^{-1}} .\label{eq:3GGG}
\end{align} 
Applying \eqref{largG_Ent_same} to the last sum on the RHS and using \Cref{appen1}, we get that 
\begin{align*}
    D\sum_{x=1}^D \langle G_1E_{a_1}G_2E_{a_2}M_3E_x\rangle \left(L_{(3,1)}  - (K_{(3,1)})\right)_{xa_3} \prec N^{-1}\eta^{-3} .
\end{align*}
Applying \eqref{largG_Ent_BB} to the first term on the RHS of \eqref{eq:3GGG}, we get that 
\begin{align}\label{eq:3GGG2}
D\langle G_1E_{a_1}G_2E_{a_2}M_3E_{a_3}\rangle &= \sum_x (I - \wh{M}_{(1, 2)})^{-1}_{a_1x}D\langle M_1E_xM_2E_{a_2}ME_{a_3}\rangle + \opr{N^{-1}\eta^{-3}}.
\end{align}
For simplicity, we will abbreviate the first term on the RHS of \eqref{eq:3GGG2} as $\nu_{a_1a_2a_3}$.
The equation \eqref{eq:3GGG2} also applies to the fourth term on the RHS of \eqref{eq:3GGG} and gives that
$$ D\sum_{x=1}^D \langle G_1E_{a_1}G_2E_{a_2}M_3E_x\rangle(K_{(3,1)})_{a_3 x} =  \sum_{x=1}^D \nu_{a_1a_2 x}(K_{(3,1)})_{a_3 x} + \opr{N^{-1}\eta^{-3}},$$
where we used that $\|K_{(3,1)}\|\lesssim 1$ by \eqref{1-M-2} below. Finally, we can prove that
$$\langle\underline{G_1E_{a_1}G_2E_{a_2}M_3HG_3E_{a_3}}\rangle \prec N^{-1}\eta^{-3}$$ 
using a similar argument as that for the proof of \eqref{eq:i_ii_cumulant} with cumulant expansions. We omit the details here. 
In sum, we have derived from \eqref{eq:3GGG} that
\begin{align*}
(L_{(1,2,3)})_{a_1a_2a_3} &=\nu_{a_1a_2a_3} +\sum_{x=1}^D \nu_{a_1a_2 x}(K_{(3,1)})_{a_3 x}  + \sum_{x=1}^D (\wh{M}_{(2,3)})_{a_2 x}(L_{(1,2,3)})_{a_1x a_3} + \opr{N^{-1}\eta^{-3}}.
\end{align*} 
Now, solving for $L_{(1,2,3)}$, using \eqref{1-M} and that $I + K_{(3,1)} = (I - \wh{M}_{(3,1)})^{-1}$, the estimate \eqref{largG_3G} follows.

\subsection{Proof of \Cref{lemma_G_large_2}}\label{sec_hardLK}

For any $z_1, z_2\in \{z, \overline{z}\}$, we abbreviate that 
$$\wh M\equiv \wh M_{(1,2)},\quad L\equiv L_{(1,2)},\quad K\equiv K_{(1,2)},\quad \text{and}\quad \wt M\equiv \wh M_{(2,1)},\quad \wt L\equiv L_{(2,1)}, \quad \wt K\equiv K_{(2,1)}.$$
Moreover, given any deterministic matrix $B\in \C^{DN\times DN}$, we denote 
$$L_{ab}(B):=D\langle G_1 E_a G_2 E_b B\rangle,\quad K_{ab}(B):=\sum_x (1-\wh M)^{-1}_{ax}D\langle M_1 E_x M_2 E_b B\rangle. $$
Similarly, we define $\wt L_{ab}(B)$ and $\wt K_{ab}(B)$ by exchanging $1$ and $2$. Now, taking the expectation of \eqref{eq:aftertrace}, we obtain that   
\begin{align}
\E L_{ab} &= \wh{M}_{ab} + D\E\langle (G_1-M_1)E_aM_2E_b\rangle - D\E\langle \underline{G_1E_aM_2HG_2E_b}\rangle + \sum_{x=1}^D \wh M_{ax} \E L_{xb} \nonumber\\
&\quad + D\sum_{x=1}^D \E\langle (G_1 - M_1)E_aM_2E_x\rangle L_{xb} + D^2\sum_{x=1}^D \E\langle (G_2-M_2) E_x\rangle \langle G_1E_aM_2E_xG_2E_b\rangle \nonumber\\
&= \wh{M}_{ab} + D\E\langle (G_1-M_1)E_aM_2E_b\rangle - D\E\langle \underline{G_1E_aM_2HG_2E_b}\rangle + \sum_{x=1}^D \wh M_{ax} \E L_{xb} \label{ELK}\\
&\quad + D\sum_{x=1}^D \E\langle (G_1 - M_1)E_aM_2E_x\rangle K_{xb} + D^2\sum_{x=1}^D \E\langle (G_2-M_2) E_x\rangle \wt K_{ba}(M_2E_x)+\OO_\prec (N^{-2}\eta^{-4}),\nonumber
\end{align} 
where we used the averaged local law \eqref{eq:aver_local} and the estimates \eqref{largG_Ent} and \eqref{largG_Ent_BB} in the second step. The proof of \Cref{lemma_G_large_2} is based on the next two lemmas, Lemmas \ref{lem:fourthcumu1} and \ref{lem:fourthcumu2}, which give precise estimates of the expectations on the RHS of \eqref{ELK}.

\begin{lemma}\label{lem:fourthcumu1}
Under the setting of \Cref{lemma_G_large_2}, we have that     
 \begin{align}
&- D\E\langle \underline{G_1E_aM_2HG_2E_b}\rangle=\opr{\eta^{-2} N^{-3/2}+\eta^{-3}N^{-2}} \nonumber\\ &+\frac{D\kappa^{(2, 2)}}{N}\sum_{x=1}^D \left[  { \langle \diag(M_2)^2E_x\rangle }\wt K_{ba}(M_2\diag(M_2)E_x) +\langle M_1 \diag(M_2)E_x\rangle  \wt K_{bx}(\diag(M_1E_aM_2))\right]\nonumber\\
&+\frac{D\kappa^{(2, 2)}}{N}\sum_{x=1}^D   \left[\langle M_1E_aM_2 \diag(M_1)E_x\rangle  \wt K_{bx}(\diag(M_1))+\langle M_1E_aM_2 \diag(M_2)E_x\rangle  \wt K_{bx}(\diag(M_2))\right]  ,\label{eq:G1HG2}
    \end{align}
where $\kappa^{(2, 2)}$ is the normalized $(2,2)$-cumulant of $h_{12}$ defined as $\kappa^{(2, 2)}:=N^2\cal C_{12}^{(2,2)}$, and $\diag(B)$ is the diagonal matrix consisting of the diagonal entries of the given matrix $B$.  
\end{lemma}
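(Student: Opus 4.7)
The plan is to apply the complex cumulant expansion (Lemma \ref{lem:complex_cumu}) to
$$\E\langle\underline{G_1E_aM_2HG_2E_b}\rangle=\frac{1}{DN}\sum_{\alpha,\beta\in\cI}\E H_{\alpha\beta}(G_2E_bG_1E_aM_2)_{\beta\alpha}-(\text{underline subtractions}),$$
where the underline precisely cancels the contribution of the $(1,1)$-, $(2,0)$-, and $(0,2)$-cumulants (the second-order terms), leaving only third-order and higher cumulant contributions. This parallels the cumulant-expansion proof of \eqref{largG_Ent}, but now we take expectations, so the leading deterministic piece from the fourth cumulant survives rather than being absorbed into fluctuations.

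\textbf{Third-order contributions.} For $\alpha\ne\beta$ the complex Hermitian condition $\E H_{\alpha\beta}^2=0$ forces $\cal C^{(3,0)}=\cal C^{(0,3)}=0$; the surviving $\cal C^{(2,1)},\cal C^{(1,2)}$ and the contributions from the $N$ diagonal entries per block are all of size $\OO(N^{-3/2})$. Differentiating $(G_2E_bG_1E_aM_2)_{\beta\alpha}$ using $\partial G_{xy}/\partial H_{\alpha\beta}=-G_{x\alpha}G_{\beta y}$ and applying an $\ell^1$-type bound in the spirit of \eqref{deronG}--\eqref{deronG2} (one factor of $G$ bounded in operator norm, one summed via Cauchy--Schwarz and the anisotropic local law \eqref{eq:aniso_local}) yields a total third-order contribution of size $\opr{\eta^{-2}N^{-3/2}}$, which is the first error in \eqref{eq:G1HG2}.

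\textbf{Fourth-order $(2,2)$-contribution.} Because the off-diagonal entries are i.i.d.\ within each block, $\cal C^{(2,2)}_{\alpha\beta}=N^{-2}\kappa^{(2,2)}$ for $\alpha\ne\beta\in\cI_x$, while the $N$ diagonal entries per block and the cumulants $\cal C^{(p,q)}$ of order four with $(p,q)\ne(2,2)$ together contribute only $\opr{\eta^{-3}N^{-2}}$. The main piece is therefore
$$\frac{\kappa^{(2,2)}}{4DN^{3}}\sum_{x=1}^{D}\sum_{\alpha,\beta\in\cI_x}\E\,\partial_{\alpha\beta}^{2}\partial_{\beta\alpha}^{2}(G_2E_bG_1E_aM_2)_{\beta\alpha}.$$
Expanding the four derivatives on the two $H$-dependent factors $G_2$ and $G_1$, each $G$-derivative produces a splitting of the form $G\to -G(\cdot)_{\alpha\beta}G$. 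After expanding, every resulting monomial is summable over $\alpha,\beta\in\cI_x$ to a product of (i) a normalized trace $\langle\cdots E_x\rangle$ built from $\diag(M_1)$ and/or $\diag(M_2)$, and (ii) a two-resolvent trace of the form $\langle G_2E_b\,\cdots\,G_1\,\cdots\rangle$. Replacing each surviving $G_i$ by $M_i$ via the anisotropic local law \eqref{eq:aniso_local} and \Cref{appen1} costs $\opr{\eta^{-3}N^{-2}}$; converting the surviving two-resolvent trace into $\wt L_{b\cdot}(\cdot)$ and then into $\wt K_{b\cdot}(\cdot)$ via \eqref{largG_Ent_BB} costs an additional $\opr{N^{-1}\eta^{-3}}$. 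The four terms on the RHS of \eqref{eq:G1HG2} correspond exactly to the four essentially distinct ways of pairing the four derivative endpoints $\{\alpha,\alpha,\beta,\beta\}$: two derivatives pair off to produce a diagonal factor (either $\diag(M_2)$ inserted inside the $\langle\cdot E_x\rangle$ trace, or $\diag(M_1)$ or $\diag(M_2)$ inserted inside the $\wt K_{bx}$ argument), while the remaining two derivatives reassemble into the $\wt K$-structure.

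\textbf{Remainder and main obstacle.} Higher-order cumulants $p+q\ge 5$ and the truncation remainder $\cal R_{l+1}$ are handled exactly as in \eqref{eq:Xmn>2} and \eqref{eq:X_remainder}, yielding contributions of order $\opr{\eta^{-3}N^{-2}}$ or smaller. The hard part is the combinatorial bookkeeping in the fourth-order step: of the many ways to distribute $\partial_{\alpha\beta}^{2}\partial_{\beta\alpha}^{2}$ across $G_2$, $G_1$, and the fixed index $\beta\alpha$, I must verify that only the four configurations listed survive to the claimed leading order, and that every other configuration either (a) forces a free $\alpha$ or $\beta$ index to appear outside a diagonal structure (producing, after summation, an entry of a product of $M$'s with an extra factor of $N^{-1}$ and hence lying in $\opr{\eta^{-3}N^{-2}}$), or (b) contains a factor $\langle (G_i-M_i)E_x\rangle$ that is $\opr{(N\eta)^{-1}}$ by the averaged local law \eqref{eq:aver_local}. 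Getting the exact coefficients and matrix arguments in \eqref{eq:G1HG2} to match requires carrying this case analysis through systematically and tracking which $M_i$ replaces which $G_i$ in each surviving term.
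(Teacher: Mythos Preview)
Your overall architecture is right and matches the paper: cumulant expansion, third-order terms into the $\eta^{-2}N^{-3/2}$ error, fourth-cumulant terms supplying the four explicit $\wt K$-expressions, and higher orders negligible. But the core computation in your fourth-order step is miscounted.

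The cumulant expansion of $\E[\bar h\,f]$ (Lemma~\ref{lem:complex_cumu}) pairs the $(p,q+1)$-cumulant with $p+q$ derivatives of $f$. Hence the $(2,2)$-cumulant $\kappa^{(2,2)}$ enters with \emph{three} derivatives, namely $\partial_{\alpha\beta}^{1}\partial_{\beta\alpha}^{2}$ in the paper's convention (this is $Y_{1,2}$), not the four derivatives $\partial_{\alpha\beta}^{2}\partial_{\beta\alpha}^{2}$ you wrote. Your displayed ``main piece'' and the subsequent heuristic about ``four derivative endpoints $\{\alpha,\alpha,\beta,\beta\}$'' are therefore off by one order; with four derivatives you would be computing the contribution of a fifth-order cumulant, which is genuinely lower order and does not produce the claimed terms.

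With the correct three derivatives, the leading contributions are the monomials in which every resolvent entry is diagonal (index pair $(\alpha,\alpha)$ or $(\beta,\beta)$); the paper lists these four explicitly in the $Y_{1,2}$ computation. The remaining fourth-cumulant configurations $(m,n)\in\{(0,3),(2,1),(3,0)\}$ --- which carry $\cal C^{(3,1)},\cal C^{(1,3)},\cal C^{(0,4)}$ --- always leave at least one off-diagonal factor $(G_sB)_{\alpha\beta}$ or $(G_2E_bG_1B)_{\alpha\beta}$, and these are dispatched by a Cauchy--Schwarz/Ward argument to $\opr{(N\eta)^{-3/2}}$, which is absorbed in the stated error. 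So the mechanism selecting exactly four terms is not a pairing of four endpoints but rather the requirement that the three derivatives on $(G_2E_bG_1E_aM_2)_{\beta\alpha}$ produce only diagonal factors; you should redo the bookkeeping starting from $\partial_{\alpha\beta}\partial_{\beta\alpha}^{2}(G_2E_bG_1E_aM_2)_{\beta\alpha}$.
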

\begin{proof}
We again use the cumulant expansion in \Cref{lem:complex_cumu}: 
\[-D\E\langle \underline{G_1E_aM_2HG_2E_b}\rangle = \frac{-1}{N}\sum_{m+n=2}^{l} \sum_{\alpha,\beta}\frac{1}{m!}\frac{1}{n!}\mathcal{C}^{(n, m+1)}_{\beta\alpha}\E \partial_{\alpha\beta}^m\partial_{\beta\alpha}^n(G_2E_bG_1E_aM_2)_{\beta\alpha} + \mathcal{R}'_{l+1},
\] 
where we choose $l$ large enough such that the remainder term satisfies $\mathcal{R}'_{l+1} \prec N^{-2}$. Using Lemma \ref{appen1}, it is straightforward to check that $\partial_{\alpha\beta}^m\partial_{\beta\alpha}^n(G_2E_bG_1E_aM_2)_{\beta\alpha} \prec \eta^{-1}$. We thus obtain the rough bound
\[Y_{m,n} := \frac{-1}{N}\sum_{\alpha,\beta}\frac{1}{m!}\frac{1}{n!}\mathcal{C}^{(n, m+1)}_{\beta\alpha}\E \partial_{\alpha\beta}^m\partial_{\beta\alpha}^n(G_2E_bG_1E_aM_2)_{\beta\alpha} \prec N^{-(m+n-1)/2}\eta^{-1},\] 
which is good enough when $m+n \geq 4$. It remains to deal with the cases $m+n \in \{ 2, 3\}$. 

For the $m+n=2$ case, we take $m=n=1$ as an example:
\begin{align*}
	Y_{1,1} =& -\frac{1}{N}\sum_{\alpha,\beta}\mathcal{C}^{(1, 2)}_{\beta\alpha}\E\left[(G_2)_{\beta\beta}(G_2)_{\beta\alpha}(G_2E_bG_1E_aM_2)_{\alpha\alpha} + (G_2)_{\beta\beta}(G_2)_{\al\alpha}(G_2E_bG_1E_aM_2)_{\beta\alpha}\right]\\
	&-\frac{1}{N}\sum_{\alpha,\beta}\mathcal{C}^{(1, 2)}_{\beta\alpha}\E\left[(G_2)_{\beta\beta} (G_2E_bG_1)_{\al\al}(G_1E_aM_2)_{\beta\alpha} +  (G_2)_{\beta\al}(G_2E_bG_1)_{\beta\beta}(G_1E_aM_2)_{\al\alpha}\right]\\
	&-\frac{1}{N}\sum_{\alpha,\beta}\mathcal{C}^{(1, 2)}_{\beta\alpha}\E\left[(G_1)_{\beta\beta} (G_2E_bG_1)_{\beta\al}(G_1E_aM_2)_{\al\alpha} +  (G_1)_{\al\al}(G_2E_bG_1)_{\beta\beta}(G_1E_aM_2)_{\beta\alpha}\right].
\end{align*}
For the first term on the RHS, we write it as
\be\label{Y11_first}\frac{1}{N^{5/2}}\E \sum_{x=1}^D\sum_{\alpha,\beta\in \cI_x}\left(\kappa^{(1,2)} + u\delta_{\al\beta}\right) (G_2)_{\beta\beta}(G_2)_{\beta\alpha}(G_2E_bG_1E_aM_2)_{\alpha\alpha}
\ee
where we used that  $\mathcal{C}^{(1,2)}_{\beta\alpha}=N^{-3/2}(\kappa^{(1,2)}+u\delta_{\al\beta})$ with $\kappa^{(1,2)}:=N^{3/2}\mathcal{C}^{(1,2)}_{12}$ and $u$ being some fixed number. Then, using $(G_2)_{\beta\beta}\prec 1$ and $(G_2E_bG_1E_aM_2)_{\alpha\alpha}\prec \eta^{-1}$ by \Cref{appen1}, applying the trivial bound $\|G_2\|\le \eta^{-1}$ gives that
$$
\eqref{Y11_first} \prec \frac{1}{N^{5/2}}\cdot \frac{N}{\eta}+ \frac{1}{N^{5/2}} \cdot \frac{1}{\eta} \frac{N}{\eta} \le 2 \eta^{-2}N^{-3/2}.
$$
The other terms in $Y_{1,1}$ and the terms in the $(m,n)\in \{(0, 2), (2,0)\}$ cases can be handled in a similar way.

For the $m + n = 3$ case, through direct calculations, we see that when $(m,n)\in \{(0, 3), (2,1), (3,0)\}$, the corresponding $Y_{m,n}$ consists of terms of the form
\be\label{eq:414_mn3_otherterms}
J_1=\frac{1}{N^{3}}\sum_{x=1}^D\sum_{\al,\beta\in \cI_x}(G_sB)_{ij}\cal F_1  \ \ \text{ or }\ \ J_2=\frac{1}{N^{3}}\sum_{x=1}^D\sum_{\al,\beta\in \cI_x}(G_2E_bG_1B)_{ij}\cal F_2 ,
\ee 
where $s\in \{1,2\},$ $ (i,j)\in \{(\al,\beta),(\beta,\al)\}$, $B$ is some deterministic matrix with $\|B\|=\OO(1)$, and $\cal F_1$ and $\cal F_2$ are expressions satisfying  $\cal F_1=\opr{\eta^{-1}}$ and $\cal F_2=\opr{1}$. Using Cauchy-Schwarz inequality and \Cref{appen1}, we get that
$$\sum_{\beta\in \cI_x}|(G_sB)_{ij}| \prec \eta^{-1/2}N^{1/2},\quad \sum_{\beta\in \cI_x}|(G_2E_bG_1B)_{ij}| \prec \eta^{-3/2}N^{1/2}, $$
with which we can bound $J_1$ and $J_2$ by $ J_1 + J_2 \prec (N\eta)^{-3/2}.$ 

Finally, we consider the remaining case with $m = 1$ and $n = 2$. In this case, some terms in $Y_{1,2}$ will be of the form \eqref{eq:414_mn3_otherterms}, so they can be bounded by $\OO_\prec((N\eta)^{-3/2})$. However, $Y_{1,2}$ also contains some leading terms that contribute to \eqref{eq:G1HG2}. We can expand $Y_{1,2}$ as
\begin{align*}
	 Y_{1,2}&=-\frac{\kappa^{(2, 2)} }{2N^3}\sum_{x=1}^D\sum_{\alpha,\beta\in \cI_x}\E \partial_{\alpha\beta}\partial_{\beta\alpha}^2(G_2E_bG_1E_aM_2)_{\beta\alpha}\\
	 &=\frac{ \kappa^{(2, 2)} }{2N^3}\sum_{x=1}^D\sum_{\alpha,\beta\in \cI_x}\E \partial_{\alpha\beta}\partial_{\beta\alpha}\left[(G_2)_{\beta\beta}(G_2E_bG_1E_aM_2)_{\al\alpha}+(G_2E_bG_1)_{\beta\beta}(G_1E_aM_2)_{\al\alpha}\right]\\
	 &=-\frac{\kappa^{(2, 2)} }{N^3}\sum_{x=1}^D\sum_{\alpha,\beta\in \cI_x}\E \partial_{\alpha\beta} \left[(G_2)_{\beta\beta}(G_2)_{\al\beta}(G_2E_bG_1E_aM_2)_{\al\alpha}+(G_2)_{\beta\beta}(G_2E_bG_1)_{\al\beta}(G_1E_aM_2)_{\al\alpha}\right]\\
	 &\quad -\frac{ \kappa^{(2, 2)} }{N^3}\sum_{x=1}^D\sum_{\alpha,\beta\in \cI_x}\E \partial_{\alpha\beta}\left[(G_1)_{\al\beta}(G_2E_bG_1)_{\beta\beta}(G_1E_aM_2)_{\al\alpha}\right].
\end{align*}
When $\partial_{\alpha\beta}$ acts on the off-diagonal terms $(G_2)_{\al\beta}$, $(G_2E_bG_1)_{\al\beta}$, and $(G_1)_{\al\beta}$, it yields 4 leading terms consisting of diagonal terms only. The partial derivative $\partial_{\alpha\beta}$ acting on other terms gives errors, each containing two off-diagonal entries of the form $(G_{s} B)_{ij}$ and $(G_{s}E_\gamma G_{s'}B)_{ij}$, where $s,s'\in \{1,2\}$, $ (i,j)\in \{(\al,\beta),(\beta,\al)\}$, $\gamma\in \{a,b\}$, and $B$ is some deterministic matrix with $\|B_i\|=\OO(1)$. By \Cref{appen1}, we have 
$\sum_{\beta\in \cI_x} |(G_{s} B)_{ij}|^2 \prec \eta^{-1}$ and $ \sum_{\beta\in \cI_x}|(G_{s}E_\gamma G_{s'}B)_{ij}|^2 \prec \eta^{-3}.$
Combining these bounds with Cauchy-Schwarz inequality, we can bound the error terms by  $\OO_\prec((N\eta)^{-2}) $. In sum, we obtain that 
\begin{align*}
Y_{1,2}&=\frac{\kappa^{(2, 2)}}{N^3}\sum_{x=1}^D\sum_{\alpha,\beta\in \cI_x}\E(G_2)_{\beta\beta}\left[ (G_2)_{\beta\beta}  (G_2)_{\al\al} (G_2E_bG_1E_aM_2)_{\al\alpha}+   (G_2)_{\al\al} (G_2E_bG_1)_{\beta\beta} (G_1E_aM_2)_{\al\alpha}\right]\\
&+\frac{\kappa^{(2, 2)}}{N^3}\sum_{x=1}^D\sum_{\alpha,\beta\in \cI_x}\E(G_1)_{\beta\beta}\left[ (G_2)_{\beta\beta} (G_2E_bG_1)_{\al\al}  (G_1E_aM_2)_{\al\al}+ (G_1)_{\al\al}(G_2E_bG_1)_{\beta\beta}  (G_1E_aM_2)_{\al\al}\right]\\
& + \OO_\prec((N\eta)^{-2}).
\end{align*}
Then, using the anisotropic local law \eqref{eq:aniso_local}, \Cref{appen1}, and the estimate \eqref{largG_Ent_BB}, we get from the above expression that 
\begin{align*} 
	Y_{1,2}&=\frac{\kappa^{(2, 2)}}{N^2}\sum_{x=1}^D \bigg\{ \sum_{\beta\in \cI_x} \left[(M_2)_{\beta\beta}\right]^2   \wt K_{ba}(M_2\diag(M_2)E_x)  + \sum_{\al\in \cI_x}  (M_2)_{\al\al} (M_1E_aM_2)_{\al\alpha} \wt K_{bx}(\diag(M_2)) \bigg\} \\
	&+\frac{\kappa^{(2, 2)}}{N^2}\sum_{x=1}^D\bigg\{\sum_{\beta\in \cI_x} (M_1)_{\beta\beta}(M_2)_{\beta\beta}\wt K_{bx}(\diag(M_1E_aM_2))  + \sum_{\al\in \cI_x} (M_1)_{\al\al}(M_1E_aM_2)_{\al\al} \wt K_{bx}(\diag(M_1)) \bigg\}\\
	& + \OO_\prec\big((N\eta)^{-3/2}+\eta^{-3}N^{-2}\big).
\end{align*}
Rewriting this expression concludes \eqref{eq:G1HG2}. 
\end{proof}

\begin{lemma}\label{lem:fourthcumu2}
Under the setting of \Cref{lemma_G_large_2}, let $B$ be an arbitrary deterministic matrix with $\|B\|\le 1$. Then, we have that
    \be\label{eq:Exp_G1-M1}
    \begin{aligned}
    \bbe \langle(G_1 - M_1)B\rangle 
    =&\frac{\kappa^{(2,2)}  \langle \diag(M_1)^2 \rangle }{N}\left[ \langle M_1BM_1\diag(M_1)\rangle   +   \frac{ \langle M_1^2 \diag(M_1)\rangle}{1-\langle M_1^2\rangle } \langle M_1^2B\rangle\right]\\
    & + \OO_\prec\big({\eta^{-1} N^{-3/2}+(N\eta)^{-2}}\big).
    \end{aligned}
    \ee
\end{lemma}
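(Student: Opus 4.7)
The plan is to refine the averaged local law from $\bbe\langle(G_1-M_1)B\rangle = \OO_\prec((N\eta)^{-1})$ to the precise leading-order correction driven by the fourth cumulant $\kappa^{(2,2)}$, in close analogy with the proof of \Cref{lem:fourthcumu1}. Starting from identities \eqref{basGM} and \eqref{eq:HGline}, I would write
\begin{align*}
\bbe\langle(G_1-M_1)B\rangle = -\bbe\langle \underline{HG_1}\,BM_1\rangle + \sum_{a=1}^D D\,\bbe\bigl[\langle(G_1-M_1)E_a\rangle\langle M_1 E_a G_1 B\rangle\bigr].
\end{align*}
The second sum will be treated by a covariance-style argument combined with one further expansion of $G_1$ around $M_1$ via \eqref{basGM}; using the averaged local law $\langle(G_1-M_1)E_a\rangle\prec (N\eta)^{-1}$, \Cref{appen1}, and Cauchy-Schwarz, its contribution is either reabsorbed into the structural terms on the right-hand side of \eqref{eq:Exp_G1-M1} or into the $\OO_\prec((N\eta)^{-2})$ error.

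The heart of the argument is to apply the complex cumulant expansion \Cref{lem:complex_cumu} to $\bbe\langle \underline{HG_1}\,BM_1\rangle$. By the definition of the underline, the second-cumulant ($p+q=1$) contributions cancel. The third-cumulant ($p+q=2$) contributions are bounded by $\OO_\prec(\eta^{-1}N^{-3/2})$ via the same Cauchy-Schwarz plus \Cref{appen1} bookkeeping as for the $(m,n)\in\{(0,3),(2,1),(3,0)\}$ cases of \Cref{lem:fourthcumu1}, matching the first error term in \eqref{eq:Exp_G1-M1}. Among the fourth-cumulant ($p+q=3$) contributions, only the $\kappa^{(2,2)}$ term produces contributions of size $N^{-1}$: the diagonal coincidence built into $\cal C^{(2,2)}_{\beta\al}$ forces an extra product $(G_1)_{\beta\beta}(G_1)_{\al\al}$ of order one on each block $\cI_x$, while the other fourth cumulants $\cal C^{(0,4)},\cal C^{(4,0)},\cal C^{(1,3)},\cal C^{(3,1)}$ lack this coincidence and yield only $\OO_\prec(\eta^{-1}N^{-3/2})$ errors. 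Cumulants of order five and higher and the remainder $\mathcal R_{l+1}$ are negligible for $l$ chosen large enough, exactly as in \Cref{lemma_G_large}.

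Next, I expand the $(m,n)=(1,2)$ and $(2,1)$ derivatives of $(G_1 B M_1)_{\beta\al}$ explicitly and replace each resulting $G_1$ entry by its deterministic approximation $M_1$ via the anisotropic local law \eqref{eq:aniso_local}; the residual fluctuations, once summed over $\al,\beta\in\cI_x$ with prefactor $N^{-3}$, are $\OO_\prec((N\eta)^{-2})$. Two structural patterns emerge from this computation. The first, in which the combinatorics puts $BM_1$ inside an $M_1 B M_1$ sandwich, produces $\frac{\kappa^{(2,2)}\langle\diag(M_1)^2\rangle}{N}\langle M_1 B M_1\diag(M_1)\rangle$. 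The second pattern decouples into a factor $\langle M_1^2 B\rangle$ multiplied by a term proportional to $\bbe\langle(G_1-M_1)\widetilde B\rangle$ for a $\widetilde B$ tied to $M_1^2$; recognizing this as a feedback into the quantity being computed, one obtains a closed linear equation for $\bbe\langle(G_1-M_1)B\rangle$ whose solution produces the geometric-series factor $(1-\langle M_1^2\rangle)^{-1}$, in accordance with the self-consistent identity $\partial_z M_1 = M_1^2/(1-\langle M_1^2\rangle)$ that follows by differentiating \eqref{def_G0}.

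The main technical obstacle will be this last step: isolating the exact feedback term from among the many $\kappa^{(2,2)}$ contributions and verifying that the iteration resums cleanly to the denominator $1-\langle M_1^2\rangle$, rather than some approximation of it with uncontrolled residuals. This is structurally analogous to how the factor $(1-\wh M_{(1,2)})^{-1}$ arose in \eqref{eq_Lab}, but in the present single-resolvent setting the feedback is more delicate because it couples back to the very quantity being estimated. The remaining steps — the bounds on third- and other fourth-order cumulants, the cumulant remainder $\mathcal R_{l+1}$, and the handling of the correction term $\sum_a D\bbe[\langle(G_1-M_1)E_a\rangle\langle M_1 E_a G_1 B\rangle]$ — are routine refinements of techniques already developed in the proofs of \Cref{lemma_G_large} and \Cref{lem:fourthcumu1}.
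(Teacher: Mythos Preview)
Your overall strategy matches the paper's, but you have mislocated the source of the $(1-\langle M_1^2\rangle)^{-1}$ factor. The cumulant expansion of $-\bbe\langle M_1\underline{HG_1}B\rangle$ yields only \emph{one} structural pattern: after replacing each $G_1$ entry by $M_1$ via the anisotropic law, the $(m,n)=(1,2)$ term gives $\frac{\kappa^{(2,2)}}{N}\langle\diag(M_1)^2\rangle\langle M_1 B M_1 \diag(M_1)\rangle$ plus $\OO_\prec(\eta^{-1}N^{-3/2})$; there is no second pattern inside the cumulant combinatorics that feeds back into $\bbe\langle(G_1-M_1)\widetilde B\rangle$.

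The resummation comes instead from the second sum $D\sum_a\bbe[\langle(G_1-M_1)E_a\rangle\langle M_1 E_a G_1 B\rangle]$, which you flagged as routine. The paper's route is to replace $G_1\to M_1$ in the factor $\langle M_1 E_a G_1 B\rangle$ (the residual costs only $(N\eta)^{-2}$), then first specialize to $B=E_a$. This gives a closed $D\times D$ linear system for the vector $(\bbe\langle(G_1-M_1)E_a\rangle)_{a\in\qqD}$ with matrix $1-\wh M_{(1,1)}$; inverting and using $\sum_x(1-\wh M_{(1,1)})^{-1}_{ax}=(1-\langle M_1^2\rangle)^{-1}$ (a consequence of block translation symmetry) yields \eqref{eq:g1b3}. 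For general $B$ one then plugs the now-known, $a$-independent value of $\bbe\langle(G_1-M_1)E_a\rangle$ back into the second sum and uses $D\sum_a\langle M_1 E_a M_1 B\rangle = D\langle M_1^2 B\rangle$ to produce the second structural term. So the feedback you anticipated is real but lives in the second sum, not in the $\kappa^{(2,2)}$ derivatives, and is resolved by a single matrix inversion rather than an iterated resummation.
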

\begin{proof}
Using \eqref{eq:G-M} and \eqref{eq:HGline}, we get that
\be\label{eq:g1b1}\begin{aligned}
	\bbe \langle(G_1 - M_1)B\rangle 
	&= -\bbe \langle M_1\underline{HG_1}B\rangle + D\sum_{x=1}^D \bbe \langle (G_1 -M_1)E_x\rangle\langle M_1E_xG_1B\rangle \\
	&= -\bbe \langle M_1\underline{HG_1}B\rangle + D\sum_{x=1}^D \bbe \langle (G_1 -M_1)E_x\rangle\langle M_1E_xM_1B\rangle  + \opr{(N\eta)^{-2}},
\end{aligned}\ee
where we used the averaged local law \eqref{eq:aver_local} in the second step. 
Taking $B=E_a$ and using \eqref{1-M-2} below, we can solve this equation to get that 
\begin{align}\label{eq:g1b2}	\bbe \langle(G_1 - M_1)E_a\rangle=-  \sum_{x=1}^D (1-\wh M_{(1,1)})^{-1}_{ax} \bbe \langle M_1\underline{HG_1}E_x\rangle + \opr{(N\eta)^{-2}}.
\end{align}
With the cumulant expansion in \Cref{lem:complex_cumu}, using a similar argument as in the proof of \Cref{lem:fourthcumu1}, we can derive that 
\begin{align}
	-\bbe \langle M_1\underline{HG_1}E_x\rangle 
	&=\frac{\kappa^{(2,2)}}{DN^3} \sum_{y=1}^D \sum_{\al,\beta\in \cI_y} \bbe \left[(G_1)_{\beta \beta}\right]^2(G_1)_{\al\al} (G_1E_xM_1)_{\al\al} +  \OO_\prec\big(\eta^{-1} N^{-3/2} \big) \nonumber\\
	&=\frac{\kappa^{(2,2)}}{N} \langle \diag(M_1)^2 \rangle \langle M_1E_xM_1 \diag(M_1)\rangle +  \OO_\prec\big(\eta^{-1} N^{-3/2} \big)\nonumber\\
	&=\frac{\kappa^{(2,2)}}{DN} \langle \diag(M_1)^2 \rangle \langle M_1^2 \diag(M_1)\rangle +  \OO_\prec\big(\eta^{-1} N^{-3/2} \big),\label{eq:G1-M1Ea_b}
\end{align}
where we used the averaged local law \eqref{eq:aver_local} and the block translation symmetry of $M_1$ in the above derivation. A similar argument gives that 
\begin{align}
	-\bbe \langle M_1\underline{HG_1}B\rangle=\frac{\kappa^{(2,2)} }{N}  \langle \diag(M_1)^2 \rangle \langle M_1BM_1\diag(M_1)\rangle  +  \OO_\prec\big(\eta^{-1} N^{-3/2} \big).\label{eq:G1-M1Ea0}
\end{align}

Now, plugging \eqref{eq:G1-M1Ea_b} into \eqref{eq:g1b2} and using 
$$ \sum_{x=1}^D (1-\wh M_{(1,1)})^{-1}_{ax} = \frac{1}{1-\sum_{x=1}^D (M_{(1,1)})_{ax}}=\frac{1}{1-\langle M_1^2\rangle },$$
we obtain that  
\begin{align} 
	\label{eq:g1b3}	
	D\bbe \langle(G_1 - M_1)E_a\rangle=\frac{\kappa^{(2,2)}}{N} \frac{\langle \diag(M_1)^2 \rangle \langle M_1^2 \diag(M_1)\rangle}{1-\langle M_1^2\rangle }  + \OO_\prec\big({\eta^{-1} N^{-3/2}+(N\eta)^{-2}}\big).
\end{align}
Finally, applying \eqref{eq:G1-M1Ea0} and \eqref{eq:g1b3} to \eqref{eq:g1b1},  we conclude \eqref{eq:Exp_G1-M1}.
\end{proof}

Now, we are ready to complete the proof of \Cref{lemma_G_large_2} using Lemmas \ref{lem:fourthcumu1} and \ref{lem:fourthcumu2}.
\begin{proof}[Proof of \Cref{lemma_G_large_2}]
Notice that $(1-\langle M(z)^2\rangle)^{-1}\lesssim 1$ by  \eqref{eq:msc0.5} and \eqref{eq:msc2} below. Hence, the estimate \eqref{largG_1G} follows from \Cref{lem:fourthcumu2} immediately for $\eta\sim N^{-\e_g}$. It remains to show \eqref{largG_Ex}.

First, when $z_1=z_2\in \{z,\bar z\}$, by \eqref{1-M-2} below, we have $\max_{a,b\in \qqD}\| K_{ab}(B)\|\lesssim 1$ for any matrix $B$ with $\|B\|=\OO(1)$. Then, applying \Cref{lem:fourthcumu1} and \Cref{lem:fourthcumu2} to \eqref{ELK}, we obtain that 
\begin{align*}
	\E L_{ab}&= \wh{M}_{ab} + \sum_{x=1}^D \wh M_{ax} \E L_{xb} + \opr{N^{-1}+\eta^{-2} N^{-3/2}+\eta^{-4}N^{-2}},
\end{align*} 
solving which gives \eqref{largG_Ex}.

Next, we consider the case $z_1=\bar z_2\in \{z,\bar z\}$. Without loss of generality, we assume that $z_1=\bar z_2=z$. Again, applying \Cref{lem:fourthcumu1} and \Cref{lem:fourthcumu2} to \eqref{ELK}, we obtain that  
\begin{align}
	\E L_{ab} &= \wh{M}_{ab} + \sum_{x=1}^D \wh M_{ax} \E L_{xb} + \OO_\prec\big({N^{-\delta_A}(N\eta)^{-1}+N^{-1}+\eta^{-2} N^{-3/2}+\eta^{-4}N^{-2}}\big)\nonumber\\
	&+\frac{\kappa^{(2, 2)}}{N} \left[ m_{sc}(z_2)^4 + |m_{sc}(z_1)|^4+ |m_{sc}(z_1)|^2 m_{sc}(z_1)^2+ |m_{sc}(z_1)|^2 m_{sc}(z_2)^2 \right]K_{ab}\nonumber\\
	&+ \frac{\kappa^{(2,2)}  }{N}\left(\frac{m_{sc}(z_1)^4|m_{sc}(z_1)|^2}{1-m_{sc}(z_1)^2}  +\frac{m_{sc}(z_2)^6}{1-m_{sc}(z_2)^2}\right) K_{ab},\label{ELK222}
\end{align} 
where we have used the estimate \eqref{eq:msc0.5} below to replace $M_1$ and $M_2$ by scalar matrices $m_{sc}(z_1)I$ and $m_{sc}(z_2)I$ up to an error of order $\OO(\|A\|)=\OO(N^{-\delta_A})$. Now, using $|m_{sc}(z_1)|=1-\OO(\eta)$ and $m_{sc}(z_2)=\overline m_{sc}(z_1)$, we can check that  
\begin{align}
& m_{sc}(z_2)^4 + |m_{sc}(z_1)|^4+ |m_{sc}(z_1)|^2 m_{sc}(z_1)^2+ |m_{sc}(z_1)|^2 m_{sc}(z_2)^2 + \frac{m_{sc}(z_1)^4|m_{sc}(z_1)|^2}{1-m_{sc}(z_1)^2}  +\frac{m_{sc}(z_2)^6}{1-m_{sc}(z_2)^2}\nonumber\\
& =m_{sc}(z_2)^4+ m_{sc}(z_1)^2+ m_{sc}(z_2)^2+ 1+ \frac{m_{sc}(z_1)^4}{1-m_{sc}(z_1)^2} + \frac{m_{sc}(z_2)^6}{1-m_{sc}(z_2)^2} +  \OO(\eta) =\OO(\eta).\label{eq:sumzero}
\end{align}
Plugging it back to \eqref{ELK222} and using that $\eta\sim N^{-\e_g}$ with $\e_g\in (0,1/4)$, we get
\begin{align*}
	\E L_{ab} &= \wh{M}_{ab} + \sum_{x=1}^D \wh M_{ax} \E L_{xb} + \OO_{\prec}\big(N^{-\delta_A}(N\eta)^{-1}+N^{-1}\big).
\end{align*}
Solving $\E L$ from this equation and using \eqref{1-M} below concludes \eqref{largG_Ex}.
\end{proof}

\begin{remark}\label{rem:self}
We remark that the cancellation in \eqref{eq:sumzero} is not a mere coincidence. Instead, there is a robust mechanism behind it that does not depend on the specific properties of $m_{sc}$. This mechanism has been thoroughly investigated and elucidated in the context of random band matrices in \cite{yang2021delocalization}, where it is referred to as the \emph{sum zero property} or \emph{self-energy renormalization}. To explain the core idea, we recall that by Lemmas \ref{lem:fourthcumu1} and \ref{lem:fourthcumu2}, there is 
\be\label{eq:ELab}
\E L_{ab} = \wh{M}_{ab} + \sum_{x=1}^D \wh M_{ax} \E L_{xb} + \cal D_{ab} + \OO_\prec (N^{-1}),
\ee
for some deterministic matrix $\cal D$ satisfying the ``rough bound" $\|\cal D\|\prec (N\eta)^{-1}$. Now, summing both sides of this equation over $a\in \qqD$, we get that 
\begin{align*}
     \left( 1-\frac{\im m}{\im m + \eta}\right)\frac{\E D\im \langle GE_{b}\rangle}{\eta} = \frac{\im m}{\im m + \eta}  + \sum_{a=1}^D \cal D_{ab} + \OO_\prec (N^{-1}),
\end{align*}
where we have used equation \eqref{sumwtM} below and applied Ward's identity \eqref{eq_Ward} to $\sum_x L_{xb}$. By \Cref{lem:fourthcumu2}, this equation gives 
\be\label{eq:nontrivial_cancel} \sum_{a=1}^D \cal D_{ab} = \frac{D\im \E \langle (G-M)E_{b}\rangle}{\im m+ \eta} + \OO_\prec(N^{-1})=\OO_\prec(N^{-1}).
\ee
This already indicates a non-trivial cancellation compared to the rough bound $(N\eta)^{-1}$. In particular, if we can write $\cal D_{ab}$ as some coefficient $c_N$ times $K_{ab}$ (up to a small error) as shown in the proof of \Cref{lemma_G_large_2} above, then $c_N$ must be of order $\OO_\prec(\eta/N)$. This gives another proof of \eqref{eq:sumzero}.
\end{remark}

\section{Characteristic flow estimates}\label{sec:flow}

In this section, we complete the proofs of Lemmas \ref{lem_flow_1}--\ref{lem_flow_3} using the characteristic flow method. Let $\mathbf{B}_t=\left(b_{i j}(t)\right)_{i,j\in \cI}$ be a $D  \times D $ block matrix Brownian motion consisting of the diagonal blocks $(B_a)_t$ in \eqref{eq:Ht}. Then, by \eqref{eq:Ht}, $H_t=(h_{ij}(t))_{i,j\in \cI}$ satisfies the equation 
$$
\dd h_{i j}=-\frac{1}{2} h_{i j} \dd t+\frac{1}{\sqrt{N}} \dd b_{i j}(t) ,
$$
with initial data $H_{t_0}=H_0$. 
Let $F$ be any function of $t$ and $H$ with continuous second-order derivatives. Then, by Itô's formula, we have that
\be\label{Ito}
\dd F=\partial_t F \dd t+ \sum_{a=1}^D \sum_{k,l \in \mathcal{I}_a} \partial_{h_{kl}} F \dd h_{kl}+\frac{1}{2 N} \sum_{a=1}^D \sum_{k,l \in \mathcal{I}_a} \partial_{h_{kl}}\partial_{ h_{lk}}  F \dd t.
\ee
We will apply this equation to functions of the resolvents $G_{i,t}\equiv (G_i)_t =(H_t-Z_{i,t})^{-1}$ with $Z_{i,t}=(z_{i})_{t}-\Lambda_t$ for $z_{i}\in \{z, \bar z\}$. 
Using the formula (with the simplified notation $\partial_{kl}\equiv \partial_{h_{kl}}$)
\begin{equation} \label{Ito_pro_1}
        \partial_{kl} \left(G_{i,t}\right)_{k'l'}= -\left(G_{i,t}\right)_{k' k}\left(G_{i,t}\right)_{ll'},\quad k',l' \in \cI, \ k,l\in \cal I_a, \ a\in \qqD,
\end{equation}
we can easily obtain the following identities (with $M_{i,t}\equiv (M_i)_t$): 
\begin{equation} \label{Ito_pro_2}
        \partial_{t}  G_{i,t}= G_{i,t}  \left(\frac{\dd}{\dd t}Z _{i,t}\right) G_{i,t},\quad \text{with}\quad \frac{\dd}{\dd t}Z_{i,t}=-\frac12 Z_{i,t}-\langle M_{i,t}\rangle; 
\end{equation}
\begin{equation} \label{Ito_pro_3}
 \sum_{a=1}^D \sum_{k,l \in \mathcal{I}_a}  h_{kl}\partial_{kl} G_{i,t}  =-G_{i,t}H_t G_{i,t}=-G_{i,t}-G_{i,t}Z_{i,t}G_{i,t};
\end{equation}
\begin{equation} \label{Ito_pro_4}
        \sum_{k,l \in \mathcal{I}_a} \partial_{kl} \left(G_{i,t}\right)_{k_1l_1} \cdot \partial_{lk} \left(G_{i',t}\right)_{k_2l_2}  = \left(G_{i,t}E_{a}G_{i',t}\right)_{k_1 l_2} \left(G_{i',t}E_{a}G_{i,t}\right)_{k_2l_1},\quad k_1,l_1,k_2,l_2 \in \cI.
\end{equation}

\subsection{Proof of \Cref{lem_flow_1}}

In this subsection, we give the proof of \Cref{lem_flow_1}. For simplicity of notations, we abbreviate $\wh M_{(1,2),t}$, $L_{(1,2),t}$, and $K_{(1,2),t}$ as $\wh M_{t}$, $L_t$, and $K_t$. Moreover, we denote $z_t=E_t+\ii \eta_t$ and
\be\label{eq:wtL12t}
\wt L_t\equiv \wt L_{(1,2),t}:= (t_c-t) L_t, \quad \wt K_t\equiv \wt K_{(1,2),t}:= (t_c-t) K_t.
\ee
Using Itô's formula \eqref{Ito} and the identities \eqref{Ito_pro_1}--\eqref{Ito_pro_4}, we can calculate that for $ x,y \in \qqD$, 
\begin{align*}
\dd (\widetilde{L}_{t})_{xy} &= - (L_t)_{xy} \dd t  + \frac{1}{\sqrt{N}} \sum_{a=1}^D \sum_{k, l \in \mathcal{I}_a} \partial_{k l} (\widetilde{L}_{t})_{x y} \dd b_{k l} + D(t_c-t)\left\langle G_{1,t} E_x G_{2,t} E_y\right\rangle \dd t \\
&+ D^2(t_c-t) \sum_{a=1}^D\left\langle G_{1,t} E_x G_{2,t} E_a\right\rangle\left\langle G_{2,t} E_y G_{1,t} E_a\right\rangle \dd t  \\
& + D^2(t_c-t) \sum_{a=1}^D
 \left\langle \left(G_{1,t}-M_{1,t} \right) E_a\right\rangle\left\langle G_{1,t} E_x G_{2,t} E_y G_{1,t} E_a\right\rangle \dd t\\
 & + D^2(t_c-t) \sum_{a=1}^D \left\langle \left(G_{2,t}-M_{2,t} \right) E_a\right\rangle\left\langle G_{2,t} E_y G_{1,t} E_x G_{2,t} E_a\right\rangle \dd t  .
 \end{align*}
Using the definitions of $\wt L_t$ and $L_{(1,2,3),t}$, we can rewrite the above equation as 
 \begin{align}
\dd (\widetilde{L}_{t})_{xy}&= \frac{1}{\sqrt{N}} \sum_{a=1}^D \sum_{k, l \in \mathcal{I}_a} \partial_{k l} (\widetilde{L}_{t})_{x y} \dd b_{k l} + \left(1 - \frac{1}{t_c-t}\right) (\widetilde{L}_{t})_{x y} \dd t + \frac{1}{t_c-t} \sum_{a=1}^D (\widetilde{L}_{t})_{x a} (\widetilde{L}_{t})_{a y} \dd t  \nonumber\\
&+ D(t_c-t) \sum_{a=1}^D \left\{\left\langle \left(G_{1,t}-M_{1,t} \right) E_a\right\rangle [L_{(1,2,1),t}]_{xya} + \left\langle \left(G_{2,t}-M_{2,t} \right) E_a\right\rangle [L_{(2,1,2),t}]_{yxa} \right\} \dd t . \label{ItowtL}
\end{align}
Next, with the averaged local law \eqref{eq:aver_local} and \Cref{appen1}, we can bound the last term by $\OO_\prec((t_c-t)\cdot N^{-1}\eta_t^{-3})=\OO_\prec(N^{-1}(t_c-t)^{-2})$, where we used $\eta_t\sim t_c-t$ by \eqref{t_eta}. Hence, we can rewrite \eqref{ItowtL} as 
\be\label{mainwtL}
\begin{aligned}
\dd  \widetilde{L}_{t }
=\frac{1}{\sqrt{N}} \sum_{a=1}^D \sum_{k, l \in \mathcal{I}_a} \partial_{k l} \widetilde{L}_{t } \dd  b_{k l} +\left[  \left(1 - \frac{1}{t_c-t}\right) \widetilde{L}_{t} + \frac{1}{t_c-t} (\widetilde{L}_{t })^2   \right] \dd  t+\OO_\prec\left(\frac1{N(t_c-t)^2}\right)\dd t .
\end{aligned}
\ee
On the other hand, by \eqref{eq:whMK}, we see that $\wt K_t$ satisfies the following equation:
\be\label{mainwtK}
\frac{\dd}{\dd t} \wt K_t= \left(1 - \frac{1}{t_c-t}\right) \widetilde{K}_{t} + \frac{1}{t_c-t} (\widetilde{K}_{t })^2, 
\ee
which matches the drift term in \eqref{mainwtL}.

We now study the martingale term in \eqref{mainwtL}, which is denoted as $\cal L_t$: 
$$ \dd\mathcal{L}_t= \frac{1}{\sqrt{N}}\sum_{a=1}^D\sum_{k,l\in\cal I_a}\partial_{kl}\wt L_{t}\dd b_{kl} \quad \text{with}\quad \cal L_{t_0}=0.
$$
The quadratic variation of $(\cal L_t)_{xy}$, $x,y \in \qqD$, is given by
\begin{align}\label{eq:quad_var}
[ \cal L_{xy}]_t &= \frac{1}{N}\int_{t_0}^t \sum_{a=1}^D\sum_{k,l\in\cal I_a}|\partial_{kl}(\wt L_s)_{xy}|^2\dd s.
\end{align}
Using \eqref{Ito_pro_1}, we can calculate the integrand as
\begin{align*}
\sum_{a=1}^D\sum_{k,l\in\cal I_a}|\partial_{kl}(\wt L_s)_{xy}|^2  =& \frac{(t_c-s)^2}{N^2}\sum_{a=1}^D\sum_{k,l\in\cal I_a}\Big(\left|(G_{1,s}E_xG_{2,s}E_yG_{1,s})_{lk}\right|^2 + \left|(G_{2,s}E_yG_{1,s}E_xG_{2,s})_{lk}\right|^2\\
&\qquad \qquad \qquad + 2\re\left[(G_{1,s}E_xG_{2,s}E_yG_{1,s})_{lk}\overline{(G_{2,s}E_yG_{1,s}E_xG_{2,s})_{lk}}\right]\Big)\\
=& \frac{D(t_c-s)^2}{N }\sum_{a=1}^D \Big( \langle G_{1,s}E_xG_{2,s}E_yG_{1,s}E_aG_{1,s}^*E_yG_{2,s}^*E_xG_{1,s}^*E_a\rangle\\
&\qquad \qquad \qquad +  \langle G_{2,s}E_yG_{1,s}E_xG_{2,s}E_aG_{2,s}^*E_xG_{1,s}^*E_yG_{2,s}^*E_a\rangle\\
&\qquad \qquad \qquad + 2 \re\langle G_{1,s}E_xG_{2,s}E_yG_{1,s}E_aG_{2,s}^*E_xG_{1,s}^*E_yG_{2,s}^*E_a\rangle\Big).
\end{align*}
Applying \Cref{appen1} and \eqref{t_eta}, we obtain that if $t_0 \le s \le t_c- N^{-1+\tau}$ for a constant $\tau>0$,
\be\label{eq:quad_integrand}
\sum_{a=1}^D\sum_{k,l\in\cal I_a}|\partial_{kl}(\wt L_s)_{xy}|^2  \prec \frac{|t_c-s|^2}{N }\cdot \frac{1}{\eta_s^5} \lesssim \frac{1}{N(t_c-s)^3} .
\ee
With a standard continuity argument, we obtain that this estimate holds uniformly in $s\in [t_0 , t_c- N^{-1+\tau}]$ (i.e., we first show that \eqref{eq:quad_integrand} holds uniformly in $t$ belonging to an $N^{-C}$-net of $[t_0 , t_c- N^{-1+\tau}]$ and then extend it uniformly to the whole interval using the Lipschitz continuity in $t$). Plugging \eqref{eq:quad_integrand} into \eqref{eq:quad_var}, we get the estimate
\be\label{eq:Ltxy}
[ \cal L_{xy}]_t  \prec \frac{1}{N^2(t_c-t)^2}, \quad \text{if}\quad t_0 \le t \le t_c- N^{-1+\tau}.
\ee
On the other hand, we have the trivial bound $|[ \cal L_{xy}]_t |\le N$ by using $\|G_{i,s}\|\le \eta_s^{-1} \ll N$ for $t\in [t_0 , t_c- N^{-1+\tau}]$. Together with \eqref{eq:Ltxy} and \Cref{stoch_domination}, it implies that for any constant $c>0$ and fixed $p\in \N$, 
$$\mathbb E \left|[ \cal L_{xy}]_t\right|^p \le \left(\frac{N^c}{N^2(t_c-t)^2}\right)^p, \quad \text{if}\quad t_0 \le t \le t_c- N^{-1+\tau}. $$
Applying the Burkholder-Davis-Gundy inequality, we obtain a $p$-th moment bound on $\sup_{s\in[ t_0,t] }\left|(\cal L_s)_{xy}\right|$. Then, applying Markov's inequality yields that for any $t\in [t_0 , t_c- N^{-1+\tau}]$ and $x,y \in \qqD$,
\be\label{CIest}
\sup_{s\in[ t_0,t] }\left|(\cal L_s)_{xy}\right| \prec \frac{1}{  N(t_c-t) }.
\ee

Inserting \eqref{CIest} back to \eqref{mainwtL}, we obtain that for any $t\in [t_0 , t_c- N^{-1+\tau}]$ and $x,y \in \qqD$,
\be\label{mainwtL2}
 \widetilde{L}_{t }- \widetilde{L}_{t_0 }
=\int_{t_0}^{t} \left[\left(1 - \frac{1}{t_c-s}\right) \widetilde{L}_{s}  + \frac{1}{t_c-s}   (\widetilde{L}_{s} )^2 \right] \dd s+\OO_\prec \left(\frac1{N(t_c-t) }\right) .
\ee
On the other hand, by \eqref{mainwtK}, we have
\be\label{mainwtL3} 
 \widetilde{K}_{t }- \widetilde{K}_{t_0 }
=\int_{t_0}^{t} \left[\left(1 - \frac{1}{t_c-s}\right) \widetilde{K}_{s}  + \frac{1}{t_c-s} (\widetilde{K}_{s})^2\right] \dd s.
\ee
For simplicity, we introduce the notation $ \wt \LK_t:= \wt L_t-\wt K_t$ and define the linear operator 
$\cal T_t$ acting on $D\times D$ matrices as 
\be\label{eq:TtV}
\cal T_t(V):=\wt K_t  V+V\wt K_t - [1-(t_c-t)] V,\quad V\in \C^{D\times D}.
\ee
Then, subtracting \eqref{mainwtL3} from \eqref{mainwtL2}, we obtain that 
\begin{align}
\widetilde{\LK}_{t }- \widetilde{\LK}_{t_0} &=\int_{t_0}^{t} \left[\left(1 - \frac{1}{t_c-s}\right) \widetilde{\LK}_{s}  + \frac{1}{t_c-s} \left(\widetilde{K}_{s}\widetilde{\LK}_{s}+ \widetilde{\LK}_{s}\widetilde{K}_{s} + (\widetilde{\LK}_{s})^2\right)\right] \dd s  +\OO_\prec \left(\frac1{N(t_c-t) }\right)\nonumber \\
&=\int_{t_0}^{t} \left(\left[(t_c-s) -1\right] \widetilde{\LK}_{s}  +  \widetilde{K}_{s}\widetilde{\LK}_{s}+ \widetilde{\LK}_{s}\widetilde{K}_{s} + (\widetilde{\LK}_{s})^2 \right) \frac{\dd s}{t_c-s}  +\OO_\prec \left(\frac1{N(t_c-t) }\right)\nonumber\\
&=\int_{t_0}^{t} \left( \cal T_s( \widetilde{\LK}_s)  +  (\widetilde{\LK}_s)^2\right) 
 \frac{\dd s}{t_c-s}+\cal E_{t},\label{wtLKint}
\end{align}
where $\cal E_{t}$ is a $D\times D$ random matrix satisfying that $\|\cal E_{t}\|_{\HS} \prec [N(t_c-t)]^{-1} $ uniformly in $t\in [t_0 , t_c- N^{-1+\tau}]$.  Denoting $\LKE_t:=\wt\LK_t-\cal E_t$ and noticing that $\cal E_{t_0}=0$, we can rewrite \eqref{wtLKint} as
\be\label{eq:deltat}
{\LKE}_{t} - \LKE_{t_0} = \int_{t_0}^{t} \left( \mathcal{T}_s(\LKE_s)+ \mathcal{T}_s(\mathcal{E}_s) + (\LKE_s + \mathcal{E}_s)^2 \right) \frac{\dd s}{t_c-s }.
\ee
Let $\Phi\left(t ; t_0\right)$ be the standard Peano-Baker series corresponding to the linear operator $\cal  T_t/(t_c-t)$, i.e., it is the unique solution to the following linear integral equation
\be\label{eq:PBintegral}
\Phi \left(t ; t_0\right)=\mathbf{1}+\int_{t_0}^t \frac{\cal T_s}{t_c-s}\circ \Phi \left(s ; t_0\right) \mathrm{d} s , 
\ee
where $\mathbf{1}$ denotes the identity operator. Equivalently, $\Phi\left(t ; t_0\right)$ satisfies the differential equation 
$$ \frac{\dd}{\dd t}\Phi \left(t ; t_0\right)=\frac{\cal T_t}{t_c-t}\circ \Phi \left(t ; t_0\right), \quad \text {with}\quad \Phi \left(t_0 ; t_0\right)=\mathbf{1}. $$
By Duhamel's principle, the solution $\LKE_t$ to \eqref{eq:deltat} can be expressed as 
\be\label{LKE}
\LKE_{t }
=\Phi \left(t  ; t_0\right) \LKE_{t_0}+\int_{t_0}^t \Phi \left(t ; s\right) \left(\frac{\mathcal{T}_s(\mathcal{E}_s) + (\LKE_s + \mathcal{E}_s)^2}{t_c-s} \right) \mathrm{d} s.\ee

Suppose the space $\mathbb{C}^{D\times D}$ of $D\times D$ matrices is equipped with the Hilbert-Schmidt norm. Then, we claim that, as a linear operator on $\mathbb{C}^{D\times D}$, 
$\mathcal T_t$ has operator norm at most $1 + \oo(1)$:
\be\label{Tnorm} 
 \|\cal T_t\|_{op}\le 1+\oo(1).
 \ee
Before proving this estimate, we first use it to prove \eqref{flow_1_result}.
With \eqref{Tnorm}, we get from \eqref{eq:PBintegral} that 
$$\frac{\dd}{\dd t}\|\Phi(t;s)\|_{op}\le \frac{1+\oo(1)}{t_c-t} \|\Phi(t;s)\|_{op}.$$
Using Gr{\"o}nwall's inequality, we conclude that for $t_0\le s \le t\le t_c- N^{-1+\tau} $,
\be\label{Phinorm} 
\|\Phi \left(t ; s\right)\|_{op}\prec  \frac{t_c-s}{t_c-t}  .
\ee
Applying \eqref{Tnorm} and \eqref{Phinorm} to \eqref{LKE} and using the bound on $\|\cal E_t\|_{\HS}$, we obtain that 
$$
\|{\LKE}_{t}\|_{2}\prec \frac{t_c-t_0}{t_c-t} \|{\LKE}_{t_0}\|_2 +\frac{1}{t_c-t}\int_{t_0}^t \|{\LKE}_s+\cal E_s\|_2^2 \dd s+\int_{t_0}^t \frac{\dd s}{N(t_c-t)(t_c-s)}.$$
From this estimate, writing $\LKE_t=\wt\LK_t-\cal E_t$, we obtain that for $t_c-t_0\sim N^{-\e_g}$ and $t_c-t\ge N^{-1+\tau}$,
\be\label{eq:wtLKt}
\|\wt {\LK}_{t }\|_2\prec \frac{t_c-t_0}{t_c-t} \|\wt {\LK}_{t_0}\|_2 +\frac{1}{t_c-t}\int_{t_0}^t \|\wt{\LK}_s\|_2^2 \dd s+\frac{1}{N(t_c-t)}.
\ee
By \eqref{largG_Ent}, we have $\|\widetilde{\LK}_{t_0 }\|_2\prec N^{-1+2\e_g}$. Then, from \eqref{eq:wtLKt}, we derive the the following self-improving estimate for $t\in [t_0,t_c- N^{-1+C\e_g}]$ when $C>2$:  
\be\label{eq:self_imp}
\sup_{ s \in [t_0, t]}N(t_c-s)\|\wt\LK_s\|_2\le N^{2\e_g} \ \Rightarrow \ N(t_c-t)\|\wt\LK_{t}\|_2\prec N^{\e_g}+N^{(4-C)\e_g}.
\ee
Moreover, defining the stopping time $T=\inf_{t\ge t_0} \{N(t_c-t)\|\widetilde{\LK}_t\|_2\ge  N^{2\e_g}\}$, we obtain from \eqref{eq:wtLKt} that 
  $$
\|\wt {\LK}_{t }\|_2\prec \frac{t_c-t_0}{t_c-t} \|\wt {\LK}_{t_0}\|_2
 +\frac{1}{N(t_c-t)}$$
if $t\le T$ and $t_c-t\ge N^{-1+C\e_g}$ with $C>4$. Now, applying a standard continuity argument with \eqref{eq:self_imp} gives that $T\ge t_c- N^{-1+C\e_g}$ with high probability when $C>4$ and hence concludes the desired result \eqref{flow_1_result}.  

Finally, we prove the bound \eqref{Tnorm}. Notice that 
\be\label{eq:wtKt}
\| \wt K_t\|=(t_c-t)\| K_t\|\le (t_c-t)\| (1-\wh M_t)^{-1}\|\|\wh M_t\|\lesssim (t_c-t)
\| (1-\wh M_t)^{-1}\| .
\ee
Hence, when $(z_1)_t=(z_2)_t \in \{z_t,\bar z_t\}$, we have $\| \wt K_t\|\lesssim t_c-t$ by \eqref{1-M-2} below, with which we readily derive \eqref{Tnorm}. 
It remains to consider the case $(z_1)_t=(\bar z_2)_t \in \{z_t,\bar z_t\}$. Since $\wh M_t$ is a circulant matrix, it has an eigendecomposition $\wh M_t = U_t D_t U_t^*$, where $D_t$ is the diagonal matrix of eigenvalues and $U_t$ is a $D\times D$ unitary matrix. Then, $\wt K_t$ can be written as 
$$\wt K_t = U_t\Xi_t U_t^*,\quad \Xi_t:=(t_c-t)\frac{D_t}{1-D_t}.$$
Now, we define the linear operator $\wt{\cal T}_t$ as 
$$
\wt{\cal T}_t(V):=\Xi_t  V+V\Xi_t - [1-(t_c-t)] V,\quad V\in \C^{D\times D}.
$$
It is easy to see ${\cal T}_t(V)= U_t[\wt{\cal T}_t(U_t^* VU_t)]U_t^*$, which implies that $\|\cal T_t\|_{op}=\|\wt{\cal   T }_t\|_{op}$. From the definition of $\wt{\cal T}_t$, we see that 
  \be\label{T2aa}
  \|\wt{\cal T}_t\|_{op} \le 
    \max_{k,l\in \qqD} \left|(\Xi_t)_{kk}+(\Xi_t)_{ll}-1\right| + |t_c-t|.
  \ee
It remains to estimate the eigenvalues of $\wt K_t$. 

Since the entries of \smash{$\wh M_t$} are all non-negative when $(z_1)_t=(\bar z_2)_t$, it has a Perron–Frobenius eigenvalue 
$$d_1=\frac{\im m_t(z_t)}{\im m_t(z_t) + \eta_t}$$ 
by equation \eqref{sumwtM} below. Moreover, by equation \eqref{eq:otherM}, the eigenvalues $d_k$ of \smash{$\wh M_t$} satisfy $d_k=d_1-a_k-\ii b_k$, $k \in \qqD$, for some $a_k\ge 0$ and $a_k+|b_k|=\oo(1)$ (specifically, $a_1=b_1=0$). Thus, 
\begin{align*}
(\Xi_t)_{kk}+(\Xi_t)_{ll}-1 & = (t_c-t)\left[\frac{d_1-a_k-\ii b_k}{(1-d_1)+a_k+\ii b_k} + \frac{d_1-a_l-\ii b_l}{(1-d_1)+a_l+\ii b_l}\right]-1 \\
& =\frac{\eta_t}{\eta_t + a'_k + \ii b'_k} + \frac{\eta_t}{\eta_t + a'_l + \ii b'_l} -1 +\oo(1), 
\end{align*} 
where we used \eqref{t_eta} in the second step and abbreviated that $a'_k:=(\im m_t(z_t)+\eta_t)a_k$ and $b'_k:=(\im m_t(z_t)+\eta_t)b_k$. 
Together with the simple fact $|1/(1+z)-1/2|\le 1/2$ when $\re z\ge 0$, this equation implies $|(\Xi_t)_{kk}+(\Xi_t)_{ll}-1|\le 1+\oo(1)$. Plugging it into \eqref{T2aa} concludes \eqref{Tnorm}.

\subsection{Proof of \Cref{lem_flow_2}} 

The proof of \Cref{lem_flow_2} is similar to that of \Cref{lem_flow_1} above. Hence, we only describe an outline of the proof without giving all the details. For simplicity, we abuse the notations a little bit and abbreviate $L_{(1,2,3),t}$ and $K_{(1,2,3),t}$ as $L_t$ and $K_t$. Moreover, we denote $z_t=E_t+\ii \eta_t$ and
 $$
\wt L_t:= (t_c-t)^{3/2} L_t, \quad \wt K_t:= (t_c-t)^{3/2} K_t, \quad \wt L_{(1,2),t}=(t_c-t)L_{(1,2),t}, \quad \wt K_{(1,2),t}=(t_c-t)K_{(1,2),t}.
$$
Similar to \eqref{mainwtL}, using It{\^o}'s formula \eqref{Ito}, the identities \eqref{Ito_pro_1}--\eqref{Ito_pro_4}, and Lemmas \ref{lem_loc} and \ref{appen1}, we  obtain that for $x,y,w\in \qqD$, 
\begin{align}
\dd (\widetilde{L}_{t})_{x y w}
 &= \frac{1}{\sqrt{N}} \sum_{a=1}^D \sum_{k, l \in \mathcal{I}_a} \partial_{k l} (\widetilde{L}_{t})_{xy w} \dd b_{k l} 
 + \frac{3}{2}\left( 1- \frac{1}{t_c-t}\right)(\widetilde{L}_{t})_{x y w} \dd t  
 + \OO_\prec \bigg(\frac{1}{N\left(t_c-t\right)^{5/2}}\bigg)\dd t 
 \nonumber\\
& + \frac{1}{t_c-t} 
\sum_{a=1}^D \left[  (\widetilde{L}_{(1,2),t})_{x a} (\widetilde{L}_{t})_{a y w}
+ (\widetilde{L}_{(2,3),t})_{y a} (\widetilde{L}_{t})_{ x a  w}
+ (\widetilde{L}_{(3,1),t} )_{ w a} (\widetilde{L}_{t})_{x y a}
\right]  \dd t. \label{eq:SDEwtL3}
\end{align}

We again denote the martingale term by 
 $$
 \dd \cal L_t:=\frac{1}{\sqrt{N}} \sum_{a=1}^D \sum_{k, l \in \mathcal{I}_a} \partial_{k l} (\widetilde{L}_{t})_{x yz} \dd b_{k l}. 
$$
Similar to \eqref{CIest}, we can prove that uniformly in $t\in [t_0 , t_c- N^{-1+\tau}]$, 
\be\label{CIest3G}
\sup_{x,y,w \in \qqD}\sup_{s\in[ t_0,t] }|(\cal L_s)_{xyw}|\prec \frac{1}{  N(t_c-t)^{3/2} }.
\ee
Plugging it into \eqref{eq:SDEwtL3}, we obtain the following estimate for $x,y,w \in \qqD$ and $t\in [t_0 , t_c- N^{-1+\tau}]$:
\begin{align}
 \left(\widetilde{L}_{t}- \widetilde{L}_{t_0 }\right)_{xyw}
=&\int_{t_0}^{t}  \frac32\left( 1 - \frac{1}{t_c-s} \right)(\widetilde{L}_{s })_{xyw} \dd s+\OO_\prec \left(\frac1{N(t_c-t)^{3/2} }\right) \label{eq:SDEwtL2}\\
 +& \int_{t_0}^{t} \frac{1}{t_c-s}  
 \sum_{a=1}^D\left[  (\widetilde{L}_{(1,2),s})_{x a} (\widetilde{L}_{s})_{a y w}
+ (\widetilde{L}_{(2,3),s})_{y a} (\widetilde{L}_{s})_{ x a  w}
+ (\widetilde{L}_{(3,1),s} )_{ w a} (\widetilde{L}_{s})_{x y a}
\right]\dd s .\nonumber
\end{align}
Using \eqref{flow_1_result_2} and applying \Cref{appen1} to $\wt L_s$, we obtain from \eqref{eq:SDEwtL2} that
\be\label{mainwtL3G}
\begin{aligned}
  \widetilde{L}_{t}- \widetilde{L}_{t_0 } 
=&\int_{t_0}^{t} 
 \mathscr T_s(\wt L_{s})  \frac{\dd s}{t_c-s}+\OO_\prec \left(\frac{N^{\e_g}}{N(t_c-t)^{3/2} }\right),
\end{aligned}
\ee
where $\mathscr T_t:\C^{D\times D\times D}  \to \C^{D\times D\times D} $ is a linear operator defined as follows: for $V\in \C^{D\times D\times D}$, 
$$
\mathscr T_t(V)_{xyw}
=  
 \sum_{a=1}^D 
 \left[ 
  (\widetilde{K}_{(1,2),t} )_{x a} V_{  a y w}
+ (\widetilde{K}_{(2,3),t} )_{y a} V_{  x a w}
+ (\widetilde{K}_{(3,1),t} )_{w a} V_{  x y a}
\right]- \frac32[1-(t_c-t)] V_{xyw}.
$$

On the other hand, by \eqref{eq:whMK3}, we have 
\be\label{mainwtL3K}
 \widetilde{K}_{t}- \widetilde{K}_{t_0}
=\int_{t_0}^{t} 
\mathscr T_s(\wt K_{s})  \frac{\dd s}{t_c-s}.
\ee
Now, subtracting \eqref{mainwtL3K} from \eqref{mainwtL3G} and denoting $ \wt {\LK}_t:= \wt L_t-\wt K_t$, we obtain that
\be\label{eq:L3t}
  \widetilde{\LK}_{t}- \widetilde{\LK}_{t_0 }
 =\int_{t_0}^{t}  \mathscr T_s( \widetilde{\LK}_s) 
 \frac{\dd s}{t_c-s}+\cal E_{t},
\ee
 where $\cal  E_{t}$ is a $D\times D\times D$ random tensor satisfying that $\|\cal  E_{t}\|_2 \prec  {N^{\e_g}}/[N(t_c-t)^{3/2} ]$, where $\|\cdot\|_2$ denotes the $\ell^2$-norm by regarding $\cal  E_{t}$ as a $D^3$-dimensional vector. 
We now claim that 
 \be\label{eq:boundT3/2}
 \|\mathscr T_t\|_{op}\le 3/2+\oo(1).
 \ee
This bound is trivial when $(z_1)_t=(z_2)_t=(z_3)_t$. Next, we assume that $(z_1)_t=(z_3)_t=(\bar z_2)_t\in \{z_t,\bar z_t\}$ without loss of generality. Then, $\|\widetilde{K}_{(3,1),t} \|=\oo(1)$ and $(\widetilde{K}_{(2,3),t})_{ya}=(\widetilde{K}_{(1,2),t})_{ay}$. Hence, we can write $\mathscr T_t$ as $$\mathscr T_t(V)=\mathscr T^{(1)}_t(V) + \sum_{a=1}^D (\widetilde{K}_{(3,1),t} )_{w a} V_{  x y a}- \frac12[1-(t_c-t)] V_{xyw},$$
where the operator $\mathscr T^{(1)}_t$ is defined as
$$
\mathscr T^{(1)}_t(V)_{xyw}:=\sum_{a=1}^D 
 \left[
  (\widetilde{K}_{(1,2),t} )_{x a} V_{  a y w}
+  V_{  x a w}(\widetilde{K}_{(1,2),t} )_{ay}
\right] - [1-(t_c-t)]V_{xyw} .
$$
Note that when $w$ is fixed, $\mathscr T^{(1)}_t$ reduces to the operator \eqref{eq:TtV}. With \eqref{Tnorm}, we can easily conclude \eqref{eq:boundT3/2}. 
 
Finally, we adopt a similar argument as that below \eqref{Tnorm}. Let $\Phi\left(t ; t_0\right)$ be the standard Peano-Baker series corresponding to the linear operator $\mathscr T_t/(t_c-t)$. From \eqref{eq:boundT3/2}, we can derive that for $t_0\le s \le t\le t_c- N^{-1+\tau} $,
\be\label{Phinorm2} 
\|\Phi \left(t ; s\right)\|_{op}\prec  \frac{(t_c-s)^{3/2}}{(t_c-t)^{3/2}}  .
\ee
Then, from \eqref{eq:L3t} and \eqref{Phinorm2}, we obtain that 
$$
\| \widetilde{\LK}_{t }\|_2
 \prec \frac{(t_c-t_0)^{3/2}}{(t_c-t)^{3/2}} \|\widetilde{\LK}_{t_0 }\|_2+\frac{N^{\e_g}}{N(t_c-t)^{3/2}},
$$
which implies \eqref{flow_3G_res} and completes the proof of \Cref{lem_flow_2}. 


\subsection{Proof of \Cref{lem_flow_1.5}}

To mimic our proofs of Lemmas \ref{lem_flow_1} and \ref{lem_flow_2} above, we abuse the notations again and denote $\wt L_t,\wt K_t\in \C^D$ by
$$
(\wt L_t)_{x}:= (t_c-t)^{1/2}\langle G E_x\rangle , \quad 
(\wt K_t)_{x}:= (t_c-t)^{1/2}\langle M E_x\rangle,\quad x\in \qqD. 
$$
Again, using It{\^o}'s formula \eqref{Ito}, the identities \eqref{Ito_pro_1}--\eqref{Ito_pro_4}, we get that for $x\in \qqD$ and $(z_{1})_t\equiv z_t=E_t+\ii \eta_t$, 
\be\label{EwtLt} 
\dd\, \E (\widetilde{L}_{t})_{x}   
=  \frac{1}{2}\left(1- \frac{1}{t_c-t} \right)\E (\widetilde{L}_{t})_{x} \dd t + \frac{1}{t_c-t}  \sum_{a=1}^D
\E (\wt L_{(1,1),t})_{xa} (\wt L_t-\wt K_t)_a 
\dd t ,
\ee
where recall that $\wt L_{(1,1),t}$ was defined in \eqref{eq:wtL12t} with $(z_1)_t=z_t$. By \eqref{eq:aver_local}, \eqref{flow_1_result_2}, and \eqref{t_eta}, we have that for $t\in [t_0,t_c- N^{-1+C\e_g}]$, 
$$
(\wt L_t-\wt K_t)_a\prec\frac{1}{N (t_c-t)^{1/2}},\quad 
 \wt L_{(1,1),t} -\wt K_{(1,1),t}\prec \frac{N^{\e_g}}{N(t_c-t) }.
$$
Again, these estimates hold uniformly in $t$ due to the $N^{-C}$-net argument. 
Therefore, \eqref{EwtLt} now writes
$$ 
\begin{aligned}
\dd  \,\E \widetilde{L}_{t }   
&= \frac{1}{2}\left(1- \frac{1}{t_c-t} \right)\widetilde{K}_{t}\dd t + \frac{1}{t_c-t}T_t \left(\E\wt L_t-\wt K_t\right) \dd  t +\OO_\prec \left(\frac{N^{\e_g}}{N^2 (t_c-t)^{5/2}}\right)\dd t,
\end{aligned}
$$
 where $T_t:\C^D\to \C^D$ is a linear operator defined as 
$$ 
\begin{aligned}
 T_t (V) 
 = \wt K_{(1,1),t}V -\frac{1}{2}\left[1  - (t_c-t)\right] V ,\quad V\in \C^D.
\end{aligned}
$$
On the other hand, with \eqref{devM}, we can derive that 
$$\dd \widetilde{K}_{t }   
=  \frac{1}{2}\left(1- \frac{1}{t_c-t} \right)\widetilde{K}_{t}\dd t. $$
Subtracting the above two differential equations and denoting $\wt\LK_t:=\E \wt L_t-\wt K_t$, we get 
\be\label{eq:SDE_simple}
\wt\LK_t -\wt\LK_{t_0} = \int_{t_0}^t T_s (\wt\LK_s) \frac{\dd  s}{t_c-s} +\cal E_t,
\ee
where $\cal E_t$ is a $D$-dimensional random vector satisfying $\|\cal E_t\|_2 \prec  {N^{\e_g}}/[{N^2 (t_c-t)^{3/2}}]$. 
Since $\|\wt K_{(1,1),t}\|=\oo(1)$, we easily see that $\|T_t\|_{op}\le 1/2+\oo(1)$.  Again, using the argument with Peano-Baker series, we obtain from \eqref{eq:SDE_simple} that 
$$
 \|\widetilde{L }_{t  }- \widetilde{K }_{t}\|_2
 \prec  \frac{(t_c-t_0)^{1/2}}{(t_c-t)^{1/2}} \|\widetilde{L }_{t_0  }- \widetilde{K }_{t_0  }\|_2+\frac{N^{\e_g}}{N^2(t_c-t)^{3/2}},
$$
which implies \eqref{flow_1G_res} and completes the proof of \Cref{lem_flow_1.5}. 

\subsection{Proof of \Cref{lem_flow_3}} 

Finally, in this subsection, we complete the proof of \Cref{lem_flow_3} with Lemmas \ref{lem_flow_2} and \ref{lem_flow_1.5} and the proof of \Cref{lem_flow_1} as main inputs. Taking the expectation of \eqref{ItowtL}, we obtain that 
\be  \label{eq:ESDELt}
\frac{\dd}{\dd t} \E \widetilde{L}_{t}
=   \left(1 - \frac{1}{t_c-t}\right)\E  \widetilde{L}_{t} + \frac{1}{t_c-t} \E (\widetilde{L}_{t })^2      +\OO_\prec\left(\frac{N^{-\e_g}}{N (t_c-t)^2}+\frac{N^{\e_g}}{N^2(t_c-t)^3}\right). 
\ee
In the derivation of this equation, we have applied \eqref{flow_3G_res_2} and \eqref{eq:aver_local} to get that 
\begin{align*}
  \E\left\langle \left(G_{1,t}-M_{1,t} \right) E_a\right\rangle [L_{(1,2,1),t}]_{xya}&=\E\left\langle \left(G_{1,t}-M_{1,t} \right) E_a\right\rangle [K_{(1,2,1),t}]_{xya}+\OO_\prec\left( \frac{N^{\e_g}}{N^2(t_c-t)^4}\right)\\
  &\prec \frac{N^{-\e_g}}{N (t_c-t)^3}+\frac{N^{\e_g}}{N^2(t_c-t)^4},
\end{align*}
where in the second step we have used \eqref{flow_1G_res_2} to control $\E \langle \left(G_{1,t}-M_{1,t} \right) E_a\rangle$ and the bound $|(K_{(1,2,1),t})_{xya}| \lesssim  \eta_t^{-2}\lesssim (t_c-t)^{-2} $ due to the estimates \eqref{1-M} and \eqref{1-M-2} below. The other term $ \E \left\langle \left(G_{2,t}-M_{2,t} \right) E_a\right\rangle [L_{(2,1,2),t}]_{yxa}$ can be bounded in a similar way.

Next, the estimate \eqref{flow_1_result_2} implies that for any $x,y,a\in \qqD$, 
\be\nonumber
\E (\widetilde{L}_{t})_{x a} (\widetilde{L}_{t})_{a y}
-\E (\widetilde{L}_{t})_{x a} \E (\widetilde{L}_{t})_{a y}
 \prec \frac{N^{2\e_g}}{N^2(t_c-t)^2} .
 \ee
Plugging this estimate into \eqref{eq:ESDELt}, we can improve the expectation of equation \eqref{wtLKint} to
$$
 \E \widetilde{\LK}_{t }- \E \widetilde{\LK}_{t_0 }
 =\int_{t_0}^{t } \cal T_s(\E \widetilde{\LK}_s) 
 \frac{\dd s}{t_c-s}+\cal E_{t },\quad \cal E_{t}=\OO_\prec\left(\frac{N^{-\e_g}}{N (t_c-t)}+\frac{N^{2\e_g}}{N^2(t_c-t)^2 }\right) .
 $$ 
Then, following the proof below \eqref{eq:deltat}, we can derive that
$$
\|\E \widetilde{\LK}_{t}\|_2\prec \frac{t_c-t_0}{t_c-t} \|\E \widetilde{\LK}_{t_0}\|_2+ \frac{N^{-\e_g}}{N(t_c-t)}+\frac{N^{2\e_g}}{N^2(t_c-t)^2 } ,
$$
which implies \eqref{flow_3_result} and completes the proof of \Cref{lem_flow_3}.

\section{Chaotic regime: eigenvalues}\label{sec:mix_evalue}

Consider the matrix OU process $\VV(t)=H_t +\Lambda$, where $H_t=(h_{ij}(t))_{i,j\in \cI}$ satisfies the OU equation 
\be\label{defVt}
\dd h_{i j}=-\frac{1}{2} h_{i j} \dd t+\frac{1}{\sqrt{DN}} \dd b_{i j}(t),\quad \text{with}\quad H_{0}=H,
\ee
where $B_t=(b_{ij}(t))_{i,j\in \cI}$ denotes a Hermitian matrix whose upper triangular entries are independent complex Brownian motions with variance $t$. Then, Theorem \ref{MixEV} follows immediately from Lemmas \ref{CVtG} and \ref{CVtV} below. 

\begin{lemma}\label{CVtG} 
Under the assumptions of Theorem \ref{MixEV}, suppose $\ft= N^{-1+\fc}$ for a constant $\fc \in (0,1/10)$. Then, for any fixed $n\in \N$, there exist a constant $c_n=c_n(\fc,\delta_A)>0$ such that 
\begin{equation}\label{ls_CVtG}
\left|\int_{\mathbb{R}^n} \mathrm{~d} \boldsymbol{\alpha}\; O(\boldsymbol{\alpha})
\left[p_{\VV(\ft)}^{(n)} \left(E+\frac{\boldsymbol{\alpha}}{DN}\right)
-p_{GUE}^{(n)} \left( E+\frac{\boldsymbol{\alpha}}{DN}\right)
\right]\right|
\le  N^{-c_n}   , 
\end{equation}
where $p_{\VV(\ft)}^{(n)}$ denotes the $n$-point correlation function of $\VV(\ft)$ and we have abbreviated that $E+\frac{\boldsymbol{\alpha}}{DN}=\left(E+\frac{\alpha_1}{DN  }, \ldots, E+\frac{\alpha_n}{DN  }\right).$
\end{lemma}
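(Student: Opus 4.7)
The plan is to apply the Dyson Brownian motion (DBM) short-time universality results of \cite{LanYau2015,LANDON20191137}: for a matrix of the form $V + G$ with $V$ a Hermitian initial datum satisfying the optimal local law and $G$ an independent GUE component, the local bulk correlation functions relax to those of the $DN \times DN$ GUE provided the entry variance of $G$ exceeds the threshold $(DN)^{-2}$. In our setting, $\ft = N^{-1+\fc}$ lies well above the threshold, and $\VV$ itself satisfies the local law of Lemma \ref{lem_loc}, so the result should follow with only modest bookkeeping.

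First I would solve the OU equation \eqref{defVt} entry-wise to obtain the decomposition $H_t = e^{-t/2} H + G_t$, where $G_t$ is a $DN \times DN$ GUE matrix independent of $H$ with entry variance $\sigma_t^2 := (1-e^{-t})/(DN)$. Setting $V_{t}^{(0)} := e^{-t/2} H + \Lambda$, this writes $\VV(t) = V_{t}^{(0)} + G_t$, so conditional on $H$ the matrix $\VV(t)$ is an independent GUE perturbation of the Hermitian matrix $V_{t}^{(0)}$. Since $V_{t}^{(0)}$ shares the block-translation-invariant structure of $\VV$ with diagonal block entries of variance $e^{-t}/N = (1+\OO(\ft))/N$, an essentially verbatim application of Lemma \ref{lem_loc} yields the anisotropic local law and the eigenvalue rigidity $|\lambda_k(V_{\ft}^{(0)}) - \gamma_k| \prec N^{-2/3}\min(k, DN+1-k)^{-1/3}$ at the optimal scale.

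With this rigidity in hand, I would invoke the DBM short-time universality theorems of \cite{LanYau2015,LANDON20191137}, which imply that the local bulk $n$-point correlation functions of $V_{\ft}^{(0)} + G_{\ft}$, once rescaled by the $(DN)$-factor of the limiting density, converge to those of GUE as long as $\sigma_{\ft}^{2} \gg (DN)^{-2}$, i.e.~$\ft \gg (DN)^{-1}$. Since $\ft = N^{-1+\fc}$ and $D$ is fixed, this holds with ample margin; taking expectation over $H$ then yields \eqref{ls_CVtG} for some constant $c_n = c_n(\fc,\delta_A) > 0$ quantifying the gap to the threshold.

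The hard part will be verifying that the DBM short-time universality results genuinely apply in our setting, where the initial matrix $V_{\ft}^{(0)}$ carries a non-mean-field block variance profile and an arbitrary deterministic shift $\Lambda$. The DBM universality machinery (local relaxation flow, homogenization, coupling to the Gaussian DBM) requires only the optimal rigidity and a sufficiently strong averaged local law on scales $\eta \gg N^{-1}$, both of which Lemma \ref{lem_loc} supplies; nonetheless, one must check that the conclusions of \cite{LanYau2015,LANDON20191137} extend to initial data more general than Wigner matrices (they do, as these sources already accommodate general deterministic initial data after minor adaptations). A secondary bookkeeping task is to extract an explicit rate $N^{-c_n}$ in terms of $\fc$ and $\delta_A$ by tracing the relevant constants through the short-time DBM analysis, but no fundamentally new ingredient is needed beyond what is cited.
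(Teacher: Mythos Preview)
Your proposal is correct and follows essentially the same route as the paper: decompose $\VV(\ft)$ as $V_\ft^{(0)}+G_\ft$ with $V_\ft^{(0)}=e^{-\ft/2}H+\Lambda$, verify the local law/rigidity for $V_\ft^{(0)}$ via Lemma~\ref{lem_loc} (the paper rescales to $H+e^{\ft/2}\Lambda$ for this), and invoke \cite[Theorem~2.2]{LANDON20191137}. The only step you leave implicit that the paper carries out explicitly is showing the free-convolution density satisfies $\widehat\rho_{\mathrm{fc},\ft}(E)=\rho_{sc}(E)+\OO_\prec(N^{-\fc\wedge\delta_A})$, which is what produces the dependence of $c_n$ on $\delta_A$ after the change of variables.
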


\begin{proof} 
This lemma is a simple consequence of Theorem 2.2 in \cite{LANDON20191137}. More precisely, $H_t$ in \eqref{defVt} has law 
\be\label{eq:eq_in_law_H}
H_t \stackrel{d}{=} e^{-t/2}\cdot H+ \sqrt{1-e^{-t}}\cdot W,
\ee
where $\stackrel{d}{=}$ means ``equal in distribution" and $W$ is a $DN\times DN$ GUE independent of $H$. Since the matrix $H+e^{\ft/2}\Lambda$ also satisfies \Cref{main_assm}, its resolvent satisfies the averaged local law \eqref{eq:aver_local} with $\Lambda$ replaced by $e^{\ft/2}\Lambda$. Then, applying \cite[Theorem 2.2]{LANDON20191137} with $V=e^{-\ft/2}H+\Lambda$ gives that
\be\label{eq:compare_statistics}\int_{\mathbb{R}^n} \mathrm{~d} \boldsymbol{\alpha}\; O(\boldsymbol{\alpha})
\left[p_{\VV(\ft)}^{(n)} \left(E+\frac{\boldsymbol{\alpha}}{DN\wh\rho_{\mathrm{fc},\ft}(E)}\right)
-p_{GUE}^{(n)} \left( E+\frac{\boldsymbol{\alpha}}{DN\rho_{sc}(E)}\right)
\right]\le N^{-c}
\ee
for a constant $c>0$, where $\wh\rho_{\mathrm{fc},\ft}$ denotes the density for the free convolution of the empirical measure of $V$ and the semicircle law with variance $1-e^{-\ft}$. More precisely, $\wh\rho_{\mathrm{fc},\ft}$ can be defined through its Stieltjes transform $\wh m_{\mathrm{fc},\ft}(z)$, which is the unique solution to 
$$ \wh m_{\mathrm{fc},\ft}(z) = \frac{1}{DN}\tr \frac{1}{V - z - (1-e^{-\ft})\wh m_{\mathrm{fc},\ft}(z)}\quad \text{with}\quad \im \wh m_{\mathrm{fc},\ft}(z) > 0 \ \text{whenever} \ \im z> 0. $$
Using the averaged local law for $H+e^{\ft/2}\Lambda$, we get that 
\begin{align*}
    \wh m_{\mathrm{fc},\ft}(z) &=e^{\ft/2} m\big(e^{\ft/2}z + (e^{\ft/2}-e^{-\ft/2})\wh m_{\mathrm{fc},\ft}(z),e^{\ft/2}\Lambda\big)+\OO_\prec \big((N\ft)^{-1}\big) \\
    &=m_{sc}(z)+\OO_\prec \big((N\ft)^{-1}+\|A\|+\ft\big)
\end{align*} 
for $z=E+\ii \eta$ with $E\in [-2+\kappa,2-\kappa]$ and $ 0\le \eta\le 1$, where in the second step, we used that $m$ is well-approximated by $m_{sc}$ due to the estimate  \eqref{eq:msc0.5} below. Taking the imaginary part of the above equation when $\eta=0$, we obtain that $\wh\rho_{\mathrm{fc},\ft}(E)=\rho_{sc}(E)+\OO_\prec(N^{-\fc\wedge \delta_A})$. Plugging it into \eqref{eq:compare_statistics}, applying the change of variables, and using the smoothness of the test function $O$, we conclude \eqref{ls_CVtG}.
\end{proof}

\begin{lemma}\label{CVtV} 
Under the assumptions of Theorem \ref{MixEV}, there exists a constant $\fc>0$ depending on $\e_A$ such that the following holds for $\ft= N^{-1+\fc}$.  
For any fixed $n\in \N$, there exists a constant $c_n=c_n(\fc,\e_A)$ such that 
\begin{equation}\label{ls_CVtV}
\int_{\mathbb{R}^n} \mathrm{~d} \boldsymbol{\alpha}\; O(\boldsymbol{\alpha})
\left(p_{\VV(\ft)}^{(n)}-p_{\VV}^{(n)} \right) 
\left(E+\frac{\boldsymbol{\alpha}}{N }\right) 
\leqslant N^{-c_n} . 
\end{equation}
\end{lemma}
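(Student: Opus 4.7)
The plan is to adapt the QUE-based Green's function comparison argument of \cite{xu2022bulk}, since the standard moment-matching approach of \Cref{main_lemma_com} does not apply directly between $\VV$ and $\VV(\ft)$. The first step is to reduce \eqref{ls_CVtV}, via the Helffer--Sj\"ostrand functional calculus and standard mollification at scale $(DN)^{-1}$, to showing that for some fixed integer $k$ and some smooth bounded $F$ with polynomially bounded derivatives,
\begin{equation*}
\bigl|\,\E F(G_{\ft}(z_1),\dots,G_{\ft}(z_k))-\E F(G_0(z_1),\dots,G_0(z_k))\bigr|\le N^{-c_n},
\end{equation*}
where $G_s(z):=(\VV(s)-z)^{-1}$, $\VV(s)=H_s+\Lambda$, and the $z_j=E_j+\ii\eta$ satisfy $\eta=N^{-1+\e_1}$ for some $\e_1\ll\fc$, with $E_j$ within $N^{-1+\e_1}$ of $E$.

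I would then apply It\^o's formula along the OU flow \eqref{defVt} to write the difference as $\int_0^{\ft}\tfrac{\dd}{\dd s}\E F(G_s)\,\dd s$. Expanding the drift term $-\tfrac{1}{2}\E[h_{ij}\partial_{h_{ij}}F]$ via the complex cumulant expansion (\Cref{lem:complex_cumu}) and using that the second-order cumulant terms cancel exactly with the It\^o correction coming from the noise $\tfrac{1}{\sqrt{DN}}\dd b_{ij}$, only contributions from cumulants $\mathcal{C}^{(p,q)}(h_{ij})=\OO(N^{-(p+q)/2})$ with $p+q\ge 3$ remain; cumulants with $p+q$ large enough can be truncated with a negligible remainder using \eqref{eq:bdd_cumu}.

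The step I expect to be the main obstacle is bounding the $p+q\in\{3,4\}$ integrands, since naive power counting $\ft\cdot N^2\cdot N^{-(p+q)/2}\cdot\eta^{-O(1)}$ is far too large. Here the block variance profile \eqref{eq:sij} constrains the summation to $\sum_{a=1}^{D}\sum_{\alpha,\beta\in\cI_a}$, so after spectral expansion of $G_s$ (whose hypotheses from \Cref{mix} are still satisfied by $\VV(s)$ for every $s\in[0,\ft]$), each integrand becomes a sum of expressions containing factors $\bv_i^*E_a\bv_j$ summed over $a\in\qqD$. Writing $E_a=D^{-1}I+(E_a-D^{-1}I)$, I would decompose each summand into a ``flat'' part and a ``fluctuation'' part. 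Using $\sum_{a}E_a=I$, the flat part equals the corresponding expression that would arise in the OU flow of a pure $DN\times DN$ GUE, where all third cumulants vanish by symmetry and the leading order cancels identically. The fluctuation part carries factors $\bv_i^*(E_a-D^{-1}I)\bv_j$, which by the QUE estimate \eqref{eq:extend:main_evector1} are $\OO_\prec(N^{-c})$ for some constant $c>0$; this extra gain is what beats the resolvent blow-up from \Cref{appen1}. The delicate point will be carrying out this splitting uniformly across all the cumulant terms and across all $s\in[0,\ft]$, while keeping track of derivatives of $F$.

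Finally, combining $\mathcal{C}^{(p,q)}=\OO(N^{-(p+q)/2})$, the $\lesssim N^{2}$ index count, the resolvent bounds from \Cref{appen1} (giving $\eta^{-O(1)}$ for the derivatives of $F$ and the polynomial factors in $G_s$), the QUE gain $N^{-c}$, and the time integration $\ft=N^{-1+\fc}$, I would obtain an overall bound of order $N^{\fc+O(\e_1)-c}$, which is $\le N^{-c_n}$ for a suitable $c_n>0$ upon choosing $\fc$ and $\e_1$ small enough relative to $c$ and $\e_A$. This completes the proof of \eqref{ls_CVtV} and, combined with \Cref{CVtG}, establishes \Cref{MixEV}.
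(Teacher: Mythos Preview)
There is a genuine gap. You claim that ``the second-order cumulant terms cancel exactly with the It\^o correction coming from the noise $\tfrac{1}{\sqrt{DN}}\dd b_{ij}$.'' This is false: under the flow \eqref{defVt} one has $\E|h_{xy}(t)|^2 = e^{-t}s_{xy} + (1-e^{-t})(DN)^{-1}$, which does \emph{not} match the Brownian variance $(DN)^{-1}$. The resulting second-order contribution to $\partial_t\E F(G_t)$ is
\[
\frac{e^{-t}}{2}\sum_{x,y\in\cI}\Bigl(\frac{1}{DN}-s_{xy}\Bigr)\,\E\,\partial_{xy}\partial_{yx}F,
\]
which is nonzero precisely because the block-diagonal profile $s_{xy}$ differs from the flat one. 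This term---not the higher cumulants---is the actual obstacle, and it is the one that requires QUE. Expanding the derivatives of $\prod_i\langle G_t(z_i)\rangle$ produces quantities such as $\langle G_i^2(D^{-1}-E_a)\rangle$ and $\langle G_i^2(D^{-1}-E_a)G_j^2 E_a\rangle$; their spectral decompositions together with \eqref{eq:extend:main_evector1} furnish the $N^{-c}$ gain. By contrast, the terms with $p+q\ge 3$ are handled by straight power counting ($\prec N^{2-(p+q)/2+O(\sigma)}$) using only \eqref{eq:local_belowN}, with no QUE input. So your QUE mechanism is the right tool, but you have aimed it at the wrong target.

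A secondary issue: the correlation-function comparison (Theorem~15.3 of \cite{Erds2017ADA}, stated here as \Cref{rescomY}) requires the resolvent comparison at $\im z_j=N^{-1-\sigma_j}$ with $\sigma_j>0$, i.e.\ \emph{below} the natural scale, not at $\eta=N^{-1+\e_1}$ as you wrote; otherwise the mollification cannot resolve individual eigenvalues. One first extends the entrywise resolvent bounds below scale via the monotonicity of $\eta\mapsto\eta\im G_{xx}$ and then runs the It\^o argument at $\eta=N^{-1-\sigma}$.
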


The rest of this section is devoted to the proof \Cref{CVtV}. We will use the following correlation function comparison theorem stated in \cite[Theorem 15.3]{Erds2017ADA}, which is a slightly modified version of \cite[Theorem 6.4]{erdHos2012bulk}. 

\begin{lemma}[Theorem 15.3 of \cite{Erds2017ADA}]\label{rescomY}
  Under the assumptions of Theorem \ref{MixEV}, let $G$ and $G_{\ft}$ denote the resolvents of $\VV$ and $\VV(\ft)$, respectively. 
 Suppose that for some small constants $\sigma, \delta>0$, the following two conditions hold.
 \begin{itemize}
     \item [(i)] Fix any $\varepsilon>0$ and $k\in \N$, there is
\be\label{eq:roughmoment}
\mathbb{E} [\im \langle G(z)\rangle] ^k
+
\mathbb{E}[\im \langle G_\ft(z)\rangle] ^k \lesssim 1,\quad \text{for}\quad z=E +\ii N^{-1+\varepsilon} \ \ \text{with}\ \ |E|\le 2-\kappa. 
\ee

\item [(ii)] For any sequence $z_j=E_j + \ii \eta_j$, $j=1, \ldots, n$, with $\left|E_j\right| \leqslant 2-\kappa$ and $\eta_j=N^{-1-\sigma_j}$ for some constants $0<\sigma_j \leqslant \sigma$, we have
\begin{equation}\label{2comVVt}
    \left|
\mathbb{E} \prod_{i=1}^n \im \langle G(z_i)\rangle 
-\mathbb{E} \prod_{i=1}^n \im \langle G_\ft(z_i)\rangle 
\right| \leqslant N^{-\delta}.
\end{equation}
 \end{itemize}
Then, for any integer $n \geqslant 1$, there is a constant $c_n=c_n(\sigma, \delta)>0$ such that for any $|E| \leqslant 2-2 \kappa$ and any $C^1$-function $O: \mathbb{R}^n \rightarrow \mathbb{R}$ with compact support,
$$
\int_{\mathbb{R}^n} \, \mathrm{~d} \boldsymbol{\alpha}\; O(\boldsymbol{\alpha})\left(p_{ \VV(\ft)}^{(n)}-p_{\VV}^{(n)}\right)\left(E+\frac{\boldsymbol{\alpha}}{N}\right) \leqslant  N^{-c_n}.
$$
\end{lemma}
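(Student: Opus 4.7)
The approach follows the Green's function comparison strategy for correlation functions originating in \cite{erdHos2012bulk}: reformulate the $n$-point correlation function integral against $O$ as an expectation of a bounded functional of the eigenvalues which, after Poisson-kernel smoothing at scale $\eta = N^{-1-\sigma}$ for small $\sigma$, can be expressed via products $\prod_j \im\langle G(z_j)\rangle$ to which hypothesis (ii) applies directly. Hypothesis (i) controls the regularization error, while the $C^1$ regularity of $O$ absorbs the smoothing discrepancy.

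Concretely, for parameters $\sigma_j\in(0,\sigma)$ to be chosen later, set $\eta_j = N^{-1-\sigma_j}$ and consider
\[
J_{\cal W}(O) := \int_{\mathbb{R}^n} O(\boldsymbol{\alpha})\; \mathbb{E} \prod_{j=1}^{n} \pi^{-1}\im\langle G_{\cal W}(E + \alpha_j/N + \ii\eta_j)\rangle\; d\boldsymbol{\alpha},
\]
where $G_{\cal W}$ denotes the Green's function of $\cal W\in\{\VV,\VV(\ft)\}$. Applying hypothesis (ii) pointwise in $\boldsymbol{\alpha}$ and integrating against $O$ (which has compact support and hence bounded $L^1$-norm) yields $|J_{\VV}(O) - J_{\VV(\ft)}(O)| \lesssim \|O\|_{L^1}\, N^{-\delta}$. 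The plan is then to show that $J_{\cal W}(O)$ approximates the target integral $I_{\cal W}(O):=\int O(\boldsymbol{\alpha})\,p^{(n)}_{\cal W}(E+\boldsymbol{\alpha}/N)\,d\boldsymbol{\alpha}$ up to an error $N^{-c}$, uniformly in $\cal W\in\{\VV,\VV(\ft)\}$. Expanding each factor as $\im\langle G_{\cal W}\rangle = (DN)^{-1}\sum_k \eta_j\big[(\lambda_k-E-\alpha_j/N)^2+\eta_j^2\big]^{-1}$ and splitting the resulting sum over $n$-tuples $(k_1,\ldots,k_n)$ into a \emph{distinct} part and a \emph{coincidence} part, the distinct part, after integrating out $\boldsymbol{\alpha}$, gives the convolution of $O$ with the $n$-fold product of Poisson kernels of width $N\eta_j = N^{-\sigma_j}$ tested against the correlation measure $p^{(n)}_{\cal W}(E+\boldsymbol{\alpha}/N)$. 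Since $O$ is $C^1$ with compact support and the one-point density $p^{(1)}_{\cal W}$ is uniformly bounded on the bulk (a consequence of hypothesis (i) and Markov's inequality), standard approximation-of-the-identity bounds yield the desired approximation with error $O(\max_j N^{-\sigma_j})$.

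The coincidence part, in which at least two indices $k_i=k_j$ collide, loses one free summation index (a factor $N^{-1}$) and gains one factor of $\eta^{-1} = N^{1+\sigma}$ per coincidence. Grouping by the partition of $\{1,\ldots,n\}$ induced by coincidences, each block of size $m$ contributes a factor bounded in expectation by $N^{(m-1)\sigma}\cdot\mathbb{E}[\im\langle G_{\cal W}(E+\ii\eta_j)\rangle]^m$, which is $O(N^{(m-1)\sigma})$ by hypothesis (i). Summing over coincidence patterns and combining with the factor $\|O\|_{L^1}$ yields a total coincidence contribution of $O(N^{C_n\sigma}\cdot N^{-1})$, which is $o(1)$ provided $\sigma$ is taken small enough relative to $1/C_n$. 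Combining the comparison bound from (ii) with the two approximation errors gives the desired $N^{-c_n}$ bound after fixing $\sigma = \sigma(n,\delta)$ sufficiently small.

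The main obstacle is the coincidence estimate: the margin between the blow-up factor $N^{1+\sigma}$ per coincidence and the lost summation factor $N^{-1}$ is only the small power $N^\sigma$, so one must choose $\sigma$ strictly below $\delta/(nC)$ for some combinatorial constant $C=C_n$, which fixes the dependence $c_n = c_n(\sigma,\delta)$. A secondary technical point is that the Poisson kernel $P_\eta(t) = \pi^{-1}\eta/(t^2+\eta^2)$ has heavy tails outside $L^1(|t|\,dt)$, so the naive estimate $\|P_\eta * O - O\|_\infty \lesssim \eta\|O'\|_\infty$ requires refinement; this is handled by splitting the convolution integral into the compact-support region of $O$ (where $C^1$ regularity gives $O(\eta)$ smoothing error) and the tail region (where the $\sim \eta/t^2$ decay of $P_\eta$ against the bounded one-point density $p^{(1)}_{\cal W}$ contributes only $O(\eta)$ as well). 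The $C^1$ regularity of $O$, in particular, permits integration by parts in one variable to transfer a derivative from $O$ onto the Poisson kernel, which is the mechanism that yields integrable tails in the error estimate.
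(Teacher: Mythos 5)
Your overall reduction (smoothing $O$ at scale $N\eta_j=N^{-\sigma_j}$, writing $\prod_j\im\langle G\rangle$ as a sum over $n$-tuples of indices, matching the distinct part with the correlation-function integral) is the right skeleton, and it is the one used in the cited source (the paper itself does not prove this lemma; it quotes Theorem 15.3 of the Erd\H{o}s--Yau book, i.e.\ Theorem 6.4 of \cite{erdHos2012bulk}). The genuine gap is in your coincidence estimate: the coincidence part is \emph{not} $O(N^{C_n\sigma}N^{-1})$, it is of order one, uniformly in $N$. Take $n=2$, $\eta=N^{-1-\sigma}$, $\theta_\eta(x)=\frac{1}{\pi}\frac{\eta}{x^2+\eta^2}$. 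The coincident ($k_1=k_2=k$) contribution to $J_{\cal W}(O)$ is
\begin{equation*}
\int O(\alpha)\,\frac{1}{(DN)^2}\,\mathbb{E}\sum_{k}\theta_\eta\Big(\lambda_k-E-\tfrac{\alpha_1}{N}\Big)\theta_\eta\Big(\lambda_k-E-\tfrac{\alpha_2}{N}\Big)\,\dd\alpha
=\frac{1}{D}\int O(\beta,\beta)\,p^{(1)}_{\cal W}\Big(E+\tfrac{\beta}{N}\Big)\dd\beta+\oo(1),
\end{equation*}
because each kernel, integrated over its own $\alpha_j$, has mass exactly $N$ and localizes $\alpha_j$ at $N(\lambda_k-E)$ up to $O(N^{-\sigma})$. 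This is generically of order $1$ whenever $O$ does not vanish on the diagonal. Your bookkeeping goes wrong in two places: the ``lost'' factor $N^{-1}$ from the collapsed summation index is already consumed in converting the block sum into a factor $\im\langle G\rangle$, so it cannot be counted again; and hypothesis \eqref{eq:roughmoment} controls moments only at scale $N^{-1+\varepsilon}$, whereas at $\eta_j=N^{-1-\sigma_j}$ one has $\mathbb{E}[\im\langle G(E+\ii\eta)\rangle]^m\sim N^{(m-1)\sigma}$ (an eigenvalue lies within $\eta$ of $E$ with probability $\sim N\eta$, on which event $\im\langle G\rangle\sim (N\eta)^{-1}$), so the per-block bound $O(N^{(m-1)\sigma})$ does not follow from (i) as stated, and in any case no net factor $N^{-1}$ survives. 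Choosing $\sigma$ small cannot repair this; the coincidence terms simply do not vanish.

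The correct treatment, and the one in the cited proof, is to recognize that each coincidence pattern $P$ with $m=|P|<n$ blocks produces, after the $\alpha$-integration, exactly a \emph{lower-order} correlation-function integral $\int O_P(\beta)\,p^{(m)}_{\cal W}(E+\beta/N)\,\dd\beta$ (with $O_P$ the restriction of $O$ to the corresponding diagonal), and to run an induction on $n$: the difference of these order-one terms between $\VV$ and $\VV(\ft)$ is controlled by the statement of the lemma at level $m<n$, which in turn uses hypothesis \eqref{2comVVt} for products of $m$ factors. This is why the hypothesis and conclusion are formulated for every $n$ (and why, in the application in \Cref{CVtV}, the comparison \eqref{2comVVt_2} is in fact available for all orders). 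A secondary, fixable point: boundedness of the one-point density $p^{(1)}$ pointwise does not follow from \eqref{eq:roughmoment} (that would be a Wegner-type estimate); condition (i) only yields bounds on eigenvalue counts in windows of width $N^{-1+\varepsilon}$, i.e.\ on averages of $p^{(1)}$ over such windows, which is what the smoothing-error and tail estimates should be phrased in terms of. Your treatment of the distinct part and of the Poisson-kernel tails is otherwise sound once stated that way.
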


Next, we bound the quantities in \eqref{eq:roughmoment} and \eqref{2comVVt} using \Cref{lem_loc} and \Cref{mix}. Note that we have only proved \Cref{lem_loc} and \Cref{mix} for $\VV$, but they can be extended to any $\VV(t)$ with $t\in [0,\ft]$. (Heuristically, adding a GUE component will ``help" the QUE of eigenvectors, so there is no essential difficulty in making this extension.) Let $M_t(z)$ be the solution to the matrix Dyson equation \eqref{def_M} with the operator $\cal S$ replaced by $\cal S_t$: 
$$ \cal S_t(M_t):=e^{-t}\cal S(M_t)+(1-e^{-t}) \langle M_t\rangle.$$
However, note that the self-consistent equation \eqref{self_m} for $m_t(z):=\langle M_t(z)\rangle$ is unchanged, so we have $m_t(z)=m(z)$ and $M_t(z)=M(z)$ as given by \eqref{def_G0}.

\begin{lemma}\label{lem:samefort}
For any $t\in [0,\ft]$, under the assumptions of \Cref{lem_loc}, the local laws \eqref{eq:aniso_local}--\eqref{eq:aver_local} holds for $G_t$ and the eigenvalue rigidity estimate \eqref{eq:rigidity} holds. Under the assumptions of \Cref{mix}, \eqref{eq:extend:main_evector1} holds for the bulk eigenvectors of $\VV(t)$. 
\end{lemma}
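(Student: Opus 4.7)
The plan is to show that $\VV(t) = H_t + \Lambda$ fits the same framework as $\VV = H + \Lambda$ after accounting for the modified variance profile of $H_t$, so that the proofs of \Cref{lem_loc} and \Cref{mix} carry over. First I would analyze the law of $H_t$: integrating the OU equation \eqref{defVt} yields $H_t \eqdist e^{-t/2} H + \sqrt{1-e^{-t}}\, W'$, where $W'$ is a $DN\times DN$ GUE with entries of variance $1/(DN)$, independent of $H$. Hence the entries of $H_t$ remain mean-zero and independent up to Hermitian symmetry, and the high moment condition \eqref{eq:highmoment} persists with constants uniform in $t\in[0,\ft]$. The variance profile becomes $\E|(H_t)_{ij}|^2 = e^{-t} s_{ij} + (1-e^{-t})/(DN)$, so the associated variance operator $\cal S_t$ satisfies $\cal S_t(B) = e^{-t}\cal S(B) + (1-e^{-t})\avg{B}\,I$ on diagonal $B$. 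Since $M$ is block-translation invariant, $\cal S(M) = m I$ and $\avg{M} = m$ together give $\cal S_t(M) = m I$, so $M$ solves the matrix Dyson equation for $\VV(t)$ as well, confirming $M_t = M$.

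For the extended local laws, I would trace through the proof of \Cref{lem_loc} given in \Cref{sec:Gprop}. Its core is a cumulant expansion and self-consistent-equation argument whose only ingredients are the moment bounds \eqref{eq:bdd_cumu} for entries of $H$, the variance matrix entering via $\cal S$, the block-translation symmetry, and the deterministic structure of $\Lambda$. For $\VV(t)$, all cumulant bounds for entries of $H_t$ remain in force with constants uniform in $t\in[0,\ft]$ (the added Gaussian piece trivially satisfies them), while $\cal S_t$ is merely a convex combination of $\cal S$ and the flat-variance operator, and the solution $M$ is unchanged. Replacing $\cal S$ by $\cal S_t$ in the derivation of \Cref{sec:Gprop} therefore yields exactly the same bounds \eqref{eq:aniso_local} and \eqref{eq:aver_local} for $G_t$, and the rigidity estimate \eqref{eq:rigidity} then follows from the averaged local law in the usual way.

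For the QUE estimate, the key input is the two-resolvent bound \eqref{main_EGG}, established via the characteristic flow approach of Sections \ref{sec:flow_prelim}--\ref{sec:flow}. I would apply that argument to $\VV(t)$ directly: using the explicit decomposition above, construct a Gaussian-divisible matrix $\wt H$ of the form \eqref{Gcom_a} whose moments match those of $H_t$ to third order and whose fourth cumulants differ from those of $H_t$ by $\OO(N^{-2-\e_g})$. This is possible because $H_t$ already contains a nontrivial Gaussian component for any $t>0$ that can be absorbed or adjusted. Then \Cref{main_lemma_G} applied to $\VV(t)$ built from $\wt H$, combined with the Green's function comparison \Cref{main_lemma_com}, yields \eqref{main_EGG} for the Green's function of $\VV(t)$, from which \eqref{eq:extend:main_evector1} for bulk eigenvectors of $\VV(t)$ follows exactly as in \Cref{sec:proof_mix}.

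The main technical subtlety, though not an obstacle of substance, is to ensure that every constant appearing in the cumulant expansion, characteristic flow, and local law arguments remains uniform in $t\in[0,\ft]$. Because $\ft = N^{-1+\fc}$ is tiny, $\cal S_t$ differs from $\cal S$ only by $\OO(\ft)$ in the entrywise and operator senses relevant to these bounds, and $M$ is unchanged, so the uniform extension is essentially automatic.
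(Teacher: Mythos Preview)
Your proposal is correct and follows the same approach as the paper: the paper's own proof simply asserts that the local laws are proved ``in the same way as \Cref{lem_loc}'' via the general methods of \cite{He2018,AEK_PTRF,EKS_Forum} (with details in \Cref{sec:pf_locallaw}) and that the QUE estimate is proved ``similar to that for \Cref{mix}'', omitting details. Your elaboration---computing the modified variance operator $\cal S_t$, verifying $M_t=M$, and rerunning the characteristic-flow plus comparison argument---is exactly the content behind those omitted details; the one minor imprecision is that the Gaussian-divisible comparison matrix for $H_t$ cannot literally be of the block-diagonal form \eqref{Gcom_a} since $H_t$ has small off-block entries, but the needed adaptation (a Gaussian component matching the variance profile $(s_t)_{ij}$) is routine and your uniformity remark covers it.
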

\begin{proof}
The local laws and eigenvalue rigidity can be proved in the same way as \Cref{lem_loc} using the methods developed in e.g., \cite{He2018,AEK_PTRF,EKS_Forum}; more details will be explained in \Cref{sec:pf_locallaw} below. The proof of \eqref{eq:extend:main_evector1} is similar to that for \Cref{mix}, and we omit the details.
\end{proof}

\begin{proof}[Proof of \Cref{CVtV}]
The estimate \eqref{eq:roughmoment} follows directly from the local law \eqref{eq:aver_local} for $G_t(z)$ established in \Cref{lem:samefort}. Now, to apply \Cref{rescomY} to conclude \Cref{CVtV}, we only need to prove the Green's function comparison estimate \eqref{2comVVt}. Using the local law of $G_t(z)$ and the fact that $\eta \im (G_t)_{xx}(E+\ii \eta)$ is an increasing function of $\eta$ for fixed $x\in \cI$ and $E\in \R$, we obtain that for any constant $\e>0$,
$$ \max_{x\in \cI}\sup_{|E|\le  2-\kappa} \sup_{N^{-1-\sigma} \le \eta\le N^{-1+\e}}\im (G_t)_{xx}(E+\ii \eta) \le \frac{N^{-1+\e}}{N^{-1-\sigma}} \max_{x\in \cI}\sup_{|E|\le  2-\kappa}\im (G_t)_{xx}(E+\ii N^{-1+\e})\prec N^{\sigma + \e}.$$
Then, using \cite[Lemma 15.5]{Erds2017ADA}, we can derive that 
\be\label{eq:local_belowN}
\max_{x,y\in \cI}\sup_{|E|\le  2-\kappa}\sup_{\eta\ge N^{-1-\sigma}}|(G_t)_{xy}(E+\ii \eta)| \prec N^{\sigma + \e}.  
\ee
Since $\im \avg{G_t(z_i)}=(\avg{G_t(z_i)}-\avg{G_t(\bar z_i)})/(2\ii\eta_i)$, to show \eqref{2comVVt}, it suffices to prove that for $z_j=E_j\pm \ii \eta_j$, $j=1, \ldots, n$, we have
\begin{equation}\label{2comVVt_2}
    \left|
\mathbb{E} \prod_{i=1}^n  \langle G(z_i)\rangle 
-\mathbb{E} \prod_{i=1}^n \langle G_\ft(z_i)\rangle 
\right| \leqslant N^{-\delta}.
\end{equation}

To prove \eqref{2comVVt_2}, we apply the It{\^o}'s formula and get that 
\begin{align*}
    &\partial_{t}\E  \prod_{i=1}^n \langle G_t(z_i)\rangle  = \frac{1}{2DN}\E\sum_{x,y\in \cI} \partial_{xy}\partial_{yx} \left(\prod_{i=1}^n \langle G_t(z_i)\rangle \right) - \frac{1}{2}\E\sum_{x,y\in \cI} h_{xy}(t)\partial_{xy} \left(\prod_{i=1}^n \langle G_t(z_i)\rangle \right),
\end{align*}
where $\partial_{xy}$ denotes the partial derivative $\partial/\partial_{h_{xy}(t)}$. Then, applying the cumulant expansion in \Cref{lem:complex_cumu} to the second term on the RHS, we get that 
\begin{align}\label{eq:prodnGt}
    &\partial_{t}\E  \prod_{i=1}^n \langle G_t(z_i)\rangle  = \frac{e^{-t}}{2}\E\sum_{x,y\in \cI} \left( \frac{1}{DN} - s_{xy}\right)\partial_{xy}\partial_{yx} \left( \prod_{i=1}^n \langle G_t(z_i)\rangle \right) + \sum_{k=3}^l \cal F_k + \cal E_{l+1} ,
\end{align}
where we used that $E|h_{xy}(t)|^2=e^{-t}s_{xy}+(1-e^{-t})(DN)^{-1}$ by \eqref{eq:eq_in_law_H} (recall that $s_{xy}$ was defined in \eqref{eq:sij}), $\cal F_k$ is the sum of terms involving the cumulants $\cal C^{(m,n)}(h_{xy}(t))$ with $m+n=k$, and $\cal E_{l+1}$ is the remainder term. We can choose $l$ sufficiently large such that $ \cal E_{l+1} \le 1$. For the terms $\cal F_k$, using the estimates in \eqref{eq:local_belowN} and the fact that $\e$ is arbitrarily small, it is easy to check that 
\be\label{eq:Fk}\cal F_k \prec N^{-k/2+2+(n+k)\sigma},\quad 3\le k\le l.\ee
It remains to bound the first term on the RHS of \eqref{eq:prodnGt}.
To simplify notations, for a fixed $t\in [0,\ft]$, we abbreviate $G_i\equiv G_t(z_i)$. Then, we can write the first term on the RHS of \eqref{eq:prodnGt} as $e^{-t}$ times
\begin{align*}
\cal F_2:=& \frac{1}{DN^2}\E\sum_{x,y\in \cI} \left(  {D}^{-1} -N s_{xy}\right)\bigg\{ \sum_{i\in \qq{n}}(G_i^2)_{xx}(G_i)_{yy} \prod_{k\ne i} \langle G_k\rangle + \frac{1}{DN} \sum_{i< j\in \qq{n}}(G_i^2)_{xy}(G_j^2)_{yx} \prod_{k\notin \{i,j\}} \langle G_k\rangle\bigg\} \\
=& D \E\sum_{a\in \qqD} \langle G_i^2(D^{-1}-E_a) \rangle \langle G_iE_a \rangle \prod_{k\ne i} \langle G_k\rangle +\frac{1}{DN^2}\E\sum_{i< j\in \qq{n}} \langle G_i^2 (D^{-1}-E_a)G_j^2 E_a\rangle \prod_{k\notin \{i,j\}} \langle G_k\rangle.
\end{align*}
It remains to bound the two terms 
\begin{align*}
	X(i;a)&:=\langle G_i^2(D^{-1}-E_a) \rangle \langle G_iE_a \rangle \prod_{k\ne i} \langle G_k\rangle,\\
	Y({i,j;a})&:=N^{-2}\langle G_i^2 (D^{-1}-E_a)G_j^2 E_a\rangle \prod_{k\notin \{i,j\}} \langle G_k\rangle.
\end{align*}

With the bound \eqref{eq:local_belowN}, we can derive that  
\begin{align}\label{eq:roughXY0}
	\langle G_k\rangle \prec N^\sigma,\quad \langle G_iE_a \rangle\prec N^{\sigma},\quad \langle G_i^2(D^{-1}-E_a) \rangle \prec N^{1+2\sigma},\quad \langle G_i^2 (D^{-1}-E_a)G_j^2 E_a\rangle \prec N^{3+4\sigma}. 
\end{align}
With these estimates, we get the following rough bounds on $X$ and $Y$:
\begin{align}\label{eq:roughXY}
	X(i;a) \prec N^{1+(n+2)\sigma},\quad 
	Y({i,j;a}) \prec N^{1+(n+2)\sigma}.
\end{align}
To improve these estimates, we consider the eigendecompositions 
   \begin{align}
        \left\langle G_i^2 \left(D^{-1}-E_a\right) \right\rangle &=\frac1{DN}\sum_k \frac{ {\bf v}_k^* (D^{-1}-E_a){\bf v}_k }{(\lambda_k-z_i)^2},\label{GEaD1}\\
        \left\langle G_i^2 \left(D^{-1}-E_a\right)G_j^2 E_a \right\rangle
      & =\frac1{DN} \sum_{k,l} \frac{  {\bf v}_k^* (D^{-1}-E_a){\bf v}_l \cdot  {\bf v}_l^* E_a{\bf v}_k}
       {(\lambda_k-z_i)^2(\lambda_l-z_j)^2},\label{GEaD2}
   \end{align}
where $\lambda_k\equiv \lambda_k(t)$ and $\bv_k\equiv \bv_k(t)$ denote the eigenvalues and eigenvectors of $H_t + \Lambda$, respectively. Now, let $k_0\in \cal I$ be the integer such that $|\gamma_{k_0}-E_i|=\min_{k\in \cI}|\gamma_k-E_i|$. Using the eigenvalue rigidity \eqref{eq:rigidity} and the QUE estimate \eqref{eq:extend:main_evector1} for $\VV(t)$ in \Cref{lem:samefort}, we can bound \eqref{GEaD1} as follows: with probability $1-\OO(N^{-c})$, 
\begin{align}
	\eqref{GEaD1}
	&\lesssim \frac1{DN}\sum_{|k-k_0|\le N^\e} \frac{N^{-c} }{\eta_i^2} +\frac1{DN}\sum_{N^\e < |k-k_0|\le N^c} \frac{N^{-c} }{|k-k_0|^2/N^2} + \frac1{DN}\sum_{|k-k_0|> N^c} \frac{1 }{|k-k_0|^2/N^2} \nonumber\\
	&\lesssim N^{1-c+2\sigma+\e},\label{GEaD3}
\end{align}
for any small constant $\e \in (0,c)$. Similarly, we can bound \eqref{GEaD2} as 
\begin{equation}\label{GEaD4}
\P\left(\left|\langle G_i^2 \left(D^{-1}-E_a\right)G_j^2 E_a \rangle\right| \ge N^{3-c+4\sigma+\e}\right)  \lesssim N^{-c} .
\end{equation}
Combining \eqref{GEaD3} and \eqref{GEaD4} with \eqref{eq:roughXY0}, we obtain that for any constant $\e>0$, 
\[\mathbb P\left( |X(i;a)|+ |Y(i,j;a)|\ge N^{1-c+(n+2)\sigma+\e} \right)\le N^{-c}.\]
Together with the rough bound \eqref{eq:roughXY}, it yields that 
\be\label{eq:F2}
\cal F_2 \lesssim N^{1-c+(n+2)\sigma+\e} +  N^{1+(n+2)\sigma + \e}\cdot N^{-c} \le  2N^{1-c+(n+2)\sigma+\e}.
\ee

Finally, plugging \eqref{eq:Fk} and \eqref{eq:F2} into \eqref{eq:prodnGt}, we conclude that for any small constant $\e >0$,
\begin{align*} 
	&\left|\partial_{t}\E  \prod_{i=1}^n \langle G_t(z_i)\rangle \right| \le  N^{1-c+(n+2)\sigma+\e} + N^{1/2+(n+3)\sigma+\e},\quad t\in [0,\ft],
\end{align*}
if we choose $\sigma\in (0,1/2)$. Integrating over $t$, we conclude \eqref{2comVVt} with $\delta = (c-(n+2)\sigma-\e-\fc)\wedge (1/2-(n+3)\sigma-\e-\fc)$ as long as we choose $\fc$, $\sigma$, and $\e$ sufficiently small depending on $c$.     
\end{proof}

\section{Integrable regime: eigenvectors}\label{sec:localization}

In this section, we prove \Cref{nonmix}. For notational convenience, we will assume $D=2$ in the subsequent proof, although the argument remains the same for the general case of $D\ge 2$.

Take the $k$-th bulk eigenvalue $\lambda_k$ with $k\in \qq{DN, (1-\kappa)DN}$. 
Denote the corresponding eigenvector by $\bv_k=\begin{pmatrix}
\mathbf{u}_k \\
\mathbf{w}_k
\end{pmatrix}$. 
Then, we have the eigenvalue equation
\[
H \begin{pmatrix}
\mathbf{u}_k \\
\mathbf{w}_k
\end{pmatrix}=\begin{pmatrix}
H_1 & A \\
A^\dagger & H_2
\end{pmatrix}
\begin{pmatrix}
\mathbf{u}_k \\
\mathbf{w}_k
\end{pmatrix}
=
\lambda_k
\begin{pmatrix}
\mathbf{u}_k \\
\mathbf{w}_k
\end{pmatrix}.
\]
From this equation, we derive that  
$$\bw_k = -\cG_2(\lambda_k)  A^\dagger\bu_k, \quad \bu_k= -\cG_1(\lambda_k) A\bw_k,$$
where we denote the resolvents of $H_1$ and $H_2$ by  
$$\cG_1(z):=(H_1-z)^{-1}, \quad \cG_2(z):=(H_2-z)^{-1}.$$

Now, given an arbitrarily small constant $\delta>0$, we define the events 
$$\mathscr E_1:=\left\{\dist(\lambda_k, \spec(H_1))\ge N^{-1-\delta}\right\},\quad \mathscr E_2:=\left\{\dist(\lambda_k, \spec(H_2))\ge N^{-1-\delta}\right\}.$$
We claim that 
\be\label{eq:probab12}
\P \left( \mathscr E_1\cup \mathscr E_2\right) = 1-\OO(N^{-\delta/2}).
\ee
To prove this claim, notice that 
$$ \P\left( (\mathscr E_1\cup \mathscr E_2)^c \right) \le \P \left( \exists i,j \in \qq{N} \text{ such that } |\lambda_i^{(1)}-\lambda_j^{(2)}|\le 2N^{-1-\delta}\right),$$
where $\lambda_i^{(1)}$ and $\lambda_j^{(2)}$ denote the eigenvalues of $H_1$ and $H_2$, respectively. Using the rigidity of eigenvalues for Wigner matrices \cite[Theorem 2.2]{erdHos2012rigidity} (or using \eqref{eq:rigidity} in the case of $D=1$), we get
\be\label{rigidity12} |\lambda_i^{(1)}-\gamma_i^{sc}|+|\lambda_i^{(2)}-\gamma_i^{sc}| \prec N^{-2/3}\min(i,N+1-i)^{-1/3},\quad i\in \qq{N}, \ee
where $\gamma_i^{sc}$, $i\in \qq{N}$, denote the quantiles of the semicircle law:
 $$\gamma_i^{sc}:=\sup_{x\in \R}\left\{\int_{-\infty}^{x}\rho_{sc}(x)\dd x < \frac{i}{N}\right\}.$$
By the bulk universality of $H_1$, there exists a constant $\delta>0$ such that for any constant $\tau>0$ and $j\in \qq{N}$, 
\be\label{Wegner12} \P \left( \exists i\in \qq{N}, \left. |\lambda_i^{(1)}-\lambda_j^{(2)}|\le 2N^{-1-\delta} \right| H_2\right) \le N^{-\delta+\tau}.\ee
In fact, \cite[Proposition B.1]{bourgade2016fixed} shows that \eqref{Wegner12} holds for a Gaussian divisible ensemble; then, applying the Green's function comparison theorem \cite[Theorem 15.3 and Theorem 16.1]{Erds2017ADA} concludes \eqref{Wegner12}. 
Let $k_0:=\mathrm{arg\,min}_{i=1}^N|\gamma_{i}^{sc}-\gamma_k|$. Now, using \eqref{rigidity12} and \eqref{Wegner12}, we obtain that for any constants $\tau,C>0$, 
\begin{align*}
 \P\left( (\mathscr E_1\cup \mathscr E_2)^c \right) &\le \P \left( {\exists i,j \in \qq{k_0-N^\tau , k_0+ N^\tau}  \text{ such that } |\lambda_i^{(1)}-\lambda_j^{(2)}|\le 2N^{-1-\delta} }\right) + \OO(N^{-C})\\
&\le \sum_{j\in \qq{k_0-N^\tau,k_0+ N^\tau}} \P \left(\exists i\in \qq{N}, |\lambda_i^{(1)}-\lambda_j^{(2)}|\le 2N^{-1-\delta}\right) + \OO(N^{-C}) =\OO(N^{-\delta+2\tau}).
\end{align*}
Taking $\tau<\delta/4$ concludes \eqref{eq:probab12}.

Without loss of generality, suppose $\mathscr E_1$ holds. Then, we claim the following estimate. 
\begin{lemma}\label{prop:local}
Under the assumptions of \Cref{nonmix} (with $D=2$) and the notations defined above, the following estimate holds for any constants $\e,C>0$:
\begin{align}
    \label{eq:E1}
    &\P \left(\| \cG_1(\lambda_k) A \bw_k\|\ge N^{\delta+\e} \|A\|_{\HS} + N^{-C}; \mathscr E_1\right) \le N^{-\e}.
\end{align}
\end{lemma}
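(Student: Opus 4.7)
The plan is to reduce $\|\cG_1(\lambda_k) A\bw_k\|^2$, via two spectral reductions, to a two-resolvent trace of the form $\tr[A(\im G)_{22}(z_k) A^* \im \cG_1(z_k)]$ at a common spectral parameter $z_k := \gamma_k + \ii\eta$ with $\eta := N^{-1+c}$ for a small constant $c = c(\e) > 0$, and then to bound this trace by $\|A\|_{HS}^2$ using a multi-resolvent estimate. This mirrors the integrable-regime strategy sketched in \Cref{sec:idea}.

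For the first reduction, I use the spectral decomposition $\cG_1(\lambda_k) = \sum_i (\mu_i - \lambda_k)^{-1}\bxi_i\bxi_i^*$ with $(\mu_i,\bxi_i)$ the eigenpairs of $H_1$. On the intersection of $\mathscr E_1$ with the rigidity event $\Omega := \{|\lambda_k - \gamma_k| \le \eta\}$ (whose complement in $\mathscr E_1$ has probability $\le N^{-C'}$ for arbitrary $C' > 0$ by \eqref{eq:rigidity}), the pointwise bound $|\mu_i - z_k|^2 \le CN^{2c+2\delta}(\mu_i - \lambda_k)^2$ holds: when $|\mu_i - \lambda_k| \ge \eta$ the ratio is $\OO(1)$, and when $|\mu_i - \lambda_k| < \eta$ the gap $|\mu_i - \lambda_k| \ge N^{-1-\delta}$ combined with $|\mu_i - z_k|^2 \lesssim \eta^2$ yields the claim. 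Hence
\[
\|\cG_1(\lambda_k) A\bw_k\|^2 = \sum_i \frac{|\bxi_i^* A\bw_k|^2}{(\mu_i - \lambda_k)^2} \le \frac{CN^{2c+2\delta}}{\eta}\, \bw_k^* A^* \im\cG_1(z_k) A\bw_k.
\]
For the second reduction, rigidity also gives $|\lambda_k - z_k|^2 \le 2\eta^2$ on $\Omega$, and the spectral identity $\im G(z_k) = \eta\sum_j \bv_j\bv_j^*/|\lambda_j - z_k|^2$ yields the operator inequality $\bv_k\bv_k^* \le 2\eta\,\im G(z_k)$. Embedding $A$ into $\C^{DN\times DN}$ via $\wt A := E_1\Lambda E_2$, so that $\bw_k^* A^* \im\cG_1(z_k) A\bw_k = \bv_k^* \wt A^* \im G_0(z_k) \wt A\,\bv_k$ with $G_0(z) := (H-z)^{-1}$ the block-diagonal resolvent, and applying the trace inequality $\bv_k^* C\bv_k \le 2\eta\,\tr[C\im G(z_k)]$ to the PSD matrix $C := \wt A^* \im G_0(z_k) \wt A$, a direct block computation gives
\[
\bw_k^* A^* \im\cG_1(z_k) A\bw_k \le 2\eta\, \tr\bigl[A(\im G)_{22}(z_k) A^* \im\cG_1(z_k)\bigr].
\]
Combining these yields $\|\cG_1(\lambda_k) A\bw_k\|^2 \le 2CN^{2c+2\delta}\, \tr[A(\im G)_{22}(z_k) A^* \im\cG_1(z_k)]$ on $\mathscr E_1 \cap \Omega$.

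The main obstacle is the multi-resolvent estimate
\[
\bbe\,\tr\bigl[A(\im G)_{22}(z_k) A^* \im\cG_1(z_k)\bigr] \prec \|A\|_{HS}^2,
\]
which isolates one of the two non-negative terms in the expansion of $\E\tr[\im G_0\Lambda\im G\Lambda]$ after writing $\Lambda = \wt A + \wt A^*$; accordingly, the stronger bound $\E\tr[\im G_0\Lambda\im G\Lambda] \prec \|A\|_{HS}^2$ referenced in \Cref{sec:idea} will suffice. My approach is to apply the complex cumulant expansion (\Cref{lem:complex_cumu}) iteratively to the entries of $H$, following the template of \Cref{sec_simpleLK}: the averaged local law \eqref{eq:aver_local} reduces $\im G$ and $\im G_0$ to their deterministic approximants, and the leading contribution factors through $\im m(z_k)\,\tr[AA^*] = \im m(z_k)\,\|A\|_{HS}^2$, since $M$ is nearly a scalar matrix when $\|A\| = \oo(1)$. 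The challenge is to bound the sub-leading fluctuation and higher-cumulant terms by $\oo(\|A\|_{HS}^2)$; this is where the block-diagonal variance profile of $H$, which keeps $\cG_1$ and $\cG_2$ independent to leading order, together with the smallness $\|A\| \le N^{-\delta_A}$, are essential, preventing cross-block mixing from producing an $\OO(1)$ correction.

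Given the multi-resolvent estimate, Markov's inequality yields $\bbe[\|\cG_1(\lambda_k) A\bw_k\|^2\,\mathbf{1}_{\mathscr E_1\cap\Omega}] \prec N^{2c+2\delta}\|A\|_{HS}^2$, and choosing $c$ small enough (depending on $\e$) gives $\P(\|\cG_1(\lambda_k) A\bw_k\| \ge N^{\delta+\e}\|A\|_{HS};\ \mathscr E_1\cap\Omega) \le N^{-\e}$. On $\mathscr E_1\cap\Omega^c$, whose probability is $\le N^{-C'}$ for arbitrary $C' > 0$, the deterministic fallback $\|\cG_1(\lambda_k) A\bw_k\| \le \|\cG_1(\lambda_k)\|_{\mathrm{op}}\|A\| \le N^{1+\delta-\delta_A}$ is absorbed into the additive $N^{-C}$ term by taking $C'$ sufficiently large, completing the proof of \eqref{eq:E1}.
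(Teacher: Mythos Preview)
Your proposal is correct and follows essentially the same route as the paper: two spectral reductions (on $\cG_1$ using the gap condition from $\mathscr E_1$, then on $G$ via $\bv_k\bv_k^*\le 2\eta\,\im G(z_k)$, which is equivalent to the paper's direct spectral expansion) reduce to the trace $\tr[A(\im G)_{22}A^*\im\cG_1]$, which is dominated by $\tr[\im G_0\,\Lambda\,\im G\,\Lambda]$ and then bounded by $\|A\|_{HS}^2+N^{-C}$ via the multi-resolvent estimate (\Cref{lem:localization}), after which Markov concludes. The only minor bookkeeping slip is that the additive $N^{-C}$ in the threshold is needed already for the $N^{-C}$ remainder in the multi-resolvent estimate (not just for $\Omega^c$), but this is easily absorbed into your Markov step by choosing the constant in \Cref{lem:localization} large relative to $C$.
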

\begin{remark}
The intuition behind the estimates \eqref{eq:E1} is that we believe $\bw_k$ is delocalized in the eigenbasis of $H_1$ on $\mathscr E_1$, since $H_1$ and $\bw_k$ should be almost independent in the integrable regime. As a consequence, we expect that on $\mathscr E_1$, 
\begin{align*}
    \|\bu_k\|^2 &\prec  \frac{1}{N}\tr\left[A^\dag  \cG_1(\lambda_k)^* \cG_1(\lambda_k) A\right] \prec \|A\|_{\HS}^2 \max_i \frac{(\bu^a_i)^* \cG_1(\lambda_k)^* \cG_1(\lambda_k)\bu^a_i }{N}  \\
    & \prec \frac{\|A\|_{\HS}^2}{N^2} \sum_{i=1}^N \frac{1}{|\lambda_i^{(1)}-\lambda_k|^2}\prec {N}^{2\delta} {\|A\|_{\HS}^2},
\end{align*}
where $\bu_i^a$ are the eigenvectors of $AA^*$. Above, in the second step, we used the definition of $\|A\|_{\HS}^2$ in terms of the singular values of $A$; in the third step, we used the delocalization of $\bu^a_i$ in the eigenbasis of $H_1$; in the last step, we used the definition of $\mathscr E_1$ and the rigidity of eigenvalues \eqref{rigidity12}. 
\end{remark}

\begin{proof}[Proof of \Cref{prop:local}]
Let $z=E+\ii \eta$ with $E=\gamma_k$ and $\eta=N^{-1+c}$ for a small constant $c\in (0,1/2)$. We claim that  
\begin{equation}\label{eq:boundE1}
    \E\left(\| \cG_1(\lambda_k) A \bw_k\|^2;\mathscr E_1\right)\lesssim N^{2(c+\delta)} \E \tr\left[\begin{pmatrix}
0 & 0 \\
0 & A^*  \im \cG_1(z) A
\end{pmatrix}
 \im G(z)\right] .
\end{equation}
To see why \eqref{eq:boundE1} holds, with the spectral decomposition of $\im G$, we obtain that 
\begin{align*}
    \E \tr\left[\begin{pmatrix}
0 & 0 \\
0 & A^*  \im \cG_1(z) A
\end{pmatrix}
 \im G(z)\right] =\E \sum_{j\in \cI} \frac{\eta}{(\lambda_j-\gamma_k)^2+\eta^2} \bw_j^*   A^*  \im \cG_1(z) A \bw_j\\
\ge  \E   \frac{\eta}{(\lambda_k-\gamma_k)^2+\eta^2} \bw_k^*   A^*  \im \cG_1(z) A \bw_k  \gtrsim \eta^{-1} \E \bw_k^*   A^*  \im \cG_1(z) A \bw_k ,
\end{align*}
where in the last step, we used the rigidity of $\lambda_k$ given by \eqref{eq:rigidity}. On the other hand, with the spectral decomposition of $\cG_1$, we obtain that on the event $\mathscr E_1$, with high probability,
\begin{align*}
\eta^2\| \cG_1(\lambda_k) A \bw_k\|^2 &= \sum_{j} \frac{\eta^2|(\bu_j^{(1)})^* A \bw_k|^2}{(\lambda_j^{(1)}-\lambda_k)^2} \lesssim N^{2(c+\delta)}\sum_{j} \frac{\eta^2|(\bu_j^{(1)})^* A \bw_k|^2}{(\lambda_j^{(1)}-\lambda_k)^2+\eta^2} \\
&\lesssim N^{2(c+\delta)}\sum_{j} \frac{\eta^2|(\bu_j^{(1)})^* A \bw_k|^2}{(\lambda_j^{(1)}-\gamma_k)^2+\eta^2} = N^{2(c+\delta)} \cdot \eta \bw_k^*   A^*  \im \cG_1(z) A \bw_k ,
\end{align*}
where $\bu_j^{(1)}$, $j\in \qq{N}$, denote the eigenvectors of $H_1$, and we used the definition of $\mathscr E_1$ in the second step and the rigidity of $\lambda_k$ in the third step. The above two estimates together conclude \eqref{eq:boundE1}.

Using \eqref{eq:boundE1} and the trivial bound  
$$ \E \tr\left[\begin{pmatrix}
0 & 0 \\
0 & A^*  \im \cG_1(z) A
\end{pmatrix}
 \im G(z)\right] 
\le \E \tr\left[\im G_0(z) \Lambda \im G(z) \Lambda \right]\quad \text{with}\quad G_0(z):=(H-z)^{-1},$$
we obtain that 
\begin{align}\label{eq:boundE1.5}
    \E\left(\| \cG_1(\lambda_k) A \bw_k\|^2;\mathscr E_1\right)\lesssim N^{2(c+\delta)} \E \tr\left[\im G_0(z) \Lambda \im G(z) \Lambda \right].
\end{align}
The estimate \eqref{eq:E1} then follows from the next lemma together with Markov's inequality. 

\begin{lemma}\label{lem:localization}
    Under the setting of \Cref{nonmix} (without requiring that $D=2$), let $z=E+\ii \eta$ with $E=\gamma_k$ and $\eta=N^{-1+c}$. Then, we have that for any constant $C>0$,
    \begin{align}\label{key_local_bdd}
        \E \tr\left[\im G_0(z) \Lambda \im G(z)\Lambda \right] \prec   
  \|A\|_{\HS}^2 + N^{-C}.
    \end{align}
\end{lemma}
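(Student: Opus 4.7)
My plan is to reduce the statement to a sharp bound on $\E\tr[G_0(z_1)\Lambda G(z_2)\Lambda]$ for $z_1,z_2 \in \{z,\bar z\}$. Writing $\im X = (X - X^*)/(2\ii)$ for $X \in \{G_0, G\}$ expresses $\E\tr[\im G_0 \Lambda \im G \Lambda]$ as a linear combination of four such terms; I fix one pair and let $M_0 := m_{sc}(z_1) I_{DN}$ denote the deterministic equivalent of $G_0$ (which is exact because $H$ is a direct sum of i.i.d.~Wigner matrices, so the relevant Dyson equation reduces to the semicircle one), and let $M := M(z_2, \Lambda)$ be as in \Cref{defn_Mm}. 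Substituting $G_0 = M_0 + (G_0-M_0)$ and $G = M + (G-M)$ produces a deterministic main term $\tr[M_0\Lambda M \Lambda]$ plus three error terms.

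For the main term, the Neumann expansion $M = m_{sc}(z_2)\sum_{k\ge 0}(-m_{sc}(z_2)\Lambda)^k$ (convergent since $\|\Lambda\| = \oo(1)$) gives $\tr[M_0 \Lambda M \Lambda] = m_{sc}(z_1) m_{sc}(z_2)\tr\Lambda^2 + O(\|A\|\tr\Lambda^2) = O(\|A\|_{HS}^2)$, using $\tr\Lambda^2 \lesssim D\|A\|_{HS}^2$. The fluctuation errors I would control by combining the singular value decomposition $A = U\Sigma V^*$ with the (an)isotropic local laws from \Cref{lem_loc}. For $\E\tr[(G_0-M_0)\Lambda M \Lambda]$, the block-diagonality of $G_0 - M_0$ reduces the trace to $\sum_a \E\tr[(\mathcal{G}_a - m_{sc}(z_1) I)(\Lambda M \Lambda)_{aa}]$; after expanding $(\Lambda M \Lambda)_{aa}$, each leading piece of the form $\tr[(\mathcal{G}_a - m_{sc} I)\, m_{sc}(z_2)\, AA^*]  = m_{sc}(z_2)\sum_i \sigma_i^2 \bu_i^*(\mathcal{G}_a - m_{sc} I)\bu_i$ is controlled by the Wigner isotropic local law, yielding $\prec \Psi \|A\|_{HS}^2$ with $\Psi := \sqrt{\im m_{sc}/(N\eta)} + (N\eta)^{-1} = \oo(1)$ at $\eta = N^{-1+c}$. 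The parallel error $\E\tr[M_0\Lambda(G-M)\Lambda] = m_{sc}(z_1)\tr[(G-M)\Lambda^2]$ is handled by decomposing $\Lambda^2$ by its block structure (diagonal blocks $AA^* + A^*A$ and off-diagonal blocks $A^2, (A^*)^2$ at $\pm 2$) and applying the anisotropic local law on lifted singular vectors $\wt{\bu}_i^{(a)} \in \C^{DN}$ supported on block $a$. The remaining double-fluctuation term $\E\tr[(G_0 - M_0)\Lambda(G-M)\Lambda]$ is even smaller, of order $\Psi^2 \|A\|_{HS}^2$.

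The main obstacle is the treatment of the cross contributions $A\, M_{a+1,a-1}\, A$ arising in $(\Lambda M\Lambda)_{aa}$, together with the $(a,a\pm 2)$ off-diagonal blocks of $\Lambda^2$ equal to $A^2$ or $(A^*)^2$, since in these terms two copies of $A$ (or $A^*$) abut without an intervening scalar matrix and the SVD alone is insufficient to extract $\|A\|_{HS}^2$-level control. My fix is to substitute the Neumann approximation $M_{a+1,a-1} = m_{sc}(z_2)^3 (A^*)^2 + O(\|A\|^4)$, so that $A(A^*)^2 A = (AA^*)^2 = U\Sigma^4 U^*$ restores SVD-compatibility and the resulting trace estimate $\sum_i \sigma_i^4\, \bu_i^*(\mathcal{G}_a - m_{sc} I)\bu_i \prec \Psi \|A\|_{op}^2 \|A\|_{HS}^2$ is negligible by $\|A\|_{op} \le N^{-\delta_A}$; an analogous iteration of the SVD together with the operator-norm smallness controls the $A^2$ and $(A^*)^2$ blocks of $\Lambda^2$ and the higher-order tails of the Neumann series, keeping all error contributions at the level $\oo(\|A\|_{HS}^2) + N^{-C}$.
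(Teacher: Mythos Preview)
Your decomposition handles the deterministic main term and the two single-fluctuation errors correctly (in fact more cleanly than you indicate: since $\Lambda M\Lambda$ and $M_0\Lambda^2$ are deterministic with nuclear norm $\lesssim\|A\|_{HS}^2$, one application of the anisotropic local law against their singular vectors immediately yields $\prec\Psi\|A\|_{HS}^2$, bypassing your block-by-block treatment of the $A^2$ and $(A^*)^2$ contributions). The genuine gap is the double-fluctuation term
\[
\E\tr\big[(G_0-M_0)\,\Lambda\,(G-M)\,\Lambda\big],
\]
which you assert is ``of order $\Psi^2\|A\|_{HS}^2$'' without justification. Expanding in the eigenbasis of $\Lambda$ (or in the SVD of $A$) produces a double sum of the shape
\[
\sum_{\alpha,\beta}\sigma_\alpha\sigma_\beta\;\big[\bu_\beta^*(\mathcal G_a-m_{sc})\bu_\alpha\big]\big[\bv_\alpha^*(G-M)_{a+1,a+1}\bv_\beta\big],
\]
and the isotropic local law only gives each bracket $\prec\Psi$, so the best one gets is $\Psi^2\bigl(\sum_\alpha\sigma_\alpha\bigr)^2\le N\Psi^2\|A\|_{HS}^2\sim\eta^{-1}\|A\|_{HS}^2=N^{1-c}\|A\|_{HS}^2$, which is far too large. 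The two fluctuations share the same randomness $H$, so no independence or simple covariance argument rescues this; Cauchy--Schwarz combined with Ward's identity likewise gives $\eta^{-1}\|A\|_{HS}^2$.

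In fact your decomposition yields no real reduction: subtracting the three controllable pieces from $\E\tr[G_0\Lambda G\Lambda]$ leaves exactly the double-fluctuation term, so bounding it is equivalent to the original problem. This is precisely why the paper (via \Cref{lem:nonmix_key2}) does \emph{not} stop after one substitution but instead writes $G_1-M_1=-G_1(H+m_1)M_1$, applies cumulant expansion, and iterates: each round either shrinks the size by a factor $(N\eta)^{-1}$, reduces the number of resolvents adjacent to a $\Lambda$, or produces a term that already meets the stopping rule. That iterative structure --- tracking the Type~I/Type~II hierarchy of \Cref{claim:Gauss} --- is the missing ingredient, and nothing short of it (or an equivalent multi-resolvent local law) will close the argument at the scale $\eta=N^{-1+c}$.
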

 The proof of this lemma is the same as that for \Cref{lem:nonmix_key2} below, which will be presented in full detail in \Cref{sec_nonmix_evalue}. Hence, we omit the proof of \Cref{lem:localization} here.
 
Combining \eqref{key_local_bdd} with \eqref{eq:boundE1.5} gives that 
\begin{equation*}
    \E\left(\| \cG_1(\lambda_k) A \bw_k\|^2;\mathscr E_1\right) 
    \prec N^{2(c+\delta)}(\|A\|^2_{\HS}+N^{-C}).
\end{equation*}
Then, applying Markov's inequality concludes \eqref{eq:E1} since $c$ is arbitrarily small.
\end{proof}

Now, we are ready to conclude \Cref{nonmix}.
\begin{proof}[Proof of \Cref{nonmix}]
   When $D=2$, by \Cref{prop:local}, we have that for any constant $\e>0$,   
   $$\P \left(\| \bu_k\| \ge N^{-\e_A+\delta+\e}; \mathscr E_1\right) \le N^{-\e}.$$
   By symmetry, a similar estimate holds for $\bw_k$ on $\mathscr E_2$. Together with \eqref{eq:probab12}, these estimates imply that 
   $$\P \left(\min\{\| \bu_k\|, \|\bw_k\|\}\ge N^{-\e_A+\delta+\e}\right) \lesssim N^{-\e}+N^{-\delta/2},$$
   which concludes \eqref{eq:main_evector2} as long as we choose $\delta+\e<\e_A$.  
   
    For the general case of $D\ge 2$, the proof is similar. More precisely, define events $\mathscr E_{(a)}$, $a\in \qqD$, as 
    $$\mathscr E_{(a)}:=\left\{\dist\left(\lambda_k, \cup_{k\in \qqD\setminus \{a\}}\spec(H_k)\right)\ge N^{-1-\delta}\right\}.$$
    Again, using the eigenvalue rigidity and bulk universality for Wigner matrices, we get that $\P(\cup_{a=1}^D \mathscr E_{(a)}) = 1-\OO(N^{-\delta/2})$. Then, similar to \eqref{eq:boundE1.5}, we can show that 
    $$ \sum_{k\in \qqD\setminus\{a\}}\E \left(\|E_k \bv_k\|^2;\mathscr E_{(a)}\right)\lesssim N^{2(c+\delta)} \E \tr\left[\im G_0(z) \Lambda \im G(z)\Lambda \right]. $$
    Applying \Cref{lem:localization} and Markov's inequality, we obtain that for any constant $\e>0$,
    $$ \P\left( 1-\|E_a \bv_k\|^2\ge N^{-2\e_A+2\delta+\e};\mathscr E_{(a)}\right)\lesssim N^{-\e/2}+N^{-\delta/2}.$$
    Finally, taking the union bound over $a\in \qqD$ concludes the proof. 
\end{proof}

\section{Integrable regime: eigenvalues}\label{sec_nonmix_evalue}
Finally, in this section, we give the proof of \Cref{NonMixEV}. Recall the interpolating matrices defined in \eqref{eq:VVtheta}. 
Denote the eigenvalues and corresponding eigenvectors of $\VV(\theta)$ by $\lambda_i(\theta)$ and ${\bf v}_i(\theta)$, $i\in \cI$. 
Then, we have that for any $k\in \cI$,
\begin{equation} \label{ygfyz}
    \lambda_k(1)-\lambda_k(0)
=\int_0^1 \frac{\dd}{\dd\theta}\lambda_{k}(\theta) \dd\theta
=\int_0^1  {\bf v}_{k}(\theta)^* \Lambda {\bf v}_{k}(\theta) \dd\theta,
\end{equation}
from which we derive that
\begin{equation} \label{ygfyz2}
\mathbb E\left|\lambda_k(1)-\lambda_k(0)\right|
\le 
\int_0^1 \mathbb E \left|{\bf v}_{k}(\theta)^* \Lambda {\bf v}_{k}(\theta) \right| \dd\theta
\le \int_0^1 \left( \mathbb E \left| {\bf v}_{k}(\theta)^* \Lambda {\bf v}_{k}(\theta) \right|^2 \right)^{1/2} \dd\theta.
\end{equation}
\begin{remark*}
Rigorously speaking, ${\dd\lambda_{k}(\theta)}/{\dd\theta}={\bf v}_{k}(\theta)^* \Lambda {\bf v}_{k}(\theta)$ holds when $\lambda_k(\theta)$ is not a degenerate eigenvalue of $\VV(\theta)$. To handle this issue, we can add a small Gaussian component to $H$ so that each entry of the diagonal blocks has a continuous distribution. For example, we can take a matrix $H_{\Lambda}'$ whose blocks satisfy the law \eqref{eq:Gauss_div} with $t-t_0=e^{-N}$. Then, almost surely, all the eigenvalues of $H_{\Lambda}'$ are non-degenerate, which, together with Fubini's theorem, implies that  
$$\E[\lambda_k(1)-\lambda_k(0)] =\int_0^1  \E {\bf v}_{k}(\theta)^* \Lambda {\bf v}_{k}(\theta) \dd\theta.$$
Hence, \eqref{ygfyz2} remains valid for the ensemble $H_{\Lambda}'$. After showing \eqref{eq:main_perurb} for $H_{\Lambda}'$, we then remove the small Gaussian component, which leads to a negligible error of order $\OO_\prec(e^{-N})$ to both $\lambda_k$ and $\lambda_k(H)$. For clarity and without loss of generality, it suffices to assume that \eqref{ygfyz2} holds throughout the following proof. 
\end{remark*}

Given \eqref{ygfyz2}, we will prove that there exists a constant $\e_A'>0$ (depending on $\e_A$) such that for any $\theta\in [0,1]$ and $k\in \qq{\kappa DN, (1-\kappa)DN}$, 
\begin{equation} \label{eq:nonmix_key}
\left( \mathbb E \left| {\bf v}_{k}(\theta)^* \Lambda {\bf v}_{k}(\theta) \right|^2 \right)^{1/2} \le N^{-1-\e_A'}.
\end{equation}
Plugging this bound into \eqref{ygfyz2} and applying Markov's inequality, we get that 
$$\P\left( \left|\lambda_k(1)-\lambda_k(0)\right| \ge N^{-1-\e_A'/2}\right)\le N^{-\e_A'/2},$$
which concludes the proof.
\begin{remark}
We now explain heuristically why the estimate \eqref{eq:nonmix_key} should hold. Fix any $\theta\in [0,1]$, suppose ${\bf v}_{k}\equiv {\bf v}_{k}(\theta)$ is decomposed into $D$ sub-vectors ${\bf v}_{k,i}:=(v_k(x):x\in \cI_i)$, $i\in\qqD$. Then, ${\bf v}_{k}^* \Lambda {\bf v}_{k}$ involves inner products ${\bf v}_{k,i}^* A {\bf v}_{k,j}$, $j\ne i$, and their complex conjugates. If we believe that the eigenvectors of different blocks $H_i$, $i\in \qqD$, do not mix (which is partially suggested by \Cref{nonmix}), then ${\bf v}_{k,i}$ and ${\bf v}_{k,j}$ should be approximately independent  random vectors, which implies that  ${\bf v}_{k,i}^* A {\bf v}_{k,j} \prec  {N}^{-1} \|A\|_{\HS} \le N^{-1-\e_A}$.
\end{remark}

The rest of this section is devoted to the proof of \eqref{eq:nonmix_key}. Without loss of generality, we can take $\theta=1$ in the following proof---for a general $\theta$, we can just take $\theta\Lambda$ as our new $\Lambda$. With the spectral decomposition of $\im G$, we get that 
$$
\operatorname{Tr}  \left[  (\operatorname{Im} G) \Lambda (\operatorname{Im} G) \Lambda \right]=
\sum_{k,l\in \cI}\frac{\eta^2 \left| {\bf v}_{k}^* \Lambda {\bf v}_l\right|^2 }{  |z-\lambda_k|^2|z-\lambda_l|^2   },\quad  z=E+\ii\eta.
$$
Let $E=\gamma_k$ and $\eta=N^{-1+c}$ for a small constant $c>0$. Then, we obtain from the above equation that 
\begin{equation}\label{bdwgy}
\E|{\bf v}_{k}^* \Lambda{\bf v}_{k}|^2  
\le \E \frac{\left[(\lambda_k-\gamma_k)^2+\eta^2\right]^2}{\eta^2} \tr\left[  (\operatorname{Im} G) \Lambda (\operatorname{Im} G) \Lambda \right] \prec \eta^2 \E  \tr\left[  (\operatorname{Im} G) \Lambda (\operatorname{Im} G) \Lambda \right] ,
\end{equation}
where we used the eigenvalue rigidity \eqref{eq:rigidity} in the second step. Now, to conclude \eqref{eq:nonmix_key}, it suffices to show that for $z=\gamma_k+\ii N^{-1+c}$,
$$
\E  \tr\left[  (\operatorname{Im} G) \Lambda (\operatorname{Im} G) \Lambda \right] \prec \|A \|^2_{\HS} + N^{-1}.
$$
Writing $\im G=(G-G^*)/(2\ii)$, it suffices to prove the following lemma.

\begin{lemma}\label{lem:nonmix_key2}
Under the setting of \Cref{NonMixEV}, let $c\in (0,1)$ be an arbitrary small constant. Fix any $z=E+\ii \eta$ with $|E|\le 2-\kappa$ and $\eta=N^{-1+c}$. Then, for $z_1,z_2\in \{z,\bar z\}$ and any constant $C>0$, we have that
\be\label{eq:nonmix_key2}
\E  \left\langle G_1 \Lambda  G_2 \Lambda \right\rangle \prec N^{-1}\|A \|^2_{\HS} + N^{-C}.
\ee
\end{lemma}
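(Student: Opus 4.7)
The plan is to decompose $\E\langle G_1\Lambda G_2\Lambda\rangle$ into a deterministic leading term plus small errors and control each piece using the local laws (\Cref{lem_loc}), multi-resolvent bounds (\Cref{appen1}), the underline trick (\Cref{def:underline}), and the complex cumulant expansion (\Cref{lem:complex_cumu}), following the template of the proof of \eqref{largG_Ent} in \Cref{sec_simpleLK}. The leading deterministic term is computed directly: using the Neumann expansion $M_i=-(z_i+m_i)^{-1}[I-\Lambda/(z_i+m_i)]^{-1}$ (valid since $\|\Lambda\|\le\|A\|\le N^{-\delta_A}$) and $M_i\approx m_{sc}(z_i)I$ on the diagonal blocks, each $(a,a)$-block of $\Lambda M_j\Lambda$ is approximately $m_{sc}(z_j)(AA^*+A^*A)$, which gives
\[
\langle M_1\Lambda M_2\Lambda\rangle=\frac{2\,m_{sc}(z_1)m_{sc}(z_2)\|A\|_{HS}^2}{N}\bigl(1+\oo(1)\bigr)=\OO\!\left(N^{-1}\|A\|_{HS}^2\right).
\]

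Next, I would apply $G_1=M_1-M_1(H+m_1)G_1$ to $\tr(G_1\Lambda G_2\Lambda)$, rewrite $HG_1\Lambda G_2\Lambda$ via the underline definition, and use the cancellation between the self-energy $m_1 G_1$ and the second-order Wick contraction $\tilde\E[\tilde H G_1\tilde H]\approx m_1 I$ (exactly as in the derivation of \eqref{eq_Lab}) to arrive at the approximate identity
\[
\E\langle G_1\Lambda G_2\Lambda\rangle=\langle M_1\Lambda M_2\Lambda\rangle+\E\langle M_1\underline{HG_1\Lambda G_2\Lambda}\rangle+\mathcal{E},
\]
where $\mathcal{E}$ collects subleading contributions from replacements $G_i\to M_i$ and from averaged local law errors. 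Using the SVD $A=\sum_k\sigma_k u_k v_k^*$, each such contribution reduces to sums of the form $\sum_{a,k}\sigma_k^2\,\bv^*(G_1-M_1)\bv$, where $\bv$ is the embedding of $v_k$ (or $u_k$) into the $a$-th block of $\C^{DN}$; the anisotropic local law then bounds each quadratic form as $\prec N^{-c/2}$, and summing $\sigma_k^2$ over $k$ collapses to $\|A\|_{HS}^2$, so that $\mathcal{E}\prec N^{-1-c/2}\|A\|_{HS}^2+N^{-C}$. The remaining underlined term is bounded by a high-moment Markov estimate on $\E|\cdot|^p$ expanded via \Cref{lem:complex_cumu}, parallel to the proof of \eqref{eq:i_ii_cumulant}, with each resulting cumulant term again containing two $\Lambda$-factors that combine to yield $\|A\|_{HS}^2$ rather than $\|A\|^2$.

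The main obstacle is extracting the Hilbert--Schmidt scaling $\|A\|_{HS}^2$ rather than the operator-norm scaling $\|A\|^2$ from traces involving $\Lambda$. A naive bound via $\|\Lambda\|\le\|A\|$ would yield an error of order $\|A\|^2/(N\eta)$, which can be much larger than $N^{-1}\|A\|_{HS}^2$ when $A$ is close to low-rank. The resolution is to decompose $\Lambda$ via the SVD of $A$ into rank-one pieces weighted by $\sigma_k$ and to observe that in every trace arising in the expansion, at least one pair of $\Lambda$-factors appears in a configuration that (after $M$-approximation of adjacent resolvents) collapses to $\tr(AA^*)$ or $\tr(A^*A)=\|A\|_{HS}^2$, while any remaining $\Lambda$-factors contribute only negligible operator-norm factors $\|A\|^k$. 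The $N^{-C}$ term in the stated bound accommodates the truncation error from stopping the cumulant expansion at sufficiently high order.
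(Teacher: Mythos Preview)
Your high-level strategy—expand $G_1=M_1-M_1(H+m_1)G_1$, separate the underlined renormalized fluctuation, and use the SVD of $A$ to extract $\|A\|_{HS}^2$ rather than $\|A\|^2$—is the correct starting point and matches the paper's philosophy. However, the claim that a \emph{single} application of this expansion closes the estimate is where the argument breaks.

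Concretely, after one step the ``error'' $\mathcal{E}$ does \emph{not} consist only of terms like $\sum_k\sigma_k^2\,\mathbf{v}^*(G_1-M_1)\mathbf{v}$. The Gaussian integration by parts (or equivalently the Wick contraction hidden in the underline) produces, among others, the two terms
\[
D\sum_{a}\langle(G_1-M_1)E_a\rangle\,\langle M_1E_aG_1\Lambda G_2\Lambda\rangle
\quad\text{and}\quad
D\sum_a\langle G_1\Lambda G_2E_a\rangle\,\langle M_1E_aG_2\Lambda\rangle.
\]
For the first, the light weight is $\prec(N\eta)^{-1}$ and the second factor is a $(G,\Lambda,G,\Lambda)$-loop of size $\prec\|A\|_{HS}^2$ (by the SVD argument), giving only $(N\eta)^{-1}\|A\|_{HS}^2=N^{-c}\|A\|_{HS}^2$. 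For the second, the SVD bounds on the two $(G,\Lambda)$-loops yield $\eta^{-1}N^{-1/2}\|A\|_{HS}\cdot N^{-1/2}\|A\|_{HS}=N^{-c}\|A\|_{HS}^2$. Neither is $\le N^{-1}\|A\|_{HS}^2$ for small $c$, so your asserted bound $\mathcal{E}\prec N^{-1-c/2}\|A\|_{HS}^2$ fails. The underlined term is no better: the high-moment argument ``parallel to the proof of \eqref{eq:i_ii_cumulant}'' bounds entries like $(G_1\Lambda G_2\Lambda M_1)_{\beta\alpha}$ via \Cref{appen1}, which gives $\eta^{-1}\|\Lambda\|^2=\eta^{-1}\|A\|^2$ (operator norm), and for low-rank $A$ this can be far larger than what you need.

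What the paper actually does is \emph{iterate}: it classifies every term arising in the expansion as a Type I or Type II expression built from light weights, $G$-loops, $(G,\Lambda)$-loops, and $(G,\Lambda,G,\Lambda)$-loops, assigns each a combinatorial ``size'' (equations \eqref{eq:WGamma}--\eqref{eq:WGamma5}), and re-expands the resolvent adjacent to the surviving single $\Lambda$. At every iteration either the size shrinks by a factor $(N\eta)^{-1}=N^{-c}$ or a structural counter $k_1+k_2$ strictly decreases; this forces termination after $O((C_0/c)^2)$ steps, at which point every term is $\OO_\prec(N^{-1}\|A\|_{HS}^2+N^{-C_0})$ (\Cref{claim:Gauss}). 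The non-Gaussian corrections are then layered on top of this iterative scheme. The missing idea in your proposal is precisely this systematic iteration and bookkeeping; a one-shot expansion cannot bridge the gap from $N^{-c}\|A\|_{HS}^2$ down to $N^{-1}\|A\|_{HS}^2$.
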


Again, the proof of this estimate relies on \eqref{eq:G-M} and the cumulant expansion formula, \Cref{lem:complex_cumu}. We first consider a simpler case where the entries of $H$ are complex Gaussian. In this case, \Cref{lem:complex_cumu} can be replaced by the Gaussian integration by parts.

\subsection{Proof of \Cref{lem:nonmix_key2}: Gaussian case} \label{sec:Gauss_exp}

We recall the definitions of the scalar $m$ and the matrix $M$ in \eqref{self_m} and \eqref{def_G0}. In the following proof, we denote $\widetilde m:=-(m+z)^{-1}$ and 
\begin{equation}\label{MLambda}
\wt M:=M-\wt m=-\wt m \Lambda M=- \sum_{k=1}^\infty (m+z)^{-(k+1)}\Lambda^k.
\end{equation}
When $z=z_i$, $i\in \{1,2\}$, we then use the notations \smash{$\wt m_i$ and $\wt M_i$}.

By the averaged local law \eqref{eq:aver_local}, we have that 
$$ \langle M_1\Lambda G_2\Lambda\rangle = \langle \Lambda^2 M_1 G_2\rangle \prec N^{-1}\|A\|_{\HS}^2 .$$
Thus, we only need to bound $\langle (G_1-M_1) \Lambda  G_2   \Lambda \rangle $. With \eqref{eq:G-M} and Gaussian integration by parts, we get that
\begin{align} \label{HL1}
\E \langle (G_1-M_1) \Lambda  G_2   \Lambda\rangle  & = - \E \langle  G_1(H+m_1) \widetilde M_1\Lambda  G_2 \Lambda \rangle
- \widetilde m_1\E \langle  G_1(H+m_1)\Lambda  G \Lambda \rangle = Y_{1}, 
\end{align}
where $Y_1$ is defined as
\begin{align}
Y_1& = D \E \sum_{a=1}^D  \langle G_1 E_a \widetilde M_1 \Lambda  G_2   \Lambda\rangle  \langle (G_1-M_1) E_a\rangle + D\E  \sum_{a=1}^D \langle  G_1 E_a G_2 \Lambda \rangle  \langle \widetilde M_1 \Lambda  G_2 E_a\rangle \nonumber \\ 
&+D\widetilde m_1 \E\sum_{a=1}^D \langle G_1 E_a  \Lambda  G_2   \Lambda\rangle \langle (G_1-M_1) E_a\rangle   +  D\widetilde m_1 \E\sum_{a=1}^D \langle  G_1 E_a G_2 \Lambda \rangle  \langle   \Lambda  G_2 E_a\rangle. \label{HL2}
\end{align}
We can keep expanding $Y_1$ using \eqref{eq:G-M} and Gaussian integration by parts. During the expansions, all our expressions can be written into certain forms with some loop structures, which we now describe in more detail. In the expansions, each term only contains a deterministic coefficient and the following four types of loops (i.e., terms of the form $\langle \cdot \rangle$):
\begin{enumerate}
\item[(i)] {\bf Light weights}: $\langle (G-M)E_{a_i}\rangle$ for some $a_i\in \qqD$. 

\item[(ii)] {\bf$G$-loops}: $\langle \cal G^{(k)}\rangle$ for some $ k\ge 2$, where $\cal G^{(k)}$ represents an expression of the form
\be\label{eq:Gk}
\cal G^{(k)} =  E_{a_0}\prod_{i=1}^k G_{s_i} E_{a_i}
\ee
for some sequences $a_i\in \qqD$ and $s_i\in \{1,2\}$.

\item[(iii)] {\bf$(G,\Lambda)$-loops}: $\langle \cal G^{(k)}\Lambda^{(n)}\rangle$ for some $k\ge 0$ and $n\ge 1$, where $\cal G^{(k)}$ is of the form \eqref{eq:Gk} (with the convention that $\cal G^{(0)}=1$) and $\Lambda^{(n)}$ represents an expression of the form
\be\label{eq:Lambdak}
\Lambda^{(n)}\sim  E_{a_0} \prod_{i=1}^n\Lambda E_{a_i}
\ee
for some sequence $a_i\in \qqD$. 

\item[(iv)] {\bf$(G,\Lambda,G,\Lambda)$-loops}: $\langle \cal G^{(k_1)}\Lambda^{(n_1)} \cal G^{(k_2)}\Lambda ^{(n_2)}\rangle$ for some $k_1,k_2,n_1,n_2\ge 1 $.

\end{enumerate} 
Let ${\cal W}^{(k)}$ denote a product of light weights of the form ${\cal W}^{(k)}=\prod_{i=1}^{k}\langle (G-M)E_{a_i}\rangle $,
and let $\Gamma^{ (m) }_n$ denote a product of $G$-loops of the form
$$
  \Gamma^{ (m) }_n =\prod_{i=1}^n\langle \cal G^{(k_i)} \rangle\quad \text{with}\quad k_i\ge 2, \ \sum_{i=1}^n k_i=m .
$$
Hereafter, we slightly abuse the notation and use $m$ to denote an integer instead of $m(z)$ defined in \eqref{self_m}.

We will show that by following a \emph{specific expansion strategy}, we can continue expanding $Y_1$ until each expression is bounded by $\OO_\prec (N^{-1}\|A \|^2_{\HS}+N^{-C})$ as required. Before introducing the formal expansion strategy, we will examine some low-order expansions as illustrative examples. In equation \eqref{HL2}, the first two terms have ``smaller sizes" compared to the third and fourth terms, owing to the additional \smash{$\wt M_1$} factor. Utilizing the local laws \eqref{eq:aver_local} and \eqref{entprodG} along with the estimates \eqref{GA1}--\eqref{eq:WGamma3} below, we can show that 
\begin{align}
&  \E    \langle G_1 E_a \widetilde M_1 \Lambda  G_2   \Lambda\rangle  \langle (G_1-M_1) E_a\rangle +\E  \langle  G_1 E_a G_2 \Lambda \rangle  \langle \widetilde M_1 \Lambda  G_2 E_a\rangle \prec  \frac{\|A\|_{\HS}}{\sqrt{N}\eta}   \frac{\|A\|_{\HS}^2}{N},\\
&   \E \langle G_1 E_a  \Lambda  G_2   \Lambda\rangle \langle (G_1-M_1) E_a\rangle   +   \E \langle  G_1 E_a G_2 \Lambda \rangle  \langle   \Lambda  G_2 E_a\rangle \prec \frac{\|A\|_{\HS}^2}{N\eta} .\label{largeterms}
\end{align}
Therefore, we will focus on expanding the two terms in \eqref{largeterms}. Using \eqref{eq:G-M}, we can expand them as
\begin{align}
& \E \langle G_1 E_a  \Lambda  M_2   \Lambda\rangle \langle (G_1-M_1) E_a\rangle  - \E \langle G_1 E_a  \Lambda G_2(H+m_2) M_2   \Lambda\rangle \langle (G_1-M_1) E_a\rangle \nonumber\\
+ ~& \E \langle  G_1 E_a M_2 \Lambda \rangle  \langle \Lambda  G_2 E_a\rangle - \E \langle  G_1 E_a G_2(H+m_2)M_2 \Lambda \rangle  \langle  \Lambda  G_2 E_a\rangle .\label{eq:example1}
\end{align}
With the estimate \eqref{eq:WGamma2} below, we can already bound the first and third terms by $\OO_\prec (N^{-1}\|A\|_{HS}^2)$.
For the second and fourth terms in \eqref{eq:example1}, by applying Gaussian integration by parts, we find: 
\begin{align}
&~D\E \sum_{x=1}^D  \langle G_2 E_x M_2 \Lambda G_1 E_a  \Lambda  \rangle \langle (G_1-M_1) E_a\rangle\langle (G_2-M_2) E_x\rangle \label{eq:example1.1}\\
+&~D\E \sum_{x=1}^D  \langle G_2 E_x G_1 E_a \Lambda \rangle \langle G_1 E_x M_2 \Lambda \rangle \langle (G_1-M_1) E_a\rangle \label{eq:example1.2}\\
+&~\frac{1}{DN^2}\E \sum_{x=1}^D  \langle G_2 E_x G_1E_a G_1 E_x M_2   \Lambda G_1 E_a  \Lambda  \rangle \label{eq:example1.3}\\
+&~ D \E  \sum_{x=1}^D \avg{G_1E_a G_2 E_x M_2 \Lambda } \avg{(G_2-M_2)E_x}\langle  \Lambda  G_2 E_a\rangle \label{eq:example1.4}\\
+&~ D \E  \sum_{x=1}^D \avg{ G_1 E_x M_2 \Lambda}\avg{G_1E_aG_2 E_x} \langle  \Lambda  G_2 E_a\rangle \label{eq:example1.5}\\
+&~\frac{1}{DN^2} \E \sum_{x=1}^D \avg{ G_1E_aG_2 E_x G_2 E_a \Lambda G_2E_x  M_2 \Lambda }.\label{eq:example1.6} 
\end{align}
Note the expressions \eqref{eq:example1.1} and \eqref{eq:example1.4} exhibit similar loop structures to the two terms in \eqref{largeterms}, albeit with an additional light weight. Consequently, their sizes are smaller by an extra factor of $(N\eta)^{-1}$: 
$$\eqref{eq:example1.1}+\eqref{eq:example1.4}\prec  {\|A\|_{\HS}^2}/{(N\eta)^2}. $$
Moving from the two terms in \eqref{largeterms} to \eqref{eq:example1.3} and \eqref{eq:example1.6}, the two loops combine into a single loop while gaining an $N^{-2}$ coefficient. Utilizing the estimate \eqref{eq:WGamma3} below, we can bound them by 
$$\eqref{eq:example1.3}+\eqref{eq:example1.6}\prec  {\|A\|_{\HS}^2}/{(N\eta)^2}, $$
which is also better than \eqref{largeterms} by a factor of $(N\eta)^{-1}$. Furthermore, from the first term in \eqref{largeterms} to the expression \eqref{eq:example1.2}, a $(G,\Lambda,G,\Lambda)$-loop is split into two $(G,\Lambda)$-loops. Using the estimate \eqref{eq:WGamma2} below, we can show that  \eqref{eq:example1.2} also satisfies a better bound than \eqref{largeterms}: 
$$ \eqref{eq:example1.2}\prec  {\|A\|_{\HS}^2}/{(N\eta)^2}.  $$

It remains to address the expression \eqref{eq:example1.5}. Utilizing the estimate \eqref{eq:WGamma2}, we can show that 
\be\label{eq:badexample}
(\ref{eq:example1.5}) \prec \|A\|_{\HS}^2/(N\eta).
\ee
While there is no improvement compared to the bound \eqref{largeterms}, we have a change in the loop structure. Specifically, two $(G,\Lambda)$-loops, denoted as $\langle \cal G^{(k_1)}\Lambda^{(n_1)}\rangle \langle \cal G^{(k_2)}\Lambda^{(n_2)}\rangle$ with $k_1=2$, $k_2=1$, and $n_1=n_2=1$, transform into a $\avg{\cal G^{(2)}}$ loop along with two $(G,\Lambda)$-loops with $k_1=k_2=1$ and $n_1=n_2=1$. The key feature of this change is that $k_1+k_2$ is decreased by 1, indicating that the two $(G,\Lambda)$-loops become ``more deterministic". To proceed, we further expand \eqref{eq:example1.5} (without the summation over $x$ and the $D$ factor) as 
\begin{align}\label{eq:remaint}
\E \avg{ M_1 E_x M_2 \Lambda}\langle  G_2 E_a  \Lambda\rangle \avg{G_1E_aG_2 E_x} - \E  \avg{ G_1(H+m_1)M_1 E_x M_2 \Lambda}\langle  G_2 E_a  \Lambda\rangle \avg{G_1E_aG_2 E_x} .
\end{align}
Note that the first term contains two $(G,\Lambda)$-loops $\langle \cal G^{(k_1)}\Lambda^{(n_1)}\rangle \langle \cal G^{(k_2)}\Lambda^{(n_2)}\rangle$ with $k_1=0$, $k_2=1$, and $n_1=n_2=1$, making it even ``more deterministic" than \eqref{eq:example1.5}. Moreover, recalling \eqref{MLambda}, the factor $\avg{ M_1 E_x M_2 \Lambda} $ can be bounded by 
$$\avg{ M_1 E_x M_2 \Lambda} = \wt m_1 \avg{ E_x \wt M_2 \Lambda} + \wt m_2 \avg{\wt M_1 E_x  \Lambda} + \avg{ \wt M_1 E_x \wt M_2 \Lambda}\lesssim N^{-1}\|A\|_{\HS}^2. $$
This estimate, together with \eqref{entprodG} and the estimate \eqref{eq:WGamma2} below, implies that 
$$ \E \avg{ M_1 E_x M_2 \Lambda}\langle  G_2 E_a  \Lambda\rangle \avg{G_1E_aG_2 E_x} \prec \frac{\|A\|_{\HS}}{\sqrt{N}\eta}\cdot \frac{\|A\|_{\HS}^2}{N},$$
an improvement over \eqref{eq:badexample} by a factor of $N^{-1/2}\|A\|_{\HS}$. Subsequently, by applying Gaussian integration by parts to the second term in \eqref{eq:remaint}, we obtain that
\begin{align*}
&~D\E \sum_{y=1}^D \avg{G_1E_y M_1 E_x M_2 \Lambda }\langle  G_2 E_a  \Lambda\rangle \avg{(G_1-M_1)E_y} \avg{G_1E_aG_2 E_x}\\
+&~\frac{1}{DN^2}\E \sum_{y=1}^D \avg{G_2E_y M_1 E_x M_2 \Lambda G_1 E_y G_2 E_a  \Lambda} \avg{G_1E_aG_2 E_x}\\
+&~\frac{1}{DN^2}\E \sum_{y=1}^D \avg{G_1E_y M_1 E_x M_2 \Lambda G_1 E_y G_1E_aG_2 E_x}
\langle  G_2 E_a  \Lambda\rangle  \\
+&~\frac{1}{DN^2}\E \sum_{y=1}^D \avg{G_1E_aG_2 E_y M_1 E_x M_2 \Lambda G_1 E_yG_2 E_x }\langle  G_2 E_a  \Lambda\rangle .
\end{align*}
By using the estimates \eqref{eq:WGamma2} and \eqref{eq:WGamma3} below, we can show that the first two expressions are bounded by $\OO_\prec ({\|A\|_{\HS}^2}/{(N\eta)^2})$, while the latter two expressions are bounded by $\OO_\prec ({\|A\|_{\HS}^2}/{(N\eta)^3})$. These bounds are better than \eqref{eq:badexample} by at least a factor of $(N\eta)^{-1}$.

Inspired by the above calculations, we define the following two types of expressions with $n_1,n_2\ge 1$:
\begin{itemize}
    \item {\bf Type I expressions}: ${\cal W}^{(k)}\Gamma^{ (m) }_n \langle \cal G^{(k_1)}\Lambda^{(n_1)}\rangle \langle \cal G^{(k_2)}\Lambda^{(n_2)}\rangle$ with a deterministic coefficient of order 
\be\label{coef1}
\OO\left(N^{-(m-n+k_1+k_2-3)-{\bf 1}(k_1=0)-{\bf 1}(k_2=0)}\right).
\ee

\item {\bf Type II expressions}: ${\cal W}^{(k)}\Gamma^{ (m) }_n\langle \cal G^{(k_1)} \Lambda^{(n_1)}\cal G^{(k_2)} \Lambda^{(n_2)}\rangle
$ with a deterministic coefficient of order 
\be\label{coef2}
\OO\left(N^{-(m-n+k_1+k_2-2)-{\bf 1}(k_1=0)-{\bf 1}(k_2=0)}\right).
\ee
\end{itemize}
In our expansions, $Y_1$ can be expanded (in the sense of equal expectation) into a linear combination of these two types of expressions. 
The coefficients take the forms \eqref{coef1} and \eqref{coef2} due to the following reasons. In \eqref{HL2}, every term has three $G$ factors and two $N^{-1}$ factors associated with the two loops, so the number of $G$ factors is greater than the number of $N^{-1}$ factors by 1. 
For each application of Gaussian integration by parts, we gain one more $G$ factor and one more $N^{-1}$ factor. Hence, we should have $ (m-n)+(k_1-1)+(k_2-1)-1$ many $N^{-1}$ factors for type I expressions and $ (m-n)+(k_1+k_2-1)-1$ many $N^{-1}$ factors for type II expressions if $k_1\ne 0$ and $k_2\ne 0$. The case of $k_1=0$ or $k_2=0$ happens when we have replaced (at least) one or two $G_i$, $i\in \{1,2\}$, factors with $M_i$, so we should gain one or two more $N^{-1}$ factors in those cases.

We are now prepared to describe our expansion strategy and a systematic approach to bounding all expressions that appear during the expansion process. Prior to that, let us provide a brief overview of our proof. We will see that for a type I expression (resp.~type II expression), if $k_1=k_2=0$ (resp.~$k_1=0$ or $k_2=0$), it can already be bounded by $\OO_\prec (N^{-1}\|A\|_{HS}^2)$. Otherwise, suppose $k_1>0$ without loss of generality. Then, we select the \emph{first $G$} to the right of $\Lambda^{(n_1)}$, denoted as $G_{s}$ with $s\in \{1,2\}$. We expand this $G_s$ with \eqref{eq:G-M}, i.e., $G_s=M_s-G_s(H+m_s)M_s$, and subsequently apply Gaussian integration by parts. Every resulting expression from this expansion satisfies one of the following properties:
\begin{itemize}
    \item It can be bounded by $\OO_\prec (N^{-1}\|A \|^2_{\HS})$.
    \item It satisfies a better bound compared to the original expression by at least a factor of $(N\eta)^{-1}$.
    \item It is ``more deterministic" than the original expression in the sense that $k_1+k_2$ becomes strictly smaller.  
\end{itemize}
This expansion process can be iterated until every resulting expression can be bounded by $\OO_\prec (N^{-1}\|A \|^2_{\HS}+N^{-C})$. The number of iterations is at most $\OO(1)$.

With the above idea in mind, we now begin with the formal proof.  
With the local laws in \Cref{lem_loc} and \Cref{appen1}, we immediately obtain that 
\be\label{eq:WGamma} {\cal W}^{(k)}\prec  (N\eta)^{-k} ,\quad   \Gamma^{ (m) }_n\prec \eta^{-(m-n)}.
\ee
Furthermore, let $G^{(k)}$ be an $N\times N$ block of the matrix $B_0\prod_{i=1}^k G_i B_i$, where $(B_i)_{0\leq i \leq k}$ is an arbitrary sequence of deterministic matrices satisfying $\|B_i\|\le 1$. Then, using \Cref{appen1} and the singular value decomposition $A=\sum_{i}\lambda_i^a \bu_i^a (\bv_i^a)^*$, we obtain that  
\begin{align}
   & \tr(G^{(k)} A) = \sum_i \lambda_i^a (\bv_i^a)^* G^{(k)} \bu_i^a \prec \eta^{-(k-1)}\sqrt{N}\|A\|_{\HS},\label{GA1}\\ 
   & \tr(G^{(k)} AA^*) = \sum_i (\lambda_i^a)^2 (\bu_i^a)^* G^{(k)} \bu_i^a \prec \eta^{-(k-1)} \|A\|_{\HS}^{2},\label{GA2}\\
   & \tr(G^{(k)} A^2) = \sum_{i,j} \lambda_i^a\lambda_j^a (\bv_j^a)^* G^{(k)} \bu_i^a (\bv_i^a)^*\bu_j^a\prec \eta^{-(k-1)} \sum_j \lambda_j^a \Big\|\sum_i \lambda_i^a \bu_i^a (\bv_i^a)^*\bu_j^a\Big\| \prec \eta^{-(k-1)} \|A\|_{\HS}^{2}.\label{GA3}
\end{align}
Similar bounds also hold if we switch $A$ and $A^*$. With these bounds, we can readily derive that 
\begin{equation}\label{eq:WGamma2}
 \langle {\cal G}^{(k)} \Lambda^{(n)}  \rangle
 \prec  \eta^{-(k-1)}\|A\|^{n-[1+{\bf 1}(n \ge 2)]}\cdot ( N^{-1/2}\|A\|_{\HS})^{1+{\bf 1}(n \ge 2)} 
\end{equation}
for any ${\cal G}^{(k)}$ and $\Lambda^{(n)}$ with $k,n\ge 1$. Following a similar argument based on \Cref{appen1}, we also obtain that 
\begin{equation}\label{eq:WGamma3}
 \langle {\cal G}^{(k_1)} \Lambda^{(n_1)} {\cal G}^{(k_2)} \Lambda^{(n_2)}  \rangle \prec \frac{\|A\|^{n_1+n_2-2-({\bf 1}(n_1\ge 2)+{\bf 1}(n_2\ge 2))}}{\eta^{k_1+k_2-2}}\cdot 
  (N^{-1/2}\|A\|_{\HS})^{{\bf 1}(n_1\ge 2)+{\bf 1}(n_2\ge 2)}\|A\|_{\HS}^2
\end{equation}
for any ${\cal G}^{(k_1)}, {\cal G}^{(k_2)}, \Lambda^{(n_1)} , \Lambda^{(n_2)}$ with $k_1,k_2,n_1,n_2\ge 1$. When $k=0$ and $n = 1$, we trivially have $\langle {\cal G}^{(0)}  \Lambda^{(n)}  \rangle=0$; when $k=0$ and $n \ge 2$, we have
\begin{equation}\label{eq:WGamma4}
 \langle {\cal G}^{(0)}  \Lambda^{(n)}  \rangle \prec \|A\|^{n-2}\cdot N^{-1}\|A\|_{\HS}^2 .
 \end{equation} 
When $k_1=0$ and $k_2,n_1,n_2\ge 1$, we have 
\begin{equation}\label{eq:WGamma5}
  \langle {\cal G}^{(0)} \Lambda^{(n_1)} {\cal G}^{(k_2)} \Lambda^{(n_2)}  \rangle
 \prec \eta^{-(k_2 -1)}\|A\|^{n_1+n_2-2} \cdot N^{-1 }\|A\|_{\HS}^2 .
\end{equation}

For simplicity of presentation, we use notations $\cal T_1(k,m,n;k_1, k_2, n_1, n_2)$ and $\cal T_2(k,m,n;k_1, k_2, n_1, n_2)$ to denote generic type I and type II expressions, together with their deterministic coefficients satisfying \eqref{coef1} and \eqref{coef2}. Their exact forms may change from one line to the next. Using the above bounds \eqref{eq:WGamma}--\eqref{eq:WGamma5}, we readily obtain some ``rough bounds" on type I and type II expressions: for $i\in \{1,2\}$ and $n_1,n_2\ge 1$, 
\begin{align}\label{eq:roughT12}
\cal T_i(k,m,n;k_1, k_2, n_1, n_2)
& \prec
 \left(\frac{1}{N\eta}\right)^{k+m-n+k_1+k_2-2}
\cdot  (N^{-1/2})^{{\bf 1}(n_1\ge 2)+{\bf 1}(n_2\ge 2)}\|A\|_{\HS}^{n_1+n_2} .
\end{align}
(Note we allow that $k_1=0$ or $k_2=0$ in this bound.) When $k_1=0$, we have better bounds: 
\begin{align}
 \cal T_1(k,m,n;0, k_2, n_1, n_2)
& \prec
 \left(\frac{1}{N\eta}\right)^{k+m-n+k_2-1}
 \cdot (N^{-1/2})^{1+ {\bf 1}(n_2 \ge 2)} \|A\|_{\HS}^{n_1+n_2} ,\quad k_2\ge 1,\label{eq:rough_k1=0_1}
\\
\cal T_2(k,m,n;0, k_2, n_1, n_2)
& \prec
 \left(\frac{1}{N\eta}\right)^{k+m-n+k_2-1}
 \cdot N^{-1 }\|A\|_{\HS}^{n_1+n_2} , \quad k_2\ge 1, \label{eq:rough_k1=0_2}\\
\cal T_i(k,m,n;0, 0, n_1, n_2)
& \prec
 \left(\frac{1}{N\eta}\right)^{k+m-n}
 \cdot N^{-1 }\|A\|_{\HS}^{n_1+n_2} ,\quad i\in \{1,2\}.\label{eq:rough_k1=0_3}
 \end{align}
Similar bounds also hold when $k_2=0$. The bounds \eqref{eq:roughT12}--\eqref{eq:rough_k1=0_3} show that the following expressions can be bounded by $\OO_\prec( N^{-1 }\|A\|_{\HS}^2)$ as desired:
\begin{enumerate}
    \item[(E1)] $\cal T_i(k,m,n;k_1, k_2, n_1, n_2)$, $i\in \{1,2\}$, with $ n_1,n_2\ge 2$; 
    \item[(E2)] $\cal T_i(k,m,n;k_1, k_2, n_1, n_2)$, $i\in \{1,2\}$, with $ k_1=k_2=0$; 
    \item[(E3)] $\cal T_2(k,m,n;k_1, k_2, n_1, n_2)$  with $k_1=0$ or $k_2=0$;
    \item[(E4)] $\cal T_1(k,m,n;k_1, k_2, n_1, n_2)$ with $k_1=0$, $n_2\ge 2$ or $k_2=0$, $n_1\ge 2$;
    \item[(E5)] $N^{-1}\cal T_i(k,m,n;k_1, k_2, n_1, n_2)$, $i\in \{1,2\}$, with $ n_1,n_2\ge 1$.
\end{enumerate}

Now, the estimate \eqref{eq:nonmix_key2} in the Gaussian case follows immediately from the next claim. 

\begin{claim}\label{claim:Gauss}
For any constant $C_0>0$, we can expand $Y_1$ in \eqref{HL2} as the expectation of a linear combination of $\OO(1)$ many expressions satisfying (E1)--(E5) and some error expressions of order $\OO_\prec(N^{-C_0})$. 
\end{claim}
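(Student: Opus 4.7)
The plan is to establish \Cref{claim:Gauss} via an iterative Gaussian integration by parts (GIP) procedure with systematic bookkeeping of the Type I and Type II structures.

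First, I would use the geometric series expansion \eqref{MLambda} of $\wt M_1$, truncated at a sufficiently high order (with remainder of size $\OO_\prec(N^{-C_0})$, since $\|\Lambda\|\le N^{-\delta_A}$), to rewrite each of the four terms in \eqref{HL2} as a linear combination of Type I expressions (from the first and third lines, each carrying one light weight $\langle(G_1-M_1)E_a\rangle$) and Type II expressions (from the second and fourth lines). A direct count of $G$-factors versus $N^{-1}$-factors confirms that the coefficients at this starting stage already match the scaling dictated by \eqref{coef1}--\eqref{coef2}.

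The core step will be a single-move expansion lemma: given any Type I or Type II expression containing at least one light weight $\langle(G_1-M_1)E_a\rangle$, write $G_1-M_1=-G_1(H_1+m_1)M_1$ and apply Gaussian integration by parts to the $H_1$ factor. Using the variance structure \eqref{eq:sij}, the derivative $\partial_{h_{xy}}$ can land on (i) a $G$-factor inside the same $(G,\Lambda)$- or $(G,\Lambda,G,\Lambda)$-loop, extending $\cal G^{(k_j)}$ to $\cal G^{(k_j+1)}$; (ii) a $G$-factor in a different loop or within $\Gamma_n^{(m)}$, thereby fusing two loops (decrementing $n$) or promoting a bare $G$-loop into a $(G,\Lambda)$- or $(G,\Lambda,G,\Lambda)$-loop; or (iii) the freshly produced $G_1$ itself, creating a new $G$-loop and incrementing $m-n$ by one. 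In each case the accompanying variance contributes a factor of $N^{-1}$ and the total $G$-count grows by one, so the accounting \eqref{coef1}--\eqref{coef2} is preserved: the light-weight count $k$ drops by one and this loss is compensated by one additional $N^{-1}$ factor in the coefficient. Whenever a newly produced $G$-vertex can be treated as deterministic to leading order, I would split $G_s=M_s+(G_s-M_s)$ to spawn a fresh light weight when needed, so the expansion genuinely progresses in depth instead of simply accumulating light weights.

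For termination and classification, since each expansion step strictly increases the exponent in the $N^{-1}$ coefficient while the light-weight count is controlled by iteration depth, after $L=L(C_0,\delta_A,\eqq)$ iterations every remaining term either has a coefficient already of order $\OO(N^{-C_0})$, or else falls into one of the categories (E1)--(E5) as one verifies case-by-case using the rough bounds \eqref{eq:roughT12}--\eqref{eq:rough_k1=0_3} together with the inequality $\|A\|_{HS}^{n_1+n_2}\le \|A\|_{HS}^2\,\|A\|^{n_1+n_2-2}$. I expect the main obstacle to be the combinatorial bookkeeping inside this one-step expansion: namely, tracking exactly how the tuple $(k,m,n,k_1,k_2,n_1,n_2)$ transforms under each possible landing site of the GIP derivative, especially for the boundary cases $k_1=0$ or $k_2=0$ where an extra $N^{-1}$ factor is present in the coefficient and can be preserved, lost, or traded for a $G$-factor depending on whether the derivative inserts a fresh $G$ into an $M$-only block. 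A secondary (but more routine) subtlety is that the $\wt M$ geometric series must be truncated uniformly across all iterations, which is handled by the bound $\|\Lambda\|\le N^{-\delta_A}$.
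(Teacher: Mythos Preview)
There are two related gaps. First, the Type~I/II assignment for the starting terms is reversed: lines~1 and~3 of \eqref{HL2} each carry a single $(G,\Lambda,G,\Lambda)$-loop multiplied by a light weight, hence are Type~II with $k=1$, while lines~2 and~4 are products of two separate $(G,\Lambda)$-loops with \emph{no} light weight, hence Type~I with $k=0$. This matches the paper's statement that the initial pool consists of $\cal T_1(0,0,0;2,1,1,n_2)$ and $\cal T_2(1,0,0;1,1,1,n_2)$.

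Second, and more importantly, your core move targets light weights, but half of the starting terms (those from lines~2 and~4) have none. The suggestion to ``split $G_s=M_s+(G_s-M_s)$ to spawn a fresh light weight'' does not resolve this: a factor $(G_s-M_s)$ sitting \emph{inside} a $(G,\Lambda)$-loop is not a light weight $\langle(G-M)E_a\rangle$; to make progress you must expand that in-loop $G_s-M_s=-G_s(H+m_s)M_s$ and apply GIP there. This is precisely what the paper does, but at a very specific location: always at the $G$ immediately adjacent to a $\Lambda^{(1)}$ (i.e., a $\Lambda$-block with $n_i=1$). That choice matters because (a) when this $G$ is replaced by $M$, the neighbouring $\Lambda$ absorbs the $M$-expansion and $n_i$ increases, pushing the term toward (E1)--(E4); and (b) when GIP is applied, a case analysis shows that every new term either has size smaller by a factor $(N\eta)^{-1}$ \emph{or} has $k_1+k_2$ strictly smaller. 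This two-track monovariant is what forces termination in $\OO(1)$ steps. Your claimed monovariant---that ``each expansion step strictly increases the exponent in the $N^{-1}$ coefficient''---fails in the case where the GIP derivative splits off a sub-$G$-loop from $\cal G^{(k_1-1)}$ (the fourth bullet under \eqref{eq:calT1} in the paper): there the coefficient exponent and the size are unchanged and only $k_1$ drops, so without the $k_1+k_2$ counter you cannot conclude termination.
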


\begin{proof}
In the following proof, corresponding to \eqref{eq:roughT12}--\eqref{eq:rough_k1=0_3}, we define the \emph{sizes} of the expressions on the LHS by their RHS. For example, if $k_1,k_2\ge 1$, we define  
$$\size\left[\cal T_i(k,m,n;k_1, k_2, n_1, n_2)\right]:=\left(\frac{1}{N\eta}\right)^{k+m-n+k_1+k_2-2}\cdot  (N^{-1/2})^{{\bf 1}(n_1\ge 2)+{\bf 1}(n_2\ge 2)}\|A\|_{\HS}^{n_1+n_2} .$$
If $k_1=0$ and $k_2\ge 1$ (or $k_2=0$ and $k_1\ge 1$), we define the sizes of $\cal T_i(k,m,n;k_1, k_2, n_1, n_2)$, $i\in \{1,2\}$, through \eqref{eq:rough_k1=0_1} and \eqref{eq:rough_k1=0_2}; if $k_1=k_2=0$, we define the sizes through \eqref{eq:rough_k1=0_3}. 
If we have an expression, say $a_N \cal T + b_N\cal T'$, for some deterministic coefficients $a_N, b_N$ and type I or type II expressions $\cal T, \cal T'$, then 
$$\size(a_N \cal T + b_N\cal T'):=a_N \size(\cal T) + b_N\size(\cal T'). $$  
We set the stopping rule of our expansions as follows: \emph{we stop expanding an expression $\E \cal T$ if it satisfies (E1)--(E5) or $\size(\cal T)\prec N^{-C_0}$}.

Given $Y_1$ in \eqref{HL2}, we first expand $\wt M_1$ using the Taylor expansion \eqref{MLambda}:
\be\label{eq:wtM1} \wt M_1= - \sum_{k=1}^{n_0} (m_1+z_1)^{-(k+1)}\Lambda^k + (m_1+z_1)^{-(n_0+1)}\Lambda^{n_0+1}  M_1,
\ee
where $n_0$ is chosen sufficiently large such that $\|A\|^{n_0}\le N^{-C_0-10}$. In every resulting expression, if it contains the error term $(m_1+z_1)^{-(n_0+1)}\Lambda^{n_0+1}  M_1$, then we can easily check that it is of order $\OO_\prec(N^{-C_0})$; otherwise, between each product of two matrices, we decompose the identity matrix as \smash{$I=\sum_{a=1}^D E_a$}, i.e., for every matrix product $ B_1B_2$, we write it as $\sum_a B_1 E_a B_2$. In this way, we can expand \eqref{HL2} into the expectation of a linear combination of $\OO(1)$ many expressions in $\cal T_1(k=0,m=0,n=0; k_1=2,k_2=1,n_1=1,n_2)$ and $\cal T_2(k=1, m=0, n=0;k_1=1,k_2=1, n_1=1, n_2)$ with $n_2\ge 1$ plus error expressions of size $\OO_\prec( N^{-C_0})$.

Now, given any type I or type II expression that is not an error of size $\OO_\prec( N^{-C_0})$ and does not satisfy (E1)--(E4), say $\cal T(k,m,n;k_1, k_2, n_1, n_2)$ with $n_1,n_2\ge 1$, we expand it according to the following strategy. 

\medskip

\noindent{\bf Step 1}: In this expression, we have either $n_1=1$ or $n_2=1$ since it does not satisfy (E1). Without loss of generality, suppose $n_1=1$. Then, we must have $k_1\ge 1$. Otherwise, if $k_1=0$ and $\cal T$ is a type I expression, then we trivially have $\langle \Lambda^{(1)}  \rangle=0$; if $k_1=0$ and $\cal T$ is a type II expression, then $\cal T$ satisfies (E3). Now, we pick the first $G$ to the right of this $\Lambda^{(1)}$, say $G_{s}$ with $s\in \{1,2\}$, i.e., there is a factor $G_{s}E_{a_0}\Lambda E_{a_1}$ in the loop containing $ \Lambda^{(1)}=E_{a_0}\Lambda E_{a_1} $, $a_0,a_1\in \qqD$. We then expand $G_s$ with \eqref{eq:G-M}, i.e., $G_s=M_s-G_s(H+m_s)M_s$. 

\medskip

\noindent{\bf Step 2}: In a new expression from step 1 that does not satisfy the stopping rule, suppose we have replaced this $G_s$ with $M_s$. By expanding \smash{$\wt M_s$} with \eqref{eq:wtM1} and adding {$I=\sum_{a} E_a$} between matrix products, we get some error expressions of size $\OO_\prec(N^{-C_0})$ plus expressions of the form $\cal T'=(N^{-1})^{\mathbf 1(k_1\ge 2)}\cdot \cal T_i (k,m,n;k_1-1, k_2, n_1', n_2)$, $i\in \{1,2\}$, with $n_1'\ge n_1 =1$. Then:
\begin{itemize}
    \item If $k_1\ge 2$, then $\cal T'$ satisfies (E5).
    \item If $k_1=1$ and $i=2$, then $\cal T'$ satisfies (E3).
    \item If $k_1=1$, $i=1$, and $n_2\ge 2$, then $\cal T'$ satisfies (E4).
    \item If $k_1=1$, $i=1$, $n_2=1$, and $k_2=0$, then $\cal T'$ satisfies (E2).
\end{itemize}
In the remaining cases with $i=1$, $k_1=1$, $k_2\ge 1$, $n_1'\ge n_1 =1$, and $n_2=1$, $\cal T'$ is a type I expression $\cal T_1(k,m,n;0,k_2,n_1',1)$ with 
$\size(\cal T')\le N^{-1/2}\size(\cal T).$ We send this $\cal T'$ back to Step 1.

\medskip

\noindent{\bf Step 3}: In a new expression from step 1 that does not satisfy the stopping rule, suppose we have replaced the $G_s$ with $-G_s(H+m_s)M_s$. Again, by expanding $\wt M_s$ with \eqref{eq:wtM1} and adding $I=\sum_{a} E_a$ between matrix products, we get errors of size $\OO_\prec(N^{-C_0})$ plus expressions of the form $\E \cal T_1$ or $\E \cal T_2$, where for some $\wt n_1\ge n_1$,
\be\nonumber
\cal T_1= c_N {\cal W}^{(k)}\Gamma^{ (m) }_n \langle \cal G^{(k_1-1)} G_s(H+m_s) \Lambda^{(\wt n_1)}\rangle \langle \cal G^{(k_2)}\Lambda^{(n_2)}\rangle
\ee
with a deterministic coefficient $c_N =\OO (N^{-(m-n+k_1+k_2-3)-{\bf 1}(k_1=0)-{\bf 1}(k_2=0)} )$
or 
\be\nonumber 
\cal T_2=c_N {\cal W}^{(k)}\Gamma^{ (m) }_n\langle \cal G^{(k_1-1)}G_s(H+m_s) \Lambda^{(\wt n_1)}\cal G^{(k_2)} \Lambda^{(n_2)}\rangle
\ee
with a deterministic coefficient $c_N=\OO (N^{-(m-n+k_1+k_2-2)-{\bf 1}(k_1=0)-{\bf 1}(k_2=0)} )$.

First, we apply the Gaussian integration by parts to $\E\cal T_1$:
\begin{align}
    \E\cal T_1 &= \frac{c_N}{DN^2} \E \sum_{x=1}^D\sum_{\al,\beta\in I_x} 
  \partial_{\beta\al}\left[\left(\Lambda^{(\wt n_1)} \cal G^{(k_1-1)} G_s\right)_{\beta\al} {\cal W}^{(k)}\Gamma^{ (m) }_n \langle \cal G^{(k_2)}\Lambda^{(n_2)}\rangle\right] \nonumber\\
  &+ c_N m_s \E  {\cal W}^{(k)}\Gamma^{ (m) }_n \langle \cal G^{(k_1-1)} G_s \Lambda^{(\wt n_1)}\rangle \langle \cal G^{(k_2)}\Lambda^{(n_2)}\rangle. \label{eq:calT1}
\end{align}
Taking the derivative $\partial_{\beta\al}$, we get the following cases. 
\begin{itemize}
    \item If $\partial_{\beta\al}$ acts on $G_s$, then the resulting expression together with the second term on the RHS of \eqref{eq:calT1} gives a sum of expressions in $\cal T_1(k+1,m,n;k_1, k_2, \wt n_1, n_2)$. These new expressions have sizes $\lesssim (N\eta)^{-1}\size(\cal T_1)$. 

    \item If $\partial_{\beta\al}$ acts on a light weight in ${\cal W}^{(k)}$, then we get a sum of expressions in $\cal T_1(k-1,m,n;k_1+2, k_2, \wt n_1, n_2)$. These new expressions have sizes $\lesssim (N\eta)^{-1}\size(\cal T_1)$. 

    \item If $\partial_{\beta\al}$ acts on a $G$-loop in $\Gamma^{ (m) }_n$, then we get a sum of expressions in $\cal T_1(k,m-k',n-1;k_1+k'+1, k_2, \wt n_1, n_2)$ for some $2\le k' \le m$. These new expressions have sizes $\lesssim (N\eta)^{-2}\size(\cal T_1)$. 

    \item If $\partial_{\beta\al}$ acts on $\cal G^{(k_1-1)}$, then we create a new $G$-loop and get a sum of expressions in $\cal T_1(k,m +k',n+1 ;k_1-k'+1, k_2, \wt n_1, n_2)$ for some $2\le k' \le k_1$. These new expressions have sizes $\lesssim \size(\cal T_1)$, but the value of $k_1$ is decreased at least by 1. 

    \item If $\partial_{\beta\al}$ acts on $\cal G^{(k_2)}$, then we create a $(G,\Lambda, G,\Lambda)$-loop and get a sum of expressions in $\cal T_2(k,m,n; k'_1, k_1+k_2-k'_1+1, \wt n_1, n_2)$ for some $1\le k'_1 \le k_2$. These new expressions have sizes $\lesssim (N\eta)^{-1}\size(\cal T_1)$. 
\end{itemize}

Second, we apply the Gaussian integration by parts to $\E\cal T_2$:
\begin{align}
    \E\cal T_2 & = \frac{c_N}{DN^2} \E \sum_{x=1}^D\sum_{\al,\beta\in I_x} 
  \partial_{\beta\al}\left[\left( \Lambda^{(\wt n_1)}\cal G^{(k_2)} \Lambda^{(n_2)} \cal G^{(k_1-1)} G_s\right)_{\beta\al} {\cal W}^{(k)}\Gamma^{ (m) }_n \right] \nonumber\\
  &+ c_N m_s \E  {\cal W}^{(k)}\Gamma^{ (m) }_n\langle \cal G^{(k_1-1)} G_s \Lambda^{(\wt n_1)}\cal G^{(k_2)} \Lambda^{(n_2)}\rangle. \label{eq:calT2}
\end{align}
Taking the derivative $\partial_{\beta\al}$, we get the following cases.  
\begin{itemize}
    \item If $\partial_{\beta\al}$ acts on $G_s$, then the resulting expression together with the second term on the RHS of \eqref{eq:calT2} gives a sum of expressions in $\cal T_2(k+1,m,n;k_1, k_2, \wt n_1, n_2)$. These new expressions have sizes $\lesssim (N\eta)^{-1}\size(\cal T_2)$. 

    \item If $\partial_{\beta\al}$ acts on a light weight in ${\cal W}^{(k)}$, then we get a sum of expressions in $\cal T_2(k-1,m,n;k_1+2, k_2, \wt n_1, n_2)$. These new expressions have sizes $\lesssim (N\eta)^{-1}\size(\cal T_2)$. 

    \item If $\partial_{\beta\al}$ acts on a $G$-loop in $\Gamma^{ (m) }_n$, then we get a sum of expressions in $\cal T_2(k,m-k',n-1;k_1+k'+1, k_2, \wt n_1, n_2)$ for some $2\le k' \le m$. These new expressions have sizes $\lesssim (N\eta)^{-2}\size(\cal T_2)$. 

    \item  If $\partial_{\beta\al}$ acts on $\cal G^{(k_1-1)}$, then we create a new $G$-loop and get a sum of expressions in $\cal T_2(k,m +k',n+1 ;k_1-k'+1, k_2, \wt n_1, n_2)$ for some $2\le k' \le k_1$. These new expressions have sizes $\lesssim\size(\cal T_2)$, but the value of $k_1$ is decreased at least by 1. 

    \item If $\partial_{\beta\al}$ acts on $\cal G^{(k_2)}$, then we create two $(G,\Lambda)$-loops and get a sum of expressions in $\cal T_1(k,m,n;k'_1, k_1+k_2-k'_1+1, \wt n_1, n_2)$ for some $1\le k_1'\le k_2$. These new expressions have sizes $\lesssim (N\eta)^{-1}\size(\cal T_2)$. 
\end{itemize}

For every new expression from Step 3 that does not satisfy the stopping rule, we send it back to Step 1. 

\medskip 

Now, we iterate the above expansion strategy, Steps 1--3, until all resulting expressions satisfy the stopping rule. It then suffices to show that this expansion process will finally stop after $\OO(1)$ many iterations. In fact, we see that after one iteration of Steps 1--3, for each new expression, either it has a strictly smaller size than the input graph by a factor $(N\eta)^{-1}\le N^{-c}$, or the value of $k_1+k_2$ decreases at least by 1. With this observation, we can show that all expressions will satisfy the stopping rule after at most $(\lceil C_0/c\rceil+10)^2$ many iterations. This concludes the proof of \Cref{lem:nonmix_key2} in the Gaussian case.
\end{proof}

\subsection{Proof of \Cref{lem:nonmix_key2}: non-Gaussian case} 

In the non-Gaussian case, we apply the cumulant expansion, \Cref{lem:complex_cumu}, to \eqref{HL1} and get some new terms containing higher-order cumulants of the $H$ entries plus an error term $\cal R_{l+1}$. Again, this error term is negligible if we take $l$ sufficiently large. We now need to estimate $Y_1$ in \eqref{HL2} and the following terms: 
\begin{align} \label{HL3}
Y_{n,m}  & := - \frac{1}{DN}\sum_{\al,\beta\in \cI}\frac{1}{ n! m!}\cal C_{\beta\al}^{(n,m+1)}\E \partial_{\al\beta}^m \partial_{\beta\al}^n ( M_1\Lambda  G_2 \Lambda G_1)_{\beta\al}  ,\quad 2\le m+n \le l.
\end{align}
Taking the derivatives, we can write \eqref{HL3} into a linear combination of $\OO(1)$ many terms of the following forms for some $s,t\in \N$ with $s+t=n+m$ and $(x(i),y(i)), (\wt x(j),\wt y(j))\in\{(\al,\beta),(\beta,\al)\}$, $i \in \qq{s}$, $j \in\qq{t}$:
\begin{align}\label{eq:p+q>2}
   &\frac{1}{DN}\sum_{\al,\beta\in \cI}\cal C_{\beta\al}^{(n,m+1)}  \E  (M_1\Lambda G_2)_{\beta x(1)} (G_2)_{y(1)x(2)}\cdots (G_2 \Lambda G_1)_{y(s) \wt x(1)}(G_1)_{\wt y(1) \wt x(2)} \cdots (G_1)_{\wt y(t)\al}  ,\ \ s\ge 1;  \\
   &\frac{1}{DN}\sum_{\al,\beta\in \cI}\cal C_{\beta\al}^{(n,m+1)}  \E  (M_1\Lambda G_2 \Lambda G_1)_{\beta \wt x(1)}(G_1)_{\wt y(1) \wt x(2)} \cdots (G_1)_{\wt y(t)\al}  ,\ \  s=0.  \label{eq:p+q>22}
\end{align}
By the anisotropic local law \eqref{eq:aniso_local}, we have that 
$$(M_1\Lambda G_2)_{\beta x(1)} \prec \|\mathbf e_\beta^* M_1\Lambda\|,\ \ (G_2 \Lambda G_1)_{y(s) \wt x(1)} \prec \sqrt{N}\|A\|_{\HS}, \ \ (M_1\Lambda G_2 \Lambda G_1)_{\beta \wt x(1)} \prec \|\mathbf e_\beta^* M_1\Lambda\|\cdot \sqrt{N}\|A\|_{\HS},$$ 
where $\mathbf e_\beta$ denotes the standard unit vector along the $\beta$-th direction. So, we can bound \eqref{eq:p+q>2} and \eqref{eq:p+q>22} by 
$$ |\eqref{eq:p+q>2}|+|\eqref{eq:p+q>22}| \prec \frac{N^{3/2}\|A\|_{\HS}}{N^{(n+m+1)/2+1}}\sum_{\beta} \|\mathbf e_\beta^* M_1\Lambda\| \prec N^{-(m+n-3)/2}\cdot N^{-1} \|A\|_{\HS}^2,$$
which is sufficiently small if $m+n\ge 3$. 

When $m+n=2$, the above bound has an extra $N^{1/2}$ factor, so we need to further expand the terms in $Y_{n,m}$. Take two terms in $Y_{1,1}$ as examples:
\begin{align*}
    - \frac{1}{DN}\sum_{x=1}^D\sum_{\al,\beta\in \cI_x}\cal C_{\beta\al}^{(1,2)}\E  \left[( M_1\Lambda  G_2 \Lambda G_1)_{\beta\beta}(G_1)_{\al\al}  +( M_1\Lambda  G_2)_{\beta\beta}(G_2\Lambda G_1)_{\al\al} \right](G_1)_{\beta\al} .
\end{align*}
In the above expression, we can expand either $G_2$ or $G_1$ in the factor $G_2\Lambda G_1$ with \eqref{eq:G-M}. If we replace $G_1$ with $M_1$, by the anisotropic local law \eqref{eq:aniso_local}, the resulting expression can be bounded by 
\begin{align*}
    \frac{1}{N^{5/2}}\sum_{x=1}^D\sum_{\al,\beta\in \cI_x} \left[\|\mathbf e_\beta^* M_1\Lambda \| \|\Lambda M_1\mathbf e_\beta\| + \|\mathbf e_\beta^* M_1\Lambda \|\|\Lambda M_1\mathbf e_\al\| \right] \lesssim N^{-3/2}\|A\|_{\HS}^2.
\end{align*}
(Note that if we have replaced $G_2$ with $M_2$ instead, the resulting expression can be bounded by $N^{-1}\|A\|_{\HS}^2$, which is still good enough.) Now, for the term with $G_1$ replaced by $-M_1(m_1+H)G_1$, applying cumulant expansions we get a sum of small enough errors, Gaussian integration by parts terms, and terms involving third or higher-order cumulants: for some $2\le m+n \le l$,
\begin{align*} 
Y_{1,1;n,m} & =  \frac{1}{DN}\sum_{\al,\beta\in \cI}\cal C_{\beta\al}^{(1,2)}\sum_{i,j\in \cI}\cal C_{ji}^{(n,m+1)} \E \partial_{ij}^m \partial_{ji}^n  \left[( M_1\Lambda  G_2 \Lambda M_1)_{\beta i} (G_1)_{j\beta}(G_1)_{\al\al} (G_1)_{\beta\al}  \right]\\
&+  \frac{1}{DN}\sum_{\al,\beta\in \cI}\cal C_{\beta\al}^{(1,2)}\sum_{i,j\in \cI}\cal C_{ji}^{(n,m+1)} \E \partial_{ij}^m \partial_{ji}^n  \left[( M_1\Lambda  G_2)_{\beta\beta}(G_2\Lambda M_1)_{\al i}(G_1)_{j\al} (G_1)_{\beta\al} \right].
\end{align*}
With the anisotropic local law \eqref{eq:aniso_local}, we can bound this term by 
$$ Y_{1,1;n,m} \prec \frac{1}{N^{3+\frac{n+m}{2}}}\sum_{\al,\beta,i,j} \|\mathbf e_\beta^*  M_1\Lambda\| \left( \|\Lambda M_1\mathbf e_i\|+\|\Lambda M_1\mathbf e_j\|\right) \lesssim N^{-\frac{n+m}{2}}\|A\|_{\HS}^2 \le N^{-1}\|A\|_{\HS}^2.$$
Hence, we only need to further expand the Gaussian integration by parts terms. 

We can estimate all the terms $Y_{n,m}$ with $n+m=2$ using a similar argument as above. In this way, we obtain that  
$$\E \langle (G_1-M_1) \Lambda  G_2   \Lambda\rangle = Y_1 + \sum_{m+n=2}Y_{n,m; 1} + \OO_\prec\big(N^{-1}\|A\|_{\HS}^2\big),$$
where $Y_1$ is given in \eqref{HL2} and $Y_{n,m; 1}$ denotes the Gaussian integration by parts terms obtained from the expansions of $Y_{n,m}$. To bound the RHS, we still need to further expand $Y_1$ and $Y_{n,m; 1}$. 
Since the proof closely resembles that of \Cref{claim:Gauss}, we will provide only a concise overview of the proof by outlining the main expansion strategy, while omitting some of the finer details.

In the expansions of $Y_{n,m;1}$, if an expression involves third or higher-order cumulants of the $H$ entries, then it can be bounded by $\OO_\prec (N^{-1}\|A\|_{\HS}^2)$. Hence, we only need to consider expressions obtained from repeated applications of \eqref{eq:G-M} and Gaussian integration by parts. In this process, we either get a sufficiently small error of size $\OO_\prec(N^{-C_0})$ or expressions consisting of light weights, $G$-loops, at most one $(G,\Lambda)$-loop $\langle \cal G^{(k)}\Lambda^{(n)}\rangle$ for some $k\ge 0$ and $n\ge 1$, and terms of the form 
\be \nonumber
(\Lambda^{(n_1)} \cal G^{(k_1)}\Lambda^{(n_1')})_{\beta \fa} (\Lambda^{(n_2')}\cal G^{(k_2)}\Lambda^{(n_2)} \cal G^{(k_2')}\Lambda^{(n_2'')})_{\fb\fc},\quad \text{or}\quad (\Lambda^{(n_1)} \cal G^{(k_1)}\Lambda^{(n_2)} \cal G^{(k_2)}\Lambda^{(n_2')})_{\beta \fa},
\ee
for some $n_1,n_2\ge 1$, $k_1,k_2,n_1',n_2',n_2''\ge 0$, and $\fa,\fb,\fc\in \{\al,\beta\}$. One can check that if (i) there is a $(G,\Lambda)$-loop, (ii) at least one of $n_1',n_2',n_2''$ is non-zero, (iii) at least one of $n_1,n_2$ is at least 2, or (iv) $k_2=0$ or $k_2'=0$, then the expressions is bounded by $\OO_\prec (N^{-1}\|A\|_{\HS}^2)$. Hence, we only need to handle expressions containing the following factors:
\be\label{eq:newterms} 
(\Lambda \cal G^{(k_1)})_{\beta \fa} (\cal G^{(k_2)}\Lambda\cal G^{(k_2')})_{\fb\fc},\quad \text{or}\quad (\Lambda \cal G^{(k_1)}\Lambda \cal G^{(k_2)})_{\beta \fa}, \quad k_1\ge 0,\ k_2,k_2'\ge 1.
\ee
Then, similar to the expansion strategy (Steps 1--3) in the proof of \Cref{claim:Gauss}, we choose the $G$ in $\cal G^{(k_2)}$ right next to $\Lambda$, use \eqref{eq:G-M} to expand it, and then apply Gaussian integration by parts. (We will also need to use \eqref{eq:wtM1} and add $I=\sum_a E_a$ between matrix products in this process.) For each new expression, at least one of the following cases holds: (1) it is a small error $\OO_\prec (N^{-C_0})$; (2) it can be bounded by $\OO_\prec (N^{-1}\|A\|_{\HS}^2)$; (3) its size is smaller than the original expression by $\OO((N\eta)^{-1})$; (4) the value of $k_2+k_2'$ is decreased at least by 1 (we let $k_2'=0$ for $(\Lambda \cal G^{(k_1)}\Lambda \cal G^{(k_2)})_{\beta \fa}$ as a convention). We can show that after at most $(\lceil C_0/c\rceil+10)^2$ many iterations of this expansion strategy, all the resulting expressions is bounded by $\OO_\prec (N^{-1}\|A\|_{\HS}^2+N^{-C_0})$.

Now, we look at the expansions of $Y_{1}$. If an expression involves (1) a fourth or higher-order cumulant or (2) at least two third-order cumulants of the $H$ entries, then it can be bounded by $\OO_\prec (N^{-1}\|A\|_{\HS}^2)$. If we only apply Gaussian integration by parts in the whole expansion process, then the relevant expressions have been handled in \Cref{sec:Gauss_exp}. Finally, we need to address the scenario in which we employ the third-order cumulant expansion only once, while utilizing Gaussian integration for the remaining expansions.
Before the third-order cumulant expansion, we follow the expansion strategy in the proof \Cref{claim:Gauss}, while after that, we follow the strategy below \eqref{eq:newterms}. Here we omit the relevant details. This concludes the proof \Cref{lem:nonmix_key2}, which further concludes \Cref{NonMixEV}.

\appendix

\section{Some deterministic estimates}\label{appd_determ}

In this section, we provide some deterministic estimates about $M$ and $\wh M$ that have been used in the main proofs. 

\begin{lemma}[Basic properties of $M$ and $\wh M$]\label{lem_bas_M}
Let $A$ be an arbitrary deterministic matrix with $\|A\|=\oo(1)$. For any constant $\tau>0$, the following estimates hold uniformly for all $z=E+\ii\eta $ with $|z|\le \tau^{-1}$ and $\eta > 0$. 

\begin{itemize}

\item We have that 
\be\label{eq:msc}
|m(z) - m_{sc}(z)| \lesssim \|A\|^{1/2}  ,\quad \| M(z)-m(z)I\| \lesssim \|A\|^{1/2}.
\ee
Furthermore, when $|E|\le 2-\kappa$ for a constant $\kappa>0$, we have the better bounds
\be\label{eq:msc0.5}
|m(z) - m_{sc}(z)| \lesssim \|A\|  ,\quad \| M(z)-m(z)I\| \lesssim \|A\|.
\ee
As a consequence of \eqref{eq:msc0.5}, we have 
\be\label{eq:msc2}
| (m(z)+z)^{-1}| \lesssim 1,\quad \text{and}\quad  | (1-m(z)^2)^{-1}| \lesssim 1 \ \ \text{if} \ \ |E|\le 2-\kappa. 
\ee

\item $\wh M$ is translation invariant, i.e., 
$\wh M_{ab}=\wh M_{a'b'}$ whenever $a-b = a'-b' \mod D$.

\item For $z_1 = \bar z_2\in \{z,\bar z\}$ and $a\in \qqD$, we have that 
\be\label{sumwtM}
   \sum_{b=1}^D  \wh M(z_1,z_2)_{ab}=\frac{\im m(z) }{\im m(z)+\eta}=:d_1, 
\ee
where $d_1$ denotes the Perron–Frobenius eigenvalue of $\wh M(z_1,z_2)$ with $(1,\ldots, 1)^\top$ being the corresponding eigenvector. The other eigenvalues of $\wh M(z_1,z_2)$ satisfy
\be\label{eq:otherM}
d_k = d_1 - a_k - \ii b_k, \quad k=2,3,\ldots,D,
\ee
where $a_k,b_k\in \R$ satisfy that  
\be\label{eq:akbk}
a_k\ge 0,\quad a_k \sim N^{-1}\|A\|_{\HS}^2,\quad  |b_k|=\oo( N^{-1}\|A\|_{\HS}^2). 
\ee

\item  For $z_1 = \bar z_2\in \{z,\bar z\}$, we have that
\begin{align}
\big\|[1-\wh M(z_1,z_2)]^{-1}\big\|& = \frac{\im m(z)+\eta} {\eta }\lesssim \eta^{-1}.\label{1-M}
\end{align}

\item When $|E|\le 2-\kappa$ for a constant $\kappa>0$, there exists a constant $C_\kappa>0$ such that
\be\label{1-M-2}
\big\|[1-\wh M(z,z)]^{-1}\big\| \le C_\kappa.
\ee
Moreover, for $z_1, z_2\in \{z,\bar z\}$, we have that
\be\label{flat_M}
\max_{a,b,a',b'\in \qqD}\left|\left[(1-\wh M_{(1,2)})^{-1}\wh M_{(1,2)} \right]_{ab}-\left[(1-\wh M_{(1,2)})^{-1}\wh M_{(1,2)} \right]_{a'b'}\right|\lesssim \frac{N }{\|A\|_{\HS}^2}.
\ee
\end{itemize}
\end{lemma}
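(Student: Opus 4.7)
The plan is to exploit two structural features throughout: the block translation invariance inherited from $\Lambda$ in the nearest-neighbor cycle model, and the resulting block-circulant structure of $M$ and $\wh M$, which is diagonalizable by the discrete Fourier transform.

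First I would establish \eqref{eq:msc}--\eqref{eq:msc2} and the translation invariance by a standard perturbative analysis of \eqref{self_m}--\eqref{def_G0}. Since $M(z,\Lambda) = -(z + m - \Lambda)^{-1}$ reduces to $m_{sc}(z)I$ when $\Lambda = 0$, implicit function/stability estimates applied to the scalar self-consistent equation give $|m(z) - m_{sc}(z)| \lesssim \|A\|$ in the bulk (with the weaker $\|A\|^{1/2}$ bound valid globally, losing a square root from $\partial_m \langle(\Lambda - z - m)^{-1}\rangle$ near the edge). The matrix bound on $M - mI$ then follows from the Neumann expansion $M = -(m+z)^{-1}\sum_{k\ge 0}(m+z)^{-k}\Lambda^k$, which also immediately yields that $M$ is block-circulant, hence so is $\wh M$.

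Next, for \eqref{sumwtM}--\eqref{eq:akbk} and \eqref{1-M}, the key algebraic identity is obtained by subtracting the Dyson equations for $z$ and $\bar z$:
\[
M(z) - M(\bar z) = \bigl[\,2\ii\eta + 2\ii\im m(z)\,\bigr]\,M(z)M(z)^{*},
\]
which, after dividing by $2\ii$ and taking the $(a,a)$-block normalized trace, gives the identity $\langle E_a M M^{*}\rangle = \im m/[D(\im m + \eta)]$; summing over $b$ in $\wh M_{ab}(z,\bar z) = \|E_a M E_b\|_{HS}^2/N$ then yields \eqref{sumwtM}. Non-negativity of all entries identifies $d_1$ as the Perron--Frobenius eigenvalue. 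For the remaining eigenvalues I would plug the leading expansion $E_a M E_{a\pm 1} = -(m+z)^{-2}\cdot (A \text{ or } A^*) + \OO(\|A\|^2)$ into the formula for $\wh M_{a,a\pm 1}$ to get $\wh M_{a,a\pm1}(z,\bar z) = |m+z|^{-4}\|A\|_{HS}^2/N + \oo(N^{-1}\|A\|_{HS}^2)$. Fourier diagonalization of the circulant matrix $\wh M$ then gives $d_k = d_1 - a_k - \ii b_k$ with $a_k$ a positive trigonometric polynomial in these off-diagonal entries; strict positivity of $|m+z|^{-4}$ (from \eqref{eq:msc2}) yields $a_k \sim N^{-1}\|A\|_{HS}^2$, while $|b_k| \lesssim N^{-1}\|A\|_{HS}^2$ is a crude Fourier bound. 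The norm estimate \eqref{1-M} then reads $(1-d_1)^{-1} = (\im m + \eta)/\eta$, which is the minimum over all $|1-d_k|$ by the lower bound on $a_k$.

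For \eqref{1-M-2} the Perron mode of $\wh M(z,z)$ corresponds to eigenvalue $\langle M(z)^2\rangle$, which by \eqref{eq:msc0.5} is within $\OO(\|A\|)$ of $m_{sc}(z)^2$; the explicit computation $|1-m_{sc}(z)^2|^2 = (4-E^2) + \oo(1)$ in the bulk gives a bound independent of $N$. Finally, the flatness bound \eqref{flat_M} is the cleanest consequence of Fourier diagonalization: writing
\[
\bigl[(1-\wh M_{(1,2)})^{-1}\wh M_{(1,2)}\bigr]_{ab} = \frac{1}{D}\sum_{k=0}^{D-1}\omega^{(a-b)k}\frac{d_k}{1-d_k},\qquad \omega = e^{2\pi\ii/D},
\]
the $k=0$ term does not depend on $(a,b)$ and cancels, while for $k\ne 0$ we have $|d_k/(1-d_k)| \lesssim a_k^{-1} \lesssim N/\|A\|_{HS}^2$ when $z_1 = \bar z_2$, and trivially bounded by \eqref{1-M-2} (times an eventually larger quantity $N/\|A\|_{HS}^2$) when $z_1 = z_2$.

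The main technical obstacle is the quantitative lower bound $a_k \gtrsim N^{-1}\|A\|_{HS}^2$: one needs to rule out a cancellation between the Fourier coefficients of $\wh M_{a,a+1}$ and $\wh M_{a,a-1}$, which requires tracking the leading-order expansion of $M$ in $\Lambda$ and verifying that the resulting trigonometric polynomial $4|m+z|^{-4}(\|A\|_{HS}^2/N)\sin^2(\pi (k-1)/D)$ dominates the $\OO(\|A\|^2 \cdot \|A\|_{HS}^2/N)$ corrections. The $D=2$ case requires a separate argument via \eqref{eq:MD=2}, since the cycle structure degenerates and the block translation symmetry fails; fortunately, the explicit formulas there are simple enough to verify the same estimates by direct computation.
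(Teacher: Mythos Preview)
Your proposal is correct and matches the paper's approach in every step: stability of the scalar self-consistent equation and Taylor expansion of $M$ in $\Lambda$ for \eqref{eq:msc}--\eqref{eq:msc2}; Ward's identity (your resolvent-difference formula $M-M^*=2\ii(\eta+\im m)MM^*$ is exactly this) for \eqref{sumwtM}; the leading-order computation $\wh M_{1,2}=\wh M_{1,D}=|m+z|^{-4}N^{-1}\|A\|_{HS}^2+\oo(N^{-1}\|A\|_{HS}^2)$ with $\wh M_{1,k}=\oo(N^{-1}\|A\|_{HS}^2)$ for $3\le k\le D-1$ to get \eqref{eq:otherM}--\eqref{eq:akbk}; and Fourier diagonalization of the circulant $\wh M$ for \eqref{1-M}--\eqref{flat_M}. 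Your explicit trigonometric polynomial $a_k\approx 4|m+z|^{-4}(N^{-1}\|A\|_{HS}^2)\sin^2(\pi(k-1)/D)$ is precisely what the paper's expansion yields, and your caveat about the $D=2$ degeneracy is apt though the paper does not spell it out in the proof.
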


\begin{proof}
The first estimates in \eqref{eq:msc} and \eqref{eq:msc0.5} follow by comparing the self-consistent equation \eqref{self_m} with the equation $m_{sc}(z)=-[z+m_{sc}(z)]^{-1}$ and by utilizing the stability of the equation. The second estimates in \eqref{eq:msc} and \eqref{eq:msc0.5} then follow from the Taylor expansion 
\be\label{eq:MTaylor}
M=-\sum_{k=0}^{\infty}(m+z)^{-k-1}\Lambda^{k}.
\ee
The estimate \eqref{eq:msc2} holds due to the simple fact that $m_{sc}(z)$ satisfies the corresponding bounds. 

The translation invariance of \smash{$\wh M$} follows from the block translation invariance of $M$. As a consequence, its eigenvectors are given by $\bu_k$, $ k \in \qqD$, with 
\be\label{eq:cylicv} u_k(a)=D^{-1/2}\exp(\ii \cdot 2 \pi (k-1)(a-1)/D). 
\ee
The corresponding eigenvalues are 
\be\label{eq_lambdak}
d_k =\sum_{a=1}^D\wh M_{1a}(z_1,z_2)e^{i 2\pi (k-1) (a-1)/D}.
\ee
In particular, when $z_1 = \bar z_2\in \{z,\bar z\}$, $d_1$ is the Perron–Frobenius eigenvalue of $\wh M(z_1,z_2)$ and the identity \eqref{sumwtM} follows by applying Ward's identity \eqref{eq_Ward} below to $M(z)$ and utilizing the identity 
$$
\sum_{b=1}^D\wh M(z_1,z_2)_{ab}= \frac{1}{D}\sum_{a,b=1}^D\wh M(z_1,z_2)_{ab} =\frac{1}{DN}\sum_{i,j\in \cal I}|M_{ij}(z)|^2.
$$
With the Taylor expansion \eqref{eq:MTaylor}, we can derive that when $z_1=\bar z_2\in \{z,\bar z\}$, 
\be\label{wtMlead}
\begin{aligned}
\wh M(z_1,z_2)_{11}& =\frac{1}{|m+z|^{2}}
+\frac{2(1+{\bf 1}_{D>2})}{|m+z|^{2}}\re \left[ (m+z)^{-2}\right]\cdot N^{-1}\|A\|_{\HS}^2 +\oo(N^{-1}\|A\|^2_{\HS}),
\\
\wh M(z_1,z_2)_{12}&=|m+z|^{-4}N^{-1}\|A\|_{\HS}^2+\oo(N^{-1}\|A\|^2_{\HS}),
\\
\wh M(z_1,z_2)_{1D}& =|m+z|^{-4}N^{-1}\|A\|_{\HS}^2+\oo(N^{-1}\|A\|^2_{\HS}),
\\
\wh M(z_1,z_2)_{1k}&  = \oo(N^{-1}\|A\|^2_{\HS}),\quad 3\le k\le D-1.
\end{aligned}
\ee
Plugging these estimates into \eqref{eq_lambdak}, we immediately obtain \eqref{eq:otherM} and \eqref{eq:akbk}. Finally, using the eigendecomposition of $\wh M(z_1,z_2)$, we can derive \eqref{1-M} from with \eqref{eq:otherM}. 

When $|E|\le 2-\kappa$, we have 
\be\label{est_1-m2}
|1-m_{sc}(z)^2|\ge c_\kappa
\ee 
for a constant $c_\kappa>0$. Combining this estimate with \eqref{eq:msc}, we can obtain \eqref{1-M-2}. 
Now, with the eigendecomposition of $\wh M(z_1,z_2)$, we get   
$$
\left[(1-\wh M_{(1,2)})^{-1}\wh M_{(1,2)} \right]_{ab}
=\frac1D\sum_{k=2}^D
\frac{d_k}{1-d_k} e^{\ii\cdot 2\pi (k-1) (a-b) /D}+\frac{1}{D}\frac{d_1}{1-d_1},
$$
which gives that 
\be\label{eq_bddM(1-M)}
\left|\left[(1-\wh M_{(1,2)})^{-1}\wh M_{(1,2)} \right]_{ab}
-\frac{1}{D}\frac{d_1}{1-d_1}\right|\lesssim \max_{k=2}^{D}
\left|(1-d_k)^{-1}\right|.
\ee
If $z_1=z_2$, then combining \eqref{est_1-m2} with \eqref{eq:msc}, we obtain that $|1-d_k|\gtrsim 1$, which gives \eqref{flat_M}.  
If $z_1=\bar z_2$, applying \eqref{eq:otherM} and \eqref{eq:akbk}, we obtain that $ |1-d_k|\gtrsim N^{-1}\|A\|_{\HS}^2$ for $ k \in \llbracket 2, D\rrbracket$. Together with \eqref{eq_bddM(1-M)}, it concludes \eqref{flat_M}. 
\end{proof}

\section{Green's function comparison}\label{sec:comparison}

In this section, we complete the proof of \Cref{main_lemma_com} using a standard Green's function comparison argument. We adopt the continuous comparison method introduced in \cite{knowles2017anisotropic}, which is based on the following interpolation.

\begin{definition}[Interpolating matrices]
Introduce the notations $H^0:=\wt H $ and $H^1:=H$. Let $\rho_{ij}^0$ and $\rho_{ij}^1$ be the laws of \smash{$\wt H_{ij} $} and $H_{ij}$, respectively. For $\theta\in [0,1]$, we define the interpolated laws
\smash{$ \rho_{ij}^\theta := (1-\theta)\rho_{ij}^0+\theta\rho_{ij}^1.$}
(Note that $\rho_{ij}^0=\rho_{ij}^1=\delta_0$ if $i$ and $j$ are not in the same $\cI_a$.) Let \smash{$\{H^\theta: \theta\in (0,1) \}$} be a collection of random matrices such that the followings hold. For any fixed $\theta\in (0,1)$, $(H^0,H^\theta, H^1)$ is a triple of independent $DN\times DN$ random matrices, and the matrix $H^\theta=(H_{ij}^\theta)$ has law
\begin{equation}\label{law_interpol}
\prod_{i\le j\in \mathcal I } \rho_{ij}^\theta(\dd H_{ij}^\theta).
\end{equation}
(Note that we do not require $H^{\theta_1}$ to be independent of $H^{\theta_2}$ for $\theta_1\ne \theta_2 \in (0,1)$.) 
For $\lambda \in \mathbb C$, $i,j\in \mathcal I$, we define the matrix $H_{(ij)}^{\theta,\lambda}$ as
\be\label{Ximulambda} 
\left(H_{(ij)}^{\theta,\lambda}\right)_{kl}:=\begin{cases}H_{ij}^{\theta}, &\text{ if }\ \{k,l\}\ne \{i,j\},\\ \lambda, &\text{ if }\ (k,l)= (i,j),\\
\bar \lambda, &\text{ if }\ (k,l) = (j,i).
\end{cases} 
\ee
Correspondingly, we define the resolvents 
\[G^{\theta}(z):=G\left(z,H^{\theta},\Lambda\right),\ \ \ G^{\theta, \lambda}_{(ij)}(z):= G \big(z,H_{(ij)}^{\theta,\lambda},\Lambda\big).\]
\end{definition}

With (\ref{law_interpol}), taking the derivative with respect to $\theta$, we get the following interpolation formula: for any differentiable function $F:\mathbb R^{DN \times DN} \rightarrow \mathbb C$, 
\begin{equation}\label{basic_interp}
\begin{split}
\frac{\dd}{\dd\theta}\mathbb E F(H^\theta)&=\sum_{i\le j\in\mathcal I}\left[\mathbb E F\left(H^{\theta,H_{ij}^1}_{(ij)}\right)-\mathbb E F\left(H^{\theta,H_{ij}^0}_{(ij)}\right)\right] ,
\end{split}
\end{equation}
 provided all the expectations exist.
Then, \Cref{main_lemma_com} follows from the next estimate on the RHS of (\ref{basic_interp}) with $F(H^\theta)=\langle G_1^\theta E_{a}G_2^\theta E_{b}\rangle$. 

\begin{lemma}\label{lemm_comp_4}
Under the assumptions of \Cref{main_lemma_com}, we have
 \begin{equation}\label{compxxx}
  \sum_{i\le j\in\mathcal I} \left[\mathbb EF\left(H^{\theta,H_{ij}^1}_{(ij)}\right)-\mathbb EF\left(H^{\theta,H_{ij}^0}_{(ij)}\right)\right] \prec N^{-1-\delta}\eta^{-2}
 \end{equation}
 for all $\theta\in[0,1]$, where $F(H^\theta):=\langle G_1^\theta E_{a}G_2^\theta E_{b}\rangle$. 
\end{lemma}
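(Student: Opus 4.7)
The plan is a standard Green's function comparison via Taylor expansion at each matrix entry, combined with the moment-matching conditions and a careful bound on the fourth derivative of $F$. For each fixed pair $(i,j)\in \cI$ with $i\le j$ and each $s\in\{0,1\}$, I would Taylor-expand $F(H^{\theta,\lambda}_{(ij)})$ in $\lambda$ around $\lambda=0$ to fourth order, keeping a fifth-order Taylor remainder. Since $H^s_{ij}$ is independent of the matrix $H^{\theta,0}_{(ij)}$ (in which the $(i,j)$-entry is set to $0$), taking expectation factors the moments of $H^s_{ij}$ out of $\E\partial_{ij}^p\partial_{ji}^q F(H^{\theta,0}_{(ij)})$. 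By the moment-matching hypothesis \eqref{comH0}, all contributions of order $p+q\le 3$ cancel exactly between $s=0$ and $s=1$; by \eqref{comH}, each fourth-order contribution is bounded by $N^{-2-\delta}|\E\partial_{ij}^p\partial_{ji}^q F(H^{\theta,0}_{(ij)})|$. The fifth-order Taylor remainder is controlled by splitting on $\{|H^s_{ij}|\le N^{-1/2+\tau}\}$: on this event, a uniform bound on the fifth derivative of $F$ (from \Cref{lem_loc} and \Cref{appen1}) combined with $\E|H^s_{ij}|^5\lesssim N^{-5/2}$ yields a negligible contribution to \eqref{compxxx}, while on its complement the high-moment assumption \eqref{eq:highmoment} renders the contribution $\OO_\prec(N^{-C})$ for any constant $C>0$.

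The crux is therefore to establish the key bound
\begin{equation}\label{eq:keybdd_prop}
\sum_{i,j\in\cI}\bigl|\E \partial_{ij}^p\partial_{ji}^q F\bigl(H^{\theta,0}_{(ij)}\bigr)\bigr|\prec N\eta^{-2},\quad p+q=4,
\end{equation}
so that the combined factor $N^{-2-\delta}\cdot N\eta^{-2}=N^{-1-\delta}\eta^{-2}$ matches the RHS of \eqref{compxxx}. Using $\partial_{ij}(G_s)_{xy}=-(G_s)_{xi}(G_s)_{jy}$ and iterating the product rule on $F=(DN)^{-1}\tr(G_1 E_a G_2 E_b)$, each term in $\partial_{ij}^p\partial_{ji}^q F$ is $(DN)^{-1}$ times a product of six resolvent entries whose indices are drawn from $\{i,j\}$, with possibly one trace index per connected ``segment''. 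Schematically, each such term has the form
\[
\frac{1}{DN}\prod_{\ell=1}^k \bigl(G_{s_{\ell,1}}B_{\ell,1}\cdots G_{s_{\ell,n_\ell}}\bigr)_{u_\ell v_\ell},
\]
where $n_1+\cdots+n_k=6$, each pair $(u_\ell,v_\ell)\in\{(i,j),(j,i)\}$, and $B_{\ell,r}\in\{E_a,E_b,I\}$.

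To establish \eqref{eq:keybdd_prop}, I would estimate each of the $\OO(1)$ many combinatorial patterns by repeated Cauchy--Schwarz followed by \Cref{appen1} and the Ward identity $\sum_u|(G_s)_{uv}|^2=\eta^{-1}\im(G_s)_{vv}\prec\eta^{-1}$. The crucial savings over the naive entry-wise estimate (which would give $\eta^{-5}$ per entry and hence $\OO_\prec(N\eta^{-5})$ after summation, far from the desired bound) come from the Ward identity together with the averaged normalization $1/(DN)$: summing each occurrence of $i$ or $j$ through one Ward step costs a factor of $N$ but gains a factor of $\eta^{-1}\im G_{vv}\prec\eta^{-1}$; after all summations, the $N$-powers cancel against $(DN)^{-1}$ and the generalized-entry bounds from \Cref{appen1}, leaving exactly $N\eta^{-2}$.

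The main technical obstacle is the combinatorial case analysis needed to verify \eqref{eq:keybdd_prop}: for each way $\partial_{ij}^p\partial_{ji}^q$ can act on the trace, one must track how the indices $i$ and $j$ are distributed across the $k$ segments and apply Cauchy--Schwarz and Ward's identity in the right order to avoid losing any factors of $\eta^{-1}$. The block structure of $\Lambda$ does not interfere, since \Cref{appen1} and the anisotropic local law \eqref{eq:aniso_local} already apply in our model, and the deterministic matrix $\Lambda$ enters only through $M$. In spirit, this step parallels the standard Wigner-matrix four-moment comparison of \cite{erdHos2012rigidity, Erds2017ADA}, adapted to the two-resolvent quantity $\langle G_1 E_a G_2 E_b\rangle$ rather than $\langle \im G(z)\rangle$.
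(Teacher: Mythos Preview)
Your framework---Taylor-expand at each entry, cancel orders $\le 3$ via \eqref{comH0}, bound order four via \eqref{comH}, and control the remainder by splitting on $\{|H^s_{ij}|\le N^{-1/2+\tau}\}$---matches the paper, and your target \eqref{eq:keybdd_prop} is exactly the bound needed. But you overcomplicate its proof. The ``naive'' estimate of $\eta^{-5}$ per entry you worry about never occurs, because each derivative $\partial_{ij}$ or $\partial_{ji}$ acting on any segment $(G_{s_1}B_1\cdots G_{s_n})_{uv}$ always \emph{splits} it into two segments while adding one resolvent factor. The first derivative opens the trace $\tr(G_1E_aG_2E_b)$ into a single entry with three $G$'s; each of the remaining three derivatives then adds one segment and one $G$. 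Hence after four derivatives you always have exactly $k=4$ segments containing six $G$'s in total, so $\sum_\ell(n_\ell-1)=2$ and \Cref{appen1} gives the pointwise bound $|\partial_{ij}^p\partial_{ji}^qF|\prec N^{-1}\eta^{-2}$ directly---no Cauchy--Schwarz or Ward identity is required. Summing over the $\OO(N^2)$ pairs $(i,j)$ yields \eqref{eq:keybdd_prop} immediately.

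This is precisely the paper's argument (see \eqref{eq:bddsumij}): the resolvent expansion \eqref{eq_comp_expansion2} inserts $k+l\ge 4$ rank-two matrices $\Delta_{ij}$, which factor the trace into $k+l$ entries; exactly two of these are of the form $(G_sE_cG_{s'})_{\alpha\beta}\prec\eta^{-1}$ by \Cref{appen1}, and the rest are single-resolvent entries $(G_s)_{\alpha\beta}\prec 1$ by \eqref{eq:aniso_local}. So what you flag as ``the main technical obstacle'' is in fact a one-line structural observation.
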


\begin{proof}
The proof of (\ref{compxxx}) uses the moment matching conditions \eqref{comH0} and \eqref{comH} and the following resolvent expansion: for any $\lambda,\lambda'\in \mathbb C$ and $K\in \mathbb N$,
\begin{equation}\label{eq_comp_expansion2}
\begin{split}
G_{(ij)}^{\theta,\lambda'} = G_{(ij)}^{\theta,\lambda}&+\sum_{k=1}^{K}  G_{(ij)}^{\theta,\lambda}\left\{ \left[\re(\lambda-\lambda') \Delta_{ij} + \ii\im( \lambda- \lambda') \wt\Delta_{ij}\right]G_{(ij)}^{\theta,\lambda}\right\}^k  \\
&+ G_{(ij)}^{\theta,\lambda'}\left\{\left[\re(\lambda-\lambda') \Delta_{ij} + \ii\im( \lambda- \lambda') \wt\Delta_{ij}\right]G_{(ij)}^{\theta,\lambda}\right\}^{K+1},
\end{split}
\end{equation}
where the matrices $\Delta_{ij}$ and $\wt\Delta_{ji}$ are $DN\times DN$ matrices defined as 
$\left(\Delta_{ij} \right)_{kl}:= \delta_{ki}\delta_{lj}+ \delta_{kj}\delta_{li}$ and $(\wt \Delta_{ij})_{kl}:= \delta_{ki}\delta_{lj}- \delta_{kj}\delta_{li}.$ 
Since $H^\theta$ and $G^{\theta,H_{ij}^a}$, $a\in \{0,1\}$, also satisfy the setting of \Cref{lem_loc}, we have that for each $\theta\in [0,1]$, $G^\theta$ and $G^{\theta,H_{ij}^a}$ satisfy the local laws \eqref{eq:aniso_local} and \eqref{entprodG}. 
Furthermore, let $\xi$ be an arbitrary random variable satisfying $|\xi|\prec N^{-1/2}$. With the expansion \eqref{eq_comp_expansion2}, we can readily show that the local laws \eqref{eq:aniso_local} and \eqref{entprodG} hold also for the resolvent $G_{(ij)}^{\theta,\xi}$. We will use these estimates tacitly in the following proof.

By changing from $H_{ij}^0 := X_{ij}^0 + \ii Y_{ij}^0$ to $H_{ij}^1 := X_{ij}^1 + \ii Y_{ij}^1$, we can break down the replacement into two distinct steps. We first modify the real part $X_{ij}^0$ to $X_{ij}^1$, followed by replacing the imaginary part $Y_{ij}^0$ with $Y_{ij}^1$.
Hence, without loss of generality, we assume for the rest of the proof that $H_{ij}^0$ and $H_{ij}^1$ have the same imaginary part, i.e., $Y_{ij}^0=Y_{ij}^1$, while the case of $X_{ij}^0=X_{ij}^1$ can be handled in the same way. Moreover, with a slight abuse of notation, we denote 
 $$H^{\theta,X_{ij}^\gamma}_{(ij)}:= H^{\theta,H_{ij}^\gamma}_{(ij)},\quad G_{(ij)}^{\theta, X_{ij}^\gamma}:= G_{(ij)}^{\theta, H_{ij}^\gamma},\quad \gamma\in \{0,1\}.$$  
Using \eqref{eq_comp_expansion2} with $\lambda=0$ and $K=7$ and applying the local law \eqref{eq:aniso_local}, we get that 
\begin{align}\label{eq:G01exp}
    \left[(G_{s})_{(ij)}^{\theta, X_{ij}^\gamma}\right]_{xy}=\left[(G_{s})_{(ij)}^{\theta,0}\right]_{xy} + \sum_{k=1}^7 \cal X^{(ij)}_{xy}(s,\gamma,k) + \OO_\prec (N^{-4}) ,    \quad  x,y\in \cI,\ s\in\{1,2\},\ \gamma\in \{0,1\},
\end{align}
where the random matrices $\cal X^{(ij)}(s,\gamma,k)$ are defined by 
\be\label{eq:Xk} 
 \cal X^{(ij)}(s,\gamma,k):=(-X_{ij}^\gamma)^k(G_{s})_{(ij)}^{\theta,0}\left[\Delta_{ij}(G_{s})_{(ij)}^{\theta,0}\right]^k,\quad \text{with}\quad \max_{x,y\in \cI}|\cal X_{xy}^{(ij)}(s,\gamma,k)| \prec N^{-k/2}.
\ee
Note that $(G_{s})_{(ij)}^{\theta,0}$ is independent of $X_{ij}^\gamma$, $\gamma\in\{0,1\}$. Then, with \eqref{eq:G01exp} and \eqref{eq:Xk}, using the moment matching condition \eqref{comH0}, we obtain that
\begin{align}
    & \sum_{i\le j\in \cI} \left[\mathbb \E F\left(H^{\theta,X_{ij}^1}_{(ij)}\right)-\mathbb \E F\left(H^{\theta,X_{ij}^0}_{(ij)}\right)\right]\nonumber \\  
     &= \sum_{i\le j\in \cI} \sum_{1\le k,l\le 7,k+l\ge 4}\E  \left[\langle \cal X^{(ij)}(1,1,k)E_a \cal X^{(ij)}(2,1,l)E_b\rangle  -  \langle \cal X^{(ij)}(1,0,k) E_a \cal X^{(ij)}(2,0,l)E_b\rangle  \right]\nonumber\\
     & +\OO_\prec(N^{-3/2}\eta^{-1/2}),\label{eq:Xk2}
\end{align}
where we have applied the anisotropic local law for $(G_{s})_{(ij)}^{\theta,0}$ to control the terms containing the error $\OO_\prec (N^{-4}) $ in \eqref{eq:G01exp}. 
When $k+l\ge 4$, using the moment matching condition \eqref{comH}, we get that  
\begin{align}
	&\sum_{i\le j \in \cI}\E \left[\langle \cal X^{(ij)}(1,1,k)E_a \cal X^{(ij)}(2,1,l)E_b\rangle  -  \langle \cal X^{(ij)}(1,0,k)E_a \cal X^{(ij)}(2,0,l)E_b\rangle\right] \nonumber \\
	&\prec \left(N^{-2-\delta}\wedge N^{-(k+l)/2}\right)\sum_{i\le j \in \cI} \E \left|\avgB{(G_{1})_{(ij)}^{\theta,0}\left[\Delta_{ij}(G_{1})_{(ij)}^{\theta,0}\right]^k E_a \left[(G_{2})_{(ij)}^{\theta,0}\Delta_{ij}\right]^l (G_{2})_{(ij)}^{\theta,0} E_b}\right|\nonumber\\
	&\prec  N^{-3-\delta}\sum_{i\le j \in \cI} \E \left|\tr\left[ (G_{2})_{(ij)}^{\theta,0} E_b(G_{1})_{(ij)}^{\theta,0}\left[\Delta_{ij}(G_{1})_{(ij)}^{\theta,0}\right]^k E_a \left[(G_{2})_{(ij)}^{\theta,0}\Delta_{ij}\right]^l\right]\right| \nonumber\\
	&\prec  N^{-3-\delta}\cdot \left(N^2\eta^{-2}\right)=N^{-1-\delta}\eta^{-2},
	\label{eq:bddsumij}
\end{align}
where in the third step we have applied the estimates \eqref{eq:aniso_local} and \eqref{entprodG} to  $(G_{s})_{(ij)}^{\theta,0}$. Plugging \eqref{eq:bddsumij} into \eqref{eq:Xk2} concludes the proof. 
\end{proof}

\section{Proof of local laws}\label{sec:Gprop}

\subsection{Proof of \Cref{lem_loc}}\label{sec:pf_locallaw}

In this subsection, we briefly outline the proof of \Cref{lem_loc}, which is divided into two parts: a probabilistic part that focuses on deriving the self-consistent equation, and a deterministic part concerning the stability of this equation. 

The probabilistic part of the proof has been addressed by the arguments presented in \cite{He2018}. Following the notation therein, we define the functions $\Pi\equiv \Pi(\cdot, z): \C^{DN\times DN}\to \C^{DN\times DN}$ and $R\equiv R(\cdot ,z): \C\to \C$ as follows:
$$
\Pi(X):=I+z X+\mathcal{S}(X) X-\Lambda  X ,\quad R_{\xi}\equiv R(\xi):=\left(\Lambda-\xi-z\right)^{-1}. 
$$
Then, we can write $M(z)$ and $G(z)$ as  
$$
M=R_{m}, \quad G=R_{\cal S(G)}-R_{\cal S(G)} \Pi(G).
$$
Subtracting the above two equations yields
$$
G-M=R_{\cal S(G)}-R_m-R_{\cal S(G)} \Pi(G) .
$$
Multiplying both sides with $E_k$, $k\in \qqD$, and taking the averaged trace, we obtain that 
\be\label{eq:gks} g_k - m - D\avg{(R_{\cal S(G)}-R_m)E_k}=-D\avg{R_{\cal S(G)} \Pi(G)E_k},\quad \text{with}\quad g_k:=D\avg{GE_{k}}.
\ee
Using the arguments in \cite{He2018}, one can show that $\Pi(G)$ is indeed a small error, and $\langle \Pi(G)B\rangle$ satisfies a better bound for any deterministic matrix $B$ with $\|B\|\le 1$. 
Furthermore, $\cal S(G)$ can be written as $\cal S(G)=\sum_{k=1}^D g_k E_{k}$. Hence, \eqref{eq:gks} leads to a self-consistent vector equation for $\mathbf g=(g_1,\ldots, g_D) \in \C^D$:
\be\label{eq:gks2} g_k - m - D\avg{(R_{\bg \cdot \mathbf E}-R_m)E_k}=\cal E_k, \quad k \in \qqD,
\ee
where $\mathbf E:=(E_1,\ldots, E_D)$, $\bg \cdot \mathbf E:=\sum_{k=1}^D g_k E_{k}$, and $\cal E_k$ denotes an error term. When $\cal E_k=0$, this equation admits a solution $\mathbf m:=(m(z),\ldots, m(z))\in \C^D$. By combining the stability of the self-consistent equation \eqref{eq:gks2} as presented in \Cref{lem_stab} below with the error estimates established for $\Pi(G)$ and $\langle \Pi(G)B \rangle$, we can deduce the local laws \eqref{eq:aniso_local} and \eqref{eq:aver_local} employing the arguments in Section 4 of \cite{He2018}. Let $\kappa_E:= |E-b_N|\wedge |E+a_N|$ (recall that $-a_N$ and $b_N$ represent the spectral edges). In the proof of the averaged local law \eqref{eq:aver_local}, we can also derive a stronger estimate outside the spectrum: 
$$ \langle G(z)\rangle - m(z) \prec \frac{1}{N(\kappa_E+\eta)}+\frac{1}{(N\eta)^2\sqrt{\kappa_E+\eta}}, $$
uniformly in $z=E+\ii \eta$ with $|z|\le \tau^{-1}$, $\eta\ge N^{-1+\tau}$, and $N\eta \sqrt{\kappa_E+\eta}\ge N^\tau$. By combining this estimate with the local law  \eqref{eq:aver_local} when $B=I$, we can derive \eqref{eq:rigidity} using the argument in \cite{erdHos2012rigidity}.

It remains to establish the stability of the self-consistent equation \eqref{eq:gks2}. Our framework closely aligns with the general settings outlined in \cite{EKS_Forum,AEK_PTRF}, except that our model does not satisfy the flatness assumption in them (see Assumption A1 of \cite{AEK_PTRF} or Assumption E of \cite{EKS_Forum}). This flatness assumption is required by \cite{EKS_Forum,AEK_PTRF} to guarantee the stability of the self-consistent equations. Nevertheless, the absence of this assumption can be compensated for by the simple form of our self-consistent equation, which is a perturbation of the quadratic self-consistent equation for semicircle law, due to our assumption that $\|A\|=\oo(1)$. Consider the self-consistent equation $\mathbf f(z,\mathbf x)=0$, where the vector-valued function $\mathbf f:\C_+\times \mathbb C^D\to \C^D$ is defined as 
$$f_k(z,\bx):= x_k - m(z) - D\avg{(R_{\bx \cdot \mathbf E}(z)-R_{m(z)}(z))E_k} , \quad k \in \qqD.$$
Define the domain $\mathbf D(\tau):=\{z=E+\ii\eta :|z|\le \tau^{-1}, \eta \ge N^{-1+\tau}\}$. For each $z\in \mathbf D(\tau)$, introduce the set 
\begin{align*}
L(z):=\{z\}\cup \{z'\in \mathbf D(\tau): \text{Re}\, z' = \text{Re}\, z, \text{Im}\, z'\in [\text{Im}\, z, \infty)\cap (N^{-10}\mathbb N)\} .
\end{align*}
In other words, $L(z)$ is a 1-dimensional lattice with spacing $N^{-10}$ along with the point $z$. The stability estimate we aim to establish for the equation $\mathbf f(z,\mathbf x)=0$ on $\mathbf D(\tau)$ is as follows:

\begin{lemma}\label{lem_stab} 
Let $c_0>0$ be a sufficiently small constant. The self-consistent equation $\mathbf f(z,\bx)=0$ is stable on $\mathbf D(\tau)$ in the following sense. Suppose the $z$-dependent function $\delta$ satisfies $N^{-2} \le \delta(z) \le (\log N)^{-1}$ for $z\in \mathbf D(\tau)$ and is Lipschitz continuous with a Lipschitz constant $\le N^3$. Additionally, assume that for each fixed $E$, the function $\eta \mapsto \delta(E+\ii\eta)$ is non-increasing for $\eta>0$. Suppose every component  $u_k$ of a vector function $\mathbf u: \mathbf D(\tau)\to \mathbb C$ is the Stieltjes transform of a probability measure on $\R$. For any $z\in \mathbf D(\tau)$, if for every $z'\in L(z)$ the bound 
\begin{equation}\label{Stability0}
\left\|\mathbf f(z', \mathbf u(z'))\right\|_\infty \le \delta(z')
\end{equation}
holds, then we have that
\begin{equation}
\left\|\mathbf u(z)-\mathbf m(z)\right\|_\infty\le \frac{C\delta(z)}{\im m(z)+\sqrt{\delta(z)}},\label{eq:est_stab}
\end{equation}
where $C>0$ is a constant independent of both $z$ and $N$.
\end{lemma}

\begin{proof}
A delicate stability analysis of a general class of quadratic self-consistent equations has been carried out carefully in \cite{isotropic,Quadratic_vector,AEK_PTRF}. 
Following the ideas from these works, we will now outline the proof of the stability estimate \eqref{eq:est_stab} without giving all the details.

Given an arbitrary $z=E+\ii \eta\in\mathbf D(\tau)$, suppose we have proved that \eqref{eq:est_stab} holds for all $z\in L(z')$ with $\im z'>\im z$. Then, using the Lipschitz continuity of the functions $\delta(z)$, $\bu(z)$, and $\mathbf m(z)$, we obtain that 
\begin{equation}
\left\|\mathbf u(z)-\mathbf m(z)\right\|_\infty\le \frac{2C\delta(z)}{\im m(z)+\sqrt{\delta(z)}}.\label{eq:est_stab2.0}
\end{equation}
By assumption, $\mathbf u(z)$ satisfies the system of equations $ f_k(z,\mathbf u(z))=\e_k(z)$, $k\in \qq{D}$, for some quantities $\e_k(z)$ with $\max_k|\e_k|\le \delta(z)$. Under condition \eqref{eq:est_stab2.0}, we can perform a Taylor expansion of $R_{\bu \cdot \mathbf E}$ to express these equations as  
$$
v_k -D\langle  M  (\mathbf v\cdot \mathbf E) M  E_{k}\rangle-D\langle  M  (\mathbf v\cdot \mathbf E) M (\mathbf v\cdot \mathbf E) M E_{k} \rangle  = \e_k + \OO(\|\mathbf v\|_\infty^3),\quad k\in \qq{D},
$$
where we denote $\bv\equiv \bu-\mathbf m$. Recalling $\wh M$ defined in \eqref{def_ML}, we can rewrite the above equation as 
\be\label{eq:est_stab2}
v_k  - \sum_{l=1}^D \wh M_{kl}(z,z) v_l  - m(z)^3 v_k^2 =\e_k + \OO(\|\mathbf v\|_\infty^3+\|A\|^{1/2}\|\mathbf v\|_\infty^2).
\ee
In the derivation of \eqref{eq:est_stab2}, we also  utilized the fact that
\begin{align*}
D\langle  M  (\mathbf v\cdot \mathbf E) M (\mathbf v\cdot \mathbf E) M E_{k} \rangle
&= \sum_{l,s}D\langle  M E_l M E_s M E_k \rangle v_l v_s = m(z)^3 v_k^2 +\OO(\|A\|^{1/2}\|\mathbf v \|_\infty^2),
\end{align*}
where the second step stems from \eqref{eq:msc}. Let ${k}\in \qq{D}$ such that $|v_{k}|=\|\bv\|_\infty$. Then, we can write \eqref{eq:est_stab2} as
\be\label{eq:est_stab3} a(z) v_k  - m(z)^3 v_k^2 =\e_k + \OO(\|\mathbf v\|_\infty^3+\|A\|^{1/2}\|\mathbf v\|_\infty^2),\ee
where $a(z):=1 - \sum_{l=1}^D \wh M_{kl}(z,z) v_l/v_k$ satisfies that 
\be\label{eq:estaaa} |a(z)|\gtrsim \sqrt{\kappa_E+\eta}\sim  \im m(z)\ee
by using \Cref{lem_bas_M} and a similar expansion as in \eqref{wtMlead} for $\wh M_{kl}(z_1,z_2)$ with $z_1=z_2=z$. By using \eqref{eq:estaaa} and \eqref{eq:est_stab2.0}, along with an analysis of equation \eqref{eq:est_stab3} similar to the reasoning in the proof of \cite[Lemma 4.5]{isotropic}, we can conclude the estimate \eqref{eq:est_stab}. Detailed specifics are omitted here.

It remains to establish the stability estimate \eqref{eq:est_stab} for a specific $z_0\in L(z)$. Combining this initial stability estimate with the continuity argument outlined above, we can conclude the proof of \Cref{lem_stab} by induction.  
For this purpose, we choose $z_0=E+ \ii C_0$ for a sufficiently large constant $C_0>0$.  Since both $m(z)$ and the components of $\mathbf u$ are Stieltjes transforms, we have the a priori bounds 
\be\label{eq:trivialG}
|m(z_0)|_\infty \le C_0^{-1},\quad \|\mathbf u(z_0)\|_\infty \le C_0^{-1}. 
\ee
As long as $C_0$ is chosen sufficiently large, $R_{\bu \cdot \mathbf E}$ behaves well, allowing us to perform a Taylor expansion. This expansion leads us back to equation \eqref{eq:est_stab2} with $|a(z_0)|\ge 1/2$. Analyzing this equation reveals that 
$$ \|\mathbf u(z_0)-\mathbf m(z_0)\|_\infty \le C_1 \delta(z_0)\quad \text{or}\quad \|\mathbf u(z_0)-\mathbf m(z_0)\|_\infty \ge C_1^{-1}$$
for a constant $C_1>0$ independent of $C_0$. However, the latter scenario contradicts \eqref{eq:trivialG} if we choose $C_0>2C_1$. Therefore,  we must have $\|\mathbf u(z_0)-\mathbf m(z_0)\|_\infty \le C \delta(z)$, establishing the initial stability estimate at $z_0$ and concluding the proof of \Cref{lem_stab}. 
\end{proof}

\subsection{Proof of \Cref{appen1}}\label{sec:pf_appen}

The proof of \Cref{appen1} uses an induction argument based on the Cauchy-Schwarz inequality and the following classical Ward's identity, which follows from a simple algebraic calculation. 
\begin{lemma}[Ward's identity]\label{lem-Ward}
Let $\cal A$ be a Hermitian matrix. Define its resolvent as $R(z):=(\cal A-z)^{-1}$ for any $z= E+ \ii \eta\in \C_+$. Then, we have 
    \be\label{eq_Ward0}
    \begin{split}
\sum_x \overline {R_{xy'}}  R_{xy} = \frac{R_{y'y}-\overline{R_{yy'}}}{2\ii \eta},\quad
\sum_x \overline {R_{y'x}}  R_{yx} = \frac{R_{yy'}-\overline{R_{y'y}}}{2\ii \eta}.
\end{split}
\ee
As a special case, if $y=y'$, we have
\be\label{eq_Ward}
\sum_x |R_{xy}( z)|^2 =\sum_x |R_{yx}( z)|^2 = \frac{\im R_{yy}(z) }{ \eta}.
\ee
\end{lemma}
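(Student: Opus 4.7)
The plan is to derive both identities from the two facts that (i) the Hermiticity of $\cal A$ forces $R(z)^\ast = R(\bar z)$, and (ii) $R(z)$ and $R(\bar z)$ satisfy the resolvent identity
\[
R(z) - R(\bar z) = (z-\bar z)\, R(z) R(\bar z) = 2\ii\eta\, R(z) R(\bar z),
\]
where in the last step I also use that $R(z)$ and $R(\bar z)$ commute because both are functions of the single Hermitian matrix $\cal A$.

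First I would rewrite each of the sums in \eqref{eq_Ward0} as a single entry of a matrix product. The Hermiticity gives the conjugation rule $\overline{R_{ij}(z)} = R_{ji}(\bar z)$, so
\[
\sum_{x}\overline{R_{xy'}(z)}\,R_{xy}(z) = \sum_{x} R_{y'x}(\bar z)\,R_{xy}(z) = [R(\bar z)R(z)]_{y'y},
\]
and similarly
\[
\sum_{x}\overline{R_{y'x}(z)}\,R_{yx}(z) = \sum_{x} R_{yx}(z)\,R_{xy'}(\bar z) = [R(z)R(\bar z)]_{yy'}.
\]

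Next I would substitute the resolvent identity into each right-hand side. For the first sum this gives
\[
[R(\bar z)R(z)]_{y'y} = \frac{R_{y'y}(z) - R_{y'y}(\bar z)}{2\ii\eta} = \frac{R_{y'y}(z) - \overline{R_{yy'}(z)}}{2\ii\eta},
\]
which is the first identity in \eqref{eq_Ward0}; the second identity follows in exactly the same way after swapping the roles of $y$ and $y'$ in the matrix product. Finally, to obtain \eqref{eq_Ward}, I would set $y=y'$ in either formula and observe that $R_{yy}(z) - \overline{R_{yy}(z)} = 2\ii\,\im R_{yy}(z)$, together with the pointwise identity $\overline{R_{xy}(z)}R_{xy}(z) = |R_{xy}(z)|^2$ (and analogously for $R_{yx}$).

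There is no serious obstacle here: the entire argument is algebraic and relies only on the Hermiticity of $\cal A$ and the commutativity of $R(z)$ with $R(\bar z)$. The only point worth stating carefully is the conjugation rule $\overline{R_{ij}(z)} = R_{ji}(\bar z)$, which is what lets one convert a sum of complex conjugates into a genuine matrix product and then invoke the resolvent identity.
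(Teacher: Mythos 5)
Your proof is correct: the conjugation rule $R(z)^*=R(\bar z)$ plus the resolvent identity $R(z)-R(\bar z)=2\ii\eta\,R(z)R(\bar z)$ is exactly the "simple algebraic calculation" the paper alludes to (it gives no more detail than that), and your reduction of each sum to an entry of $R(\bar z)R(z)$ or $R(z)R(\bar z)$ carries it out cleanly. One minor remark: the commutativity of $R(z)$ and $R(\bar z)$ is not actually needed, since the resolvent identity applied with the two arguments in either order already produces both products directly.
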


The case of $p=1$ follows directly from the anisotropic local law \eqref{eq:aniso_local}. Suppose now that $p\geq 2$ and the estimate \eqref{entprodG} holds for products of $k$ many $G_iB_i$'s when $1\le k \leq p - 1$. 

First, if $p = 2q$ is even, then we have
\begin{align*}|(G_1B_1\cdots G_pB_p)_{\mathbf{uv}}| &= |(\mathbf{u}^*G_1B_1\cdots G_q)B_q(G_{q+1}B_{q+1}\cdots G_pB_p\mathbf{v})|
\\&\leq \Big(\sum_j |(G_1B_1\cdots G_q)_{\mathbf{u}j}|^2\Big)^{1/2} \Big(\sum_j |(G_{q+1}B_{q+1}\cdots G_pB_p)_{j\mathbf{v}}|^2\Big)^{1/2}.\end{align*}
Using Ward's identity, we get 
\begin{align}
\sum_j |(G_1B_1\cdots G_q)_{\mathbf{v}j}|^2 
&= \eta^{-1}[G_1B_1\cdots G_{q-1}B_{q-1} (\im G_q)B_{q-1}^* G^*_{q-1}\cdots B_1^*G_1^*]_{\mathbf{vv}}\label{eq:bdd2q}\\
&= \frac{(G_1B_1\cdots  G_q \cdots B_1^*G_1^*)_{\mathbf{vv}}-(G_1B_1\cdots G_q^* \cdots B_1^*G_1^*)_{\mathbf{vv}}}{2\ii \eta}\prec \frac{1}{\eta^{2q-1}},\nonumber
\end{align}
where we used the induction hypothesis in the last step with the observation that the two terms in the numerator have $(2q - 1)$ many $G$'s. The term $\sum_j |(G_{q+1}B_{q+1}\cdots G_pB_p)_{j\mathbf{v}}|^2$ can be proved in the same way. This gives \eqref{entprodG} for the even $p$ case.

On the other hand, if $p = 2q + 1$ is odd, we find that
\begin{align}|(G_1B_1\cdots G_pB_p)_{\mathbf{uv}}| &\leq \Big(\sum_j |(G_1B_1\cdots G_{q+1})_{\mathbf{u}j}|^2\Big)^{1/2}\Big(\sum_j |(G_{q+2}B_{q+2}\cdots G_pB_p)_{j\mathbf{v}}|^2\Big)^{1/2}\label{eq:bdd2q2}\\
&\prec \eta^{-(2q-1)/2}\Big(\sum_j |(G_1B_1\cdots G_{q+1})_{\mathbf{u}j}|^2\Big)^{1/2},\nonumber
\end{align} 
where we have bounded $\sum_j |(G_{q+2}B_{q+2}\cdots G_pB_p)_{j\mathbf{v}}|^2$ as in \eqref{eq:bdd2q}. Now, we use the induction hypothesis and the trivial bound $\|G_i\|\le \eta^{-1}$ to get that 
\begin{align*}
\sum_j |(G_1B_1\cdots G_{q+1})_{\mathbf{u}j}|^2 &= \eta^{-1}(G_1B_1\cdots G_q B_q (\im G_{q+1})B_q^* G_q^*\cdots B_1^*G_1^*)_{\mathbf{uu}}\\
&\leq \eta^{-2}(G_1B_1\cdots G_q G_q^*\cdots B_1^*G_1^*)_{\mathbf{uu}} \prec \eta^{-(2q+1)}.
\end{align*}
Plugging it into \eqref{eq:bdd2q2}, we get \eqref{entprodG} for the odd $p$ case.

Finally,  \Cref{appen1} follows by induction in $p$.



\end{document}